\documentclass[paper=letter, 11pt, DIV=14, parskip=half, leqno]{scrartcl}
\usepackage{hyperref}
\usepackage{amsmath, amssymb, amsthm,enumerate}
\usepackage{graphicx, subcaption}
\usepackage[backend=biber, style=alphabetic, maxnames=6, maxalphanames=6]{biblatex}
\usepackage{fixdif}
\usepackage[svgnames]{xcolor}
\hypersetup{colorlinks, citecolor=DarkTurquoise, linkcolor=DarkRed}
\usepackage{subfiles,dsfont}
\usepackage{verbatim}
\usepackage{libertinus-type1, libertinust1math}
\usepackage{physics2}
\usepackage{commath}
\usepackage[normalem]{ulem}
\usephysicsmodule{ab}
\newtheorem{theorem}{Theorem}[section]

\theoremstyle{definition}

\newtheorem{remark}[theorem]{Remark}
\newtheorem{definition}[theorem]{Definition}
\newtheorem{lemma}[theorem]{Lemma}
\newtheorem{proposition}[theorem]{Proposition}

\newcommand{\bR}{\mathbb{R}}
\newcommand{\bZ}{\mathbb{Z}}
\newcommand{\bN}{\mathbb{N}}
\newcommand{\bQ}{\mathbb{Q}}
\newcommand{\bC}{\mathbb{C}}
\newcommand{\bE}{\mathbb{E}}
\newcommand{\bbD}{\mathbb{D}}
\newcommand{\bbE}{\mathbb{E}}
\newcommand{\bbP}{\mathbb{P}}

\newcommand{\bP}{\mathbb{P}}
\newcommand{\bbR}{\mathbb{R}}
\newcommand{\bbZ}{\mathbb{Z}}
\newcommand{\bbQ}{\mathbb{Q}}
\newcommand{\bbH}{\mathbb{H}}
\newcommand{\bH}{\mathbb{H}}
\newcommand{\fh}{\mathfrak{h}}

\newcommand{\cS}{\mathcal{S}}
\newcommand{\cG}{\mathcal{G}}
\newcommand{\cA}{\mathcal{A}}
\newcommand{\cC}{\mathcal{C}}
\newcommand{\cD}{\mathcal{D}}
\newcommand{\cR}{\mathcal{R}}
\newcommand{\cT}{\mathcal{T}}
\newcommand{\cL}{\mathcal{L}}
\newcommand{\rmW}{\mathrm{W}}
\newcommand{\rmE}{\mathrm{E}}
\newcommand{\rmS}{\mathrm{S}}
\newcommand{\rmL}{\mathrm{L}}
\newcommand{\rmR}{\mathrm{R}}
\newcommand{\fA}{\mathfrak{A}}
\newcommand{\sfL}{\mathsf{L}}
\newcommand{\sfR}{\mathsf{R}}
\newcommand{\sfZ}{\mathsf{Z}}
\newcommand{\sfV}{\mathsf{V}}
\newcommand{\sfU}{\mathsf{U}}
\newcommand{\sfS}{\mathsf{S}}
\newcommand{\bfc}{\mathbf{c}}
\newcommand{\ol}{\overline}
\newcommand{\e}{\varepsilon}
 
\newcommand{\cros}{\mathrm{cross}}
\newcommand{\curve}{\mathrm{curve}}
\newcommand{\field}{\mathrm{field}}
\newcommand{\SLE}{\mathrm{SLE}}

\newcommand{\LF}{\mathrm{LF}}

\newcommand{\QA}{\mathrm{QA}}
\newcommand{\MA}{\mathrm{MA}}
\newcommand{\MD}{\mathrm{MD}}
\newcommand{\MT}{\mathrm{MT}}
\newcommand{\MC}{\mathrm{MC}}
\newcommand{\QT}{\mathrm{QT}}

\newcommand{\Leb}{\mathrm{Leb}}
\newcommand{\IG}{\mathrm{IG}}

\newcommand{\wt}[1]{\widetilde{#1}}
\newcommand{\wh}[1]{\widehat{#1}}

\newcommand{\Md}{\mathcal{M}^{\mathrm{disk}}}

\DeclareMathOperator{\re}{Re}
\DeclareMathOperator{\im}{Im}

\DeclareMathOperator{\supp}{supp}

\numberwithin{equation}{section}

\newcommand{\zhenfeng}[1]{
    {\color{BlueViolet}\textbf{Zhenfeng: }#1}
}

\newcommand{\updated}[1]{
    {\color{Indigo}#1}
}
\title{Oriented planar maps and \\ annular mating of trees}

\author{Xingjian Di\thanks{NYU--ECNU Institute of Mathematical Sciences, New York University Shanghai.} 
\footnotemark[2]
\qquad Nina Holden\thanks{Courant Institute of Mathematical Sciences, New York University.}
\qquad Zhenfeng Tu\footnotemark[2]
\qquad Pu Yu\footnotemark[2]
}
\date{}

\begin{document}

\maketitle

\begin{abstract}
We study a class of oriented annular planar maps which are closely related to bipolar-oriented maps. We prove that such maps can be encoded by a 2D lattice walk which converges to a 2D Brownian path in the scaling limit. We further prove a mating-of-trees result for the annulus, where the aforementioned Brownian path  and a generalization encodes an LQG annulus  decorated by a counterclockwise space-filling SLE loop  with parameter $\gamma=16\kappa^{-1/2}\in(0,2)$. 

\end{abstract}

\tableofcontents

\section{Introduction}
Random planar maps are natural models for discrete random surfaces which are studied in several branches of both mathematics and physics. 
Liouville quantum gravity (LQG) is a model for a random surface originating in work of Polyakov~\cite{polyakov1981quantum} which has been extensively studied in probability theory in the past two decades. 
One research direction is to prove that random planar maps converge in the scaling limit to LQG surfaces.

In these scaling limit results, several different notions of convergence have been considered. For example, one can consider the \emph{Gromov-Hausdorff} topology where the planar map is viewed as a metric space by equipping it with its graph distance \cite{LG13, Mie13}. Equivalence of the Brownian surfaces considered in the cited works and LQG was proven in \cite{LQGBM1, LQGBM2, LQGBM3}. One can also consider convergence under \emph{discrete conformal embedding}, where one proves convergence of the counting measure induced in the plane by the embedded planar map \cite{GMS21, HS23}. Finally, one can consider convergence in \emph{peanosphere sense}, which we are doing 
in this paper. Several planar maps with a statistical physics model can be encoded by a 2D lattice walk. In the theory of \emph{mating-of-trees} \cite{MoT}, a 2D Brownian motion encodes an LQG surface decorated by a Schramm-Loewner evolution (SLE) \cite{SchrammSLE}. SLE has a single parameter $\kappa>0$ and has been proved or conjectured to describe the scaling limits of a large class of two-dimensional lattice models at criticality, e.g.~\cite{smirnov2001critical,lawler2011conformal,schramm2009contour,chelkak2014convergence}. The convergence in peanosphere sense means convergence of the 2D lattice walk encoding random planar maps to the 2D Brownian motion encoding space-filling SLE decorated LQG surfaces. Once convergence has been established in the peanosphere sense, it is natural to conjecture that convergence also holds in the other two senses. 
Examples include~\cite{She16} on the Fortuin-Kastelyn model, \cite{KMSW19} on bipolar orientations, ~\cite{BHS18} on percolation on triangulations, and ~\cite{LSW24} on Schynder woods.

Most scaling limit results are obtained for random planar maps with disk, sphere or whole-plane topology. A notable exception is \cite{bettinelli2017compact,BM22}, which considers uniform quadrangulations on general compact surfaces with boundary. One can also get natural multiply connected planar map models by considering a planar map with disk topology decorated by a loop model and ``removing'' the interior of some appropriately chosen loops \cite{ModAnn, Wu2023}; see also \cite{legall-drilling-annulus}. 
Other aspects of non-simply connected random planar map models are studied in e.g.~\cite{BG14,bonzom2022enumeration,fusy2021maps, ELT23,BGM24}, where enumerative results for  certain classes of planar maps of various topology have been obtained, often related to topological recursion~\cite{eynard2007invariants,eynard2008algebraic}. There have also been a number of works on planar maps in non-simply connected settings, e.g.,\cite{sun2024annulus} on annulus crossing formulae for critical planar percolation and \cite{DGL17,bernardi2025grand} on Schnyder woods in general settings. The mating-of-trees theory was first established in~\cite{MoT} for an infinite volume LQG surface called $\gamma$-quantum cone with whole-plane topology, and later extended to other simply connected LQG surfaces of finite volume in~\cite{MS19Finite,MoTboundary,ConfWeldDisks,AY23wholeplane,ASYZ24}.  {Before this work,} there was no mating-of-trees results for LQG surfaces of non-simply connected topology. One reason is that there is no general theory of space-filling SLE in non-simply connected domains.

In this paper, we study a certain class of oriented random planar maps of annular topology  that are closely related to the bipolar-oriented maps in~\cite{KMSW19}. We prove that such planar maps of annular topology can be encoded by 2D lattice walks, which converges to a Brownian path in the scaling limit (Theorem~\ref{thm: annular orientation scaling limit}). Based on the imaginary geometry~\cite{ImagGeo1,ImagGeo4}, we construct a version of space-filling SLE loops in the annulus and further  prove a mating-of-trees result for this version of SLE loop on the LQG annulus (Theorem~\ref{thm: quantum annulus mot}).  The SLE and LQG on annulus encoded by the Brownian path agrees with the scaling limit of the lattice walks encoding our  planar maps of annular topology, and therefore can be viewed as the scaling limit of the annular maps. There is also a random moduli of the annulus in the scaling limit, which is computed in Theorem~\ref{thm: law of the random modulus}.

\subsection{Main results and proof outline}
\label{sec: intro precise statements and outline}

Let $\gamma\in (0,2)$. Throughout this paper, we  work with   Brownian motions  $(L_t,R_t)_{t\in\bbR}$ with covariance
\begin{equation}
    \label{eq: gamma correlated BM}
    \mathrm{Var} (L_t) = \mathrm{Var} (R_t) = \mathbb{a}^2 |t|, \quad
    \mathrm{Cov}(L_t, R_t) = -\mathbb{a}^2 |t|\cos\theta, \ \quad \text{where $\theta =\frac{\pi \gamma^2}{4}$ and $\mathbb{a}^2 = \frac{2}{\sin\theta}$.}  
\end{equation}
We shall also work with Brownian path measures  induced by $(L,R)$; see e.g.~\cite{LW04}. For a domain $D$ and $x,y\in \overline D$, we write $B_{x,y}^{\gamma,D}$ for the Brownian path $(L,R)$ in $D$ from $x$ to $y$ with covariance matrix~\eqref{eq: gamma correlated BM}. See Section~\ref{sec: prelim Brownian path measure} for details.

A \emph{planar map} is a 
connected planar graph with finitely many edges together with a proper embedding into $\mathbb{S}^2 \cong \wh{\bC}:=\bC\cup\{\infty\}$. 
Two embeddings are equivalent if there exists an orientation-preserving homeomorphism from $\mathbb{S}^2$ to $\mathbb{S}^2$ that takes the first embedded graph to the second. All the planar maps we consider will be rooted, meaning that there is a distinguished oriented edge $e$. The tail vertex of $e$ is called the root vertex and the face to the right of $e$ is called the root face. An \emph{annular planar map} is a planar map with two distinguished faces $f_{\mathrm{inner}}$ and $f_{\mathrm{outer}}$ whose boundaries are simple loops, and we assume that $f_{\mathrm{outer}}$ is the root face.

In this paper, we work with oriented planar maps or  annular planar maps, where in the latter case we will assume that both the boundaries of $f_{\mathrm{inner}}$ and $f_{\mathrm{outer}}$ are oriented counterclockwise. 
A \emph{source} (resp.\ \emph{sink}) is a vertex with no incoming (resp.\ outgoing) edges, and an annular planar map is pole-free if there is no sink or source. We write $\cG^0$ for the set of pole-free annular planar maps as above with the additional condition that there are no contractible (directed) loops, i.e., all directed loops are non-contractible if we embed the annular planar maps and remove $f_{\mathrm{outer}}$ and $f_{\mathrm{inner}}$.
A \emph{bipolar-oriented map} is a planar map with a unique source and a unique sink which are both incident to the root face and there are no (directed) loops. In~\cite[page 1242]{KMSW19}, it is shown that for every vertex $v$ and its incidental edges, in counterclockwise order, there is only one group of incoming edges and one group of outgoing edges. In Lemma~\ref{lemma: good orientation two groups}, we prove that the same property holds for $G\in\cG^0$.   See Figure~\ref{fig: annular map example} for an illustration. This allows us to define the \emph{west edge} started from $v$ to be the last outgoing edge in its group. A \emph{west path} is a path such that all of its edges are west edges. Let $\cG $ be the maps in $\cG^0$ such that there exists a crossing west path,  i.e., a west path from  $f_{\mathrm{outer}}$ to $f_{\mathrm{inner}}$. See Figure~\ref{fig: tree encoding} (left) for an illustration.

\begin{figure}[t]
    \centering
    \includegraphics[height=4cm, page=3]{img/tree-encoding.pdf}
    \hspace{1cm}
    \includegraphics[height=4cm, page=2]{img/tree-encoding.pdf}
    \hspace{1cm}
    \includegraphics[height=4cm, page=1]{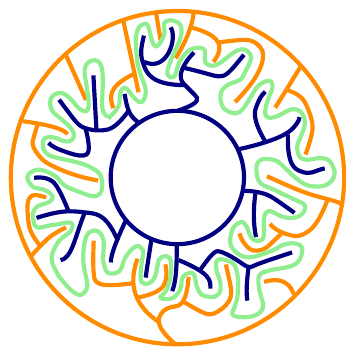}
    \caption{
    \textbf{Left:} A pole-free oriented annular map in $\cG$ where the boundaries are oriented counterclockwise, and there exists a crossing west path (colored in red) starting from the root vertex  and no contractible loops. 
    \textbf{Middle:} In Section~\ref{sec: random conditioned orientation}, we cut all the non-west edges near their starting points to generate a west tree (blue). Likewise, we can reverse the orientations to apply the same procedure to generate an east tree (orange). This generate a space-filling curve (green) moving along faces and edges of the map. 
    \textbf{Right:} The curve converges to a space-filling loop in the scaling limit, which could be thought of as a space-filling $\SLE_{16/\gamma^2}$ in the annulus.}
    \label{fig: tree encoding}
\end{figure}

It is known that bipolar-oriented maps can be encoded by lattice walks~\cite{KMSW19} which are also known as \emph{height functions}, and one can generate a space-filling path on the map that 
explores the map
in a natural way. It is shown in~\cite{KMSW19} that the boundary of each non-root face can be decomposed as a union of a clockwise directed path of length $i+1$  and a counterclockwise directed path of length $j+1$ on the right, and such a face is said to be \emph{type-$(i,j)$}. Our first result is an extension of these conclusions to  the annular maps in $\cG$. See Figure~\ref{fig: tree encoding} (middle) and Figure~\ref{fig: height function encoding} for an illustration. In Lemma~\ref{lem:type-ij}, we prove that the notion of type-$(i,j)$ can be extended to maps $\cG$.  Let $\{b_e,b_{i,j}: i,j\in \mathbb{Z}_{\geq 0}\}$ be a set of non-negative real numbers with the following conditions. 
    \begin{enumerate}[(A)] \label{conditions for map weights}
        \item \emph{Bounded Degree}: There exists a constant $M>1$ such that $b_{i,j} = 0 $ if $i+j \geq M$. \label{cond: b-A}
        \item \emph{Symmetry}: $b_{i,j} = b_{j,i}$. \label{cond: b-B}
        \item \emph{Zero Drift}: $b_e(-1,1) + \sum_{i,j}b_{i,j}(j,-i) = (0,0)$. \label{cond: b-C}
        \item \textit{Defines a probability measure}:  $b_e + \sum_{i,j} b_{i,j} = 1$. \label{cond: b-D}
    \end{enumerate} 
  For each $G\in\cG$, let $E(G)$, $F(G)$ be its edge set and face set. We set its weight to be 
 \begin{equation}
        \label{eq: planar map weight}
        W(G) = 
        b_e^{|E(G)|-|F\degree(G)|} \prod_{i, j\in \bZ_{\ge 0}} b_{i,j}^{n_{i, j}},
    \end{equation}
where $F\degree(G)=F(G)\setminus\{f_{\mathrm{inner}},f_{\mathrm{outer}}\}$  and $n_{i,j}$ is the number of type $(i,j)$-faces in $G$. We use the convention $0^0=1$.   For $\ell$, $n\in\bN$ and $m\in\bZ_{\ge 0}$, we let $\mathcal{H}_{k}(\ell, m, n)$ be the set of lattice paths $\sfZ = (\mathsf{L}_j,\mathsf{R}_j)_{0\leq j\leq n}:[0,n]\cap\bbZ \to\bbZ^2$ that satisfy the following conditions.

    \begin{enumerate}[(I)]
        \item Each step of the lattice path is equal to $(-1, 1)$ or $(j, -i)$ for some $i$, $j\in \bZ_{\ge 0}$.\label{cond: Hk lmn cond 1}
        \item We have $\sfZ(0)=(0, 0)$ and $\sfR(j)\geq0$ for $0\leq j\leq n$.
        \item We have $\sfL(n)=-\ell$, $\inf_{0\le j\le n} \sfL(j) = -m-\ell$ and $\sfR(n)=k$.
        \label{cond: Hk lmn cond 4}
    \end{enumerate}

    We also write $\mathcal{H}_k = \bigcup_{\ell, n\in \bN, m\in \bZ_{\ge 0}}\mathcal{H}_k(\ell, m, n)$, and $\mathcal{H} = \bigcup_{k\in \bN} \mathcal{H}_k$. Note that given a lattice path $\sfZ\in\mathcal{H}$, the parameters $\ell$, $m$, $n$ and $k$ may be recovered from the path, and we set its   weight to be
    \begin{equation}
        \label{eq: lattice path weight}
        W(\mathsf{Z}) = b_e^{n_e}\prod_{i,j\in \mathbb Z_{\geq 0}} b_{i,j}^{n_{i,j}},
    \end{equation}
    where $n_e$ (resp.\ $n_{i,j}$) is the number of $(-1, 1)$  (resp.\ $(j, -i)$) steps.  Now we are ready to state our first result. Throughout this paper we write $\bbH$ for the upper half plane $\{z\in\bC: \mathrm{Im}\,z> 0\}.$   For two processes $w_1,w_2$, we will also use the metric
    \begin{equation}\label{eq:curve-metric}
        d(w_1,w_2) = \inf_{\psi}\sup_{t\in [0,T_1]} \ab( |w_1(t) - w_2(\psi(t))| + |\psi(t) - t|),
    \end{equation}
    where $w_j:[0,T_j]\to \mathbb{R}^2$ and the infimum is over all increasing bijections $[0,T_1]\to [0,T_2]$.

\begin{figure}[t]
    \centering
    \includegraphics[height=5cm, page=1]{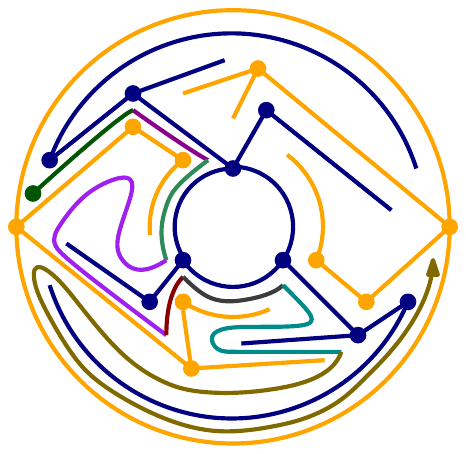}
    \hspace{1cm}
    \includegraphics[height=5cm, page=2]{img/height-function-encoding.pdf}
    \caption{
    \textbf{Left:} The oriented annular map and the space-filling curve as in Figure~\ref{fig: tree encoding}, where each color represents a step of the curve. \textbf{Right:} The height function $(\mathsf L, \mathsf R)$ encodes the change of height in the west and east forest, respectively, at each step. The dark green and dark gold steps are slightly bended for illustration purpose. For a step during which the Peano curve does not cross a face of the map or trace its boundary, the associated step of the walk will be $(-1,1)$. When the Peano curve crosses a type-$(i,j)$ face, the associated step of the walk will be $(i,-j)$.}
    \label{fig: height function encoding}
\end{figure}

    \begin{theorem}   
\label{thm: annular orientation scaling limit}
Let $k\in\bN$ and let $\cG_k$ be the set of  maps in $\cG$ with $k$ edges on the outer boundary.  Then 
\begin{enumerate}
    \item For each $k\geq 1$, there exists a   bijection between $\mathcal G_k$ and $\mathcal H_k$ which sends an oriented annular map to the associated pair of height functions and preserves the weights~\eqref{eq: planar map weight} and~\eqref{eq: lattice path weight}.
    \item Fix $b>0$. Let $G$ be sampled from $\mathcal G_{\lfloor b\sqrt{N}\rfloor} $ according to the weights~\eqref{eq: planar map weight} with associated height function $(\sfZ_j)_{0\leq j\leq n}$. Then as $N\to \infty$, the law of $(\frac{1}{\sqrt N}\sfZ_{\lfloor Nt \rfloor })_{0\leq \lfloor Nt \rfloor\leq n}$ converges in distribution to $\int_0^\infty \d a B^{\gamma, \bH}_{0, -a+ib}$ with respect to the metric~\eqref{eq:curve-metric}. The   parameter $\gamma$ is given by
    \begin{equation}
        \label{eq: relation between gamma and weights}
        \cos \ab(\frac{\pi\gamma^2}{4}) = \frac{2b_e + 2\sum_{i,j \in \bZ_{\ge 0}} ijb_{i,j}}{2b_e + \sum_{i,j\in \bZ_{\ge 0}}(i^2+j^2) b_{i,j}}
    \end{equation}
    and by choosing  numbers $\{b_e,b_{i,j}:i,j\in\bZ_{\geq0}\}$ properly, $\gamma$ may take every value in $(0,\sqrt{2})$.
\end{enumerate}
\end{theorem}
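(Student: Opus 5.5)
For part 1, I will adapt the Kenyon--Miller--Sheffield--Wilson (KMSW) bijection for bipolar-oriented maps to the annular setting.

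\emph{Map to walk.} Start from $G\in\cG_k$ and use Lemma~\ref{lemma: good orientation two groups} and Lemma~\ref{lem:type-ij}.
\begin{enumerate}
\item Cut the annulus along the crossing west path beginning at the root vertex. This gives a disk-like map whose boundary consists of the outer boundary, two copies of the crossing path, and the inner boundary.
\item On the cut map, build the west forest (keep only west edges) and the east forest (the same rule with orientations reversed). The west edges form a forest whose roots lie on the inner boundary.
\item Let the interface Peano curve between the two forests visit edges and faces in order.
\item Record $(\sfL,\sfR)$ as the heights in the west and east forests. Each edge step is $(-1,1)$, and each type-$(i,j)$ face gives a step $(j,-i)$, exactly as in KMSW.
\end{enumerate}

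\emph{Boundary data.} I then check that the boundary data match conditions (II)--(III) of $\mathcal H_k$.
\begin{enumerate}
\item $\sfR$ starts at $0$ and ends at $k$, because the east forest is rooted along the $k$ outer boundary edges.
\item $\sfR\ge 0$ holds because the curve never exits through the outer boundary.
\item $-\ell$ is the final west height, which encodes the inner boundary length.
\item $-m-\ell$ is the running minimum of $\sfL$, which encodes the length of the crossing west path.
\end{enumerate}

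\emph{Inverse.} I will build the inverse by a sewing procedure. Read the walk backwards, glue faces step by step as in KMSW, and at the end identify the two copies of the crossing path to close up the annulus. I then need to verify three properties of the result:
\begin{enumerate}
\item it is pole-free;
\item it has no contractible directed cycles, because every cycle must cross the glued path;
\item the glued path is a west path.
\end{enumerate}
Weight preservation is immediate: each edge not bounding an interior face corresponds to a $(-1,1)$ step. Since $|E|-|F^\circ|$ counts these, this should follow from the Euler relation once boundary edges are accounted for.

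For part 2, set $\mathsf P=$ the random walk with step law $b_e\delta_{(-1,1)}+\sum b_{i,j}\delta_{(j,-i)}$. By (C) it has zero drift, and by (A) it has bounded steps. Its covariance is proportional to~\eqref{eq: gamma correlated BM}, with $\cos\theta$ given by~\eqref{eq: relation between gamma and weights}; this is a direct second-moment computation using the symmetry (B). Under the weights, $G$ is the walk $\mathsf P$ restricted to $\mathcal H_k$, summed over $\ell,m,n$.
\begin{enumerate}
\item Decompose according to the endpoint $(-\ell,k)$ and the minimum level.
\item Condition the walk to stay in the half plane $\{\sfR\ge0\}$ and end at $(-\ell,k)$. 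By local limit theorems for walks in cones (Denisov--Wachtel) plus an invariance principle, the scaled walk converges to a Brownian excursion-type path in $\bH$ from $0$ to $-a+ib$, with $a=\ell/\sqrt N$. The constraint $\inf\sfL=-m-\ell$ only selects $m$, so summing over $m$ removes it.
\item Summing over $\ell$ gives an integral in $a$. The local limit asymptotics for the conditioned walk, of the form $N^{-\cdots}$ times the half-plane Brownian transition density and boundary Poisson kernel, yield the measure $\int_0^\infty \d a\, B^{\gamma,\bH}_{0,-a+ib}$ after normalization. After this normalization the ratio of masses is uniform in $a$.
\item Tightness in the metric~\eqref{eq:curve-metric} needs uniform control of large $n$ and large $\ell$. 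I will get it from the exponential/polynomial tail of the total mass, since the limiting measure $\int \d a\, |B_{0,-a+ib}|$ is finite.
\end{enumerate}

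\emph{Range of $\gamma$.} I expect $b_e$ plus symmetric $b_{0,j},b_{j,0}$ with zero drift to give ratios filling $(0,1)$, hence $\theta\in(0,\pi/2)$, i.e.\ $\gamma\in(0,\sqrt2)$.

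\emph{Main obstacles.} I expect the hardest part of part 2 to be the uniform local limit estimate near the boundary for walks in a half plane with endpoint on the boundary line $\sfR=k$. I would first try a reflection-free direct approach: apply the KMSW/cone-walk estimates to the one-dimensional component $\sfR$, which is a walk conditioned positive, coupled with the free $\sfL$. In part 1, the delicate step is showing that cutting and regluing along the crossing west path is canonical, since the crossing west path from the root vertex is unique.
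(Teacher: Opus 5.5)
\paragraph{Part 1.} You follow the paper's argument: cut along the root's west path, apply KMSW to the resulting bipolar map, and glue back. What your sketch omits are the two boundary conditions that make the image exactly $\mathcal H_k$.

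\emph{The forced steps.} In the cut map, the top $m$ edges of the right boundary are the copy $p_{\mathrm L}$ of the crossing path. Each vertex of $p_{\mathrm L}$ has a single outgoing edge, so the full KMSW walk ends with $m$ forced $(-1,1)$ steps. These must be deleted. That is why $\sfR$ ends at $k$ and $\sfL$ at $-\ell$, rather than at $k+m$ and $-\ell-m$.

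\emph{The sink-degree condition.} To sew a path of $\mathcal H_k$ you must first append these $m$ steps. You also need the sink of the cut map to have degree at least $2$, equivalently that $\sfL$ already reaches $-\ell-m$ before the appended steps. This justifies reading $m$ off as $-\inf\sfL-\ell$. It also guarantees that the glued path is the root's west path stopped at its \emph{first} hit of the inner boundary. Without it, the top edge is shared by both boundaries, the glued path meets the inner boundary one step early, and cutting does not invert gluing.

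Two smaller points. Excluding contractible cycles uses that vertices of $p_{\mathrm L}$ have only one outgoing edge, so no directed cycle can cross the glued path from right to left; merely meeting the glued path is no contradiction. Weight preservation needs no Euler relation: each step is attached to the edge it traverses, and each interior face is crossed by exactly one face step. Hence the $n_{i,j}$ agree and $n_e$ is the number of steps minus $|F^{\circ}|$.

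\paragraph{Part 2.} Here you take a genuinely different and much heavier route. Working forward from $(0,0)$ on the boundary of the half-plane, you need Green's-function asymptotics for the killed walk. That means handling the renewal function at the start, sublattice issues, and uniformity in $\ell$ and $n$, which is exactly the obstacle you flag.

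The paper avoids this by reversing time. The path $\wh\sfZ_t=\sfZ_{n-t}+(\ell,0)$ starts at $(0,k)$, and $W(\sfZ)$ is exactly its probability under the i.i.d.\ walk with steps $(1,-1)$ and $(-j,i)$. Since the second coordinate decreases only by unit steps, that walk reaches $\{\sfV\le 0\}$ exactly on the line $\sfV=0$. The mass of $M_{\cG,k}$ thus becomes a hitting probability, and Donsker's theorem plus a.s.\ continuity of the Brownian exit time finishes the argument.

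Your formulation does buy one thing: it uses the constraint $\sfR\ge 0$ exactly as $\mathcal H_k$ states it. Identifying $\wh\sfZ$ with a first-passage path tacitly needs $\sfR_j>0$ for $1\le j\le n$. This fails, for instance, whenever $k\ge 2$ and the root vertex has out-degree at least $2$. In that case the outer-boundary edge leaving the root starts at the source of the cut map, so it has east height $0$, and it is visited strictly between times $0$ and $n$. Making the reversal exact requires splitting at the first hit of the line. The leftover near-boundary piece has finite total mass and is microscopic, so it only changes the overall constant, which your ``after normalization'' absorbs.

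\paragraph{Range of $\gamma$.} Your example does not work. With only $b_e$ and $b_{0,j}=b_{j,0}=\beta_j$, zero drift forces $b_e=\sum_j j\beta_j$. The ratio defining $\cos\theta$ is then $\sum_j j\beta_j/\sum_j (j+j^2)\beta_j\le\frac12$, which gives only $\gamma\in[2/\sqrt3,\sqrt2)$. Small $\gamma$ needs diagonal face types. For instance, $b_{1,1}=\beta$, $b_{0,1}=b_{1,0}=\delta$, $b_e=\beta+\delta$ gives ratio $(2\beta+\delta)/(2\beta+2\delta)$, which covers $(\frac12,1)$.
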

Note that the law of $(\frac{1}{\sqrt N}\sfZ_{\lfloor Nt \rfloor })_{0\leq \lfloor Nt \rfloor\leq n}$ and $\int_0^\infty \d a B^{\gamma, \bH}_{0, -a+ib}$ are both non-probability measures, and we will show that the partition function for $(\frac{1}{\sqrt N}\sfZ_{\lfloor Nt \rfloor })_{0\leq \lfloor Nt \rfloor\leq n}$ converges to the total mass of $\int_0^\infty \d a B^{\gamma, \bH}_{0, -a+ib}$.
The proof is based on the observation that, as in Lemma~\ref{lemma: cut to be bipolar}, if we cut along the crossing west path starting from the root vertex, we obtain a   bipolar-oriented map with disk topology and certain boundary conditions. This allows us to extend the properties of bipolar-oriented maps  to $\cG$ and modify the bijection in~\cite{KMSW19} to prove the first part of the theorem.  The second part follows from a standard Donsker-type argument.

Next we recap the theory of mating-of-trees for Liouville quantum gravity. Let $\gamma\in(0,2)$ and let $h$ be some variant of Gaussian free field (GFF). The Liouville quantum gravity, roughly speaking, is the geometry induced by $e^{\gamma h}$, where the LQG area is $e^{\gamma h(z)}\,d^2z$ and the LQG length is $e^{\gamma h(z)/2}\,dz$. The LQG area and length can be made rigorous via a limiting procedure, see e.g.\ \cite{DS11}. Consider a whole-plane $\gamma$-LQG surface called the $\gamma$-quantum cone introduced in~\cite[Definition 4.10]{MoT} and a whole-plane space-filling $\SLE_{16/\gamma^2}$ curve $\eta':(-\infty,\infty)\to\bC$  from $\infty$ to $\infty$ parameterized by   LQG area.  Consider the LQG length of the left and right boundaries of $\eta'((-\infty,t]))$, and let $(L_t,R_t)$ be the change of these LQG lengths relative to time 0. Then it is shown in~\cite{MoT} that $(L,R)$ evolves as a Brownian motion with covariance~\eqref{eq: gamma correlated BM} (the variance $\mathbb{a}^2$ was computed in~\cite{ARS23}), and further determines the  $\gamma$-quantum cone and the SLE curve.  This type of results was later extended to other simply connected LQG surfaces of finite volume in~\cite{MS19Finite,MoTboundary,ConfWeldDisks,AY23wholeplane,ASYZ24} where $(L,R)$ is a Brownian path. Many of these LQG surfaces can be described in terms of \emph{Liouville conformal field theory}~\cite{AHS17,cercle2021unit,AHS24,QuanTrig}, a 2D quantum field theory rigorously developed in~\cite{DKRV16} and subsequent works~\cite{GKRV20_bootstrap,KRV20,GKRV21_Segal}. The mating-of-trees approach to LQG has been helpful in studying a number of problems related to LQG, including random planar maps and integrability of statistical physics models and SLE. 

\begin{figure}[t]
    \centering
    \includegraphics[scale=0.77]{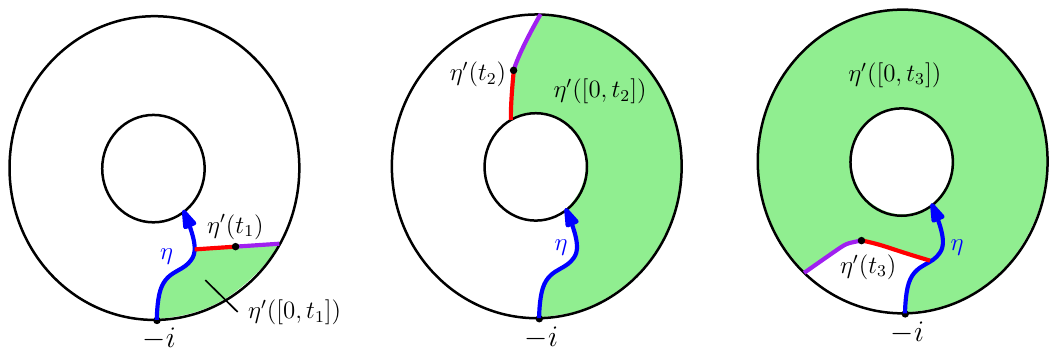}
    \caption{An illustration of the boundary length process $(L,R)$ in Theorem~\ref{thm: quantum annulus mot} where $\gamma\in (0,\sqrt 2)$; the case where  $\gamma\in (\sqrt 2,2)$ is similar. The counterclockwise space-filling  curve $\eta'$ is rooted at $-i$ and parameterized by LQG area.  The interface $\eta$ is the left boundary of the curve $\eta'$  stopped when first hitting the inner boundary. For each $t$, $L_t$ is the LQG length of the red segment plus the intersection between $\eta'([0,t])$ and the left side of $\eta$, minus the LQG length of the intersection between $\eta'([0,t])$ and the inner boundary as well as the right side of $\eta$, while $R_t$ is the LQG of the purple segment plus the   intersection between $\eta'([0,t])$ and the outer boundary.}
    \label{fig:weld-annulus-10}
\end{figure}

Our next result is the mating-of-trees on the annulus, where we prove that a Brownian path sampled from $\int_0^\infty \d a B^{\gamma, \bH}_{0, -a+ib}$ encodes an LQG annulus decorated by a space-filling SLE. The LQG annulus is studied in~\cite{ModAnn} and can be described in terms of LCFT on the annulus~\cite{remy2018liouville}.  For $\tau>0$, let $\cA_\tau := \{z\in \bC: e^{-2\pi\tau}<|z|<1\}$ denote an annulus with modulus $\tau$. Then the LQG annulus  can be described in terms of  $(\cA_\tau,\phi)$ where $\phi$ is a variant of GFF sampled from the measure $\LF_\tau$. We will work with the measure  $\LF_\tau^{(\gamma,-i)}$, where we first weight the law of $\phi$ by the LQG length of $C_0:=\partial\bbD$ and sample a marked point on $C_0$ according to the probability measure proportional to the LQG length measure, and further rotate such that the marked point is located at $-i$. This marked point corresponds to the root vertex of the annular planar maps in Theorem~\ref{thm: annular orientation scaling limit}. See Section~\ref{sec: LQG surfaces} for further details. 

We will also work with the space-filling SLE  loop on the annulus. SLE curves are first introduced for simply connected domains, and there are multiple ways to extend to non-simply connected domains, including Loewner equations~\cite{zhan2004stochastic,zhanannulus,BF04SLE}, imaginary geometry~\cite{ImagGeo4} and Brownian loop measures~\cite{lawler2011defining,zhan2021sle}. The SLE curves are constructed locally in the first two ways (the endpoint continuity for annulus SLE was discussed in~\cite{zhanannulus}), and in the last way the SLE curves are constructed globally (i.e., the curve is constructed all at once) but this works only in the simple regime ($\kappa\leq 4$). For simply connected domains, there is also a version of space-filling SLE~\cite{ImagGeo4}, which agrees with ordinary SLE for $\kappa\geq 8$ but different for $\kappa\in(4,8)$. In Section~\ref{sec: IG annulus}, we study a particular imaginary geometry field on the annulus, and construct a counterclockwise space-filling SLE loop on the annulus, whose law is denoted by $\SLE_\tau^{\mathrm{loop}}$. Different from~\cite{ImagGeo4}, our field on the annulus is multi-valued, and has close relation to a field  with one magnetic insertion from compactified imaginary Liouville theory~\cite{GKR23}. 

\begin{theorem}
    \label{thm: quantum annulus mot}
    For $\gamma \in (0,  {2})\backslash\{\sqrt2\}$, there exists a constant $C_\gamma\in (0, \infty)$ and a $\sigma$-finite measure $m(\d\tau)$ on $(0, \infty)$, such that the following is true. Consider the embedding $(\cA_\tau,\phi,\eta',-i)$  of a sample from
    \begin{equation}
        \label{eq: MA 01 defn intro}
        \MA^{1,0} := 
        \LF_\tau^{(\gamma, -i)}(\d\phi)\, \SLE^{\mathrm{loop}}_\tau(\d\eta')\, m(\d\tau)/\mathord\sim,
    \end{equation}
  where the counterclockwise space-filling $\SLE_{16/\gamma^2}$ loop is rooted at $-i$ and parameterized by the $\gamma$-LQG area with respect to $\phi$.   The boundary length process $(L,R)$ in Figure~\ref{fig:weld-annulus-10}
  has law $C_\gamma\int_0^\infty \d a \int_0^\infty \d b B^{\gamma, \bH}_{0, -a+bi}$.
  Moreover, the curve-decorated annulus is almost surely determined by its boundary length process.
\end{theorem}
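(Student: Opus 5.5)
The plan is to reduce the annular statement to the known mating-of-trees theorem for the quantum disk. We cut the annulus along the interface $\eta$, mirroring the discrete picture in Lemma~\ref{lemma: cut to be bipolar}.

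\textbf{Step 1: cutting.} Given a sample $(\cA_\tau,\phi,\eta',-i)$, let $\eta$ be the left boundary of $\eta'$ started at $-i$ and stopped when it first hits the inner circle $C_\tau := \{|z| = e^{-2\pi\tau}\}$. By the imaginary geometry construction of $\SLE^{\mathrm{loop}}_\tau$ in Section~\ref{sec: IG annulus}, $\eta$ is a flow line of the multi-valued IG field. Conditionally on $\eta$, the restriction of $\eta'$ to $\cA_\tau\setminus\eta$ should be an ordinary space-filling $\SLE_{16/\gamma^2}$ in the simply connected domain $\cA_\tau\setminus\eta$. It runs from $-i$ to the endpoint of $\eta$ on the inner circle (or to $-i$ after looping around), with boundary data that make it counterclockwise. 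The key point is that once $\eta$ is removed, the monodromy of the field disappears, and the field becomes a standard IG field with piecewise-constant boundary data.

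\textbf{Step 2: identify the cut surface and its encoding.} On the LQG side, the surface $(\cA_\tau\setminus\eta,\phi)$ is a quantum disk with boundary arcs consisting of the outer circle $C_0$, the two sides of $\eta$, and the inner circle. We want to show that, after integrating over $m(\d\tau)$, this disk is a weight-$(2-\gamma^2/2)$ quantum disk with appropriate marked points, or a conformal welding thereof. The tool is the welding/uniform-embedding method of~\cite{ConfWeldDisks, ModAnn}: gluing the two sides of $\eta$ in a quantum disk recovers $\LF_\tau$ on the annulus, with a Jacobian that defines $m(\d\tau)$. We would define $m$ precisely so that this identity holds, proving it with the Liouville field description of the annulus~\cite{remy2018liouville} and a change-of-coordinates computation. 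Given the disk identification, the disk mating-of-trees theorem~\cite{MoTboundary, ConfWeldDisks, AHS24} says that the boundary length process of $\eta'$ in the cut disk is a Brownian path of covariance~\eqref{eq: gamma correlated BM} in a cone. Here $L$ tracks the left boundary: the outer arc together with the left side of $\eta$, minus what has been consumed on the inner circle and the right side of $\eta$. $R$ tracks the outer boundary. This forces $R\ge 0$ and $L$ to end at $-a$, where $a$ is the inner boundary length, while the outer length is $b$. The resulting law is $C_\gamma\int\!\int B^{\gamma,\bH}_{0,-a+ib}\,\d a\,\d b$. The length of $\eta$ itself is encoded in the running infimum of $L$, matching parameter $m$ of $\mathcal H_k(\ell,m,n)$. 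The weighting by $C_0$-length and the uniform root produce the $\d b$ measure. The integration over $a$ arises from the disintegration over the inner boundary length, and the integration over the gluing data from the disintegration over the length of $\eta$.

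\textbf{Step 3: measurability.} The Brownian path determines the cut disk and its space-filling SLE by the disk mating-of-trees result. The annulus is then recovered by conformally welding the two sides of $\eta$ according to LQG length. Conformal removability of SLE$_\kappa$-type curves for $\kappa<8$, or of the relevant flow lines, shows that this welding is unique. Hence the curve-decorated annulus is a.s.\ determined by $(L,R)$.

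The main obstacle is Step 2: showing that the field on $\cA_\tau$ together with $m(\d\tau)$ corresponds exactly to the welded quantum disk, including the correct constants and the exclusion of $\gamma=\sqrt2$. We expect the exclusion arises because the relevant disk weight $2-\gamma^2/2$ equals $\gamma^2/2$ there, so the welding changes between the thick and thin regimes. We would first establish the welding statement for the annulus using the quantum annulus framework of~\cite{ModAnn}, and then verify the exponents by matching against the discrete partition function asymptotics from Theorem~\ref{thm: annular orientation scaling limit}.
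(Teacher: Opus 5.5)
There is a genuine gap, and it is the step you flag as the main obstacle. Step~2 is the whole content of the theorem, and the tools you propose do not prove it.

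What must be shown is that, for fixed $\tau$, the field of the cut (equivalently, welded) surface is $c(\tau)\LF_\tau^{(\gamma,-i)}$, and that $(\eta,\eta')$ is independent of $\phi$ with the imaginary-geometry law. The existence of $m$ is a consequence of this, so ``defining $m$ so that the identity holds'' assumes what is to be proved.

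A change-of-coordinates computation cannot supply this. The domain $\cA_\tau\setminus\eta$ is the complement of a random curve winding around the annulus, and there is no explicit uniformization that would turn a Jacobian into an identity of measures. Even in \cite{ModAnn} the annulus field was identified by a resampling argument. Matching against the discrete asymptotics of Theorem~\ref{thm: annular orientation scaling limit} cannot verify an identity of continuum measures either.

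Step~2 also misidentifies the objects:
\begin{itemize}
\item The cut surface is the quantum triangle $\QT(2-\frac{\gamma^2}{2},\gamma^2,\frac{\gamma^2}{2})$, not a weight-$(2-\frac{\gamma^2}{2})$ quantum disk. The two copies of the root are the weight $2-\frac{\gamma^2}{2}$ and weight $\frac{\gamma^2}{2}$ vertices, and one copy of the endpoint of $\eta$ is the weight-$\gamma^2$ vertex.
\item The space-filling curve in the cut surface runs from one copy of $-i$ to the other, not to the endpoint of $\eta$.
\item The encoding you need is the quantum-triangle mating of trees, Theorem~\ref{thm:ASYZ3.9}. Its boundary length process is a path in $\bbH$, which the disk results you cite (cone or quadrant excursions) do not produce.
\item The exclusion of $\gamma=\sqrt2$ is simply because that input theorem is unavailable there, not a thick/thin welding phenomenon.
\end{itemize}

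The paper argues in the welding direction, which is what makes $m$ well defined. It welds the SLE-decorated quantum triangle to itself, producing $\MA^{1,0,\dagger}$. The boundary length law and the measurability are inherited directly from Theorem~\ref{thm:ASYZ3.9}; this is where your Step~3 belongs. It then proves $\MA^{1,0,\dagger}=\MA^{1,0}$, with $m$ equal to the modulus law times $c(\tau)$. This identification has two parts.

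\emph{Curves.} Decomposing the Brownian path at the hitting time of a uniform level (Proposition~\ref{prop:BM-MC+MD}) exhibits the triangle as a welding of a quantum disk and a quantum cell (Proposition~\ref{prop: MC+MD}). The symmetry of that welding gives the two-interface resampling property of Proposition~\ref{prop:curve-resample}. Annular imaginary-geometry flow lines satisfy the same property (Lemma~\ref{lem:ig-curve-resample}). Irreducibility of $\Lambda^{\curve}$ and uniqueness of invariant measures then yield independence from $\phi$ and the imaginary-geometry law (Proposition~\ref{prop:pf-curve-resample}).

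\emph{Field.} The rotation invariance of Lemma~\ref{lem:M-resample} and the domain Markov property of Lemma~\ref{lem:M-Markov} show that $M_\tau$ and $\LF_\tau^{(\gamma,-i)}$ are both invariant under an irreducible (two-step, length-restricted) version of $\Lambda^{\field}$. Hence they agree up to a factor $c(\tau)$ (Lemma~\ref{lem:M-law}).

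Your proposal has no substitute for either identification argument, and without them neither the Liouville description of the field nor the independence of the loop from $\phi$ is established.
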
 

\begin{remark}
    The reason we did not include $\gamma=\sqrt{2}$ in the statement is that the version of the mating-of-trees for quantum triangles that we are using (Theorem~\ref{thm:ASYZ3.9}, from~\cite{ASYZ24}), did not include $\gamma=\sqrt{2}$. The reason for this, is that the proof of Theorem~\ref{thm:ASYZ3.9} relied on~\cite[Proposition 3.6]{ASYZ24}, which used inputs from~\cite{SY23}. If we directly use results from~\cite{SY23}, then one would need to work with Liouville fields of the form $\LF_\bbH^{(\beta_i,s_i)}$ (see Definition~\ref{def:LF-HH}) with 4 insertions and one insertion being $Q^-$, which was not treated in that paper. One way to get around this is to apply~\cite[Proposition 3.8]{ang2025p} and use arguments in~\cite[Section 6.5]{QuanTrig} to take a $W_0\uparrow \frac{\gamma^2}{2}$ limit. Once this is done, both Theorem~\ref{thm:ASYZ3.9} and Theorem~\ref{thm: quantum annulus mot} will hold for $\gamma=\sqrt2$.
\end{remark}


\begin{figure}[t]
    \centering
    \includegraphics[scale=0.76]{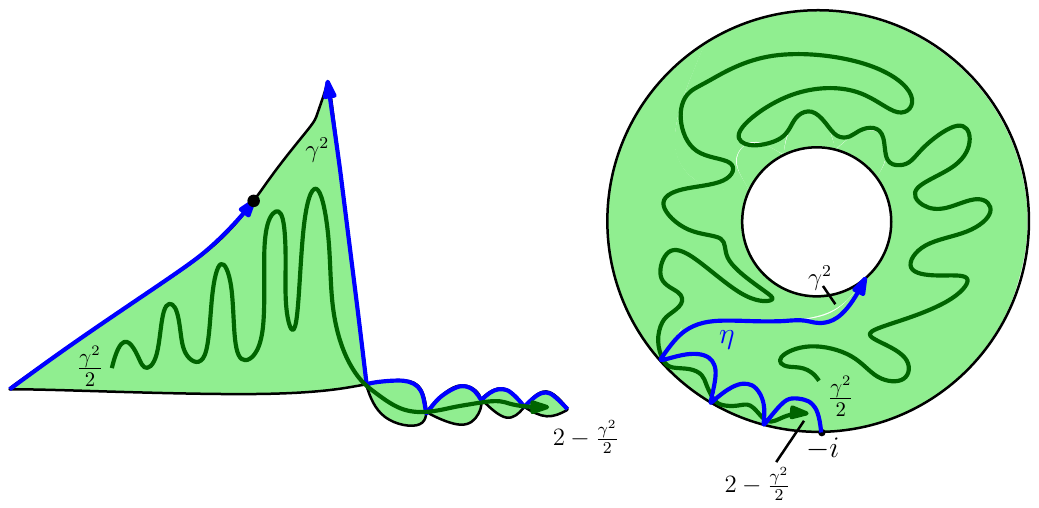}
    \caption{An illustration of the conformal welding of the weight $( 2-\frac{\gamma^2}{2}, \gamma^2,\frac{\gamma^2}{2})$ quantum triangle (left) to itself, where $\gamma\in (\sqrt2,2)$ and there are infinitely many beaded disks near the weight $2-\frac{\gamma^2}{2}$ vertex; there will be no such beads if $\gamma\in (0,\sqrt 2)$ and the quantum triangle is simply connected in this case. The dark green curve is a space-filling $\SLE_{16/\gamma^2}(\frac{8}{\gamma^2}-4;\frac{8}{\gamma^2}-4)$ as in Theorem~\ref{thm:ASYZ3.9}.  We assume that the boundary arc connecting the weight $\gamma^2$ and $\frac{\gamma^2}{2}$ vertices is longer than the the boundary arc connecting the weight $\gamma^2$ and $2-\frac{\gamma^2}{2}$ vertices in terms of $\gamma$-LQG length. After gluing, we obtain an LQG surface with annulus topology. In Theorem~\ref{thm: quantum annulus mot} we will prove that this LQG surface can be described via Liouville CFT on the annulus~\cite{LCFTAnnulus,ModAnn}, and the interface will be a counterclockwise space-filling $\SLE_{16/\gamma^2}$ loop in the annulus as discussed in Section~\ref{sec: IG annulus}.}
    \label{fig:weld-annulus-8}
\end{figure}

Combining Theorem~\ref{thm: annular orientation scaling limit} and Theorem~\ref{thm: quantum annulus mot}, we get a  scaling limit result for the annular planar map and its space-filling curve to \eqref{eq: MA 01 defn intro}. Our proof of Theorem~\ref{thm: quantum annulus mot} is based on \emph{conformal welding} for LQG surfaces, where one can glue two LQG surfaces together according to the LQG boundary length measure. This was discovered in~\cite{She16a,MoT} and later widely extended and applied in e.g.~\cite{ConfWeldDisks,QuanTrig,ACSW24,AHS24,ASYZ24,LSYZ24}. We start from the mating-of-trees for a space-filling $\SLE_\kappa$ decorated \emph{quantum triangle} of weights $( 2-\frac{\gamma^2}{2}, \gamma^2,\frac{\gamma^2}{2})$, an LQG surface with three marked boundary points studied in~\cite{QuanTrig,ASYZ24}; see Theorem~\ref{thm:ASYZ3.9}.  The Brownian path measure in Theorems~\ref{thm: annular orientation scaling limit} and~\ref{thm: quantum annulus mot} appears in this mating-of-trees result. In light of this,  in order to prove Theorem~\ref{thm: quantum annulus mot} it is sufficient to argue that as we conformally weld  the quantum triangle to itself to get an annulus as in Figure~\ref{fig:weld-annulus-8}, the law of the field and the curves on the annulus, denoted by $\MA^{1,0,\dagger}$, is equal to the measure $\MA^{1,0}$  in~\eqref{eq: MA 01 defn intro}. This is done in two parts: the imaginary geometry on the annulus in Section~\ref{sec: IG annulus}, and the resampling property of the fields and curves from $\MA^{1,0} $ and $\MA^{1,0,\dagger} $ in Section~\ref{sec: resampling characterization scaling limit}. 

To construct the counterclockwise space-filling $\SLE_{16/\gamma^2}$ curve in the annulus, we start with the universal cover $\cS_\tau=\bbR\times (0,\tau)$ where the GFF is now periodic. We first use the locality of the flow lines of GFF as in~\cite[Theorem 1.2]{ImagGeo1} along with local absolute continuity between different GFF as in~\cite[Proposition 2.16]{ImagGeo4} to define flow lines of the periodic GFF on $\cS_\tau$ (Proposition~\ref{prop:def-flow-line-strip}).  In Lemma~\ref{lem:flow-line-continuity}, we further prove that the flow lines either merge into one of its translates or $\bbR+i\tau$, and we work with the event $E^\cros$ where there the west-going (i.e., angle $\frac{\pi}{2}$) flow line $\eta_0^\rmW$ started from 0 hits $\bbR+i\tau$ before merging into any of its translates $\eta_0^\rmW+k$. This is in accordance with our condition that there exists a crossing west path in the discrete setting. Our (periodic) space-filling SLE is then the concatenation of the usual chordal space-filling counterflow line of the GFF within each connected component of $\cS_\tau\backslash \{\eta_0^\rmW+k:k\in\bbZ\}$.  We define the flow lines and space-filling counterflow lines of the GFF in the annulus $\cA_\tau$ to be the image of their counterpart in $\cS_\tau$ under the map $p:z\mapsto e^{2\pi iz}$.

To characterize the surface and the interface after conformal welding we use a resampling argument. 
In Proposition~\ref{prop: MC+MD}, we prove that if one starts with the space-filling SLE decorated quantum triangle as in Theorem~\ref{thm:ASYZ3.9} and stops the SLE curve when it hits a 
uniform point on the boundary arc connecting the weight $\frac{\gamma^2}{2}$ and weight $\gamma^2$ vertices,
then one gets a picture that can be understood as the conformal welding of a weight $\frac{\gamma^2}{2}$ quantum disk as discussed in~\cite[Remark 7.10]{ConfWeldDisks} and a quantum cell as discussed in~\cite{ang2023liouville,AY23wholeplane}. See Figure~\ref{fig: two ways of welding} for an illustration. This is done  by considering the associated 2D Brownian paths 
(Lemma~\ref{lemma: first and last passage decomposition}). The conformal welding of the quantum disk and the quantum cell is symmetric, which gives a resampling property for the interfaces as in Proposition~\ref{prop:curve-resample}. Based on this, we construct irreducible Markov kernels $\Lambda^\curve$ and $\Lambda^\field$ and prove that both  $\MA^{1,0} $ and $\MA^{1,0,\dagger} $  are invariant measures, which allows us to conclude by~\cite{MT12} that they must be the same up to a multiplicative constant. This finishes the proof of Theorem~\ref{thm: quantum annulus mot}. 
Resampling arguments and irreducible Markov kernels have been used for SLE~\cite{ImagGeo2,Yu22,Zhan23} and LQG~\cite{ModAnn,QuanTrig,ang2024radial} in some earlier contexts, but we are the first paper to use this for a planar map scaling limit result, and we expect that our methods can be extended to other non-simply connected planar map models. Additionally, we have not seen earlier works that establish a resampling property via a decomposition of a Brownian path measure, or which construct a natural annular SLE-decorated surface by welding an SLE-decorated surface with disk topology to itself.

 Our final result is the exact formula for the law of the modulus $\tau$. The Brownian path measure in Theorem~\ref{thm: quantum annulus mot} gives  {the joint law of} the inner and outer LQG boundary lengths of surfaces from $\MA$. We adopt the following convention for the Jacobi $\vartheta$ function
\begin{equation}
    \label{eq: theta 3 definition}
    \vartheta_3(z, q) = 1+2\sum_{n=1}^\infty q^{n^2} \cos(2 n z).
\end{equation}
 Combined with the integrability of the Liouville fields on the annulus~\cite{LCFTAnnulus,ModAnn}, in Section~\ref{sec: law of the random modulus} we prove the following. 

\begin{theorem}
    \label{thm: law of the random modulus}
    The measure $m(\d\tau)$ in \eqref{eq: MA 01 defn intro} is given by
    \begin{equation}
        C_\gamma \ab[\vartheta_3(0,e^{-\theta\tau}) - \vartheta_3(\theta, e^{-\theta\tau})]\d\tau,
    \end{equation}
    where $C_\gamma$ normalizes $m$ to be a probability measure and $\vartheta_3$ is as defined in \eqref{eq: theta 3 definition}.
\end{theorem}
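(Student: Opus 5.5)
We will compute $m(\d\tau)$ by evaluating the same observable in two ways: once via the Brownian path description in Theorem~\ref{thm: quantum annulus mot}, and once via the definition~\eqref{eq: MA 01 defn intro} combined with the known integrability of the Liouville field on the annulus~\cite{LCFTAnnulus,ModAnn}. The observable is the joint law of the outer and inner LQG boundary lengths.

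\textbf{Step 1: Brownian side.} Under $C_\gamma\int_0^\infty\d a\int_0^\infty \d b\, B^{\gamma,\bH}_{0,-a+bi}$, the outer length is read off from the endpoint of the path, and the inner length from the final value of $L$ relative to its running infimum; compare Figure~\ref{fig:weld-annulus-10}. We apply the linear map that turns $(L,R)$ into a standard planar Brownian motion. This sends $\bH$ to a wedge of opening angle $\theta=\pi\gamma^2/4$. The masses $|B^{\gamma,\bH}_{0,z}|$ are then explicit Poisson-kernel-type densities in the wedge, of the form $\frac{\sin(\pi\arg w/\theta)}{|w|^{\pi/\theta}}$ up to constants. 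From this we obtain an explicit formula for the joint law $\mu(\d\ell_{\mathrm{out}},\d\ell_{\mathrm{in}})$. The natural quantity to compute is its Laplace transform $\int e^{-\mu_1\ell_{\mathrm{out}}-\mu_2\ell_{\mathrm{in}}}\,\mu$, or a Mellin-type transform in the inner length.

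\textbf{Step 2: LCFT side.} Under $\MA^{1,0}$, the SLE loop is independent of the field given $\tau$ and contributes total mass $1$. So the joint law of the boundary lengths is $\int m(\d\tau)$ times the boundary-length law under $\LF^{(\gamma,-i)}_\tau$. The latter is $\LF_\tau$ weighted by the outer length, and its two-boundary-length Laplace transform is known from~\cite{LCFTAnnulus,ModAnn}. Concretely, $\LF_\tau$ with cosmological boundary constants $\mu_1,\mu_2$ decomposes into an integral over $P\in\bR$ of $\eta(2i\tau)^{-1}e^{-\pi\tau P^2}$ times boundary one-point bulk-free factors, namely products of the FZZ/boundary Liouville wavefunctions $\cos(\ldots P \ldots)$-type terms in $\mu_i$. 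Equating the two sides gives an identity of the form $\int m(\d\tau)\,K_\tau(\mu_1,\mu_2)=\text{explicit}(\mu_1,\mu_2)$.

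\textbf{Step 3: inversion.} We will change variables so that the $\mu$-dependence appears as an oscillatory kernel in $P$, via the standard parametrization $\mu_i\propto\cosh(\pi\gamma s_i)$. We then read off the $\tau$-density by inverting a Laplace transform in $\tau$ through the factor $e^{-\pi\tau P^2}$. The wedge angle $\theta$ produces discrete frequencies $n\pi/\theta$. Applying Poisson summation to the resulting sum over $n$ should produce the theta series $\sum_n q^{n^2}(1-\cos 2n\theta)$ with $q=e^{-\theta\tau}$, which is exactly $\vartheta_3(0,q)-\vartheta_3(\theta,q)$. The $\eta$-function factors from the annulus partition function must cancel, or be absorbed, against the ghost or free-field contribution.

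\textbf{Main obstacle.} The main difficulty is matching the two transforms exactly. The Brownian side naturally gives a sine-series in the angle, while the LCFT side gives a continuous $P$-integral. We would first test the identity by computing both sides as functions of a single length ratio, and then verify injectivity of the $\tau\mapsto K_\tau$ transform, which is a Laplace transform, to conclude uniqueness of $m$. The normalizing constant is then fixed by total mass.
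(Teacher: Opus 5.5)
Your overall plan matches the paper's: compute the boundary-length law from the Brownian side, compare it with annulus LCFT integrability, and invert a Laplace transform in $\tau$. However, both concrete ingredients are wrong or missing, so the argument does not go through as written.

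\textbf{Step 1 uses the wrong domain.} The shear $m_\theta^{-1}$ is linear, so it sends $\bH$ to a \emph{half-plane}. The wedge of opening $\theta$ is the image of the quadrant $\bR_+^2$, which is what appears for $\MD$, not for the annulus. You also misread the lengths. Both boundary lengths come from the endpoint $-a+bi$: the inner length is $a=-L_T$ and the outer length is $b=R_T$. The quantity $L_T-\inf L$ is the length of the glued arc, not a boundary length. The correct computation is elementary. $\|B^{\gamma,\bH}_{0,-a+bi}\|$ is the half-plane exit density $\frac{b\sin\theta}{\pi(a^2+b^2-2ab\cos\theta)}$ of a sheared Brownian motion. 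This gives $\|\MA^{1,0}(a,b)\|=C_\gamma\, b/(a^2+b^2-2ab\cos\theta)$, and after unmarking, $\|\MA(a,b)\|=C_\gamma/(a^2+b^2-2ab\cos\theta)$. Your wedge kernel $\sin(\pi\arg w/\theta)|w|^{-\pi/\theta}$ would produce a different, wrong law, and the ``frequencies $n\pi/\theta$'' you anticipate do not occur in the answer.

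\textbf{Steps 2--3 never produce an identity you can invert.} The claim that the $\eta(2i\tau)$ factors ``must cancel or be absorbed'' is not an argument. A two-parameter $(\mu_1,\mu_2)$ transform with FZZ wavefunctions leaves a two-variable inversion that you have not carried out. The missing idea is to pick an observable for which the annulus LCFT result collapses to a one-variable Laplace transform of $m$. By \cite[Theorem 1.7]{ModAnn}, for $x\in\bR$,
\[
\int_0^\infty e^{-\theta x^2\tau}m(\d\tau)=\frac{2\sinh(\theta x)}{\pi\gamma x\Gamma(1+ix)}\langle L_1e^{-L_1}L_0^{ix}\rangle_{\MA}.
\]
Insert the rational density above and substitute $b=at$. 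Then use $\int_0^\infty a^{ix}e^{-a}\d a=\Gamma(1+ix)$ and $\int_0^\infty\frac{t^{ix}}{1-2t\cos\theta+t^2}\d t=\frac{\pi\sinh((\pi-\theta)x)}{\sin\theta\,\sinh(\pi x)}$. This gives
\[
\int_0^\infty e^{-\theta x^2\tau}\,m(\d\tau)=C_\gamma\,\frac{\sinh(\theta x)\sinh((\pi-\theta)x)}{x\sinh(\pi x)} .
\]
The discrete spectrum comes from the poles $x=in$ of $1/\sinh(\pi x)$, and $\theta$ enters the exponent only through $e^{-\theta x^2\tau}$. No Poisson summation is needed. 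The partial-fraction identity $\sum_{n\ge1}\frac{\sin^2(n\theta)}{n^2+x^2}=\frac{\pi\sinh(\theta x)\sinh((\pi-\theta)x)}{2x\sinh(\pi x)}$ identifies $m(\d\tau)\propto\sum_{n\ge1}e^{-n^2\theta\tau}\sin^2(n\theta)\,\d\tau$. This is proportional to $[\vartheta_3(0,e^{-\theta\tau})-\vartheta_3(\theta,e^{-\theta\tau})]\d\tau$; the paper reaches the same result via a $\vartheta_4$ Laplace-transform table. Uniqueness then follows from injectivity of the Laplace transform on $s=\theta x^2\ge 0$.
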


{
\begin{remark}
 
    Let $g(\theta, \tau) = \vartheta_3(0, e^{-\theta\tau})-\vartheta_3(\theta, e^{-\theta\tau})$.
    In Section~\ref{app: gamma = 0 limit}, we show that for every $\delta>0$,
    \begin{equation*}
        \lim_{\theta\downarrow 0}\frac{\int_\delta^\infty g(\theta, \tau)\d\tau}{\int_0^\infty g(\theta, \tau)\d\tau} = 0.
    \end{equation*}
    Loosely speaking, this implies that $m(\d\tau)$ becomes a point mass at $0$ in the semiclassical limit  {$\gamma\downarrow 0$}.
    In other words, the annulus will be infinitesimally thin. This is consistent with the observation that if $\gamma=0$, the lengths of the inner boundary and the outer boundary will be almost surely equal. It is also possible to scale in other ways to obtain different limits; see Remark~\ref{remark: gammato0} for more details. 
\end{remark}
}

\subsection{Future directions}
\label{sec: future directions}
A first natural extension of our model is to remove the conditioning on the crossing west path in the discrete model. In this case, one can prove that the west paths and east paths are still well-defined, but there might be multiple Peano curves separated by west or east cycles; see Figure \ref{fig: multiple annulus}. In the continuum, this corresponds to removing the conditioning on $E^\cros$ for the imaginary geometry field, and a west-going flow line in the annulus may merge with itself and form an $\SLE_\kappa$-type loop. The limiting surface can be described as the conformal welding of multiple surfaces of the type we consider in this paper or closely related variants. This model will be studied in a future work of the authors.

\begin{figure}[t]
    \centering
    \includegraphics[height=5cm, page=1]{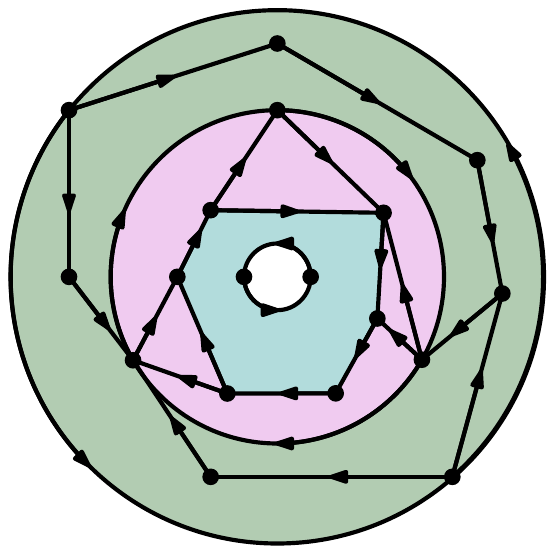}
    \hspace{3cm}
    \includegraphics[height=5cm, page=3]{img/multi-annulus.pdf}
    \caption{
    \textbf{Left:} A pole-free almost acylic annular map without a crossing west path. The annular map is separated into three regions by a west cycle and an east cycle in the bulk, shaded in green, pink and cyan, respectively. A Peano curve may be drawn in each region. 
    \textbf{Right:} In the continuum, we obtain the conformal welding of a random number of (variants of) the annuli we obtain in this paper.
    The welding interfaces are $\SLE_{\kappa}$-type loops, generated by west-going (blue) and east-going (orange) flow lines.
    Each annulus is also decorated with a space-filling loop, in green, pink and cyan, respectively.
    }
    \label{fig: multiple annulus}
\end{figure}


\begin{figure}
    \centering
    \includegraphics[scale=1]{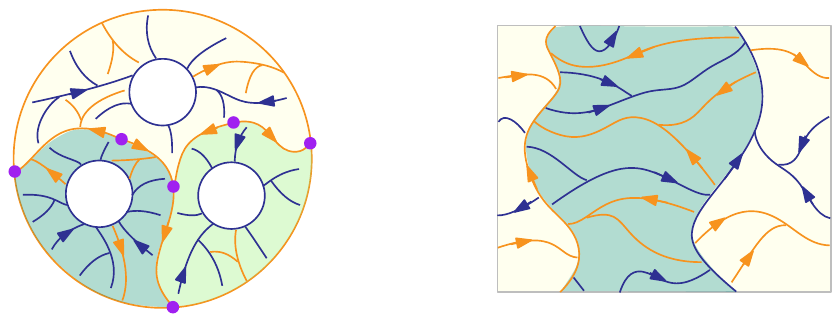}
    \caption{ Illustrations of potential scaling limits of planar maps with orientations in the case of multiply connected (left) and torus (right) topologies. The blue and orange curves represent west-going and east-going, respectively, flow lines. These flow lines divide the surfaces into three (left) or two (right) annuli in the examples in the figure, but the number of annuli is random. The purple points in the left figure are special points for the flow lines.}
    \label{fig: pfaa maps general topology}
\end{figure}

Another natural extension is to consider random maps of other topologies, for example higher genus or multiply connected topologies.  One can also add singularities to the surface by allowing a fixed set of sources and/or sinks, where the case of a single sink can be viewed as an annulus with the inner boundary shrunk to a single vertex.
In at least some cases we expect that, both in the discrete and the continuum, the surfaces can be decomposed into simpler surfaces with annular topology by ``cutting'' along suitably chosen flow lines, including flow lines forming loops. See Figure \ref{fig: pfaa maps general topology} for an example.  We expect that the limiting surface can be described by a Liouville field and a certain imaginary geometry field. The imaginary geometry field might be related to  
a compactified GFF, which describes the scaling limit of dimers on Riemann surfaces 
\cite{DimersIG,DimersRiemannI,DimersRiemannII,DimersDGFF} and has been studied recently in the context of compactified imaginary Liouville theory \cite{GKR23}. In settings where the surface decomposes into simpler annular surfaces one can attempt to compute the partition function of the surface by using the partition functions we get for the annular surfaces via Brownian computations.


In the simply-connected setting, there has been a number of planar maps decorated by a statistical physics model whose scaling limit can be described via mating-of-trees. One can also study similar questions to the ones above for the other models, 
such as an FK model or a tree/forest. It is also natural to consider models on regular lattices, where one might guess that in the annulus setting, the scaling limit of the interfaces can be described via imaginary geometry.

Another interesting future direction is the study of the SLE-type curves which arise in this paper and in the generalized settings discussed above. Some earlier works on SLE in non-simply connected settings include e.g.~\cite{zhan2004stochastic,lawler2011defining,jahangoshahi2018multiple,zhan2021sle,aru2024sle}. It is of interest to investigate a potential relationship between our curves $\eta$ and annulus SLE \cite{zhan2004stochastic}, and the SLE-type loops discussed above might be related to Brownian loop measures and SLE loop measures. It is also of interest to get explicit Loewner equation descriptions or other characterizations of these SLE-type curves. We expect that our west-going flow lines in the annulus are locally absolutely continuous with respect to annulus variants of $\SLE_\kappa(-\kappa;\kappa-2)$ curves.


An interesting feature of our model is that it makes sense for the  range $\gamma\in(0,2)\backslash\ \{\sqrt 2\}$ and in particular it allows to study the limit $\gamma\downarrow 0$. It is known that both SLE \cite{wang2019energy, PW23} and LCFT \cite{LRV22semiclassical} have a rich behavior in the semiclassical limit and our model allows to study this from a planar map and mating-of-trees perspective. A particular direction of interest is to see how the partition function of the surface behaves in the semiclassical limit in the setting of more general topologies and investigate whether there is a relationship with the Weil-Petersson volume.
We remark that several earlier works have established a close relationship between the latter volumes and random planar maps via topological recursion; see \cite{CoSur} for a survey. See also \cite{budd2022irreducible,Wu2023}. 

\subsection*{Convention}
We collect some equalities relating different parameters that will be used in this paper for readers' convenience. 
Here, $\gamma$ is a parameter that we take in $(0, 2)\backslash\{\sqrt2\}$, and $\beta\in \bR$ refers to the strength of an additive log singularity (at the boundary) of a distribution.
\begin{equation}
    Q = \frac{\gamma}{2}+\frac{2}{\gamma}, \quad W = \gamma \ab(\gamma + \frac{2}{\gamma}-\beta);
\end{equation}
\begin{equation}\label{eq:ig-parameters}
    \kappa = \gamma^2, \quad \kappa'=\frac{16}{\kappa}, \quad \chi = \frac{2}{\sqrt{\kappa}}-\frac{\sqrt{\kappa}}{2}, \quad \lambda = \frac{\pi}{\sqrt{\kappa}}, \quad \lambda'=\frac{\pi\sqrt{\kappa}}{4};
\end{equation}
\begin{equation}
    \theta = \frac{\pi\gamma^2}{4}.
\end{equation}
Throughout the paper, we will  use $\|\mu\|$ to denote the total mass of a measure $\mu$, and use $\overset{d}{=}$ to denote equal in distribution. A curve is a map $\gamma:I\to \bC$ for some interval $I$. 

In this paper we work with non-probability measures and extend the terminology of ordinary probability to this setting. For a finite or $\sigma$-finite  measure space $(\Omega, \mathcal{F}, M)$, we say $X$ is a random variable if $X$ is an $\mathcal{F}$-measurable function with its \textit{law} defined via the push-forward measure $M_X=X_*M$. In this case, we say $X$ is \textit{sampled} from $M_X$ and write $M_X[f]$ for $\int f(x)M_X(dx)$. \textit{Weighting} the law of $X$ by $f(X)$ corresponds to working with the measure $d\tilde{M}_X$ with Radon-Nikodym derivative $\frac{d\tilde{M}_X}{dM_X} = f$.     {\emph{Restricting} to some event  $E\in\mathcal{F}$ refers to the measure $M[E\cap\cdot]$, and \emph{conditioning on}   $E$ where $M[E]\in (0,\infty)$  refers to the measure $M[E\cap\cdot]/M[E]$. }

For a planar map $G$, we will use $V(G)$, $E(G)$ and $F(G)$ to represent the set of vertices, edges, and faces, respectively, of $G$. For a finite set, we always use $\abs{ \cdot }$ to represent its cardinality.

\medskip

\textbf{Organization of the paper.}
The rest of the paper is organized as follows. In Section~\ref{sec: prelim}, we gather some preliminaries on GFF, SLE, LQG and the Brownian path measure. In Section~\ref{sec: random conditioned orientation},  we define the annular oriented maps to be considered in this paper and prove Theorem~\ref{thm: annular orientation scaling limit}. Sections~\ref{sec: resampling property of scaling limit}-\ref{sec: resampling characterization scaling limit} are devoted to the proof of Theorem~\ref{thm: quantum annulus mot}, where we first review the mating of trees for quantum triangles and prove a resampling property for the conformal welding in Figure~\ref{fig:weld-annulus-8} in Section~\ref{sec: resampling property of scaling limit}, and then work with the imaginary geometry in the annulus in Section~\ref{sec: IG annulus}, and finally conclude the proof of Theorem~\ref{thm: quantum annulus mot} in Section~\ref{sec: resampling characterization scaling limit} via a resampling argument. Finally in Section~\ref{sec: law of the random modulus}, we calculate the law of the random modulus, prove Theorem~\ref{thm: law of the random modulus}, and further discuss the semiclassical limit $\gamma\downarrow 0$. 

\medskip

\textbf{Acknowledgements.}
We thank Morris Ang and Zijie Zhuang for discussion during the early stage of this project. 
X.D.\ was supported by the NSFC NYTP20220903, MOST 2021YFA1002700, and the NYU Shanghai Boost Funds. 
X.D., N.H.\ and Z.T.\ were 
supported by grant DMS-2246820 of the National Science Foundation. 
N.H.\ and Z.T.\ were supported by the Simons Collaboration Grant on 
Probabilistic Paths to Quantum Field Theory.

\section{Preliminaries}
\label{sec: prelim}

\subsection{Gaussian free fields}
\label{sec: prelim gff}
We first give a brief review of the two-dimensional Gaussian free field (GFF). Readers are referred to  e.g.~\cite{GFFmathematicians,werner2021lecture,berestycki2024gaussian}  for more background.
 Let $D\subsetneq \bC$ be a domain that is conformally equivalent to the unit disk or the annulus $\cA_\tau$ for some $\tau\in (0, \infty)$. 
For $f$, $g\in C^\infty(D)$, define 
the inner product 
\begin{equation}\label{eq:inner}
    \langle f, g\rangle = (2\pi)^{-1}\int_D (\nabla f\cdot\nabla g)\d x.
\end{equation}

Let $H_0(D)$ be the completion of compactly supported smooth functions on $D$, and $\overline{H}(D)$ be the completion of $C^\infty(D)/\mathord\sim$, where $\sim$ identifies functions up to an additive constant. 
Let $(f_j)_{j\in\bN}$ be an orthonormal basis of $\overline{H}(D)$, let $(g_j)_{j\in\bN}$ be an orthonormal basis of $H_0(D)$, and let 
$(\alpha_j)_{j\in\bN}$
be i.i.d.~standard normal variables.
The random distribution $\sum_{j\in \mathbb N} \alpha_j f_j$ is called a \emph{free boundary GFF} on $D$, defined up to an additive constant, and $\sum_{j\in \mathbb N} \alpha_j g_j$ a \emph{zero boundary GFF} on $D$. Moreover, for a deterministic harmonic function $\fh$ on $D$, we say that $\sum_{j\in\bN} \alpha_j g_j + \fh$ is a \emph{Dirichlet GFF} with boundary data specified by $\fh$.

One way to fix the additive constant for a free boundary GFF $h$ is to require $\int_D h \d\rho=0$ where  
$\rho(\d x)$ is a compactly supported probability measure on $\overline{D}$ such that
\begin{equation}
    \label{eq: log integrability}
    \int_{\overline{D}\times \overline{D}} -\log\ab|x-y|\rho(\d x)\rho(\d y)<\infty.
\end{equation}
Equivalently, let $H(D; \rho)$ be the completion of $\{f\in C^\infty(D) : \int_D f \d\rho=0\}$ and let $(f^\rho_j)_{j\in\bN}$ be an orthonormal basis. Then $\sum_{j\in \bN} \alpha_j f^\rho_j$ is called a \emph{free boundary GFF} on $D$ normalized by $\rho$.

We also consider a mixed type of boundary data. Following the construction in e.g.\ \cite[Section 2.1]{ModAnn}, 
let $\partial_1 D$ be a non-empty finite union of open or closed subset of $\partial D$, and $\partial_2 D = \partial D \setminus \partial_1 D$. Write $H(D; \partial_1 D)$ for the Hilbert completion of
\begin{equation*}
    \{f\in C^\infty(D): \supp(f) \cap \partial_1 D = \emptyset\},
\end{equation*}
under the inner product~\eqref{eq:inner} and let $(h_j)_{j\in \mathbb N}$ be an orthonormal basis of $H(D; \partial_1 D)$.
The random distribution $\sum_j \alpha_j h_j$ is called a \emph{Dirichlet--Neumann GFF} with zero boundary data on $\partial_1 D$ and free boundary data on $\partial_2 D$.

We will use the following version of the Markov property of the GFF. Let $\tau>0$, $D = \cA_\tau $, $\rho$ be the uniform measure on $\{0\}\times[-1,-e^{-2\pi\tau}]$ and $h$ be a free boundary GFF on $D$ normalized such that $\int_D h \d \rho=0$. Let $A =D\backslash \{re^{i\theta}: r\in [e^{-2\pi\tau},1],\theta\in[5\pi/4,7\pi/4] \}$. Let $\mathrm{Harm}(D,A,\rho)$ be the space of functions in  $H(D,\rho)$ that are harmonic on $A$ and have normal derivative zero on $\partial D\cap\partial A$. Let $\partial_1A = \partial A\backslash\partial D$. The following is from the standard argument from~\cite[Section 2.6]{GFFmathematicians}.

\begin{lemma}\label{lem:GFF-Markov-0}
 Let $D,h,A,\rho$ be described as above.   We  have the orthogonal decomposition
    \begin{equation}
        H(D) = H(A;\partial_1A)\oplus \mathrm{Harm}(D,A,\rho).
    \end{equation}
    Furthermore, let $h^{\mathrm{har}}$ be the harmonic  extension of $h|_{D\backslash A}$ onto $A$ with zero normal derivative on $\partial A\cap\partial D$ and let $h^A = h-h^{\mathrm{har}}$. Then $h^A$ is a GFF on $A$ with zero boundary conditions on $\partial A\backslash\partial D$ and free boundary conditions on $\partial A\cap \partial D$. Moreover, $h^A$ is independent from $h^{\mathrm{har}}$.
\end{lemma}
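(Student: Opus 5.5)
The plan is to follow the standard Hilbert-space proof of the Markov property in \cite[Section 2.6]{GFFmathematicians}, adapted to the mixed boundary conditions. Write $H(D)$ for $H(D;\rho)$, with the Dirichlet inner product~\eqref{eq:inner}; since $\rho$ is supported on a segment of positive length, the normalization $\int f\,\d\rho=0$ turns this into a genuine norm.

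\textbf{Step 1: the two spaces are orthogonal.} We view $H(A;\partial_1A)$ as a subspace of $H(D)$ by extending its functions by zero on $D\backslash A$. Such a function vanishes on the segment supporting $\rho$, since that segment lies in $D\backslash A$, so it is compatible with the normalization. For smooth $f$ supported in $A\cup(\partial A\cap\partial D)$ and $g\in \mathrm{Harm}(D,A,\rho)$, Green's formula on $A$ gives
\[
2\pi\langle f,g\rangle=-\int_A f\,\Delta g+\int_{\partial A} f\,\partial_n g .
\]
The bulk term vanishes by harmonicity. On $\partial_1A$ the boundary term vanishes because $f=0$ there. On $\partial A\cap\partial D$ it vanishes because $\partial_n g=0$. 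Density then gives orthogonality.

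\textbf{Step 2: the two spaces span $H(D)$.} Take $u\in H(D)$ orthogonal to $H(A;\partial_1A)$. Testing against $f\in C_c^\infty(A)$ shows $u$ is weakly harmonic in $A$, hence harmonic by Weyl's lemma. Testing against $f$ that are smooth up to $\partial A\cap\partial D$ and vanish near $\partial_1A$ shows that the weak normal derivative of $u$ vanishes there. So $u\in\mathrm{Harm}(D,A,\rho)$.

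Conversely, given any $u\in H(D)$, let $u^{\mathrm{har}}$ be the solution of the mixed problem: it equals $u$ on $D\backslash A$, is harmonic in $A$, and has zero Neumann data on $\partial A\cap\partial D$. This solution is the Dirichlet-energy minimizer among functions agreeing with $u$ off $A$, and it exists by projection in the Hilbert space. Then $u-u^{\mathrm{har}}\in H(A;\partial_1A)$, which completes the orthogonal decomposition.

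\textbf{Step 3: the probabilistic statement.} Choose orthonormal bases $(g_j)$ of $H(A;\partial_1A)$ and $(k_j)$ of $\mathrm{Harm}(D,A,\rho)$. Their union is an orthonormal basis of $H(D)$, so $h=\sum\alpha_jg_j+\sum\beta_jk_j$ with all coefficients i.i.d.\ standard normal. The two sums are independent. The first is, by definition, the Dirichlet--Neumann GFF on $A$ that is zero on $\partial_1A$ and free on $\partial A\cap\partial D$. The second sum is harmonic in $A$ with zero normal derivative and coincides with $h$ on $D\backslash A$, because the first sum vanishes there; hence it equals $h^{\mathrm{har}}$. Consequently $h^A=h-h^{\mathrm{har}}$ is the first sum, and it is independent of $h^{\mathrm{har}}$.

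\textbf{Main obstacle.} The main difficulty is regularity in Steps 1 and 2. The region $A$ has corners where $\partial_1A$ meets $\partial D$, and the Green's formula and the weak Neumann condition have to be justified there. I would handle this by working throughout in the weak formulation, defining $\mathrm{Harm}$ as the orthogonal complement, so that no pointwise boundary regularity is ever needed. A related point is the convergence of the series defining $h^{\mathrm{har}}$ as a distribution and its identification with the harmonic extension of $h|_{D\backslash A}$. This follows from the convergence of the full series for $h$ together with the continuity of the orthogonal projection.
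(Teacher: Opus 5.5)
Your proposal is correct and is essentially the standard Hilbert-space argument of \cite[Section 2.6]{GFFmathematicians}, adapted to the free boundary arcs, which is all the paper invokes for this lemma. One small point: the distributional convergence of $\sum_j\beta_jk_j$ is better obtained by writing it as $h$ minus the first sum (itself a convergent GFF series) than from continuity of the orthogonal projection, since $h\notin H(D)$ almost surely.
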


Next we recall the definition of  \emph{local sets} from \cite{SS13}. Following \cite[Section 3.3]{SS13}, we let $d_*$ be the metric on $D$ induced by the Euclidean metric when we conformally send $D$ to the unit disk or $\cA_\tau$ for some $\tau\in\bR$, let $\Gamma(D)$ be the space of closed (with respect to $d_*$) nonempty subsets of $\overline{D}$, endowed with the Hausdorff metric induced by $d_*$. Let $h$ be a Dirichlet GFF on $D$. Given $A\in \Gamma(D)$, let $A_\delta$ denote the closed set containing all points in $D$ whose $d_*$-distance from $A$ is at most $\delta$, $\fA_\delta$ the smallest $\sigma$-algebra in which $A$ and the restriction of $h$ to the interior of $A_\delta$ are measurable, and $\fA = \cap_{\delta\in \bQ_{>0}} \fA_\delta$.

\begin{definition}
    \label{defn: local set Dirichlet GFF}
    In the setting above, suppose $(h, A)$ is a random variable which is the coupling of an instance $h$ of a  GFF with a random element $A\subseteq D$ of $\Gamma(D)$. We say that $A$ is a \emph{local set} for $h$ if the following is true. Conditioned on $\fA$, (a regular version of) the conditional law of $h$ is that of $h_0+\fh$ where $h_0$ is the GFF with zero boundary data on $D\setminus A$ and the original boundary data elsewhere, and $\fh$ is an $\fA$-measurable random distribution 
    which is a.s.~harmonic on $D\setminus A$.
\end{definition}

\subsection{SLE and Imaginary geometry}
\label{sec: prelim ig}

	The SLE$_\kappa$ curves are introduced in \cite{SchrammSLE}. For a curve $\eta$ on the upper half plane starting from 0,  let $H_t$ be the unbounded connected component of $\bbH\backslash \eta([0,t])$, and call $K_t:=\bbH\backslash H_t$ the \emph{hull} of $\eta$ at time $t$. For $\kappa>0$, $\SLE_\kappa$ is a conformally invariant measure on continuously growing   curves $\eta$   with the Loewner driving function $W_t=\sqrt{\kappa}B_t$ (where $B_t$ is the standard Brownian motion).   Then  the $\SLE_\kappa$ curve from 0 to $\infty$ on the upper half plane  can be described by 
	\begin{equation}\label{eqn-def-sle}
	g_t(z) = z+\int_0^t \frac{2}{g_s(z)-W_s}ds, \ z\in\mathbb{H},
	\end{equation} 
	where $g_t$ is the unique conformal transformation from $H_t$ to $\mathbb{H}$ such that $\lim_{|z|\to\infty}|g_t(z)-z|=0$. $\SLE_\kappa$ is scale invariant, and hence its definition can be extended to other simply connected domains via conformal maps. 
	
	SLE$_\kappa$ curves also has a natural variant called SLE$_\kappa(\underline{\rho})$, which first appeared in \cite{LSW03,Dub05} and was studied in \cite{ImagGeo1}. 
Fix force points $x^{k,L}<...<x^{1,L}<x^{0,L}= 0^-<x^{0,R}= 0^+< x^{1,R}<...<x^{\ell, R}$ and weights $\rho^{i,q}\in\bbR$, The $\SLE_\kappa(\underline{\rho})$ process is defined in the same way as $\SLE_\kappa$, except that its Loewner driving function $(W_t)_{t\ge 0}$ is now defined by 
\begin{equation}\label{eq:def-sle-rho}
\begin{split}
&W_t = \sqrt{\kappa}B_t+\sum_{q\in\{L,R\}}\sum_i \int_0^t \frac{\rho^{i,q}}{W_s-g_s(x^{i,q})}ds,
\end{split}
\end{equation}
where $B_t$ is standard Brownian motion. Here if $x^{i,\rmL}$ (resp.\ $x^{i,\rmR}$) is not on $\partial H_t$, then we define $g_t(x^{i,\rmL})$ (resp.\ $g_t(x^{i,\rmR})$) to be the $g_t$-image of the leftmost (resp.\ rightmost) point of $K_t\cap\bbR$.
It has been proved in \cite{ImagGeo1} that the SLE$_\kappa(\underline{\rho})$ process a.s.\ exists, is unique and generates a continuous curve until the \textit{continuation threshold}, the first time $t$ such that $W_t = g_t(x^{j,q})$ with $\sum_{i=0}^j\rho^{i,q}\le -2$ for some $j$ and $q\in\{L,R\}$. 
    In the rest of the paper, we let $\kappa\in (0, 4)$  and recall the conventions~\eqref{eq:ig-parameters}. 
	
	 Now we recall the notion of \emph{the  GFF flow lines}. Heuristically, given a GFF $h$, $\eta$ is a flow line of angle $\theta$ if
	\begin{equation}
	 {\partial_t\eta(t)} = e^{i(\frac{h(\eta(t))}{\chi}+\theta)}\ \text{for}\ t>0.
	\end{equation} 
	Rigorously, the GFF flow lines are defined via the following result from~\cite[Theorem 1.1, Theorem 1.2]{ImagGeo1}. Also recall the notion of GFF local sets in Definition~\ref{defn: local set Dirichlet GFF}.
	
	\begin{theorem}\label{thm-ig1}
	 	Fix $\kappa>0$, a vector $\underline{\rho}$ of weights and a vector $\underline{x}$ of force points. Let $(K_t)_{t\ge 0}$ be the hull at time $t$ of the SLE$_\kappa(\underline{\rho})$ process $\eta$ described by the Loewner flow \eqref{eqn-def-sle} with $(W_t)_{t\geq0}$ solving \eqref{eq:def-sle-rho}. Let $\mathfrak{h}_t^0$ be the harmonic function on $\mathbb{H}$ with boundary values 
	 	$$
	 	-\lambda(1+\sum_{i=0}^j \rho^{i,L})\ \ \text{on} \ [V_t^{j+1, L}-W_t, V_t^{j,L}-W_t)\ \  \text{and}\ \ \lambda(1+\sum_{i=0}^j \rho^{i,R})\ \text{on}\ \ [V_t^{j, R}-W_t, V_t^{j+1,R}-W_t)
	 	$$
	 	where $\rho^{0,R} = \rho^{0,L}=0$, $x^{0,L} = 0^-, x^{0,R} = 0^+, x^{k+1, L} = -\infty, x^{\ell+1, R} = +\infty$, $V_t^{0,L} = g_t(0^-)$ and $V_t^{0,R} = g_t(0^+)$. Set $\mathfrak{h}_t(z) = \mathfrak{h}_t^0( {g_t}(z))-\chi\arg  {g_t'}(z)$. Let $\mathcal{F}_t$ be the filtration generated by $(W, V^{i,q})$. Then there exists a coupling $(K,h)$ where $h = \tilde{h}+ \mathfrak{h}_0$ with $\tilde{h}$ being a zero boundary GFF on $\mathbb{H}$ such that the following is true. For any $\mathcal{F}_t$-stopping time $\tau$ before the continuation threshold, $K_\tau$ is a local set for $h$ and the conditional law of $h|_{\mathbb{H}\backslash K_\tau}$ given $\mathcal{F}_\tau$ is the same as the law of $\mathfrak{h}_\tau+\tilde{h}\circ  {g_\tau}$. Moreover, the curve $\eta$ is measurable with respect to $h$.
	 \end{theorem}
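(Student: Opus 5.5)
The plan is the standard Schramm--Sheffield / Dubédat martingale approach. The coupling is built by first sampling the $\SLE_\kappa(\underline\rho)$ curve and then sampling the field. Measurability of $\eta$ with respect to $h$ is handled as a separate, harder step. Throughout, I work up to a stopping time $\tau$ that comes strictly before the continuation threshold and before any force point is swallowed in a degenerate way. On that interval the processes $V^{j,q}_t$ are well defined and $\mathfrak h_t$ is a bounded harmonic function away from $K_t$. I then let $\tau$ increase to the continuation threshold at the end.

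\emph{Step 1 (martingale property).} Fix $z\in\bbH$ and apply Itô's formula to $\mathfrak h_t(z)=\mathfrak h_t^0(g_t(z))-\chi\arg g_t'(z)$. The function $\mathfrak h^0_t$ is an explicit linear combination of terms $\arg(g_t(z)-V^{j,q}_t)$ and $\arg(g_t(z)-W_t)$ with coefficients $\pm\lambda\rho$. Using $\partial_t g_t=2/(g_t-W_t)$, $dV^{j,q}_t=2\,dt/(V^{j,q}_t-W_t)$, the SDE~\eqref{eq:def-sle-rho}, and $\partial_t\log g_t'(z)=-2/(g_t(z)-W_t)^2$, I expect all drift terms to cancel exactly when $\lambda=\pi/\sqrt\kappa$ and $\chi=2/\sqrt\kappa-\sqrt\kappa/2$, as in~\eqref{eq:ig-parameters}. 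The $\rho$-drift of $W$ cancels the drift coming from the force-point arguments. The Itô correction from $\arg(g_t(z)-W_t)$ cancels against the $\chi\arg g_t'$ term. I will also check that
\[
d\langle \mathfrak h(z),\mathfrak h(w)\rangle_t=-\,dG_t(z,w),
\]
where $G_t(z,w)=G_{\bbH}(g_t(z),g_t(w))$ is the Dirichlet Green's function of $\bbH\setminus K_t$. This is the usual identity $\im\frac{-1}{g_t(z)-W_t}\cdot\im\frac{-1}{g_t(w)-W_t}\propto -\partial_t G_t$.

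\emph{Step 2 (construction of the coupling).} For a test function $f$, set $M_t=(\mathfrak h_t,f)$. By Step 1, $M_t$ is a continuous martingale with $\langle M\rangle_t=E_0(f)-E_t(f)$, where $E_t(f)=\iint f(z)G_t(z,w)f(w)$. Hence $\exp\bigl(i(\mathfrak h_t,f)-\tfrac12E_t(f)\bigr)$ is a bounded martingale. Now sample $\eta$ up to $\tau$, and then set $h=\mathfrak h_\tau+\tilde h\circ g_\tau$ with $\tilde h$ an independent zero-boundary GFF. The characteristic functional of $h$ is
\[
\bbE\exp\bigl(i(\mathfrak h_\tau,f)-\tfrac12E_\tau(f)\bigr)=\exp\bigl(i(\mathfrak h_0,f)-\tfrac12E_0(f)\bigr),
\]
so $h\overset{d}{=}\mathfrak h_0+\tilde h$. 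Two further points follow from the construction:
\begin{itemize}
\item The conditional law of $h$ given $\mathcal F_\tau$ is the one claimed in the theorem.
\item The same identity at any earlier stopping time $\sigma\le\tau$, combined with the Markov property of the Loewner chain, makes the conditional laws consistent across stopping times.
\end{itemize}
Passing $\tau$ up to the continuation threshold uses the consistency of the couplings and the fact that $\mathfrak h_t$ converges along the stopping times. The local set property of $K_\tau$ (Definition~\ref{defn: local set Dirichlet GFF}) then follows directly: $\mathfrak h_\tau$ is $\mathcal F_\tau$-measurable and harmonic off $K_\tau$, and the remainder is a zero-boundary GFF in $\bbH\setminus K_\tau$.

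\emph{Step 3 (measurability, the main obstacle).} Showing that $\eta$ is determined by $h$ is the genuinely hard part. I would follow Dubédat's argument as extended in~\cite{ImagGeo1}.
\begin{itemize}
\item Take two curves $\eta_1,\eta_2$ that are conditionally independent given $h$, each coupled with $h$ as above. It suffices to show $\eta_1=\eta_2$ almost surely.
\item Using the local set property for both curves, and the fact that the union of conditionally independent local sets is local, one controls the boundary behavior of $h$ along $\eta_1\cup\eta_2$.
\item Consider $\eta_2$ in the complement of $\eta_1$. Its conditional law there is an $\SLE_\kappa(\underline\rho)$ whose boundary data on the sides of $\eta_1$ forces it to hit, or to be absolutely continuous with, $\eta_1$. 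A monotonicity (flow-line ordering) argument then shows that neither curve can lie strictly to one side of the other.
\end{itemize}
The delicate points are the behavior of the curves near force points, and the case where $\kappa\in(0,4)$ curves bounce off the boundary. For these I would first treat the case of no force points via Dubédat's reversal/commutation trick, and then add force points by absolute continuity, away from the continuation threshold.
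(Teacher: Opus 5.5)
The paper gives no proof of this statement; it simply imports \cite[Theorems 1.1 and 1.2]{ImagGeo1}. Your Steps 1--2 are the standard Schramm--Sheffield/Dub\'edat martingale construction of the coupling and are fine in substance, with one repair. Stopping before the first collision of $W$ with some $V^{j,q}$ and then ``letting $\tau$ increase'' never reaches stopping times after a boundary hit. Instead, use the following fact. Near a collision, $W-V^{j,q}$ behaves like $\sqrt\kappa$ times a Bessel process of dimension $1+2(\bar\rho+2)/\kappa>1$, where $\bar\rho>-2$ is the cumulative weight. Hence the $\rho$-drift is integrable, $(W,V^{j,q})$ are continuous semimartingales up to the continuation threshold, and your It\^o computation runs across collisions.

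The genuine gap is in Step 3, exactly where you locate the difficulty. A force-point-free base case plus absolute continuity cannot reach the times at which the tip of $\eta$ sits at a force point. These occur at time $0$ if a force point is placed at $0^\pm$. They also occur at every boundary hit when a cumulative weight lies in $(-2,\frac\kappa2-2)$, after which all swallowed weights sit at $0^\pm$ of the new domain. From such a time the curve is an $\SLE_\kappa(\rho^L;\rho^R)$ with force points immediately adjacent to its start. This law is mutually singular with $\SLE_\kappa$ on every initial segment: $(V-W)/\sqrt\kappa$ is a Bessel process from $0$ of dimension $1+2(\rho+2)/\kappa$ rather than $1+4/\kappa$. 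Indeed, for $\kappa\le 4$ plain $\SLE_\kappa$ never touches $\bbR$, so no comparison with it can see the bouncing. These times form a perfect set, so there is no ``next'' good time to restart from. Moreover, in your two-copy argument, local absolute continuity only rules out $\eta_1,\eta_2$ separating while the tip is away from force points; the whole difficulty is concentrated at precisely these times. The comparison/merging argument for conditionally independent copies must therefore be carried out directly for $\SLE_\kappa(\rho^L;\rho^R)$ with force points at $0^\pm$ and arbitrary weights $>-2$. Only force points at positive distance from the tip can then be added, by absolute continuity combined with the Markov property at stopping times.

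Second, the statement is for every $\kappa>0$, but Step 3 only makes sense for curves that are simple or merely touch the boundary. For $\kappa>4$ the complement of $\eta_1$ has infinitely many components (and is empty for $\kappa\ge 8$). So ``$\eta_2$ in the complement of $\eta_1$ is an $\SLE_\kappa(\underline\rho)$'' and ``one curve lies to one side of the other'' have no content. Also, $\SLE_\kappa$ is not reversible for $\kappa>8$, so a reversal trick cannot supply a base case. The missing input there is duality. The left and right boundaries of the counterflow line stopped upon hitting a boundary point of the current domain are flow lines of the conditional field, hence determined by $h$ by the $\kappa<4$ case. Iterating this with the Markov property reduces measurability of counterflow lines to that of flow lines.
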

	 Following~\cite{ImagGeo1}, a chordal $\SLE_\kappa(\underline\rho)$ curve $\eta$ with $\kappa\in(0,4)$ coupled with the GFF $h$ as above is called a \emph{flow line} of $h$, while a chordal $\SLE_{\kappa'}(\underline\rho)$ curve $\eta$ with $\kappa'>4$ coupled with the GFF  {$-h$} is called a \emph{counterflow line} of $h$. For $\theta\in\bbR$, we say $\eta$ is a \emph{flow line of $h$ of angle $\theta$}, if $\eta$ can be coupled with $h+\chi\theta$ in the above way.
	 One important consequence  is that, as argued in \cite[Section 6]{ImagGeo1}, given $\eta$, the field $h$ in each component of $\bbH\backslash\eta$ is a Dirichlet GFF with the \emph{flow line boundary conditions}; see Figure~\ref{fig: flow line boundary data} and also e.g.\ \cite[Figures 1.10 and 1.11]{ImagGeo1}.

     \begin{figure}[ht]
    \centering
    \includegraphics[height=4cm, page=1]{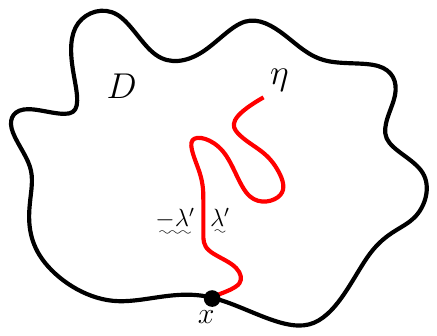}
    \hspace{2cm}
    \includegraphics[height=4cm, page=2]{img/flowline.pdf}
    \caption{Boundary data modulo $2\pi\chi$ of $\fh$ on two sides of $\eta$. The symbol $\uwave{\lambda'}$ is the shorthand for $\lambda' + \chi\cdot\text{winding}$. In other words, whenever $\eta$ makes a quarter turn to the left, the boundary data of $\fh$ goes up by $\frac{\pi}{2}\chi$, and whenever $\eta$ makes a quarter turn to the right, the boundary data of $\fh$ goes down by $\frac{\pi}{2}\chi$. The same applies to $\uwave{-\lambda'}$.} 
    \label{fig: flow line boundary data}
\end{figure}

One can also define GFF flow lines started from interior points; see~\cite[Theorems 1.1 and 1.2]{ImagGeo4} for a precise statement.  Let $h$ be a GFF on $\bbH$ with piecewise constant boundary conditions. We work with the setting where $h$ is equal to $\lambda'$ on $[y^\rmL,0]$ and $-\lambda'+\pi\chi$ on $(-\infty,y^\rmL)$, and equal to $-\lambda'$ on $(0,y^\rmR]$ and $\lambda'-\pi\chi$ on $(y^\rmR,\infty)$, with $y^\rmL\in (-\infty,0)\cup\{0^-\}$ and $y^\rmR\in (0,\infty)\cup\{0^+\}$. Flow lines of ${h}$ with the same angle started at different points of $\mathbb{Q}^2\cap\bbH$ merge into each other when intersecting and form a tree \cite[Theorem 1.9]{ImagGeo4}. This gives an ordering of $\mathbb{Q}^2\cap\bbH$, where $x\preceq y$ whenever {the $\frac{\pi}{2}$-angle flow line} from $x$ merges into the $\theta$-angle flow line from $y$ on the left side. Then one can construct a Peano curve which visits points of $\mathbb{Q}^2$ with respect to this ordering. We call this curve the \textit{space-filling   counterflow line  of ${h}$.} These curves are space-filling versions of  $\SLE_{\kappa'}(\frac{\kappa'}{2}-4;\frac{\kappa'}2-4)$ processes with force points located at $y^\rmL$ and $y^\rmR$, which agrees with the ordinary version of $\SLE_{\kappa'}(\frac{\kappa'}{2}-4;\frac{\kappa'}2-4)$ when $\kappa'\geq 8$. One can also work with the case where $h$ is a \emph{whole-plane GFF modulo $2\pi\chi\bbZ$}, and the curve generated in this way is the space-filling whole-plane $\SLE_{\kappa'}$. See~\cite[Section 4.3]{ImagGeo4} for more details.

	 Finally, we can  extend the notion of GFF flow lines to other simply connected domains as explained in~\cite{ImagGeo1}. For a conformal map $f:D\to\bbH$ and a GFF $h$ on $\bbH$, if $\eta$ is a flow line of $h$, then $f^{-1}\circ \eta$ is a flow line of the field 
	  \begin{equation}\label{eq:ig-change-coord}
	      h\circ f - \chi\arg f'.
	   \end{equation}

\subsection{Liouville fields and LQG surfaces}
\label{sec: LQG surfaces} 
We now review LQG surfaces with disk topology and annular topology. Let $\gamma \in (0, 2)$ and recall that $Q = \frac{\gamma}{2}+\frac{2}{\gamma}$.
Let $D\subseteq \bC$ be a planar domain.
For conformal maps $\psi:D'\to D$ and a generalized function $\phi$ on $D$, we define the \emph{LQG pullback} by
\begin{equation}
    \label{eq: LQG field pullback}
    \psi^*_\gamma\phi = \phi \circ \psi + Q\log\ab|\psi'|,
\end{equation}
which defines the following equivalence relation on tuples with $z_i\in \overline{D}$
\begin{equation}
    \label{eq: LQG surface equivalence}
    (D, \phi, \{z_i\}_{i=1}^n) \sim_\gamma (D', \psi^*_\gamma\phi, \{\psi^{-1}(z_i)\}_{i=1}^n).
\end{equation}
The equivalence class $(D, \phi, \{z_i\}_{i=1}^n)/\mathord\sim_\gamma$ is called a \emph{$\gamma$-LQG surface} (or just \emph{quantum surface}) with $n$ marked points. A particular tuple in 
the equivalence class is called an \emph{embedding} of the LQG surface.

We consider the \emph{LQG area measure} and the \emph{LQG length measure}, formally defined to be $e^{\gamma\phi} \d^2 z$ and $e^{\gamma\phi/2}\d z$, respectively, where $\d^2 z$ is the Lebesgue measure on $D$ and $\d z$ is the Lebesgue measure on $\partial D$. This can be made rigorous by taking the limit $\lim_{\e\to 0}\e^{\frac{\gamma^2}{2}}e^{\phi_\e(z)}$ and $\lim_{\e\to 0}\e^{\frac{\gamma^2}{4}}e^{\phi_\e(z)/2}$ if $\phi$ is sampled from a variant of the GFF on $D$, where $\phi_\e(z)$ is the average of $\phi$ over $\partial B(z;\e)\cap\bbH$. See \cite{berestycki2024gaussian} for a   review. 
The definition \eqref{eq: LQG field pullback} ensures the invariance of the LQG area measure and the LQG length measure under LQG pullback. We will write $\mu_\phi$ (resp.~$\nu_\phi$) for the LQG area (resp.~length) measure associated to $\phi$.

Let $\bP_{\bH}$ be the law of a free boundary GFF on $\bH$, where the additive constant is fixed by taking $\rho$ to be the uniform measure on the unit semicircle $\partial\bbD\cap\bbH$ in the definition.
For $z\in \bC$, define $|z|_+ = \max\{|z|, 1\}$.
The free boundary GFF has a covariance kernel 
\begin{equation}
    G_{\bH}(z, w) := \bE[h_{\bH}(z) h_{\bH}(w)] = -\log\ab|z-w|-\log\ab|z-\overline{w}| +2\log\ab|z|_+ +2\log\ab |w|_+.
\end{equation}
We use the convention $G_{\bH}(\infty, w)=2\log\ab|w|_+$.

\begin{definition}[Liouville field on $\bH$]\label{def:LF-HH}
    Sample $(h, \bfc)$ from the infinite measure $\bP_\bH\times \Leb_{\bR}$, where $\Leb_\bR$ is the Lebesgue measure on $\bR$, and take $\phi = h+\bfc$. We write $\LF_{\bH}$ for the law of $\phi$. 
\end{definition}

We also consider the Liouville field with boundary insertions on $\bH$. For $m > 0$, let $(\beta_i, s_i)\in \bR\times (\bR\cup(\infty))$ for $i=1, \dots, m$ where the $s_i$ are distinct, and $s_i\neq\infty$ for $i\ge 2$. We formally define $\LF_{\bH}^{(\beta_i, s_i)_{i=1}^m}(\d\phi)$ by $\exp(\sum_{i=1}^m \beta_i \phi(s_i)/2)\LF_\bH(\d\phi)$, which leads t the following rigorous definition; see ~\cite[Definitions 2.5, 2.8 and Lemma 2.6]{AHS24}.

\begin{definition}[Liouville field with boundary insertions on $\bH$]
    In the setting above, sample $(h, \bfc)$ from the infinite measure
    \begin{equation}
        C_{\bH}^{(\beta_i, s_i)_{i=1}^m}
        \bP_{\bH}
        \times \ab[ e^{(\frac{1}{2}\sum_{i=1}^m\beta_i-Q)c}\Leb_{\bR}(\d c)],
    \end{equation}
    where
    \begin{equation}
        C_{\bH}^{(\beta_i, s_i)_{i=1}^m} = 
        \begin{cases}
        \prod_{i=1}^m \ab|s_i|_+^{-\beta_i(Q-\frac{\beta_i}{2})} \exp\ab(\frac{1}{4}\sum_{j=i+1}^m \beta_i\beta_j G_{\bH}(s_i, s_j)) \text{ if } s_1\neq \infty\\
        \prod_{i=2}^m \ab|s_i|_+^{-\beta_i(Q-\frac{\beta_i}{2}-\frac{\beta_1}{2})} \exp\ab(\frac{1}{4}\sum_{j=i+1}^m \beta_i\beta_j G_{\bH}(s_i, s_j)) \text{ if } s_1= \infty
        \end{cases},
    \end{equation}
    and take
    \begin{equation}
        \label{eq: Liouville field}
        \phi(z) = h(z)-2Q\log\ab|z|_+ +\frac{1}{2}\sum_{i=1}^m \beta_i G_{\bH}(s_i, z)+\bfc.
    \end{equation}
    We write $\LF_{\bH}^{(\beta_i, s_i)_{i=1}^m}$ for the law of $\phi$.
\end{definition}

Note that if $\beta_i\geq Q$, following  \cite[Theorem 3.1]{huang2018liouville}, the LQG length measure with respect to $\phi$ has infinite mass near $s_i$. On the other hand,   the limit  $\lim_{\beta_3\uparrow Q}\frac{1}{Q-\beta_3}\LF^{(\beta_1,s_1),(\beta_2,s_2),(\beta_3, s_3)}_{\bH}$ exists as measures on the space of distributions, where we equip the space of distributions with the weak-$*$ topology from testing against smooth compactly supported functions. The limiting field, whose law is denoted by $\LF^{(\beta_1,s_1),(\beta_2,s_2),(Q^-, s_3)}_{\bH}$, has finite LQG boundary length near $s_3$, and   in this case we say the Liouville field has a $Q^-$ insertion at the point $s_3$. The same can be defined analogously if one or more $\beta_1,\beta_2,\beta_3$ is $Q^-$. See~\cite[Definition 2.28 and Proposition 2.32]{QuanTrig} for more details.

\begin{definition}[\cite{QuanTrig}]
    \label{defn: quantum triangle}
    Fix $W_1,W_2, W_3 \ge \gamma^2/2$, and let $\beta_i= \gamma+\frac{2-W_i}{\gamma}$ if $W_i\neq \frac{\gamma^2}{2}$ and $\beta_i=Q^-$ if $W_i=\frac{\gamma^2}{2}$.
    Consider a distribution $\phi$ sampled from 
    \begin{equation*}
        \prod_{\beta_i\neq Q^-} (Q-\beta_i)^{-1} \LF_{\bH}^{(\beta_1,0), (\beta_2, 1),(\beta_3, \infty)}.
    \end{equation*}
    The infinite measure $\QT(W_1,W_2,W_3)$ is defined to be the law of $(\bH, \phi, 0, 1, \infty)/\mathord\sim_\gamma$. A sample from $\QT(W_1,W_2,W_3)$ is called a quantum triangle with weights $W_1$, $W_2$ and $W_3$. 
\end{definition}

We will also need the following definition of quantum disks. Let $H(\bbH,\rho)$ be as in Section~\ref{sec: prelim gff} where $\rho$ is the uniform measure on $\partial\bbD\cap\bbH$.  This space admits a natural decomposition $H(\bbH,\rho) = H_1(\bbH)\oplus H_2(\bbH)$, where $H_1(\bbH)$ (resp.\ $H_2(\bbH)$) is the set of functions in $H(\bbH)$ with constant value (resp.\ average zero) on the semicircle $\{z\in\bbH:\ |z| = r\}$ for each $r>0$.

\begin{definition}[Thick quantum disk]\label{def-quantum-disk}
Fix a weight parameter $W\ge\frac{\gamma^2}{2}$ and let $\beta = \gamma+ \frac{2-W}{\gamma}\le Q$. 
		Let $(B_s)_{s\ge0}$ and $(\wt{B}_s)_{s\ge0}$ be independent standard one-dimensional Brownian motions conditioned on  $B_{2t}-(Q-\beta)t<0$ and  $ \wt{B}_{2t} - (Q-\beta)t<0$ for all $t>0$.   Let $\mathbf{c}$ be sampled from the infinite measure $\frac{\gamma}{2}e^{(\beta-Q)c}dc$ on $\bbR$ independently from $(B_s)_{s\ge0}$ and $(\wt{B}_s)_{s\ge0}$.
			Let 	
			\begin{equation*}
				Y_t=\left\{ \begin{array}{rcl} 
					B_{2t}+\beta t+\mathbf{c} & \mbox{for} & t\ge 0,\\
					\wt{B}_{-2t} +(2Q-\beta) t+\mathbf{c} & \mbox{for} & t<0.
				\end{array} 
				\right.
			\end{equation*}
			 Let $h$ be a free boundary  GFF on $\mathbb{H}$ independent of $(Y_t)_{t\in\bbR}$ with projection onto $H_2(\mathbb{H})$ given by $h_2$. Consider the random distribution
			\begin{equation*}
				\psi(\cdot)=Y_{-\log|\cdot|} + h_2(\cdot) \, .
		\end{equation*}
		Let the infinite measure $\mathcal{M}_{2}^{\mathrm{disk}}(W)$ be the law of $({\mathbb{H}}, \psi,0,\infty)/\mathord\sim_\gamma $. 
		We call a sample from $\mathcal{M}_{2}^{\textup{disk}}(W)$ a \emph{quantum disk} of weight $W$ with two marked points.
		
		We call $\nu_\psi((-\infty,0))$ and $\nu_\psi((0,\infty))$ the left and right{, respectively,} quantum   {boundary} length of the quantum disk  $(\mathbb{H}, \psi, 0, \infty)/{\sim_\gamma}$.
	\end{definition}

    Next we define quantum disks and quantum triangles with thin weights.
	\begin{definition}[Thin quantum disk]\label{def-thin-disk}
	For $W\in(0, \frac{\gamma^2}{2})$, the infinite measure $\mathcal{M}_{2}^{\textup{disk}}(W)$ is defined as follows. First sample a random variable $T$ from the infinite measure $(1-\frac{2}{\gamma^2}W)^{-2}\textup{Leb}_{\mathbb{R}_+}$. Then sample a Poisson point process $\{(u, \mathcal{D}_u)\}$ with intensity $\mathds{1}_{t\in [0,T]}dt\times \mathcal{M}_{2}^{\textup{disk}}(\gamma^2-W)$. Finally consider the ordered (according to the order induced by $u$) collection of doubly-marked thick quantum disks $\{\mathcal{D}_u\}$, called a \emph{thin quantum disk} of weight $W$.
		
		Let $\mathcal{M}_{2}^{\textup{disk}}(W)$ be the law of this ordered collection of doubly-marked quantum disks $\{\mathcal{D}_u\}$.
		The left (resp.\ right) boundary length of a sample from $\mathcal{M}_{2}^{\textup{disk}}(W)$ is defined to be the sum of the left (resp.\ right) boundary lengths of the quantum disks $\{\mathcal{D}_u\}$. A surface sampled from $\mathcal{M}_{2}^{\textup{disk}}(W)$ has two marked points; see Figure~\ref{fig-qt} (left). 
	\end{definition}

    \begin{definition}[Quantum triangles with  {possibly} thin vertices]\label{def-qt-thin}
	Fix $W_1, W_2, W_3>0$. Let $I:=\{i\in\{1,2,3\}:W_i<\frac{\gamma^2}{2}\}$. Let $\tilde W_i = W_i$ if $i \not \in I$ and $\tilde W_i = \gamma^2 - W_i$ if $i \in I$. Sample $(S_0, (S_i)_{i \in I})$ from 
 \[\QT(\tilde W_1, \tilde W_2, \tilde W_3) \times \prod_{i\in I } (1-\frac{2W_i}{\gamma^2}) \Md_2(W_i).  \]
 Embed $S_0$ as $(\tilde D, \phi, \tilde a_1, \tilde a_2, \tilde a_3)$, for each $i \not \in I$ let $a_i = \tilde a_i$, and for each $i \in I$ embed $S_i$ as $(\tilde D_i, \phi, \tilde a_i, a_i)$ in such a way that the $\tilde D_i$ are disjoint and $\tilde D_i \cap \tilde D = \tilde a_i$. Let $D = \tilde D \cup \bigcup_{i \in I} \tilde D_i$ and let $\QT(W_1, W_2, W_3)$ be the law of $(D, \phi, a_1, a_2, a_3)/{\sim_\gamma}$.
\end{definition} 

\begin{figure}[ht]
	\centering
    \begin{tabular}{cc}
    \includegraphics[scale=0.43]{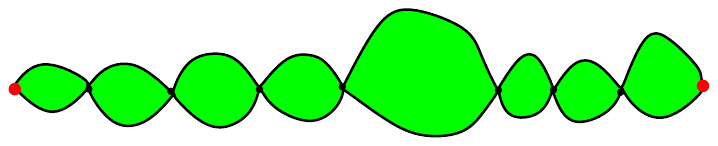}     &  \includegraphics[scale=0.43]{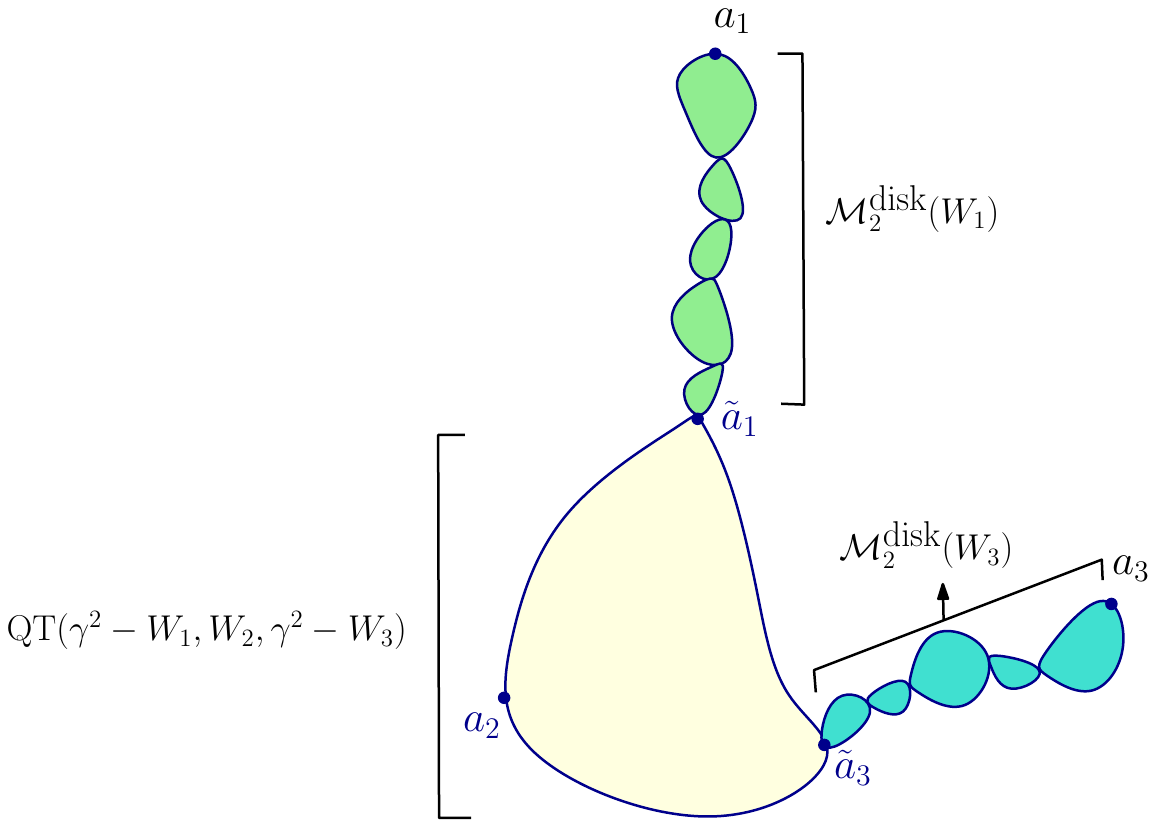}
    \end{tabular}
	
	\caption{\textbf{Left}: A thin quantum disk of weight $W\in(0,\frac{\gamma^2}{2})$. Note that there are infinitely many disks between the different beads and near the two  marked points (red). \textbf{Right:} A quantum triangle with $W_2\geq\frac{\gamma^2}{2}$ and $W_1,W_3<\frac{\gamma^2}{2}$ embedded as $(D,\phi,a_1,a_2,a_3).$  The two thin disks (colored green) are concatenated with the thick triangle (colored yellow) at points $\tilde{a}_1$ and $\tilde{a}_3$.}\label{fig-qt}
\end{figure}

    We will also work with LQG surfaces with fixed boundary lengths. For instance, for quantum disks we have 
\begin{equation}\label{eq:QD-dis}
 \mathcal{M}_{2}^{\textup{disk}}(W)=\int_0^\infty\int_0^\infty    \mathcal{M}_{2}^{\textup{disk}}(W;\ell_1,\ell_2)\,d\ell_1\,d\ell_2,
\end{equation}
where $\mathcal{M}_{2}^{\textup{disk}}(W;\ell_1,\ell_2)$ is supported on the set of quantum disks with left boundary $\ell_1$ and right boundary $\ell_2$. The quantum triangles $\QT(W_1,W_2,W_3;\ell_1,\ell_2;\ell_3)$ with boundary arc lengths $\ell_1,\ell_2,\ell_3$ can be defined analogously. These measures can be defined for a.e.\ -$\ell$ by standard disintegration and extended to every $\ell$ in a continuous way; see~\cite[Section 2.6]{ConfWeldDisks} for details.

We now turn to domains with annular topology. Let $\bP_{\tau}$ be the law of the free boundary GFF on $\cA_\tau$, with the additive constant fixed by requiring the field to have zero average on $\{0\}\times [-1,-e^{-2\pi \tau}]$.

\begin{definition}[Liouville field on $\cA_\tau$]
    Sample $(h, \bfc)$ from the infinite measure $\bP_\tau\times \Leb_{\bR}$ and take $\phi=h+\bfc$. We write $\LF_\tau$ for the law of $\phi$.
\end{definition}

 We write $G_\tau$ for the covariance kernel of $h\sim \bbP_\tau$, i.e., $G_\tau(z,w) = \bbE[h(z)h(w)]$. The exact expression for $G_\tau$ is not needed. The following lemma can be proved similarly as~\cite[Lemma 2.6]{AHS24} by using the Girsanov theorem. 
\begin{lemma}
    Let $\beta\in\bbR$ and $s\in\partial \cA_\tau$. Let $\phi_\e(s)$ be the average of $\phi$ over $A_\tau\cap \partial B(s;\e)$. In the sense of vague convergence of measures on the space of distributions, the limit
    \begin{equation}\label{eq:def-LF-ann-marked-pt-0}
        \LF_\tau^{(\beta,s)}:=\lim_{\e\to0}\e^{\frac{\beta^2}{4}}e^{\frac{\beta}{2}\phi_\e(s)} \LF_\tau  
    \end{equation}
    exists. Furthermore, there exists some constant $C(\tau,s)>0$, such that if we sample $(h,\mathbf{c})$ from $C(\tau,s)\LF_\tau \times [e^{\beta c/2}\mathrm{Leb}_\bbR(dc)]$, then 
    \begin{equation}\label{eq:def-LF-ann-marked-pt}
        \phi(z)=h(z)+\frac{\beta}{2}  \beta_i G_{\tau}(s, z) + \bfc
    \end{equation}
    has the same law as $\LF_\tau^{(\beta,s)}$.
\end{lemma}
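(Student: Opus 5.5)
We follow the Girsanov argument of \cite[Lemma 2.6]{AHS24}, adapted to the annulus. Write $\phi = h+\mathbf c$ with $(h,\mathbf c)\sim \bbP_\tau\times\Leb_\bbR$. Then $\phi_\e(s)=h_\e(s)+\mathbf c$, and the prefactor factorizes as
\[
\e^{\frac{\beta^2}{4}}e^{\frac\beta2\phi_\e(s)}=\e^{\frac{\beta^2}{4}}e^{\frac\beta2 h_\e(s)}\cdot e^{\frac{\beta}{2}\mathbf c}.
\]
The $\mathbf c$-factor gives exactly the measure $e^{\beta c/2}\Leb_\bbR(dc)$ in \eqref{eq:def-LF-ann-marked-pt}, independently of $\e$. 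It therefore suffices to study the probability measures $\bbP_\tau$ reweighted by $\e^{\beta^2/4}e^{\frac\beta2 h_\e(s)}$ and to show two things: their total masses converge to a finite positive constant $C(\tau,s)$, and after normalization they converge to the law of $h+\frac\beta2 G_\tau(s,\cdot)$.

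\textbf{Step 1 (Girsanov for fixed $\e$).} The variable $h_\e(s)$ is a centered Gaussian in the Gaussian Hilbert space of $h$, with variance $\mathrm{Var}(h_\e(s))=\iint G_\tau(x,y)\,\rho_\e(dx)\rho_\e(dy)$, where $\rho_\e$ is the uniform probability measure on $\cA_\tau\cap\partial B(s;\e)$. By the Cameron--Martin/Girsanov theorem, weighting $\bbP_\tau$ by
\[
e^{\frac\beta2 h_\e(s)}\big/\bbE\big[e^{\frac\beta2 h_\e(s)}\big]
\]
yields the law of $h+\frac\beta2 G_{\tau,\e}(s,\cdot)$, where $G_{\tau,\e}(s,z)=\int G_\tau(x,z)\rho_\e(dx)=\mathrm{Cov}(h_\e(s),h(z))$. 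The normalizing constant is $\exp\big(\tfrac{\beta^2}{8}\mathrm{Var}(h_\e(s))\big)$.

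\textbf{Step 2 (asymptotics of the kernel near a boundary point).} This step is the main obstacle and the only genuinely new input. Near the boundary point $s$ we need $G_\tau(x,y)=-2\log|x-y|+g(x,y)$, with $g$ continuous up to the boundary in a neighbourhood of $(s,s)$. To obtain this, map a neighbourhood of $s$ to a neighbourhood in $\bbH$ by a conformal map $\psi$ with $\psi(s)=0$ and $|\psi'(s)|$ finite, for instance a branch of $-i\log$ composed with a Möbius map. The Neumann Green function on $\cA_\tau$, corrected by the normalization via the segment $\{0\}\times[-1,-e^{-2\pi\tau}]$ (which only adds smooth terms near $s$), differs locally from the method-of-images kernel $-\log|z-w|-\log|z-\bar w|$ by a smooth harmonic function. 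Since the segment has positive distance from $s$ unless $s$ is one of its endpoints, and even then the correction is a bounded continuous function, we get
\[
\mathrm{Var}(h_\e(s))=-2\log\e+g(s,s)+o(1).
\]
Hence $\e^{\beta^2/4}\exp\big(\tfrac{\beta^2}{8}\mathrm{Var}(h_\e(s))\big)\to e^{\beta^2 g(s,s)/8}=:C(\tau,s)$. This is finite and positive, and it is the constant appearing in the statement.

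\textbf{Step 3 (convergence of the shift).} We have $G_{\tau,\e}(s,\cdot)\to G_\tau(s,\cdot)$ locally uniformly on $\cA_\tau$ and in $L^1_{\mathrm{loc}}$ of $\overline{\cA_\tau}$, since the logarithmic singularity is integrable. Consequently, for any bounded continuous functional $F$ depending on $\phi$ through finitely many test-function pairings, the dominated convergence theorem gives convergence of
\[
\int F(h+\tfrac\beta2 G_{\tau,\e}(s,\cdot)+c)\,e^{\beta c/2}\,dc
\]
restricted to $c$ in compact sets. Combined with Step 2, this gives vague convergence to $C(\tau,s)$ times the law of $h+\frac\beta2 G_\tau(s,\cdot)+\mathbf c$ under $\bbP_\tau\times e^{\beta c/2}\Leb(dc)$. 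Here vague convergence means testing against functionals supported where $\mathbf c$ is bounded; this is needed because the measure in $\mathbf c$ is infinite. This simultaneously proves existence of the limit \eqref{eq:def-LF-ann-marked-pt-0} and the identification \eqref{eq:def-LF-ann-marked-pt}.
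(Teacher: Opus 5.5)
Your proposal is correct and follows the same route as the paper, which gives no separate argument and simply defers to the Girsanov proof of \cite[Lemma 2.6]{AHS24}. Splitting off the $e^{\beta c/2}\,dc$ factor, applying the Cameron--Martin shift by $\frac{\beta}{2}G_{\tau,\e}(s,\cdot)$, and using the boundary variance asymptotics $\mathrm{Var}(h_\e(s))=-2\log\e+c_0+o(1)$ is exactly that argument carried over to $\cA_\tau$. One small correction in Step 2: near a boundary point, $G_\tau(x,y)$ is not $-2\log|x-y|$ plus a function continuous at $(s,s)$; it is $-\log|x-y|-\log|x-y^*|$ plus a smooth function, where $y^*$ is the reflection of $y$ across the boundary circle. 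This is the image-kernel form you invoke in the next sentence, and it is what actually gives the $-2\log\e$ variance with a convergent additive constant.
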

The precise value of the constant $C(\tau,s)$ has been computed in~\cite[Equation (3.14)]{remy2018liouville}, but we will not use it here. 
Following~\cite[Lemma 2.3]{ModAnn}, the  measure $\LF_\tau$ does not depend on the normalizing measure $\rho$. Therefore it follows from~\eqref{eq:def-LF-ann-marked-pt-0} that the measure  $\LF_\tau^{(\beta,s)}$ does not depend on $\rho$ either.

\begin{definition}
    Fix $\tau\in (0, \infty)$. We write $\QA_\tau$ for the law of the LQG surface $(\cA_\tau, \phi)/\mathord\sim_\gamma$ where $\phi\sim \LF_\tau$. Given an embedding  $(\cA_\tau, \phi)$ of $\QA_\tau$, we write $\cL_0^\phi$ and $\cL_1^\phi$ for the LQG length measure induced by $\phi$ on $C_0 = \{|z|=1\}$ and $C_\tau = \{|z|=e^{-2\pi\tau}\}$, respectively. We further weight the law of $\phi$ by  $\|\cL_0^\phi\|$, the LQG length of the outer circle, and sample a point $x\in C_0$ according to the probability measure proportional to the LQG length measure   $\nu_\phi$. We write $\QA_\tau^1$ for the law of the resulting LQG surface   $(\cA_\tau, \phi,x)/\mathord\sim_\gamma$.
\end{definition}

Following~\cite[Lemma 2.12]{ModAnn}, under the measure $\QA_\tau$, $\|\cL_0^\phi\|,\|\cL_1^\phi\|<\infty$ except for a zero measure set, so the above definition makes sense. Furthermore, for every compact interval $I\subsetneq (0,\infty)$, $\QA_\tau[\{ \|\cL_0^\phi\|\in I \}]<\infty$. One can check this by e.g., setting $f = \mathds{1}_I$ in ~\cite[Lemma 2.12]{ModAnn}.

The following result is an analog of \cite[Proposition 2.18]{AHS24} in the context of LQG annuli and the proof is similar. 
\begin{proposition}
    \label{prop: adding a gamma singularity to QA}
    Let $\phi$ be a sample from $ \LF^{(\gamma, -i)}_\tau$. Then there is some constant $C>0$, such that the law of $(\cA_\tau, \phi, -i)/\mathord\sim$ is $C \QA_\tau^1$.
\end{proposition}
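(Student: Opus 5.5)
The plan is to follow the proof of \cite[Proposition 2.18]{AHS24}, i.e.\ a Girsanov argument combined with rotation invariance of the annulus. First, by the definition of $\QA_\tau^1$ and the rotation invariance of $\LF_\tau$ (the rotations $z\mapsto e^{i\alpha}z$ are conformal automorphisms of $\cA_\tau$ with $|\psi'|=1$, so LQG pullback is simply composition), the law of $(\cA_\tau,\phi,x)/\mathord\sim_\gamma$ equals the law of $(\cA_\tau,\phi,-i)/\mathord\sim_\gamma$ where $\phi$ is sampled from the measure
\begin{equation*}
M:=\frac{1}{2\pi}\int_0^{2\pi} \big(R_\alpha\big)_*\Big[\nu_\phi(\d x)\,\LF_\tau(\d\phi)\Big]\Big|_{x=-i}\,\d\alpha ,
\end{equation*}
which is more precisely described as follows. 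Sample $(\phi,x)$ from $\nu_\phi(\d x)\LF_\tau(\d\phi)$ restricted to $x\in C_0$, and rotate so that $x$ is sent to $-i$. Since $\nu_\phi$ is a finite measure on $C_0$ a.e., this is well defined. It therefore suffices to identify the pair $(\phi,x)$ sampled from $\nu_\phi|_{C_0}(\d x)\,\LF_\tau(\d\phi)$.

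The key step is the standard rooted-measure identity
\begin{equation*}
\nu_\phi|_{C_0}(\d x)\,\LF_\tau(\d\phi)=\mathbf 1_{x\in C_0}\,\LF_\tau^{(\gamma,x)}(\d\phi)\,|\d x| .
\end{equation*}
To prove it, I would use the regularized length $\nu_{\phi}^\e(\d x)=\e^{\gamma^2/4}e^{\frac{\gamma}{2}\phi_\e(x)}|\d x|$ on $C_0$ and test against a nonnegative functional $F(\phi,x)$. By Fubini, $\int\!\!\int F\,\d\nu^\e_\phi\,\d\LF_\tau=\int_{C_0}\!\int F(\phi,x)\,\e^{\gamma^2/4}e^{\frac\gamma2\phi_\e(x)}\LF_\tau(\d\phi)\,|\d x|$. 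Taking $\e\to0$, the inner integral converges to $\int F(\phi,x)\,\LF_\tau^{(\gamma,x)}(\d\phi)$ by \eqref{eq:def-LF-ann-marked-pt-0}, while the left side converges to $\int\!\!\int F\,\d\nu_\phi\,\d\LF_\tau$ by the a.s.\ weak convergence $\nu^\e_\phi\to\nu_\phi$. For $F$ continuous, bounded, and supported where $\|\cL_0^\phi\|$ lies in a compact interval, the required uniform integrability follows from the finiteness of $\QA_\tau[\|\cL_0^\phi\|\in I]$ and a Girsanov computation: under the $\e$-weighting, the field is $h+\frac{\gamma}{2}G_\tau^\e(x,\cdot)+\bfc$, exactly as in \eqref{eq:def-LF-ann-marked-pt}. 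A monotone class argument then extends the identity to all $F$.

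Finally, rotation invariance gives $(R_\alpha)_*\LF_\tau^{(\gamma,x)}=\LF_\tau^{(\gamma,e^{i\alpha}x)}$, because $G_\tau$ is rotation invariant once one uses the $\rho$-independence of $\LF^{(\beta,s)}_\tau$ noted after \eqref{eq:def-LF-ann-marked-pt}. Rotating each $x$ to $-i$ thus yields $2\pi\,\LF_\tau^{(\gamma,-i)}$ as the law of the rotated field. Since weighting by $\|\cL_0^\phi\|$ and sampling $x$ from the normalized length is the same as sampling from $\nu_\phi(\d x)\LF_\tau$, we get $\QA^1_\tau=2\pi\,[\LF^{(\gamma,-i)}_\tau]/\mathord\sim$, which gives the claim with $C=1/(2\pi)$.

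The main obstacle is the justification of the $\e\to0$ limit: one has to interchange the limit with integration against an infinite measure, and vague convergence alone does not control it. To handle this, I would restrict to $\bfc$ in a compact set, where the measure is finite, and use uniform $L^1$ bounds on $\nu^\e_\phi(C_0)$. These bounds come from $\bbE[\e^{\gamma^2/4}e^{\frac\gamma2 h_\e(x)}]\asymp 1$ uniformly in $x\in C_0$, which reduces the problem to the boundary GMC convergence in \cite{berestycki2024gaussian}.
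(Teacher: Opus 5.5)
Your proposal is correct and is essentially the argument the paper has in mind. The paper gives no separate proof; it only says the result is the annulus analog of \cite[Proposition 2.18]{AHS24}, which is exactly a rooted-measure (Girsanov) identity $\nu_\phi(\d x)\,\LF_\tau(\d\phi)=\LF_\tau^{(\gamma,x)}(\d\phi)\,|\d x|$ combined with a conformal automorphism (here a rotation of $\cA_\tau$) that moves the marked point to $-i$. The $\e\to0$ interchange you flag is handled more cleanly as follows. Write $\phi=h+\bfc$ and use $\nu_{h+c}=e^{\gamma c/2}\nu_h$. For each fixed $\bfc$, apply the standard boundary-GMC rooted-measure theorem under the probability measure $\bbP_\tau$, then integrate over $\bfc$ by Tonelli; this avoids any uniform-integrability argument under an infinite measure.
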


The domain Markov property of the GFF implies a domain Markov property for $\LF_\tau$. 
We first define conditioning for infinite measures via Markov kernels.
\begin{definition}
    \label{defn: conditioning Markov kernel definition}
    Let $(\Omega, \mathcal{F})$ and $(\Omega', \mathcal{F}')$ be measurable spaces. We say $\Lambda: \Omega\times \mathcal{F}'\to [0, 1]$ is a Markov kernel if $\Lambda(\omega, \cdot)$ is a probability measure on $(\Omega', \mathcal{F}')$ for any $\omega\in\Omega$, and $\Lambda(\cdot, A)$ is $\mathcal{F}$-measurable for any $A\in\mathcal{F}'$. If $(X, Y)$ is a sample from $\Lambda(x, \d y)\mu(\d x)$ for a measure $\mu$ on $(\Omega, \mathcal{F})$, we say that $\Lambda(X, \cdot)$ is the conditional law of $Y$ given $X$. 
\end{definition}

\begin{lemma}\label{lem:GFF-Markov}
Let $D=\cA_\tau$ and $A$ be as in Lemma~\ref{lem:GFF-Markov-0}, and $\phi$ be sampled from $\LF_\tau^{(\gamma,-i)}$. Let $\phi^{\mathrm{har}}$ be the harmonic extension of $\phi|_{D\backslash A}$ onto $A$ with zero normal derivative on $\partial A\cap\partial D$. Let $\phi_A = \phi- \phi^{\mathrm{har}}$.   Then conditioned on $\phi|_{D\backslash A}$, the conditional law of  $\phi_A$ is a GFF on $A$ with zero boundary condition on $\partial A\backslash\partial D$ and free boundary condition on $\partial A\cap\partial D$.
\end{lemma}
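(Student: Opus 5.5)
The plan is to reduce the statement to the ordinary GFF Markov property of Lemma~\ref{lem:GFF-Markov-0}, using the explicit form \eqref{eq:def-LF-ann-marked-pt} of $\LF_\tau^{(\gamma,-i)}$. By that lemma, $\phi$ can be sampled as $\phi = h + \frac{\gamma}{2}G_\tau(-i,\cdot) + \bfc$. Here $(h,\bfc)$ is drawn from $C(\tau,-i)\,\bbP_\tau\times[e^{\gamma c/2}\Leb_\bbR(\d c)]$, and $h$ is the free boundary GFF normalized to have zero average on $\rho$, the uniform measure on $\{0\}\times[-1,-e^{-2\pi\tau}]$.

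\textbf{Step 1: normalization segment.} The segment $\{0\}\times[-1,-e^{-2\pi\tau}]$, read as the radial segment through $-i$, lies at angle $3\pi/2\in[5\pi/4,7\pi/4]$. So $\operatorname{supp}\rho\subset \overline{D\setminus A}$, and the setting of Lemma~\ref{lem:GFF-Markov-0} applies to $h$ with exactly this $\rho$.

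\textbf{Step 2: decompose $h$.} Apply Lemma~\ref{lem:GFF-Markov-0} to get $h = h^{\mathrm{har}} + h^A$. Here $h^A$ is a GFF on $A$ with zero boundary values on $\partial_1 A$ and free boundary values on $\partial A\cap\partial D$, and $h^A$ is independent of $h^{\mathrm{har}}$. Moreover $h|_{D\setminus A}$ determines $h^{\mathrm{har}}$, and conversely $h^{\mathrm{har}}|_{D\setminus A}=h|_{D\setminus A}$. Thus $h^A$ is independent of $h|_{D\setminus A}$.

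\textbf{Step 3: the deterministic part.} Set $g:=\frac{\gamma}{2}G_\tau(-i,\cdot)+\bfc$. The marked point $-i$ lies in the closure of $D\setminus A$ and away from $\overline{A}$. Hence $G_\tau(-i,\cdot)$ is harmonic on $A$ with zero normal derivative on $\partial A\cap\partial D$; this is the Neumann property of the covariance of a free boundary GFF. One has to check that the normalization by $\rho$ contributes only functions that are harmonic on $A$ with Neumann data there. This holds because the projection correction terms are potentials of $\rho$, which is supported in $\overline{D\setminus A}$. Consequently the harmonic extension of $g|_{D\setminus A}$ to $A$ is $g$ itself.

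\textbf{Step 4: conclude.} Harmonic extension is linear, so
\[
\phi^{\mathrm{har}} = h^{\mathrm{har}} + g \quad\text{and}\quad \phi_A = \phi-\phi^{\mathrm{har}} = h^A .
\]
The pair $(\phi|_{D\setminus A},\bfc)$ is a measurable function of $(h|_{D\setminus A},\bfc)$, and $h^A$ is independent of $(h|_{D\setminus A},\bfc)$ under the product measure. For a $\sigma$-finite measure, conditioning is interpreted via Markov kernels as in Definition~\ref{defn: conditioning Markov kernel definition}. Under this interpretation the conditional law of $\phi_A$ given $\phi|_{D\setminus A}$ is the law of $h^A$, which is the claimed GFF on $A$.

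\textbf{Main obstacle.} The delicate point is Step 3: verifying that $G_\tau(-i,\cdot)$, with the $\rho$-normalization built in, has vanishing normal derivative on $\partial A\cap\partial D$ and is harmonic on $A$. If this is awkward to check directly, I would instead approximate. Replace $\LF_\tau^{(\gamma,-i)}$ by the regularized measures $\e^{\gamma^2/4}e^{\frac{\gamma}{2}\phi_\e(-i)}\LF_\tau$ from \eqref{eq:def-LF-ann-marked-pt-0}. For small $\e$ the weight is a function of $\phi|_{D\setminus A}$ only. Reweighting by such a function leaves the conditional law of $\phi_A$ unchanged, and that conditional law is the one given by Lemma~\ref{lem:GFF-Markov-0}. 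One then passes to the limit $\e\to0$ using vague convergence.
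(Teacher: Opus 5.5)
Your proposal is correct and follows the paper's argument: write $\phi = h+\frac{\gamma}{2}G_\tau(-i,\cdot)+\bfc$ and reduce to the Markov property of $h$ from Lemma~\ref{lem:GFF-Markov-0}. The paper phrases this reduction by noting that $\bfc$ is the $\rho$-average of $\phi$, hence measurable with respect to $\phi|_{D\backslash A}$; you instead use that $h^A$ is independent of $(h|_{D\backslash A},\bfc)$, and you state explicitly that the insertion term is harmonic on $A$ with zero normal derivative there, which the paper leaves implicit.
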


\begin{proof}
    Let $\phi$ be constructed via $(h,\mathbf{c})$ as in~\eqref{eq:def-LF-ann-marked-pt}. By our choice of the normalization of $h$ in $\bbP_\tau$, the constant $\mathbf{c}$ is the average of $\phi$ over $\{0\}\times [-1,-e^{-2\pi\tau}]$ and is therefore measurable with respect to $\phi|_{D\backslash A}$. The claim therefore follows from the domain Markov property of $h$ as in Lemma~\ref{lem:GFF-Markov-0}.
\end{proof}

\subsection{Brownian path measure}
\label{sec: prelim Brownian path measure}
In this subsection we define the various 2D Brownian path measure that will be used in this paper. We start with the standard Brownian  motion case, and later use the shear matrix in~\eqref{eq: m theta definition} to transfer to the setting where the covariance matrix is~\eqref{eq: gamma correlated BM}. 

Let $\Gamma$ be the set of curves $Z: [0, t]\to \bR^2$, where $t>0$  is called the duration of $Z$. For curves $Z_1$ and $Z_2$ with duration $t_1$ and $t_2$, respectively, we define the uniform metric on $\Gamma$ by
\begin{equation}
    \label{eq: uniform metric curves}
    d(Z_1, Z_2) = \inf_\psi \sup_{t\in [0, t_1]}(|Z_1(t)-Z_2(\psi(t))|+|\psi(t)-t|),
\end{equation}
where the infimum is over all increasing bijiections $[0, t_1]\to [0, t_2]$.
This metric defines a topology and corresponding Borel $\sigma$-algebra on $\Gamma$. The Brownian path measure is a family of infinite measures indexed by $x$, $y\in\bR^2$ such that
\begin{equation}
    \label{eq: bpm defn}
    B_{x, y} = \int_0^\infty B_{x, y, s}^{\#} p_s(x-y)\d s,
\end{equation}
where $B_{x, y, s}^{\#}$  is the Brownian bridge (probability) measure from $x$ to $y$ with duration $s$, and $p_s(z)=(2\pi s)^{-1}e^{-|z|^2/2s}$ is the Poisson kernel on $\bR^2$ with time $s$.   

Let $D\subsetneq \bC$ be a simply connected domain. For simplicity we assume that $\partial D$ consists of a finite number of lines, half-lines and line segments. For $x$, $y\in D$, write $B_{x, y}^D$ for the restriction of $B_{x, y}$ to the curves that stay in $D$.

We define an interior-to-boundary measure as follows. For $x\in D$ and $y\in\partial D$, let $\mathbf{n}_y$ be the inward unit normal at $y$ into $D$. Let 
\begin{equation}
    \label{eq: BPM interior to boundary defn}
    B_{x, y}^D = \lim_{\epsilon\downarrow 0} \frac{1}{2\epsilon} B^D_{x, y+\epsilon\mathbf{n}_y}.
\end{equation}
The  boundary-to-boundary measure can be defined considering the limit twice, i.e., for $y,z\in\partial D$,
\begin{equation*}
    B_{y, z}^D = \lim_{\epsilon\downarrow 0} \frac{1}{2\epsilon} B^D_{y+\epsilon\mathbf{n}_y, z} = \lim_{\epsilon\downarrow 0} \frac{1}{4\epsilon^2} B^D_{y+\epsilon\mathbf{n}_y, z+\epsilon\mathbf{n}_z}.
\end{equation*}
It follows from~\cite[Section 3]{LW04} that the limits above are well-defined, and that the total mass $\|B^{D}_{x, y}\|$  is equal to the exit density at $y\in\partial D$ of a Brownian motion starting at $x \in D$.

For $\eta\in\Gamma$, let $\eta^R$ be the reversal of $\eta$. The Brownian path measures are by definition reversible, i.e., $B_{y, x}(C) = B_{x, y}(\{\eta\in \Gamma: \eta^R\in C\})$. We define boundary-to-interior measure by reversing the paths, i.e., for  $x\in D$ and $y\in\partial D$.
\begin{equation*}
    B_{y, x}^D(C) = B^D_{x, y}(\{\eta\in \Gamma: \eta^R\in C\}).
\end{equation*}

Now we turn to the $\gamma$-correlated Brownian motion $(L, R)$ as defined in~\eqref{eq: gamma correlated BM}, related to the standard Brownian motion $(X, Y)$ via the transform
\begin{equation}
    \label{eq: m theta definition}
    \begin{pmatrix}
        L \\ R
    \end{pmatrix} =
    M_\theta 
    \begin{pmatrix}
        X \\ Y
    \end{pmatrix},
    \qquad
    M_\theta :=
    (\sin\theta)^{-1/2}
    \begin{pmatrix}
        \sin \frac{\theta}{2} + \cos \frac{\theta}{2} 
        & \sin \frac{\theta}{2} - \cos \frac{\theta}{2}\\
        \sin \frac{\theta}{2} - \cos \frac{\theta}{2} 
        & \sin \frac{\theta}{2} + \cos \frac{\theta}{2}
    \end{pmatrix}.
\end{equation}

Let $m_\theta: \bR^2 \to \bR^2$ denote multiplication by $M_\theta$. It induces a map on curves $\Gamma(x, y)\to \Gamma(m_\theta(x), m_\theta(y))$ by composition. Consequently, given a path measure on $\Gamma(x, y)$, we may consider its pushforward measure on $\Gamma(m_\theta(x), m_\theta(y))$. Write $B^{\gamma, D}_{x, y}$ for the pushforward of $B^{m_\theta^{-1}(D)}_{m_\theta^{-1}(x), m_\theta^{-1}(y)}$.

This defines the same Brownian path measure as in, say, \cite{MoTboundary,ConfWeldDisks,ASYZ24}. Compared with their definition, we are using a different shear matrix 
in order to simplify later calculation. Note that any $M_\theta$ such that $M_\theta^2$ equals the desired covariance matrix of $L$ and $R$ will produce the same law on paths.

Let $B_1$, $B_2$ be measures on curves, where the ending points of samples from $B_1$ agrees with the starting points of samples from $B_2$. For  $(\eta_1,\eta_2)\sim B_1\times B_2$,  write $B_1\oplus B_2$ for the law of the concatenation $\eta_1\oplus \eta_2$. The following decomposition is essentially a Markov property for the Brownian path.

\begin{lemma}
    \label{lemma: first and last passage decomposition}
  For any $a,b,c>0$ and with $\mathbb{a}$ as in~\eqref{eq: gamma correlated BM},
     \begin{equation}
        \label{eq: B first passage decomposition}
        B^{\gamma, \bH}_{0, -a+ib} = B^{\gamma,  \bbR_+^2-c}_{0, -a+ib} + \mathbb{a} \int_0^\infty \big( B^{\gamma, \bbR_+^2-c}_{0, -c+i d} \oplus B^{\gamma, \bH}_{-c+i d, -a+ib} \big) \d d
    \end{equation}
    where $\bbR_+^2-c = (-c,\infty)\times(0,\infty)$.
\end{lemma}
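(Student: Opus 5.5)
We will prove \eqref{eq: B first passage decomposition} by splitting paths according to whether they hit the vertical line $\{\re z=-c\}$, and applying the strong Markov property at the first hitting time. The plan is to work first with the standard Brownian path measures via $m_\theta$, and then push forward.

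\textbf{The split.} Take a path $Z=(L,R)$ sampled from $B^{\gamma,\bH}_{0,-a+ib}$, and let $T=\inf\{t: L_t=-c\}$. Since $L_0=0>-c$, either $T$ does not occur before the end of the path, or it does.

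\textbf{Paths that never reach $\{L=-c\}$.} The path stays in $\bH\cap\{L>-c\}=\bbR_+^2-c$. Hence the restriction of $B^{\gamma,\bH}_{0,-a+ib}$ to this event is, by the definition of restricted path measures, exactly $B^{\gamma,\bbR_+^2-c}_{0,-a+ib}$. Since the endpoint $-a+ib$ may have $-a<-c$, this term vanishes in that case, consistent with the formula.

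\textbf{Paths that do reach it.} We decompose $Z$ as $Z|_{[0,T]}\oplus Z|_{[T,\text{end}]}$.

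The first piece is a path in $\bbR_+^2-c$ from $0$ ending on the boundary ray $\{-c+id: d>0\}$. The second piece is a path in $\bH$ from $-c+id$ to $-a+ib$.

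To identify the measure, approximate the endpoint by an interior point $-a+ib+\epsilon$ in the normal direction, and use the corresponding normalization $\frac1{2\epsilon}$ from \eqref{eq: BPM interior to boundary defn}. For the interior-endpoint measure (a Brownian motion killed on exiting $\bH$, with Green-function normalization), the strong Markov property at the stopping time $T$ gives
\[
B^{D}_{x,y}\big|_{\{T<\infty\}}=\int_{\partial}\mathrm{(harmonic\ measure\ density)}\ \big(\text{law of }Z|_{[0,T]}\text{ given }Z_T=w\big)\oplus B^{D}_{w,y}\,\d w.
\]
The harmonic-measure piece, together with the conditioned law, is precisely the interior-to-boundary excursion measure $B^{\gamma,\bbR_+^2-c}_{0,w}$; its total mass is the exit density, by \cite{LW04}. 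Letting $\epsilon\to0$ converts the endpoint to the boundary measure, and the last-exit factors commute with this limit by the definitions.

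\textbf{The constant $\mathbb{a}$.} Exit density in \cite{LW04} is taken with respect to Euclidean arclength for standard Brownian motion, whereas the $\gamma$-measure is defined by pushing forward under $m_\theta$. The line $\{L=-c\}$ pulls back under $m_\theta^{-1}$ to a line in $(X,Y)$-coordinates. The density in $d$ therefore picks up a Jacobian from two sources: the arclength scaling of the line under $M_\theta$, and the change of the inward normal used in the $\frac1{2\epsilon}$ normalization.

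We need to check that the combined factor equals $\mathbb{a}=\sqrt{2/\sin\theta}$. Using the formula for $M_\theta$ in \eqref{eq: m theta definition}, the direction $(0,1)$ in $(L,R)$ corresponds to the direction $M_\theta^{-1}(0,1)$. The normal distance to the line $\{L=-c\}$ transforms by $1/\|\text{row}_1(M_\theta)\|=\mathbb{a}^{-1}$, since $\mathrm{Var}(L_1)=\mathbb{a}^2$.

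This Jacobian bookkeeping is the step I expect to be the main obstacle. The cleaner route is to note that the boundary normalization in the $\gamma$-picture is effectively by the standard deviation of the normal coordinate $L$, which yields exactly one factor $\mathbb{a}$.
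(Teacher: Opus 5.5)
Your route is the same as the paper's. You pull back by $m_\theta^{-1}$ and apply the first-passage decomposition $B^{D}_{0,y}=B^{D_0}_{0,y}+\int_S [B^{D_0}_{0,z}\oplus B^{D}_{z,y}]\,|\mathrm{d}z|$, with $D=m_\theta^{-1}\bH$ and $S=m_\theta^{-1}(\{-c+id:d\ge 0\})$. The paper simply quotes this decomposition from \cite[Lemmas B.2, B.3]{MinBMCutPt}, and both of you then push forward by $m_\theta$. Your strong Markov sketch of that decomposition is fine, up to one mix-up about which endpoint lies on the boundary. The point that needs the $\frac{1}{2\epsilon}$ approximation is the starting point $0\in\partial\bH$. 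The endpoint $-a+ib$ is interior, since $b>0$. So you should run the strong Markov property from $\epsilon\mathbf{n}_0$, and the piece $B^{D_0}_{0,z}$ is then a boundary-to-boundary measure, not an interior-to-boundary one.

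The genuine gap is the constant. It is the only content of the lemma beyond the standard decomposition, and you assert it rather than compute it. Because $B^{\gamma,D}_{x,y}$ is \emph{defined} as the pushforward of $B^{m_\theta^{-1}D}_{m_\theta^{-1}x,\,m_\theta^{-1}y}$, all $\frac{1}{2\epsilon}$-normalizations already live in the pulled-back coordinates. So there is no second Jacobian from ``the change of the inward normal'', and the heuristic based on $\mathrm{Var}(L_1)=\mathbb{a}^2$ plays no role. The only factor is the arclength element along $S$, namely $|\mathrm{d}z|=|M_\theta^{-1}e_2|\,\mathrm{d}d$. Computing it gives $M_\theta^{-1}e_2=\frac{1}{2\sqrt{\sin\theta}}\big(\cos\frac{\theta}{2}-\sin\frac{\theta}{2},\ \cos\frac{\theta}{2}+\sin\frac{\theta}{2}\big)$, hence $|M_\theta^{-1}e_2|=(2\sin\theta)^{-1/2}=\mathbb{a}/2$. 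As a sanity check, at $\theta=\pi/2$ the map $m_\theta$ is multiplication by $\sqrt2$, so the factor is visibly $1/\sqrt2$ and not $\mathbb{a}=\sqrt2$. Your claim of ``exactly one factor $\mathbb{a}$'' therefore does not survive the computation.

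The conventions of Section~\ref{sec: prelim Brownian path measure} make the standard-coordinate decomposition have coefficient $1$ with respect to arclength, since total masses equal exit densities. Under them the prefactor comes out as $\mathbb{a}/2$. The paper attributes its $\mathbb{a}$ to exactly this change of variables, so the stated constant itself appears to be off by a factor of $2$. This is harmless: the only use of the lemma, in Proposition~\ref{prop:BM-MC+MD}, absorbs the prefactor into an unspecified constant.
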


\begin{proof}
 
Let $D = m_\theta^{-1}\bbH$, $S = m_\theta^{-1}(\{-c+id:d\geq 0\})$ and $D_0$ be the connected component of $D\backslash S$ containing 0. Let $y = m_\theta^{-1}(-a+bi)$. By
   ~\cite[Lemmas B.2, B.3]{MinBMCutPt},
    \begin{equation}\label{eq:pf-BM-decom-z}
        B^{D}_{0, y} = B^{D_0}_{0, y} + \int_S [B_{0, z}^{D_0} \oplus B^{D}_{z, y}]\d z.
    \end{equation}
    The claim~\eqref{eq: B first passage decomposition} then follows by applying the transform $m_\theta$ to~\eqref{eq:pf-BM-decom-z}, where the additional factor $\mathbb{a}$ is from change of variables $-c+id = m_\theta(z).$
\end{proof}

\section{Bijection and peanosphere convergence for annular map with orientation}
\label{sec: random conditioned orientation}

The goal of this section is to prove Theorem~\ref{thm: annular orientation scaling limit}. 
In Section~\ref{sec: annular orientations}, we define the pole-free almost acyclic annular maps. In Section~\ref{sec: bijection between maps and lattice paths}, we construct a weight-preserving bijection between the set of pole-free almost acyclic annular maps with a west crossing path and a certain set of lattice paths (Proposition~\ref{prop: discrete time bijection}), which proves the first assertion of  Theorem~\ref{thm: annular orientation scaling limit}. The main input to our proof is the bijection from~\cite{KMSW19}. 
In Section~\ref{subsec:height-func}, we prove the convergence of the lattice path under Brownian scaling and complete the proof of Theorem~\ref{thm: annular orientation scaling limit}.

\subsection{Pole-free almost acyclic annular maps}\label{sec: annular orientations}
A \emph{planar map} is a 
connected planar graph with finitely many edges together with a proper embedding into $\mathbb{S}^2 \cong \wh{\bC}:=\bC\cup\{\infty\}$. 
Two embeddings are equivalent if there exists an orientation-preserving homeomorphism from $\mathbb{S}^2$ to $\mathbb{S}^2$ that takes the first embedded graph to the second. All the planar maps we consider will be rooted, meaning that there is a distinguished oriented edge $\mathfrak e$. The tail vertex of $e$ is called the root vertex and the face to the right of $e$ is called the root face. An \emph{oriented planar map} is a planar map with an assignment of an orientation to each edge of the map. We will work with oriented planar maps. A \emph{source} (resp.\ \emph{sink}) is a vertex with only outgoing (resp.\ incoming) edges, and a vertex which is either a source or a sink is called a \emph{pole}.
A \textit{trail} in a planar map is a sequence of edges consisting of at least one edge, such that the ending vertex of one edge is the starting vertex of the next edge, and each edge only appears in the sequence once. We say that a vertex is \emph{in} a trail if it is the ending vertex or starting vertex of an edge in the trail. A \textit{path} is a trail such that every vertex in the trail is visited only once. 
A \textit{cycle} is a trail where the starting vertex agrees with the ending vertex. 
An oriented map is \emph{acyclic} if it does not contain any cycle.
A \emph{bipolar-oriented map} is an acyclic oriented planar map with a single source and single sink, such that the sink and the source  are incident to a common face. We  assume that the source is the root vertex and that the source and the sink are both incident to the root face.

An \textit{oriented annular map} is an oriented planar map with two distinguished faces $f_{\mathrm{inner}}$ and $ f_{\mathrm{outer}}$ with simple boundaries, where we assume that $f_{\mathrm{outer}}$ is the root face. We allow the case where $f_{\mathrm{inner}}$ and $ f_{\mathrm{outer}}$ share some vertices or edges. Every cycle in an annular map is embedded as a Jordan curve in $\mathbb{A}^2:=\mathbb{S}^2 \setminus (f_{\mathrm{inner}} \cup f_{\mathrm{outer}})$, where by an abuse of notation, $f_{\mathrm{inner}}$ and $f_{\mathrm{outer}}$ are identified with corresponding subsets on $\mathbb{S}^2$. 
A cycle in an annular map is called \textit{contractible} if and only if it is contractible as a loop on $\mathbb{S}^2 \setminus (f_{\mathrm{inner}} \cup f_{\mathrm{outer}})$, and \textit{non-contractible} otherwise. A \emph{crossing path} on an annular map is a path that starts from (a vertex on) one boundary component and ends at the other.

Let $C$ be a non-contractible cycle in an oriented annular map. Suppose the map is embedded in the plane such that $f_{\mathrm{outer}}$ contains $\infty$. We say that $C$ is oriented clockwise (resp.\ counterclockwise) if it is a clockwise (resp.\ counterclockwise) cycle around $f_{\mathrm{inner}}$ under this embedding.

We consider a particular class of oriented annular maps. Namely,
we say that an oriented annular map is \textit{pole-free} if it is source-free and sink-free and we say that the orientation is \emph{almost acyclic}  if it has no contractible cycle.
  We write $\cG^0$ for the set of pole-free annular maps with an almost acyclic orientation such that both inner and outer boundaries are oriented counterclockwise, and there exists a crossing path.

For each vertex $v$, a \emph{group} of incoming (resp.\ outgoing) edges refers to a maximal 
set of consecutive edges in cyclic order around $v$, such that all edges in the set are incoming (resp.\ outgoing) edges. See Figure \ref{fig: annular map example} for an illustration. It is shown in~\cite{KMSW19} that, for bipolar-oriented maps, for each vertex that is not equal to its source or sink, its incidental edges consist of exactly one group of incoming edges and exactly one group of outgoing edges.

\begin{figure}
    \centering
    \includegraphics[height=4cm]{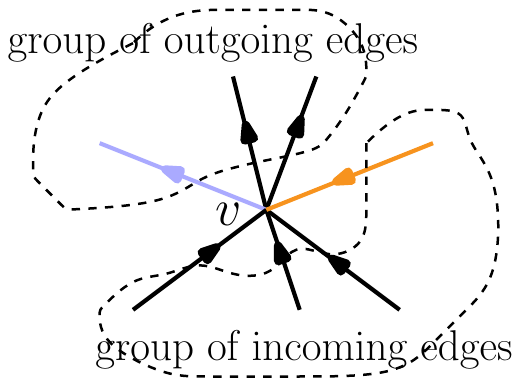}
    \hspace{2cm}
    \includegraphics[height=4cm]{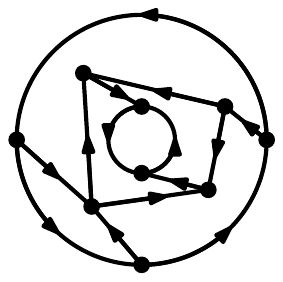}
    \caption{\textbf{Left:} A vertex in an annular map with a single group of incoming and outgoing edges. The blue (resp.\ orange) edge is the last edge in the incoming  outgoing (resp.\ outgoing  incoming) group of edges in the counterclockwise direction. \textbf{Right:} An example of a pole-free almost acyclic annular map.}
    \label{fig: annular map example}
\end{figure}

\begin{lemma}
    \label{lemma: good orientation two groups}
 Let $G\in\cG^0$. Then the same property above holds, i.e., for each vertex $v$, its incidental edges consist of exactly one group of incoming edges and exactly one group of outgoing edges.
\end{lemma}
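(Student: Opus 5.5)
We argue by contradiction, following the strategy of \cite{KMSW19} for bipolar orientations but with the acyclicity replaced by the absence of contractible cycles. Since $G$ is pole-free, every vertex $v$ has at least one incoming and at least one outgoing edge, so around $v$ there is at least one group of each kind and, by cyclic alternation, the same number $g\ge1$ of incoming and outgoing groups. It therefore suffices to rule out $g\ge 2$. Suppose $g\ge2$ at some vertex $v$. Then in cyclic order around $v$ we can find outgoing edges $e_1,e_3$ and incoming edges $e_2,e_4$ appearing in the order $e_1,e_2,e_3,e_4$.

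The main tool is that directed paths can always be extended. Starting from $e_1$ (resp.\ $e_3$), keep following outgoing edges, which exist by pole-freeness. Since the map is finite, the walk eventually revisits a vertex, producing a directed cycle, which by almost acyclicity must be non-contractible. Similarly, following $e_2,e_4$ backwards along incoming edges produces backward-directed paths that end in non-contractible cycles. Together with $v$ these four directed paths (two forward, two backward) separate a neighbourhood of $v$ into four sectors.

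Next we derive a contradiction from planarity and the annular topology. The key observation is that a non-contractible simple directed cycle in $\mathbb{A}^2$ separates $f_{\mathrm{inner}}$ from $f_{\mathrm{outer}}$. Moreover, two non-contractible directed cycles that meet must be consistently oriented (both clockwise or both counterclockwise), since otherwise at a crossing or touching point one could splice them into a contractible directed cycle. (The boundaries themselves are counterclockwise cycles, which fixes the orientation where the cycles reach the boundary.) We then take the forward path from $e_1$ and the backward path from $e_2$, which leave $v$ in adjacent sectors, and concatenate to get a directed trail through $v$. We do the same with $e_3,e_4$. We want to show that one of the regions bounded by these trails together with pieces of the limiting non-contractible cycles contains a closed directed curve through $v$ that is contractible: either it bounds a disk not containing $f_{\mathrm{inner}}$, or it winds around in the wrong direction. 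In either case we get a contractible directed cycle, a contradiction.

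The main obstacle is this last topological step: making precise that the alternating pattern $e_1,e_2,e_3,e_4$ at $v$, combined with the fact that every maximal directed path eventually spirals into consistently oriented non-contractible cycles, forces a contractible cycle. I would handle it by passing to the universal cover $\cS$ of $\mathbb{A}^2$, a strip with $\bbZ$-periodic lifted map. Non-contractible cycles lift to bi-infinite periodic directed paths, and the lifted orientation is acyclic, since contractible cycles are exactly the ones that lift to closed cycles. Locally, in a finite periodic window, one can then run the classical argument of \cite{KMSW19}: in an acyclic planar orientation, two outgoing groups at $v$ produce two directed paths from $v$ that, together with the interleaved incoming paths, must enclose a region whose boundary forces a source or a sink inside. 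In the cover that region is bounded, since it is cut off between translates of the periodic lines, so it projects to a pole or a contractible cycle in $G$. Either conclusion contradicts $G\in\cG^0$.
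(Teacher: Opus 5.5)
\textbf{The gap.} The missing piece is exactly the step you flag as the main obstacle, and the universal-cover fix does not close it. The argument of \cite{KMSW19} works because every maximal directed path ends at the unique sink or source on the outer face. A bounded region enclosed by paths from $v$ therefore traps a path that has nowhere to end. In the cover there are no poles, and nothing forces two of your four paths from $\tilde v$ to meet. They can be pairwise disjoint and escape to the two ends of the strip, enclosing no bounded region. So ``cut off between translates of the periodic lines'' is an assertion, not an argument.

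\textbf{A configuration your sketch does not exclude.} Take the outer boundary counterclockwise and a clockwise cycle $M$ separating the two faces. Join the inner boundary to $M$ by edges pointing into $M$. Put a vertex $v$ between the outer boundary and $M$, with edges $v\to m_1$, $m_2\to v$ ($m_i\in M$) and $v\to b_1$, $b_2\to v$ ($b_i$ on the outer boundary). Place them so that $v\to m_1\to\cdots\to m_2\to v$ winds once clockwise and $v\to b_1\to\cdots\to b_2\to v$ winds once counterclockwise.
\begin{itemize}
\item Then $v$ has the pattern in/out/in/out, yet every \emph{simple} directed cycle is non-contractible.
\item With the natural choice of your paths (staying on $M$ and on the outer boundary), the four lifted paths from $\tilde v$ are disjoint and run to $\pm\infty$.
\item The limiting cycles, $M$ and the outer boundary, are disjoint, so your splicing remark gives nothing.
\end{itemize}
The contradiction comes only from the non-simple closed trail $v\to m_1\to\cdots\to m_2\to v\to b_1\to\cdots\to b_2\to v$ of winding zero. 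In the cover this is a directed cycle through $\tilde v$ and a translate of $\tilde v$. So a proof along your lines must route paths back through $v$, or compare $v$ with its translates, and must count non-simple closed trails as cycles. The proposal has no such mechanism. Your splicing claim and the acyclicity of the lift also need an argument when the cycles involved share edges.

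\textbf{The unused hypothesis and the paper's route.} You never use the hypothesis, built into the definition of $\cG^0$ in Section~\ref{sec: annular orientations}, that a directed crossing path exists. This is what makes the paper's proof short. By Lemma~\ref{lemma: cut to be bipolar}, cutting along a crossing path $p$ yields a bipolar-oriented map $\wt G$. In $\wt G$, every non-pole vertex has exactly one incoming and one outgoing group by \cite{KMSW19}. Vertices off $p$ are unaffected by the cut. For $v$ on $p$, both copies $v_\rmL$ and $v_\rmR$ keep the edges of $p$ at $v$, so an alternating pattern at $v$ would persist at one of the copies, which is impossible. The global topology you were trying to control in the cover is absorbed into the acyclicity of $\wt G$ proved in Lemma~\ref{lemma: cut to be bipolar}.
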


Before proving the lemma, we define the operation of \emph{cutting} along a crossing path. 
    Let $G\in\cG^0$ and $p$ be a crossing path. For every vertex $v$ on $p$, every incidental edge that is not on $p$ is either on the \emph{left} of $p$ or \emph{right} of $p$. This is true because $p$ is directed; for the starting and ending vertices we also use the assumption that they are on the boundary of the distinguished faces.
    We then produce an oriented planar map $\wt{G}$ by adding two disjoint copies $\{p_\rmL, p_\rmR\}$ of $p$ to the map $G\setminus p$, so that for every vertex $v$ on $p$, edges on the left (resp.~right) of $v$ are connected to the copy of $v$ on $p_\rmL$ (resp.~$p_\rmR$) in $\wt{G}$. The planar graph $\wt{G}$ may be embedded in $\mathbb{S}^2$ by a modification of the embedding of $G$; see Figure \ref{fig: cutting along path}. Note that the definition above makes sense even if $p$ partially traces the outer boundary; see Figure \ref{fig: cutting along path with boundary}.

\begin{figure}[ht]
    \centering
    \includegraphics[height=3.5cm, page=1]{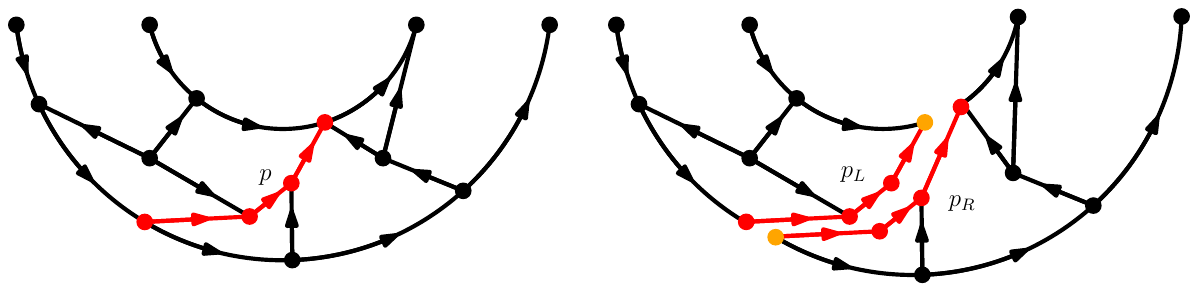}
    \caption{Cutting along a crossing path $p$ (shown in red) that connects the inner face and the outer face turns the annular map into a bipolar-oriented map, where the two poles are colored in orange. After cutting, each vertex on $p_\rmL$ has only one outgoing edge which is the one on $p_\rmL$. One should understand the maps obtained from cutting as bipolar-oriented maps with some nontrivial boundary conditions. See Figure \ref{fig: cutting dashed interface curve} for details on how this boundary condition would affect the interface curve. 
    }
    \label{fig: cutting along path}
\end{figure}


\begin{figure}[ht]
    \centering
    \includegraphics[height=3.5cm, page=2]{img/path-cutting.pdf}
    \caption{The path $p$ here traces part of the outer boundary. By our assumption, after cutting, the copy of the starting vertex on the right is still a source.
    }
    \label{fig: cutting along path with boundary}
\end{figure}

\begin{lemma}
\label{lemma: cut to be bipolar}
     Let $G\in\cG^0$, let $p$ be a crossing path on $G$, and let $\widetilde{G}$ be the map obtained by cutting $G$ along $p$. Then $\widetilde G$ is a bipolar-oriented map where the sink and the source are incident to the same face.
\end{lemma}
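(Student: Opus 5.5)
The plan is to verify three things about $\widetilde G$ in turn: it is a planar map with disk-like structure, it is acyclic, and it has exactly one source and one sink, both on a common face.

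\textbf{Topology.} Cutting the annulus $\mathbb{A}^2$ along the simple crossing path $p$ produces a simply connected region. Concretely, $\widetilde G$ is embedded in $\mathbb{S}^2$ so that $f_{\mathrm{inner}}$, $f_{\mathrm{outer}}$ and the slit along $p$ merge into a single face $f$. The boundary of $f$ consists, in cyclic order, of:
\begin{itemize}
\item[] the outer boundary cycle,
\item[] the copy $p_\rmR$ traversed from its start $s_\rmR$ to its end $t_\rmR$,
\item[] the inner boundary cycle,
\item[] and $p_\rmL$ traversed back from $t_\rmL$ to $s_\rmL$.
\end{itemize}
Every other face of $\widetilde G$ is a face of $G$. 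Connectivity of $\widetilde G$ is clear, since every vertex of $G$ connects to $p$ inside the annulus.

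\textbf{Acyclicity.} Any directed cycle $C$ in $\widetilde G$ maps to a closed directed trail in $G$ that never crosses $p$. Such a trail lies in the simply connected region $\mathbb{A}^2\setminus p$, so it is contractible in $\mathbb{A}^2$. It contains a directed cycle, which would be a contractible cycle of $G$. This contradicts almost-acyclicity.

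\textbf{Poles.} A vertex not on $p$ has the same incident edges in $\widetilde G$ as in $G$, so it is not a pole. For a vertex $v$ of $p$ other than its endpoints, each copy of $v$ retains one incoming and one outgoing edge of $p$, so neither copy is a pole. The whole difficulty is therefore at the endpoints. Let $p$ start at $s$ on the outer boundary and end at $t$ on the inner boundary.
\begin{itemize}
\item[] Since the boundary cycles are oriented counterclockwise, $s$ has an incoming outer-boundary edge arriving on one side of $p$ and an outgoing outer-boundary edge leaving on the other side.
\item[] By orientation conventions, the incoming boundary edge attaches to the copy on one side, say $s_\rmL$. That copy then has incoming edges plus the outgoing $p$-edge, so it is not a pole.
\item[] The other copy $s_\rmR$ has the outgoing $p$-edge, the outgoing boundary edge, and any interior edges on that side.
\end{itemize}

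I expect the main obstacle to be showing that every interior edge attached to $s_\rmR$ is outgoing, and symmetrically that every edge at $t$ on the appropriate copy is incoming. The plan is to use acyclicity, which is now established, together with the standard fact that a finite acyclic orientation has at least one source and at least one sink. It then suffices to show that at most one source and at most one sink exist.

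To get uniqueness, I would argue as follows. Take any source $u$ of $\widetilde G$. Extend a maximal directed path backwards from $u$ in $G$. Since $G$ is pole-free and has no contractible cycles, this backward path must wrap around non-contractibly, and so it must cross $p$. In $\widetilde G$ this means the backward path terminates at a vertex of the slit or at a boundary vertex.

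Checking this carefully, one finds that only $s_\rmR$ can fail to have an incoming edge. In the case where $p$ partially traces the outer boundary (Figure~\ref{fig: cutting along path with boundary}), the same conclusion holds because the copy of the starting vertex on the right is still the source. The symmetric argument shows that the unique sink is $t_\rmL$ (or $t_\rmR$, depending on orientation).

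Finally, both $s_\rmR$ and $t$'s copy lie on the boundary of the merged face $f$. Hence $\widetilde G$ is bipolar-oriented with its source and sink on a common face, which is what Lemma~\ref{lemma: cut to be bipolar} asserts.
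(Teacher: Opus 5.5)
Your proof is correct, but the key step takes a different route from the paper. The key step is showing that the right copy of the starting vertex is a source and the appropriate copy of the ending vertex is a sink.

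\textbf{The paper's route.} It argues locally. If an edge $e$ to the right of $p$ at $v_1$ were incoming, pole-freeness lets one extend $e$ backwards into a trail $p'$ until it meets a boundary cycle, $p$, or itself. Pieces of $p'$, $p$ and the boundary are then spliced into a contractible cycle; the case analysis is left to Figure~\ref{fig: cut to create source}. All remaining vertices are dismissed by the same local bookkeeping you do.

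\textbf{Your route.} You use the no-contractible-cycle hypothesis only once, to get acyclicity of $\wt G$. Then the two ends of a longest directed path in the finite acyclic map $\wt G$ must be a source and a sink. Your bookkeeping already shows that every vertex other than $s_\rmR$ has an incoming edge:
\begin{itemize}
\item vertices off $p$, by pole-freeness;
\item interior copies and both copies of $t$, via edges of $p$;
\item $s_\rmL$, via the incoming outer-boundary edge, which can never lie on $p$.
\end{itemize}
Symmetrically, every vertex other than the one sink candidate at $t$ has an outgoing edge. Existence therefore immediately identifies the unique source and sink.

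\textbf{Comparison.} Your route is shorter and avoids the splicing case analysis. The paper's argument instead gives an explicit local reason why all edges to the right of $p$ at $v_1$ are outgoing.

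\textbf{Consequences for your write-up.}
\begin{itemize}
\item The paragraph beginning ``To get uniqueness'' is superfluous, and as written (``checking this carefully, one finds'') it is not an argument. Conclude directly from bookkeeping plus existence.
\item The case where $p$ starts along $\partial f_{\mathrm{outer}}$ needs no appeal to the figure. There the first edge of $p$ is the outgoing boundary edge, $s_\rmL$ still carries the incoming boundary edge and the first $p$-edge, and $s_\rmR$ still carries the first $p$-edge.
\item In the acyclicity step, start from a simple directed cycle of $\wt G$. Its image in $G$ is a closed directed trail, contractible because it is the image of a loop in the simply connected cut surface. That trail is itself a cycle in the paper's sense, since vertices may repeat, and it gives the contradiction. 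Do not pass to a simple sub-cycle of it, which can be non-contractible (e.g.\ two oppositely oriented non-contractible loops meeting at a vertex of $p$).
\end{itemize}
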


\begin{proof}
   A cycle in $\wt G$ would correspond to a contractible cycle in $G$, which implies that $\wt G$ cannot have any cycles.
    Next we verify that cutting yields exactly one source and one sink. Assume $p$ starts from vertex $v_1$ on the outer boundary and ends at vertex $v_2$ on the inner boundary. We write $v_1^\rmR$ for the copy of the vertex  $v_1$ on $p_\rmR$, and $v_2^\rmL$ for the copy of the vertex $v_2$ on $p_\rmL$.

    We first show that  $v_1^\rmR$ is a source in $\widetilde G$. 
    Let $e_L$, $e_R$ be the incoming and outgoing edges incident to $v_1$ on $\partial f_{\mathrm{outer}}$, respectively, and $e_1$ be the first edge on $p$. We claim that if $e$ is an edge incident to $v$ and $e_L$, $e_1$, $e$, $e_R$ are in cyclic order, then $e$ must be an outgoing edge. This implies that in $\wt{G}$, every edge incident to  $v_1^\rmR$ is outgoing, and hence $v_1^\rmR$ is a source in $\wt{G}$. 
    To prove the claim, we assume for the sake of contradiction that $e$ is an incoming edge. Since $G$ is source-free, the starting vertex of $e$ has at least one incoming edge, and we choose one edge $e'$ arbitrarily to extend $e$ to a trail consisting of $e$ and $e'$. Repeat this process until the trail contains an edge that starts on a vertex that is either on the boundary, on $p$, or on our trail. We denote the resulting train by $p'$. 
    No matter where $p'$ starts, there will be a contractible cycle by concatenating appropriately pieces of $p$, $p'$ and the boundary of $\widetilde G$, and this is a contradiction; see Figure \ref{fig: cut to create source}.
    A similar argument will show that $v_2^\rmL$ is a sink in $\wt{G}$. The source $v_2^\rmR$ and the sink $v_2^\rmL$ lie on a common face.

    \begin{figure}
        \centering
        \includegraphics[height = 6cm]{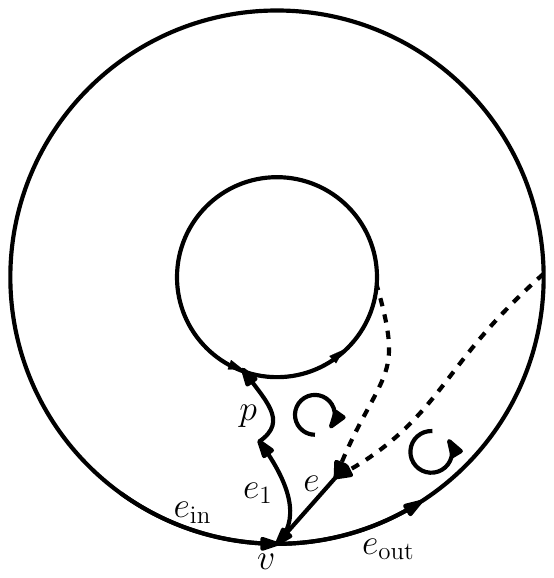}
        \hspace{1cm}
        \includegraphics[height = 6cm]{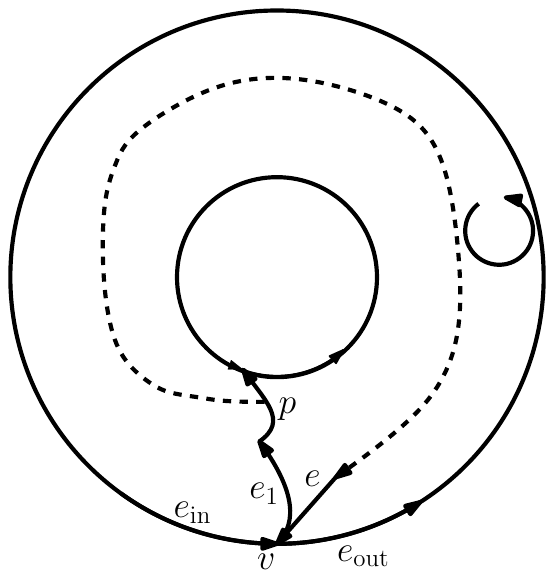}
        \caption{An illustration for the proof of Lemma \ref{lemma: cut to be bipolar}, more specifically the part where we prove that $v_1^\rmR$ is a source. If $e$ is an incoming edge of $v$, then there always exists a path $p'$, represented by dashed lines in the figures, such that $p'$ contains $v$ and part of $p'$ concatenated with part of the two boundaries and $p$ make a contractible cycle. This is a contradiction.} 
        \label{fig: cut to create source}
    \end{figure}
    
    To conclude the proof, we need to show that there is no other source or sink. By the definition of pole-free maps, vertices of $\wt G$ not on $p_\rmL$ or $p_\rmR$ can not be a source or a sink. Also, it is clear that  vertices on $p_\rmL$ or $p_\rmR$ other than $v_1^\rmR$ or $v_2^\rmL$ cannot be a source or a sink. 
\end{proof}


\begin{proof}[Proof of Lemma~\ref{lemma: good orientation two groups}]
    By Lemma~\ref{lemma: cut to be bipolar}, we can cut along the crossing path $p$ and obtain a bipolar-oriented map. The path $p$ corresponds to two paths in the new bipolar-oriented map which we call $p_\rmL$ and $p_\rmR$. If $v$ is not incident to $p$, then clearly it has the desired property. Now assume that $v$ is incident to $p$, and there are $4$ incidental edges $e_1, e_2,e_3,e_4$ in cyclic order that are oriented inward, outward, inward and outward, respectively. 
    It can happen that $e_{\mathrm{in}}$ and $e_{\mathrm{out}}$ coincides with some of the $4$ edges, but in all cases, incidental edges to $v$ in $\{e_1,e_2,e_3,e_4\}\cup \{e_{\mathrm{in}},e_{\mathrm{out}}\}$ will consist of at least $2$ groups of incoming edges and $2$ groups of outgoing edges in cyclic order. 
    Moreover, $e_{\mathrm{in}}$ and $e_{\mathrm{out}}$ must belong to distinct inward and outward groups. After cutting, at least one copy of $v$ on  $p_\rmL$ and $p_\rmR$ will have more than $2$ groups of alternating inward and outward edges, which gives a contradiction due to the first claim.
    See Figure~\ref{fig: bad vertex on path}.
\end{proof}
\begin{figure}[ht]
    \centering
    \includegraphics{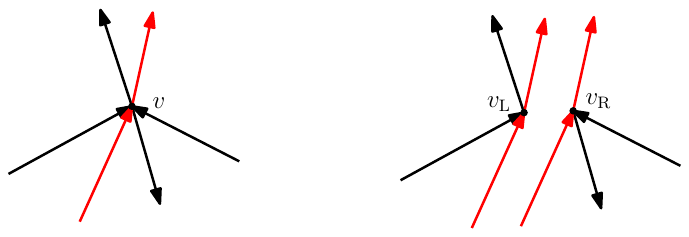}
    \caption{Suppose $v\in p$ does not satisfy the assertion of Lemma \ref{lemma: good orientation two groups}. 
    Then cutting along $p$, the vertex $v$ is ``split'' into two vertices $v_L$ and $v_R$, and we see that $v_R$, as a vertex in a bipolar-oriented planar map, does not satisfy this property, which makes a contradiction.}
    \label{fig: bad vertex on path}
\end{figure}
Suppose $G$ is in $\cG^0$ or that $G$ is a  bipolar-oriented map. 
As a consequence of Lemma \ref{lemma: good orientation two groups}, each vertex of $G$ has a single non-empty group of outgoing edges, and a single non-empty group of incoming edges. In counterclockwise order, we denote the last edge in the outgoing group to be the \textit{west edge} of the vertex, and the last edge in the incoming group to be the \textit{east edge} of the vertex. By our choice of orientation, for $G\in\cG^0$, each edge $e$ on $f_{\mathrm{inner}}$ (resp.\ $f_{\mathrm{outer}}$) is a west (resp.\ east) edge. Note that if we reverse the orientation of the edges, then the east edge of a vertex $v$ becomes the west edge of $v$ in the new map.
 West and east are the counterparts of \textit{north-west} and \textit{south-east} in~\cite{KMSW19}, but we choose our notation to be consistent with the imaginary geometry in the continuum, and with the notation in~\cite{GHS16}. 

 {
 A \emph{west trail} (resp.~\emph{east trail}) is a trail where each composing edge is a west (resp.~east) edge, and a  \emph{west path} (resp.~\emph{east path}) is a path where each composing edge is a west (resp.~east) edge.
}

\begin{proposition}
\label{prop: no loop in good annular map}
    Let $G\in\cG^0$ and let $p$ be a  west trail. Then $p$ cannot cross other west trails of $G$ or any east trail of $G$. The same holds if $G$ is a  bipolar-oriented map. 
    Furthermore, if there exists a  crossing west path in $G\in\cG^0$  (i.e., a path which is both crossing and west), 
    then there can be no west trail which forms a cycle before hitting the inner boundary.
\end{proposition}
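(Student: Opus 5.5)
We first clarify what ``cross'' means. Two trails $p$ and $q$ cross at a common vertex $v$ if, locally at $v$, $q$ passes from one side of $p$ to the other. Equivalently, the incoming and outgoing edges of $q$ at $v$ separate the incoming and outgoing edges of $p$ at $v$ in cyclic order. Since crossing is a local property at a vertex, the first two claims reduce to a statement about the cyclic order of edges around a single vertex. By Lemma~\ref{lemma: good orientation two groups}, for both $G\in\cG^0$ and bipolar-oriented maps, the edges around every vertex $v$ split into one incoming group followed by one outgoing group (in counterclockwise order). The west edge $w(v)$ is the last edge of the outgoing group, which is immediately followed by the first incoming edge. The east edge is the last edge of the incoming group, which is immediately followed by the first outgoing edge.

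\textbf{Step 1 (two west trails).} Suppose west trails $p$ and $q$ both pass through $v$. Both leave $v$ along the same edge $w(v)$, since each vertex has a unique west edge. Thus at $v$ they share their outgoing edge and differ at most in their incoming edges. Two trails that share the outgoing edge at a vertex cannot cross there. Two incoming edges plus a common outgoing edge cannot interlace, because interlacing needs four distinct edges alternating in cyclic order. After the common vertex, $p$ and $q$ follow the same edge and hence continue to coincide. So they merge and never cross.

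\textbf{Step 2 (west versus east).} Let $p$ be a west trail and $q$ an east trail meeting at $v$. Write $p$'s edges at $v$ as $(a, w(v))$ and $q$'s as $(e(v), b)$, where $a$ and $e(v)$ are incoming and $w(v)$ and $b$ are outgoing. In counterclockwise order the edges appear as: incoming group ending with $e(v)$, then outgoing group ending with $w(v)$. Hence $e(v)$ and $w(v)$ bound the outgoing group on either side, and all of $b$'s possible positions lie between $e(v)$ and $w(v)$. Positioning $a$ anywhere in the incoming group, one checks that the four edges cannot alternate $p,q,p,q$. Therefore $q$ stays on one side of $p$ at $v$. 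This case check is the only real computation, and it uses only the two-group structure, so it applies verbatim to bipolar maps.

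\textbf{Step 3 (no west cycle).} Suppose a west trail starting at some vertex forms a cycle $C$ before reaching the inner boundary. By almost acyclicity, $C$ is non-contractible, so it separates $f_{\mathrm{outer}}$ from $f_{\mathrm{inner}}$. The crossing west path $p^*$ runs from $\partial f_{\mathrm{outer}}$ to $\partial f_{\mathrm{inner}}$, so it must meet $C$. By Step 1, once $p^*$ meets $C$ at a vertex, it follows the west edges of $C$ forever and stays on $C$. It therefore never reaches the inner boundary, which is a contradiction.

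One subtlety: if $p^*$ starts on $C$ (or $C$ touches the outer boundary), the same merging argument still applies, since from the first common vertex onward $p^*$ coincides with $C$. Another subtlety: the cycle might reach the inner boundary, but the hypothesis ``before hitting the inner boundary'' excludes this. Also, the boundary edges of $f_{\mathrm{inner}}$ are west edges, so a west trail that reaches $f_{\mathrm{inner}}$ does cycle around it, which is consistent with the statement. The main obstacle I expect is making Step 2's cyclic-order argument rigorous when edges coincide, for example when $a=e(v)$ or $b=w(v)$. These degenerate cases are handled by observing that shared edges only reduce the number of distinct edges, so interlacing is impossible.
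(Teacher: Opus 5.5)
Your proposal is correct and follows the paper's proof essentially step for step. West trails merge because west edges are unique. At a common vertex, the two-group cyclic order puts both edges of the east trail on one side of the west trail; the paper phrases this as $e_{i+1},e'_{j+1},e'_j,e_i$ appearing in clockwise order. Finally, a west cycle that avoids $f_{\mathrm{inner}}$ is non-contractible, so the crossing west path must meet it and merge into it, and therefore cannot reach $f_{\mathrm{inner}}$.
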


\begin{proof}
   We work with the case where $G\in\cG^0$; the claim when $G$ is a bipolar orientation is similar. It is clear from definition that two west trails merge upon intersecting. Now assume $v$ is on a west trail $p = e_1\ldots e_n$ and on an east trail $p'=e_1'\ldots e_m'$, and assume $v$ is incident to $e_i$, $e_{i+1}$,  $e_j'$ and $e_{j+1}'$. 
   By the definition of the east edge and west edge, $e_{i+1},e'_{j+1},e'_{j},e_i$ must be in clockwise direction, 
   which implies that $p$ must bounce off $p'$ and cannot cross $p'$. For the last claim, let $p$ be a crossing west path and $\wt p$ be a west trail. If $\wt p$ forms a cycle before hitting the inner boundary, since $G$ has no contractible loops, $\wt p$ must hit and merge with $p$ before forming a cycle. Then if $\wt p$ further completes a cycle before hitting $f_{\mathrm{inner}}$, then $p$ can no longer hit  $f_{\mathrm{inner}}$, which contradicts with the assumption of a crossing path. This finishes the proof.
\end{proof}

\begin{figure}
    \centering
    \includegraphics[scale=0.6]{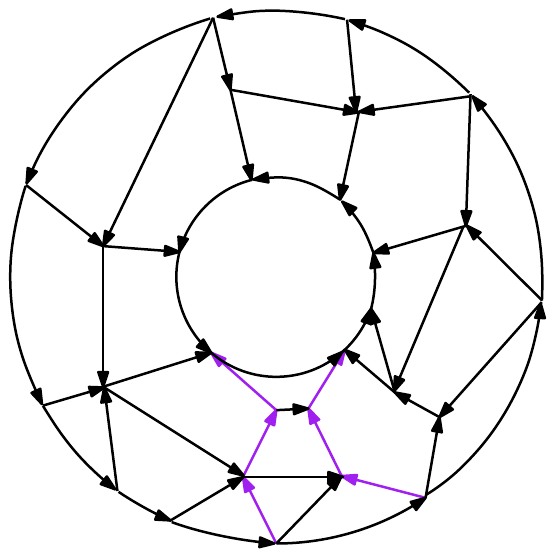}
    \hspace{1cm}
    \includegraphics[scale=0.6]{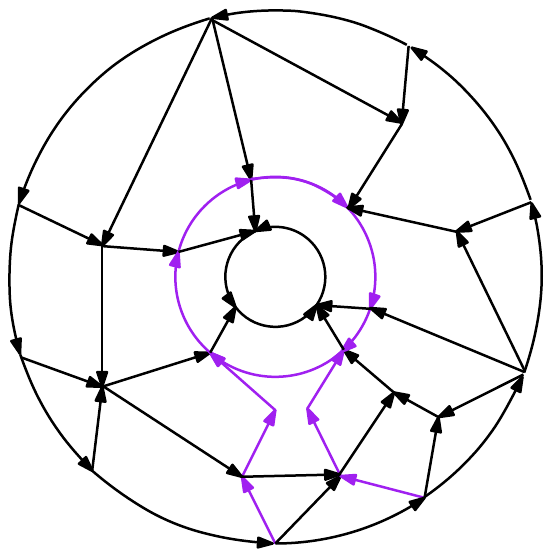}
    \caption{\textbf{Left:} Both purple paths are west paths that hits the inner boundary. \textbf{Right:} Both purple trails are west trails containing a non-contractible cycle.  By Proposition~\ref{prop: no loop in good annular map}, there does not exist any crossing west path.}
    \label{fig: group of incoming and outgoing edges}
\end{figure}

Note that there exists $G\in\cG^0$ such that there is no crossing west path; see the right part of Figure~\ref{fig: group of incoming and outgoing edges} for an example. Let $\cG\subseteq \cG^0$ be the set of maps in $\cG^0$ where there exists a crossing west path. 

\subsection{Bijection between pole-free almost acyclic annular maps and lattice paths}
\label{sec: bijection between maps and lattice paths}
Recall that $\cG$ is the set of pole-free almost acyclic annular maps with cyclically oriented boundary such that there exists a crossing west path. For $k\in \bN$, let $\cG_k$ be the subset of $\cG$ where every map has $k$ edges on the outer boundary. The goal of this subsection is to prove Proposition~\ref{prop: discrete time bijection}, where we construct a weight-preserving bijection between $\cG_k$ and $\mathcal H_k$, a set of lattice paths on $\bZ_{\geq 0}^2$.

We first define the type-$(i,j)$ faces for bipolar-oriented maps $\wt G$.
Let $f$ be a bounded face of $\wt G$, and let $\partial f $ be the set of edges incident to $f$. A vertex $v$ on the boundary of $f$ is called a \emph{maximal} (resp.~\emph{minimal}) vertex if both edges in $\partial f$ incident to $v$ are incoming (resp.~outgoing). Following~\cite[Section 2]{KMSW19}, $f$ has a unique maximal vertex and a unique minimal vertex. We say $f$ is of type-$(i,j)$, if, on the boundary, there are $i+1$ edges from minimum vertex to the maximum vertex in the clockwise direction, and $j+1$ edges from the minimum vertex to the maximum vertex in the counterclockwise direction. 
\begin{lemma}\label{lem:type-ij}
    Let $G\in\cG$. Then the notion of type-$(i,j)$ faces is well-defined for every face in $G\in\cG$ that is different from $f_{\mathrm{inner}}$ and $f_{\mathrm{outer}}$.
\end{lemma}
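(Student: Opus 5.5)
The plan is to reduce to the bipolar case through Lemma~\ref{lemma: cut to be bipolar}, using the crossing west path that exists because $G\in\cG$. Fix a bounded face $f\neq f_{\mathrm{inner}},f_{\mathrm{outer}}$ of $G$, and let $p$ be a crossing west path. Cut $G$ along $p$ to obtain $\wt G$, which by Lemma~\ref{lemma: cut to be bipolar} is bipolar-oriented. The aim is to show that $f$ corresponds to a single face of $\wt G$ with the same boundary cycle, so that the statement for bipolar maps from \cite[Section 2]{KMSW19} applies. That statement says $f$ has a unique maximal vertex and a unique minimal vertex, and its boundary splits into a clockwise directed path and a counterclockwise directed path. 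The type is then defined by these path lengths.

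\textbf{Step 1: faces of $G$ survive the cut.} Cutting along $p$ only duplicates the vertices and edges of $p$. Every face of $G$ other than $f_{\mathrm{inner}}$ and $f_{\mathrm{outer}}$ therefore lies entirely on one side of each edge of $p$ it touches, so it becomes a face of $\wt G$. The only faces that change are the two distinguished faces, which merge into the outer face of $\wt G$.

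The boundary walk of $f$ is also unchanged. Whenever the boundary of $f$ passes through a vertex $v$ of $p$, the two consecutive boundary edges of $f$ at $v$ lie on the same side of $p$, possibly being edges of $p$ itself. In $\wt G$ they are therefore attached to the same copy $v^\rmL$ or $v^\rmR$. This side-consistency is the main point to check carefully. A face could a priori touch $p$ from both sides at different vertices, but that is harmless: at each vertex, the corner of $f$ lies on one side, and that corner is what the boundary walk uses.

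\textbf{Step 2: compute the type in $\wt G$.} Since $f$ is a bounded face of $\wt G$ (the cut surface is a disk and $f$ is not its outer face), the bipolar-map result gives the decomposition of $\partial f$ into two directed paths. Their lengths $i+1$ and $j+1$ are measured on the same boundary cycle as in $G$, and orientations are preserved by the cut. Hence $f$ has a well-defined type-$(i,j)$ in $G$.

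\textbf{Step 3: independence of $p$.} I will argue that the type does not depend on the choice of $p$. This is immediate, because maximal and minimal vertices and the clockwise and counterclockwise boundary lengths are defined intrinsically from the orientations of the edges on $\partial f$. The cut only serves to prove that a unique maximum and a unique minimum exist.

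Alternatively, the proof can avoid the cut by arguing directly. If $\partial f$ had two maximal vertices, there would be at least two local maxima and two local minima along $\partial f$. Using pole-freeness, as in the proof of Lemma~\ref{lemma: cut to be bipolar}, one would extend directed paths from the interior sides to produce a contractible cycle inside the disk bounded by $\partial f$ together with these paths. However, $\partial f$ need not be contractible-safe near the inner face, so I prefer the cut argument. The main obstacle is Step 1's verification that $f$ is not split and is not the face adjacent to $p$ in a degenerate way, for instance when $p$ runs along the outer boundary as in Figure~\ref{fig: cutting along path with boundary}.
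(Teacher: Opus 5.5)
Your proposal is correct and is essentially the paper's proof, which is the one-line reduction: cut $G$ along a crossing path using Lemma~\ref{lemma: cut to be bipolar}, then invoke the face structure of bipolar-oriented maps from \cite{KMSW19}.

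One small correction to Step 1. A non-distinguished face need not lie on one side of each edge of $p$ it touches: if an edge of $p$ is a bridge, the face $f$ lies on both sides of it. What you actually need is your corner-wise version. Each corner of $f$ lies on one side of $p$, so the boundary walk of $f$ lifts to the boundary walk of a bounded face of $\wt G$, and this handles the bridge case as well.
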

\begin{proof}
    This follows from the result for bipolar-oriented maps by cutting along a crossing path as in Lemma~\ref{lemma: cut to be bipolar}.
\end{proof}

    Now we review the theory of height functions in~\cite{KMSW19}.  Let $\wt{G}$ be a bipolar-oriented map. Its edge set $E(\wt{G})$ consists of the west edges $E_0(\wt{G})$ and the non-west edges $E_1(\wt{G})$. 
    For every $e\in E_1(\wt{G})$, we introduce an additional vertex $v'$ associated to $e$ embedded at the midpoint of $e$, as well as an additional edge starting from $v'$ and ending at the ending vertex of $e$, embedded in the natural way.
    Then there is a set of vertices $V'(\wt{G})$ and edges $E'(\wt{G})$ obtained in this way, each in bijective correspondence with $E_1(\wt{G})$.
    The \emph{west tree} associated to $\wt{G}$ is defined to be the planar tree $\mathcal{W}(\wt{G}) = (V(\wt{G})\cup V'(\wt{G}), E_0(\wt{G}) \cup E'(\wt{G}))$, where we impose the sink of $\wt{G}$ to be the root of $\mathcal{W}(\wt{G})$.

    If we reverse the orientations of each edge and perform the same procedure, we obtain an analogous tree rooted at the source, which we call the \emph{east tree}. We can further embed both trees simultaneously on $\mathbb S^2$ such that they do not cross each other; 
    c.f.~\cite[Figure 2]{KMSW19} and Figure~\ref{fig: cutting dashed interface curve}. 
    Each edge $e\in E(\wt G)$ corresponds to two edges, $e_{\mathrm W}$ in the west tree and $e_{\mathrm E}$ in the east tree. We can further define a path $\eta'{:[0,|E(G)|-1]\cap\bZ\to E(\wt G)}$ tracing the interface between the two trees with $|E(\wt{G})|-1$ steps, such that each step of the interface  either moves along an edge of the map, or across a type $(i,j)$ face of the map from the maximal vertex to the minimal vertex and then traverses an edge. The former is called an $m_e$-step, while the latter is called an $m_{i,j}$-step. See Figure \ref{fig: cutting dashed interface curve}. For each edge $e\in E(\wt G)$, we define its west height to be the graph distance from the ending vertex of $e$ to the sink in the west tree, and we define its east height to be the graph distance of the starting vertex of $e$ in the east tree to the source in the east tree. 
    We get a height function $(\wt{\sfL}_t, \wt{\sfR}_t)_{0\le t \le |E(G)|-1}$, so that $\wt{\sfL}_t$ (resp.~$\wt{\sfR}_t$) is the west (resp.~east) height of the edge corresponding to $\eta'(t)$.

By \cite[Section 2.1]{KMSW19}, if $\eta'(t) $  traverses an edge, then 
\begin{equation}
    \label{eq: edge step}
    (\widetilde{\mathsf L}_t,\widetilde{\mathsf R}_t) - (\widetilde{\mathsf L}_{t-1},\widetilde{\mathsf R}_{t-1}) = (-1,1),
\end{equation}
and if $\eta'(t) $ crosses a type-$(i,j)$ face and then traverses an edge, then
\begin{equation}
    \label{eq: face step}
    (\widetilde{\mathsf L}_t,\widetilde{\mathsf R}_t)-(\widetilde{\mathsf L}_{t-1},\widetilde{\mathsf R}_{t-1}) = (j,-i).
\end{equation}

This definition of the height functions of a bipolar-oriented map differs from the one in \cite{KMSW19} in that we have swapped the two coordinates. This is for consistency with the continuum, where the first (resp.\ second) coordinate measures the change in the left (resp.\ right) LQG boundary length. Its name is justified by its close relation to the height function of a tree, as in e.g.\ \cite[Section 1.1]{le2005random}. The function $\wt{\sfR}$ is exactly the height function of the east tree, while $\wt{\sfL}$ is the height function of the west tree up to a time-reversal and translation, since we are exploring the west tree from one of its leaves and in a reversed order.

 For a bipolar-oriented map or an annular map $G\in\mathcal{G}$, we set its weight to be~\eqref{eq: planar map weight}, where $F\degree(G)$ is $F(G)$ minus the root face for bipolar-oriented maps and  $F(G)\setminus\{f_{\mathrm{inner}},f_{\mathrm{outer}}\}$ for $G\in\cG$. Also recall the definition of the set of lattice walks $\mathcal{H}_{k}$ via the Conditions~(\ref{cond: Hk lmn cond 1})-(\ref{cond: Hk lmn cond 4}) as in Section~\ref{sec: intro precise statements and outline} and the weight $W(\sfZ)$ given by~\eqref{eq: lattice path weight}.

The main result of this section is the following proposition. 
\begin{proposition}
    \label{prop: discrete time bijection}
    For each $k\in \mathbb{N}$, there exists a weight-preserving bijection between $\mathcal{G}_{k}$ and $\mathcal{H}_{k}$, given by $ \xi\circ \psi\circ \phi$, where $\phi,\psi, \xi$ are introduced in Lemmas \ref{lemma: bijection step 1}, \ref{lemma: bijection step 2} and \ref{lemma: bijection step 3}, respectively. 
\end{proposition}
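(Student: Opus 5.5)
The bijection will be the composite of three weight-preserving bijections: $\phi$ cuts $G$ along a canonical crossing west path, $\psi$ applies the KMSW bijection to the resulting bipolar-oriented map, and $\xi$ re-indexes the walk so it lands in $\mathcal H_k$.

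\textbf{Step 1: the map $\phi$ (cutting).} Given $G\in\cG_k$, choose a canonical crossing west path. Since west trails merge on intersection and cannot form cycles before hitting $f_{\mathrm{inner}}$ (Proposition~\ref{prop: no loop in good annular map}), the west trail started from the root vertex is forced. I will show that this trail reaches $\partial f_{\mathrm{inner}}$: it merges with any crossing west path, and cannot cycle beforehand. It is therefore a crossing west path $p$, determined by $G$.

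Cutting along $p$ gives, by Lemma~\ref{lemma: cut to be bipolar}, a bipolar-oriented map $\wt G$. Its boundary consists of three pieces:
\begin{itemize}
\item[(i)] a left copy $p_\rmL$, on which each vertex has only its path edge as outgoing edge;
\item[(ii)] the outer boundary, a directed path of $k$ edges;
\item[(iii)] a right copy $p_\rmR$ together with the inner boundary.
\end{itemize}
$\phi$ maps $G$ to the class of bipolar maps with this boundary structure, plus the marking needed to reglue $p_\rmL$ to $p_\rmR$. Its inverse is gluing. I must check two things for the inverse. First, the reglued map is pole-free with counterclockwise boundaries. Second, every contractible cycle lifts to a cycle of $\wt G$, so gluing introduces none.

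The weight is preserved. Interior faces and their types (Lemma~\ref{lem:type-ij}) are unchanged. Edges increase by exactly $|p|$ on cutting, and the bipolar weight convention should account for this once the boundary edges of $\wt G$ are matched correctly.

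\textbf{Step 2: the map $\psi$ (KMSW bijection).} Apply the bijection of~\cite{KMSW19} between bipolar-oriented maps and lattice walks with steps $(-1,1)$ and $(j,-i)$, via the height functions~\eqref{eq: edge step}--\eqref{eq: face step}. This preserves the weight: each $m_e$-step contributes $b_e$ and each $m_{i,j}$-step contributes $b_{i,j}$.

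I then identify the boundary constraints.
\begin{itemize}
\item $\wt\sfR\geq 0$ is the east-tree height, so the constraint $\sfR\ge 0$ holds automatically.
\item The final value $\sfR(n)=k$ records the $k$ outer-boundary edges.
\item On $p_\rmL$ every vertex has a unique outgoing edge, namely its west edge. Hence $p_\rmL$ lies on the rightmost branch of the west tree, and its length $\ell$ appears as $\sfL(n)=-\ell$.
\item The inner-boundary and $p_\rmR$ data produce the extra descent $m$ in $\inf\sfL=-m-\ell$.
\end{itemize}
$\xi$ is then just the translation or relabelling of the walk that puts it into the normalization of $\mathcal H_k(\ell,m,n)$.

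\textbf{Main obstacle.} The main obstacle is showing the correspondence is onto: every walk in $\mathcal H_k$ must arise from a bipolar map whose left boundary has the special ``west-only'' structure and can be glued consistently. Equivalently, I must characterize intrinsically which bipolar maps arise as cut annular maps. For this I will work with the KMSW sequential construction, adding edges and faces step by step. I expect the conditions $\sfL(n)=-\ell$ and $\inf\sfL=-m-\ell$ to exactly encode that the final left boundary consists of $\ell$ forced west edges, to be glued to the $\ell$ edges of the right boundary. I also need that gluing recovers the canonical west path. This holds because the glued path is a west path starting at the root vertex, so Step 1's uniqueness applies.
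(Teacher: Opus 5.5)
Your three-step architecture (cut along the root west path, apply KMSW, reindex) is the paper's, but the bookkeeping in Steps 1--2 is wrong, and as written the composite neither preserves weights nor lands in $\mathcal H_k$.

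\textbf{Weight and endpoint.} Cutting along $p$ with $|p|=m$ creates $m$ new edges and no new inner faces, so $W(\wt G)=b_e^{m}W(G)$. The bipolar weight convention does not absorb this, so Step 1 is not weight-preserving. Likewise, the KMSW walk of $\wt G$ ends with $\wt\sfR$ equal to $k+m$, not $k$. Here $k+m$ is the length of the side of $\wt G$ formed by the outer boundary together with the forced copy of $p$.

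\textbf{The missing step.} The forced copy of $p$ is the top $m$ edges of that side, and each of its vertices has the path edge as its only outgoing edge. So once the interface curve reaches this segment it must follow it to the sink, and the last $m$ steps of the walk are all $(-1,1)$. The map $\xi$ must delete these $m$ steps, not merely translate. This deletion does two things at once: it removes the surplus factor $b_e^{m}$, and it moves the endpoint to $(-\ell,k)$ after translating by $-(\ell+m,0)$.

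\textbf{Swapped parameters.} After the deletion, $\sfL(n)=-\ell$ where $\ell$ is the inner-boundary length. The other side of $\wt G$, made of the other copy of $p$ and the inner boundary, has $\ell+m$ edges. The cut-path length $m=|p|$ appears as the depth $\sfL(n)-\inf\sfL$: the truncated walk ends at west height $m$ and must have reached west height $0$ earlier. Your reading, with $\ell=|p|$ and $m$ the inner length, does not match the actual heights.

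\textbf{A condition you never impose.} The requirement that the walk reaches west height $0$ before the forced tail is not part of your construction. It is the paper's condition that the sink of $\wt G$ is incident to at least two edges, equivalently $\inf_{j<n+m}\wt\sfL(j)=0$. It holds for cut maps because the sink sees both the last edge of $p$ and an inner-boundary edge. This condition is exactly what makes $m=-\inf\sfL-\ell$ recoverable from $\sfZ\in\mathcal H_k$. Hence $\xi^{-1}$ is: read off $m$, append $m$ steps $(-1,1)$, and translate.

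\textbf{Surjectivity.} You do not need a new sequential construction. The KMSW bijection already covers all bipolar maps with prescribed boundary lengths, so one only matches the extra constraints. ``The top $m$ edges of the $(k+m)$-side are west edges'' corresponds to ``the last $m$ steps are $(-1,1)$''. ``The sink has degree at least $2$'' corresponds to the infimum condition. This is how Lemmas~\ref{lemma: bijection step 2} and~\ref{lemma: bijection step 3} are proved.
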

 
Our strategy is to use Lemma~\ref{lemma: cut to be bipolar} to obtain a bijection $\phi$ between annular maps in $\mathcal{G}_k$ and a family of bipolar-oriented maps $\wt{\mathcal{G}}_k$ by cutting along the crossing west path started from the root. The map $\psi$ is a bijection between $\wt{\mathcal{G}}_k$ and a class of lattice paths $\wt{\mathcal{H}}_k$  {which is a restriction of a bijection in} \cite[Theorem 1.1, 1.2]{KMSW19}. The final step is an elementary bijection $\xi$ between $\wt{\mathcal{H}}_k$ and $\mathcal{H}_k$ where we shift the lattice path and remove the last $m$ $(-1,1)$-steps. By removing these steps, the path $\eta'$ on the bipolar-oriented map becomes a loop after gluing.
See Figure~\ref{fig: discrete logics} for an overview.

\begin{figure}
    \centering
    \includegraphics{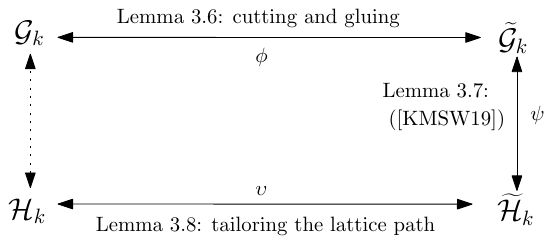}
    \caption{An overview of the proof of Proposition~\ref{prop: discrete time bijection}. } 
    \label{fig: discrete logics}
\end{figure}

For $G\in \cG$, we let $p$ be the west trail started from the root vertex, stopped upon hitting the inner boundary. By Proposition~\ref{prop: no loop in good annular map}, $p$ is a crossing west path. By Lemma~\ref{lemma: cut to be bipolar}, if we cut along $p$, we will get a bipolar-oriented map $\wt G$ with $\abs{E(G)}+m$ edges, where $m$ is the length of $p$. 

We now define the sets $\wt{\mathcal{G}}_k$ and $\wt{\mathcal{H}}_k$. For a bipolar-oriented map, we define its left (resp.\ right) boundary to be the west path (resp.\ east path) from the sink to the source. Let $\widetilde{\mathcal{G}}_{k}$ be the family of pairs $(\widetilde{G}, m)$, where $m \in \bN$ and $\widetilde{G}$ is a bipolar-oriented map 
that satisfy the following conditions. 
\begin{enumerate}
    \item There are $k+m$ edges on the right boundary.
    \label{cond: tilde Gk 1}
    \item The number of edges on the left boundary is greater or equal to $m$.
    \label{cond: tilde Gk 2}
    \item The uppermost $m$ edges on the right boundary are all west edges. 
    \label{cond: tilde Gk 3}
    \item { The uppermost edge on the left boundary does not coincide with the uppermost edge on the right boundary.} Equivalently, the sink is incident to at least 2 edges.
    \label{cond: tilde Gk 4}
\end{enumerate}
 
We emphasize that $\widetilde{\mathcal G}_k $ is not the usual class of bipolar-oriented maps with prescribed boundary lengths  and forms a subset of the set of bipolar-oriented maps studied in \cite{KMSW19}. Condition~\ref{cond: tilde Gk 3} is an additional boundary condition on the uppermost $m$ edges of the right boundary: for each vertex 
incident to one of these edges, the only outgoing edge is the boundary edge itself. As a consequence, once the interface curve reaches this part of the right boundary, it remains on the right boundary until it terminates at the sink. This boundary condition is compatible with the cutting and gluing for annular planar maps as in Lemma~\ref{lemma: cut to be bipolar} and Figure~\ref{fig: cutting dashed}.


\begin{figure}[t]
    \centering
    \includegraphics[scale=0.7]{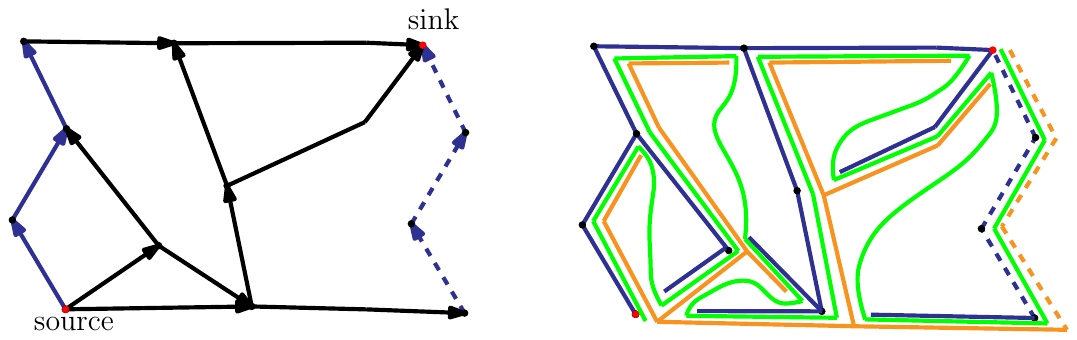}
    \caption{{\textbf{Left}: Illustration of $(G,m)\in\wt{\cG}_k$ for $m=3$ and $k=2$. The dashed edges represent the special boundary segment from Condition~\ref{cond: tilde Gk 3}. \textbf{Right}:} The green curve is the interface curve between the west tree and the east tree. The steps of the green curve are given by $m_e m_e m_{1,1} m_e m_e m_e m_{2,2} m_{1,0} m_e m_e m_e m_{1,1} m_{1,3}$.  For every vertex incident to the dashed segment, the only outgoing edge is the boundary edge itself. Therefore, once the interface curve reaches this part of the right boundary, it is forced to continue along the boundary and cannot leave it before reaching the sink. Furthermore, if we drop the last 3 steps as in Lemma~\ref{lemma: bijection step 3} and glue the dashed edges on the right to the solid edges on the left as in Figure~\ref{fig: cutting dashed}, the green curve becomes a loop on the annular map. }
    \label{fig: cutting dashed interface curve}
\end{figure}

Let $\widetilde{\mathcal{H}}_{k}(\ell, m,n)$ be the set of lattice paths $\wt{\sfZ} = (\widetilde{\mathsf L}_j, \widetilde{\mathsf R}_j)_{0\leq j\leq m+n}$ that satisfy the following conditions.
\begin{enumerate}[(i)]
    \item Each step of the lattice path is selected from $(-1, 1)$ and $(j, -i)$ for some $i$, $j\in \bZ_{\ge 0}$.
    \label{cond: Hk tilde cond 2}
    \item The last $m$ steps of the lattice path are all $(-1, 1)$.
    \label{cond: Hk tilde cond 3}
    \item We have $\wt{\sfZ}(0) = (\ell+m, 0)$ and $\wt{\sfZ}(n+m) = (0, k+m)$.
    \label{cond: Hk tilde cond 4}
    \item For any $j\in [0, n+m] \cap \bZ$, $\wt{\sfL}(j)\ge 0$ and $\wt{\sfR}(j) \ge 0$.
    \label{cond: Hk tilde cond 6}
    \item $\inf_{0 \le j < n+m} \wt{\sfL}(t) = 0$.
    \label{cond: Hk tilde cond 7}
\end{enumerate}


We define $\widetilde{\mathcal{H}}_{k}$ to be the collection of tuples $\wt{H} = (\wt{\sfZ},   m)$, where $\wt{\sfZ}\in \widetilde{\mathcal{H}}_{k}(\ell, m, n)$ and $\ell, m,n\in \bZ_+$. 

\begin{lemma}
    \label{lemma: bijection step 1}
    For any $k\in\mathbb N$ there is a bijection $\phi:\mathcal G_k\to \widetilde{\mathcal G}_k$ given by  {$\phi(G) = (\wt{G}, m)$, where $\wt{G}$ is the bipolar-oriented map we obtain when we cut $G$ along the west path starting from the root vertex, and $m$ is the length of the west path.} Moreover, $W(G) = b_e^{-m}W(\wt G)$.
\end{lemma}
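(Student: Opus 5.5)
The plan is to show three things in turn: $\phi(G)$ lands in $\widetilde{\mathcal G}_k$, there is an explicit inverse (gluing), and the weights transform as claimed.

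\textbf{Well-definedness.} Let $G\in\mathcal G_k$ and let $p$ be the west trail from the root vertex $v_1$, stopped at $f_{\mathrm{inner}}$. By Proposition~\ref{prop: no loop in good annular map} it is a crossing west path; let $m$ be its length. By Lemma~\ref{lemma: cut to be bipolar}, $\wt G$ is bipolar-oriented with source $v_1^\rmR$ and sink $v_2^\rmL$ on a common face. I will then identify the two boundaries of $\wt G$.
\begin{itemize}
\item \emph{Right boundary.} It is the east path from source to sink. It runs along the $k$ outer-boundary edges, which are all east edges, from $v_1^\rmR$ around to $v_1^\rmL$, and then up $p_\rmL$ ($m$ edges). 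This gives Condition~\ref{cond: tilde Gk 1}.
\item \emph{Condition~\ref{cond: tilde Gk 3}.} In $G$ the edges of $p$ are west edges, so all outgoing edges other than the $p$-edge lie on the right of $p$. Hence on $p_\rmL$ each vertex's only outgoing edge is the boundary edge. One must check that this segment is traversed as the top part of the right boundary; I expect this to follow from the orientation conventions in Figure~\ref{fig: cutting along path}.
\item \emph{Left boundary.} It consists of $p_\rmR$ ($m$ edges) followed by the inner-boundary edges from $v_2^\rmR$ to $v_2^\rmL$. This gives Condition~\ref{cond: tilde Gk 2}.
\item \emph{Condition~\ref{cond: tilde Gk 4}.} This holds because the inner boundary contributes at least one edge, so the two boundary edges at the sink differ.
\end{itemize}

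\textbf{Inverse map.} Given $(\wt G,m)\in\widetilde{\mathcal G}_k$, glue the top $m$ edges of the right boundary to the bottom $m$ edges of the left boundary, matching them in order from the source side. Then identify the remaining left-boundary edges as $f_{\mathrm{inner}}$ and the first $k$ right-boundary edges as $f_{\mathrm{outer}}$, rooted at the image of the source. I will verify four points:
\begin{itemize}
\item \emph{Boundary orientation.} Both boundaries come out oriented counterclockwise, and $f_{\mathrm{inner}}$ is simple; Condition~\ref{cond: tilde Gk 4} excludes degenerate gluings.
\item \emph{Pole-freeness.} The source and sink are glued to vertices carrying the missing in- and out-edges.
\item \emph{No contractible cycles.} A contractible cycle in the glued map would cross the glued path zero net times, hence lift to a cycle in $\wt G$, contradicting acyclicity.
\item \emph{The glued path is the west path from the root.} This uses Condition~\ref{cond: tilde Gk 3}: after gluing, the outgoing edges of each glued vertex come only from the left-boundary copy plus the path edge. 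The path edge is therefore last in counterclockwise order, so it is the west edge.
\end{itemize}
It follows that the glued map is in $\mathcal G_k$. That both compositions are the identity is then immediate from the construction.

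\textbf{Weight.} Cutting adds exactly $m$ edges and preserves all bounded faces and their types, by Lemma~\ref{lem:type-ij}. For the face count: $f_{\mathrm{inner}}$ and $f_{\mathrm{outer}}$ merge into the single root face of $\wt G$, and $F^\circ$ of $\wt G$ excludes only the root face. Hence $|F^\circ(\wt G)|=|F^\circ(G)|$ and $|E(\wt G)|=|E(G)|+m$, giving $W(\wt G)=b_e^{m}W(G)$.

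\textbf{Main obstacle.} The delicate step is the inverse: checking that west-ness of the glued path and pole-freeness hold, especially when $p$ partially traces the outer boundary (Figure~\ref{fig: cutting along path with boundary}). There I would do a careful local case analysis at each glued vertex, using the two-groups property of Lemma~\ref{lemma: good orientation two groups}.
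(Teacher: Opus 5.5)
\emph{Gap in the contractible-cycle step.} Your outline otherwise follows the paper's proof: cut along the west path, check the four conditions, invert by gluing, and count edges and faces. Your boundary identifications, pole-freeness, west-ness of the glued path and the weight count are all fine.

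The problem is the sentence ``a contractible cycle would cross the glued path zero net times, hence lift to a cycle in $\widetilde G$.'' Zero algebraic intersection with the glued path $P$ does not mean the cycle avoids crossing $P$. It could cross $P$ from left to right at one vertex and from right to left at another. Such a cycle is not a cycle of $\widetilde G$: cut open along $P$, it falls apart into several directed paths running between the copies $p_{\mathrm L}$ and $p_{\mathrm R}$. Equivalently, its lift to the universal cover passes through several copies of $\widetilde G$. So acyclicity of $\widetilde G$ does not give the contradiction without a further argument.

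The paper closes this gap with Condition 3, which you used only for west-ness. Each vertex of the glued copy of the top $m$ right-boundary edges has the boundary edge as its only outgoing edge. So in the glued map, every edge incident to a vertex of $P$ and lying on the left of $P$ is incoming. A directed cycle can therefore never leave $P$ into the left side. It can cross $P$ only from left to right, never back. A contractible cycle has equally many crossings in each direction, so it has no crossings at all. It then meets $P$ at most from the right side, and so it is a directed cycle of $\widetilde G$, a contradiction.

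Alternatively, you can work in the universal cover. Gluing acyclic maps along a common directed path $x_0\to\dots\to x_m$ keeps the union acyclic. The reason is that any excursion inside one piece from $x_i$ to $x_j$ must have $i<j$, otherwise it closes a cycle with the path. Either way, some argument of this kind has to replace your ``hence.''
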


\begin{lemma}
    \label{lemma: bijection step 2}
    For every $k\in \mathbb{N}$ there is a weight-preserving bijection $\psi: \widetilde{\mathcal{G}}_{k}\to \widetilde{\mathcal{H}}_{k}$ such that 
    $\psi((\wt{G}, m)) = (\sfZ,  m)$. Furthermore, $\sfZ\in \widetilde{\mathcal{H}}_{k}(\ell, m, n)$, where $n=|E(\wt{G})|-m$ and $\ell$ is such that the right boundary of $\wt{G}$ has $\ell+m$ edges. 
\end{lemma}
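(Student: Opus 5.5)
The plan is to restrict the Kenyon--Miller--Sheffield--Wilson bijection \cite[Theorems 1.1, 1.2]{KMSW19} between bipolar-oriented maps and lattice walks to the subclass $\widetilde{\mathcal G}_k$, and then check that the four defining conditions of $\widetilde{\mathcal G}_k$ correspond exactly to conditions (i)--(v) of $\widetilde{\mathcal H}_k(\ell,m,n)$. Recall that this bijection sends a bipolar-oriented map with left boundary of length $a+1$ and right boundary of length $b+1$ to the height-function walk $(\wt\sfL,\wt\sfR)$. This walk has steps $(-1,1)$ or $(j,-i)$, by \eqref{eq: edge step} and \eqref{eq: face step}. It starts at $(a',0)$ and ends at $(0,b')$, where $a',b'$ are determined by the boundary lengths, it stays in the closed quadrant, and it is weight-preserving with $W$ as in \eqref{eq: planar map weight} and \eqref{eq: lattice path weight}. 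The count is consistent: the number of edge steps is $|E|-1-|F^\circ|$, which matches the exponent $|E|-|F^\circ|$ up to the boundary convention. I will fix the convention by comparing with $\wt G$ directly, using our coordinate swap, so that the starting $\wt\sfL$-value equals the number of right boundary edges minus $k$ and the ending $\wt\sfR$-value equals $k+m$.

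The first step is to translate boundary lengths. Condition~\ref{cond: tilde Gk 1}, $k+m$ right boundary edges, gives the endpoint $\wt\sfZ(n+m)=(0,k+m)$. Writing the relevant boundary length as $\ell+m$ gives the start $(\ell+m,0)$. Condition~\ref{cond: tilde Gk 2} then becomes $\ell\ge 0$ ($\ell\in\bZ_+$). Condition~\ref{cond: tilde Gk 4}, that the sink has degree at least two, should correspond to $\wt\sfL$ hitting $0$ strictly before the final time, which is condition (v). The reason is that $\wt\sfL$ is the west height of the current edge, and it equals $0$ exactly when that edge ends at the sink. If the sink has at least two incoming edges, the interface visits a sink-incident edge before the last step, and conversely.

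The second step, which I expect to be the main obstacle, is to show that Condition~\ref{cond: tilde Gk 3} (the top $m$ right-boundary edges are west edges) is equivalent to condition (ii) (the last $m$ steps are $(-1,1)$). For the forward direction, I will argue as the paper suggests: for each vertex on that segment the only outgoing edge is the boundary edge. So once the interface reaches it, the east-tree exploration just follows the boundary up to the sink, each step traversing one edge, which gives $m$ steps of type $m_e$. For the converse, I will use the inverse (sewing) procedure of KMSW. A terminal run of $m$ edge steps $(-1,1)$ ending at $(0,k+m)$ adds boundary edges whose start vertices receive no further outgoing edges, because no later face step $(j,-i)$ occurs to attach faces to them. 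I need to check this carefully against the KMSW reconstruction, namely that a face step is precisely what creates additional outgoing edges at right-boundary vertices. This is where the argument is delicate, and I would verify it by tracking the rightmost active vertex in the sewing algorithm.

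Finally, $n=|E(\wt G)|-m$ holds because the walk has $|E(\wt G)|-1$ steps. I will adjust the index convention (whether there are $n+m$ or $n+m-1$ steps) to match the statement. Injectivity and surjectivity are inherited from KMSW, since both subclasses are carved out by equivalent conditions, and $m$ is carried along unchanged.
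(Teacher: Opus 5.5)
Your overall strategy is the paper's: restrict the KMSW bijection to $\widetilde{\mathcal G}_k$ and check that the defining conditions correspond. Your arguments for Condition~4 $\Leftrightarrow$ (v) (the west height vanishes exactly on edges entering the sink, which has only incoming edges) and for Condition~3 $\Rightarrow$ (ii) are also the paper's.

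The gap is the converse, (ii) $\Rightarrow$ Condition~3. You identify it as the main obstacle but leave it to an unverified analysis of the KMSW sewing algorithm. As sketched, that argument only shows that the tails of the terminal edges acquire no outgoing edges \emph{after} the terminal run, not that they had none before it. You do not need the inverse map; the forward exploration rule suffices. The step into $e_{t+1}$ is an edge step exactly when $e_t$ is the east edge of $v=\mathrm{head}(e_t)$, and then $e_{t+1}$ is the west edge of $v$. Otherwise it is a face step, and $e_{t+1}$ is the lowest edge on the east side of a face whose minimal vertex is the tail of $e_{t+1}$; in particular that tail has at least two outgoing edges.

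Let $f_1,\dots,f_m$ be the top $m$ right-boundary edges, with tails $u_1,\dots,u_m$. None of these tails is the source, since $k\ge 1$. A right-boundary edge is the rightmost outgoing edge of its tail and the east edge of its head. Hence the step into $f_s$ is an edge step if and only if $f_s$ is the west edge of $u_s$, i.e.\ $u_s$ has a single outgoing edge. In that case the previously visited edge is the east edge of $u_s$, namely $f_{s-1}$. Inducting backward from the final edge $f_m$ shows that the last $m$ steps are all $(-1,1)$ if and only if each $u_s$ has one outgoing edge, which is Condition~3. This is what the paper means by ``the last non-$(-1,1)$ step corresponds to the last right-boundary edge that is not a west edge.'' It also makes your forward direction precise, since the step into $f_1$ is itself an edge step.

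Separately, your boundary bookkeeping is off. $\wt{\sfL}(0)$ is the west height of the bottom edge of the \emph{left} boundary, so it is governed by the left boundary length. It is not ``right boundary edges minus $k$'', which equals $m$ by Condition~1. The statement's ``right boundary of $\wt{G}$ has $\ell+m$ edges'' should be read as ``left'', as in the paper's definition of $\widetilde{\mathcal G}_k'(\ell,m,n)$; only with that reading is Condition~2 equivalent to $\ell\ge 0$.

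You also noticed a one-unit mismatch with KMSW's normalization: a boundary of $a+1$ edges gives coordinate $a$, and there are $|E|-1$ steps. The paper's proof glosses over this as well. Fix one convention and apply it uniformly to the start point, end point, step count and $b_e$-exponent, rather than tuning each separately to match the statement.
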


\begin{lemma}
    \label{lemma: bijection step 3}
    For every $k\in \mathbb{N}$ there is a bijection $ \xi: \widetilde{\mathcal{H}}_{k}\to\mathcal{H}_{k}$, where for $H=(\wt\sfZ,m)$, $\xi(H)$ is given by shifting and removing the last $m$ steps of $\wt\sfZ$. 
    Moreover,  $W(\xi(H)) = b_e^{-m}W(\wt\sfZ)$.
\end{lemma}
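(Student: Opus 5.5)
The map $\xi$ is elementary, so I would define it explicitly, write down a candidate inverse, and check that both land in the right sets. Let $H=(\wt\sfZ,m)\in\wt{\mathcal H}_k$ with $\wt\sfZ\in\wt{\mathcal H}_k(\ell,m,n)$. Define $\xi(H)=\sfZ=(\sfL_j,\sfR_j)_{0\le j\le n}$ by
\[
\sfZ_j=\wt\sfZ_j-(\ell+m,0),\qquad 0\le j\le n,
\]
that is, translate so the path starts at the origin and drop the last $m$ steps.

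\textbf{Step 1: $\xi(H)\in\mathcal H_k(\ell,m,n)$.}
\begin{itemize}
\item Condition~(\ref{cond: Hk lmn cond 1}) is inherited from condition~(\ref{cond: Hk tilde cond 2}).
\item By condition~(\ref{cond: Hk tilde cond 4}), $\sfZ(0)=(0,0)$.
\item $\sfR_j=\wt\sfR_j\ge 0$ by condition~(\ref{cond: Hk tilde cond 6}).
\item The last $m$ steps of $\wt\sfZ$ are all $(-1,1)$ and $\wt\sfZ(n+m)=(0,k+m)$. Hence $\wt\sfZ(n)=(m,k)$, so $\sfL(n)=-\ell$ and $\sfR(n)=k$.
\item For $j\in[n,n+m)$ we have $\wt\sfL(j)\in\{m,m-1,\dots,1\}$, all positive. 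So the infimum $0$ in condition~(\ref{cond: Hk tilde cond 7}) is attained at some $j\in[0,n]$. Since $\wt\sfL\ge 0$ throughout, $\inf_{0\le j\le n}\wt\sfL(j)=0$, and therefore $\inf_{0\le j\le n}\sfL(j)=-m-\ell$.
\end{itemize}
This is condition~(\ref{cond: Hk lmn cond 4}). The parameters $(\ell,m,n)$ match on both sides, and I will check that the ranges allowed for $\ell,m,n$ in the two definitions agree. I expect this index bookkeeping, rather than anything conceptual, to be the only place needing care.

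\textbf{Step 2: the inverse.} Given $\sfZ\in\mathcal H_k(\ell,m,n)$, the parameters are determined by the path: $n$ is the duration, $\ell=-\sfL(n)$, and $m=-\inf_j\sfL(j)-\ell$. Define $\wt\sfZ_j=\sfZ_j+(\ell+m,0)$ for $j\le n$, and append $m$ steps $(-1,1)$. We check the conditions on $\wt\sfZ$:
\begin{itemize}
\item Conditions (\ref{cond: Hk tilde cond 2}) and (\ref{cond: Hk tilde cond 3}) hold by construction.
\item $\wt\sfZ(0)=(\ell+m,0)$, and $\wt\sfZ(n+m)=(m,k)+m(-1,1)=(0,k+m)$, giving condition~(\ref{cond: Hk tilde cond 4}).
\item $\wt\sfL\ge 0$ on $[0,n]$ because $\inf\sfL=-m-\ell$. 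On the appended segment $\wt\sfL$ decreases from $m$ to $0$. $\wt\sfR$ only increases there. This gives condition~(\ref{cond: Hk tilde cond 6}).
\item The value $0$ of $\wt\sfL$ is attained on $[0,n]$, which is strictly before $n+m$. This gives condition~(\ref{cond: Hk tilde cond 7}).
\end{itemize}
The two constructions are clearly mutually inverse, so $\xi$ is a bijection.

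\textbf{Step 3: weights.} Passing from $\wt\sfZ$ to $\xi(H)$ removes exactly $m$ steps of type $(-1,1)$ and leaves every $(j,-i)$-step unchanged. So $n_e$ drops by $m$ and each $n_{i,j}$ is unchanged, and \eqref{eq: lattice path weight} gives $W(\xi(H))=b_e^{-m}W(\wt\sfZ)$.
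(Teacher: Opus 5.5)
Your construction is the paper's: translate so the path starts at the origin and delete the final $m$ edge steps, with the inverse re-appending $m$ steps $(-1,1)$. Your checks of (I)--(III), of the inverse, and of the weight identity are correct whenever $m\ge 1$. The paper's own proof is only a one-line assertion of this.

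The index bookkeeping you deferred is not a formality, though. With the definitions as literally written it fails, and it fails exactly at your step ``the value $0$ of $\wt\sfL$ is attained on $[0,n]$, which is strictly before $n+m$.'' The set $\mathcal H_k$ allows $m\in\bZ_{\ge 0}$. For $m=0$, condition~(III) only says $\min_{0\le j\le n}\sfL(j)=\sfL(n)=-\ell$, and this minimum may be attained only at $j=n$. In that case the shifted path has $\wt\sfL(j)>0$ for all $j<n=n+m$, which violates condition~(v).

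Concretely, take $k=\ell=n=1$. The one-step path $(0,0)\to(-1,1)$ lies in $\mathcal H_1(1,0,1)$. Its only candidate preimage, $(1,0)\to(0,1)$, fails (v). If $\bZ_+$ in the definition of $\wt{\mathcal H}_k$ means positive integers, $m=0$ is excluded there outright, so there is no candidate at all. Either way $\xi$ is not onto as stated.

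Conversely, if $\bZ_+$ is read to include $0$, then paths in $\wt{\mathcal H}_k$ with $\ell=0$ map outside $\mathcal H_k$, since $\mathcal H_k$ requires $\ell\in\bN$. An example is $(1,0)\to(0,1)\to(1,1)\to(0,2)$ with $m=1$, $k=1$.

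The statement becomes correct once $\ell,m,n$ range over $\bN$ on both sides. This is the intended reading, since $m$ is the length of the crossing west path and $m\in\bN$ in $\wt{\mathcal G}_k$. With that restriction made explicit, your argument is complete.
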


 
We start with the \emph{gluing} operation for bipolar-oriented maps, which is the inverse of the cutting operation defined  {above Lemma \ref{lemma: cut to be bipolar}}. 
For $(\widetilde{G}, m)\in \widetilde{\mathcal{G}}_k$, let $(e_1^\rmL, \dots, e_m^\rmL)$ be the first $m$ edges of the west path starting from the source, and $(e_1^\rmR, \dots, e_m^\rmR)$ be the last $m$ edges of the east path started from the source. Identifying $e_j^\rmL$ and $e_j^\rmR$ for each $j$ gives an annular map $G$, where $f_{\mathrm{outer}}$ (resp.~$f_{\mathrm{inner}}$) is chosen such that its boundary contains the non-identified edges of the right (resp.~left) boundary of $\widetilde{G}$. See Figure~\ref{fig: cutting dashed}.
\begin{figure}[ht]
    \centering
    \includegraphics[height=4.5cm]{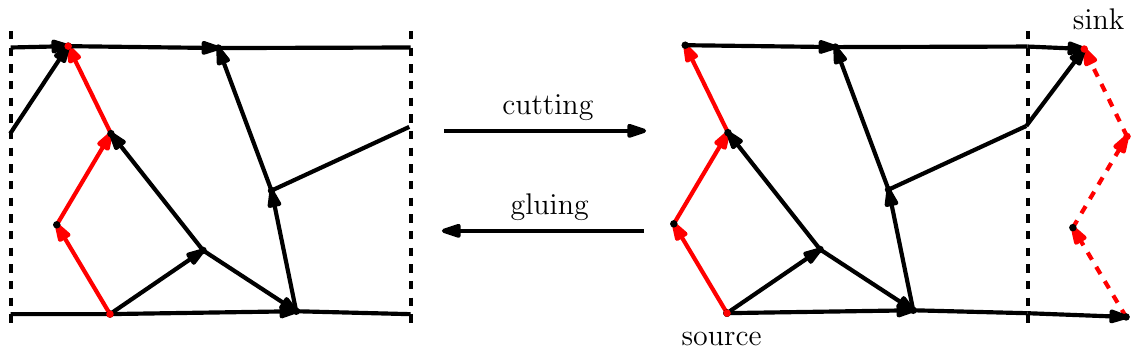}
    \caption{Cutting and gluing along the red path with $m=3$. We will see later that the path corresponds to a welding interface in the continuum.
    }
    \label{fig: cutting dashed}
\end{figure}

\begin{proof}[Proof of Lemma \ref{lemma: bijection step 1}]
    For $G\in \mathcal{G}_k$ with root vertex $v_0$, let $m$ be the length of the crossing west path from $v_0$. Cut along this path to obtain $\widetilde{G}$ and set $\phi(G)=(\widetilde{G}, m)$. The pair $(\widetilde{G}, m)$ clearly satisfies Conditions \ref{cond: tilde Gk 1}-\ref{cond: tilde Gk 2} for $\widetilde{\mathcal{G}}_{k}$. Condition~\ref{cond: tilde Gk 3} follows since the path we are cutting along is a west path. To verify Condition~\ref{cond: tilde Gk 4}, recall that we are cutting along a west path that stops upon hitting the inner boundary. Therefore after cutting, the sink must be incident to at least 2 edges: one edge from the inner cycle and another edge from the west path. Therefore the uppermost edge on the left boundary cannot coincide with the uppermost edge on the right boundary. See Figure~\ref{fig: cutting dashed}.

    Conversely, we will now check that gluing the bottommost $m$ edges $e^\rmL_1,\ldots,e^\rmL_m$ (bottom to top) on the left boundary to the uppermost $m$ edges $e^\rmR_1,\ldots,e^\rmR_m$ (bottom to top) on the right boundary of $\widetilde{G}_{k}$ yields an element in $\mathcal G_k$. The gluing operation identifies the sink (resp.~source) with a vertex that is not a sink (resp.~source). Therefore after gluing there is neither sink nor source. The gluing interface is a crossing west path  by Condition~\ref{cond: tilde Gk 3}. Moreover, we claim that there is no contractible cycle after gluing. 
    Otherwise, the cycle would either intersect the gluing interface or not intersect the gluing interface. 
    In the former case, the cycle must cross the gluing interface at least once from left and once from right. Thus one vertex of $e^R_1,\ldots,e^R_m$ has an outgoing edge that is not one of $e^R_k$, which contradicts with Condition~\ref{cond: tilde Gk 3}.
    In the latter case, we have a contractible cycle in $\widetilde{G}$, also making a contradiction. 
    The outer boundary of the glued map corresponds to the part of right boundary in $G$ that is not glued, and this segment has $k$ edges.  Thus we get an element in $\mathcal G_k$ after gluing. 
    
    It is clear that the two maps we constructed via cutting and gluing are inverses of each other and that their weights are related as desired, which concludes the proof. 
\end{proof}

\begin{proof}[Proof of Lemma~\ref{lemma: bijection step 2}]
  {
    Let $\widetilde{\mathcal{G}}_{k}'(\ell, m, n)$ be the set of bipolar-oriented maps $\wt{G}$ that satisfy Conditions \ref{cond: tilde Gk 1}-\ref{cond: tilde Gk 2} in the definition of $\widetilde{\mathcal{G}}_{k}$, such that there are $\ell+m$ edges on the left boundary and   $|E(\wt{G})| = n+m$.   Let $\widetilde{\mathcal{H}}_k'(\ell, m, n)$ be the family of lattice paths that satisfy Conditions~(\ref{cond: Hk tilde cond 2})-(\ref{cond: Hk tilde cond 6}) except for Condition~(\ref{cond: Hk tilde cond 3}) in the definition of $\widetilde{\mathcal{H}}_k(\ell, m, n)$. 
    It follows from \cite[Theorem 2]{KMSW19} that such $\widetilde{G}\in\widetilde{\mathcal{G}}_{k}'(\ell, m, n)$ is in bijective correspondence to a lattice path $(\widetilde{\sfL}, \widetilde{\sfR})\in \widetilde{\mathcal{H}}_k'(\ell, m, n)$ given by the bipolar height functions. 

  Next we restrict this bijection to $ \widetilde{\mathcal{G}}_k(\ell, m, n)$ and $ \widetilde{\mathcal{H}}_k(\ell, m, n)$. By Condition~\ref{cond: tilde Gk 3} in the definition of $\widetilde{\mathcal{G}}_{k}$, for each $\wt G\in \widetilde{\mathcal{G}}_{k}$ and any one of the uppermost $m$ edges of the right boundary started from $v$, there is no other outgoing edge started from $v$.  
  Once the interface curve reaches this segment, it is forced to follow the right boundary all the way to the sink, and the final $m$ steps of the bipolar height function are all edge steps, i.e.\ equal to $(-1,1)$, implying Condition~(\ref{cond: Hk tilde cond 3}) of $\wt{\mathcal{H}}_k(\ell, m, n)$. Furthermore, in the construction of the space-filling path, the last step in the walk that is not equal to $(-1,1)$ corresponding to the last edge on the right boundary that is not a west edge, and therefore Condition~(\ref{cond: Hk tilde cond 3}) for $\wt{\mathcal{H}}_k(\ell, m, n)$  implies Condition~\ref{cond: tilde Gk 3} for $\widetilde{\mathcal{G}}_{k}$ in this bijection.  
    
    To obtain a bijection between $\widetilde{\mathcal{G}}_k$ and $\widetilde{\mathcal{H}}_k$ (and hence a bijection between $\wt{\mathcal{G}}$ and $\wt{\mathcal{H}}$), it remains to show that if $\wt{G}\in \widetilde{\mathcal{G}}_{k}'(\ell, m, n)$ satisfies Condition~\ref{cond: tilde Gk 4} in the definition of $\widetilde{\mathcal{G}}_{k}$, then its corresponding lattice path in $\widetilde{\mathcal{H}}_{k}'$ satisfies Condition~(\ref{cond: Hk tilde cond 7}) of the definition of $\widetilde{\mathcal{H}}_{k}(\ell, m, n)$, and vice versa.
    Since the uppermost edge on the left boundary does not coincide with the uppermost edge on the right boundary, the interface curve must visit the uppermost edge on the left boundary before it traces the uppermost $m$ edges on the right boundary. Thus $\inf_{0\leq j < n+m} \sfL(j) = 0$. Similarly, if $\inf_{0\leq j < n+m} \sfL(j) = 0$, then the interface curve $\eta'$ must visit an edge incident to the sink before tracing the uppermost $m$ edges on the right boundary. This concludes the proof.
    }
\end{proof}

\begin{proof}[Proof of Lemma~\ref{lemma: bijection step 3}]
    Consider the bijective map from $\widetilde{\mathcal{H}}_{k}(\ell,m,n)$ to $\mathcal{H}_{k}(\ell,m,n)$  given by first translating the path so that it starts from the origin, and then removing the final $m$ steps. The weights under this bijection are related as desired, and we can extend it to a bijection between $\widetilde{\mathcal{H}}_{k}$ and  $\mathcal{H}_{k}$.
\end{proof}

\begin{proof}[Proof of Proposition~\ref{prop: discrete time bijection}]
   The proposition follows by combining Lemmas~\ref{lemma: bijection step 1}, \ref{lemma: bijection step 2} and \ref{lemma: bijection step 3}. 
\end{proof}

\subsection{Convergence of height functions}\label{subsec:height-func}

In this subsection, we prove that under the Brownian scaling, the height function of an annular map sampled according to weights converges to a correlated Brownian {path measure}, which implies Theorem \ref{thm: annular orientation scaling limit}.

    Recall the weights $W(G)$ in~\eqref{eq: planar map weight}.  We define a $\sigma$-finite measure $M_\cG$ on $\mathcal{G}$ (and, by Proposition~\ref{prop: discrete time bijection}, equivalently on $\mathcal{H}$), where for each $G\in \cG$, $M_\cG(\{G\})$ is equal to the weight $ W(G)$. We write $M_{\cG,k}$ for the restriction of $M_\cG$ to $\cG_k$. 
 For lattice paths $\sfZ=(\sfL_t, \sfR_t)_{0\le t\le n}$ in $\mathcal{H}$, by Condition~(\ref{cond: Hk lmn cond 4}) in the definition of $\mathcal{H}_k$, $\sfZ_n=(-\ell, k)$ for some $\ell$, $k\in\bN$. We write $\wh{\sfZ}$ for the lattice path such that $\wh{\sfZ}_t=\sfZ_{n-t}+(\ell, 0)$, i.e.~it is the time-reversal of $\sfZ$, translated so that it starts from $(0,k)$. The steps of  $\wh{\sfZ}$ are now in $\{(1,-1)\}\cup\{(-j,i)\,:\, b_{i,j}>0\}$. {The reason we work with the time-reversal of $\sfZ$ instead of $\sfZ$ is that the time-reversal haw the law of a particular stopped random walk, see Lemma \ref{lemma: random walk representation of Hk} below.}

Fix an admissible set of weights $\{b_{e}, b_{i, j}: i, j\in \bZ_{\ge 0}\}$ satisfying Conditions~(\ref{cond: b-A})-(\ref{cond: b-D}). Let $k\in\bN$. Consider a random walk $\sfS=(\sfU_t, \sfV_t)_{t\in \bZ_{\ge 0}}$ with i.i.d.\ steps that starts from $(0, k)$ such that $\sfS_{t+1} - \sfS_t = (1,-1)$ with probability $b_e$ and equals $(-j,i)$ with probability $b_{i,j}$. We write $\bbP_k$ for its law.  Let $\tau = \inf\{t: \sfV_t \le 0\}$, which is finite almost surely. We write $\sfS^{\tau}=(\sfU_t, \sfV_t)_{0\le t\le \tau}$ for the walk truncated at time $\tau$.

\begin{lemma}
    \label{lemma: random walk representation of Hk}
    In the setting above, conditioned on the positive probability event $\{\sfS_{\tau} \in \bN\times\{0\}\} =\{ V_\tau=0 \} $, the  law of $\sfS^\tau$ is proportional to the law of $\wh{\sfZ}$ under $M_{\cG,k}$. Furthermore, the measure $M_{\cG,k}$ is finite and its total mass is equal to $\bbP_k[\sfS_{\tau}\in \bN\times \{0\}]$. 
\end{lemma}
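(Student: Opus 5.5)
The plan is to identify $M_{\cG,k}$, pushed forward to reversed lattice paths, with the law of the stopped walk $\sfS^\tau$ restricted to $\{\sfV_\tau=0\}$. By Proposition~\ref{prop: discrete time bijection}, $M_{\cG,k}$ is the measure on $\mathcal{H}_k$ assigning mass $W(\sfZ)$ to each $\sfZ$, since the bijection preserves weights. The map $\sfZ\mapsto\wh{\sfZ}$ is injective on $\mathcal H_k$, so it suffices to show two things. First, the image $\{\wh\sfZ:\sfZ\in\mathcal H_k\}$ is exactly the set of possible trajectories of $\sfS^\tau$ on $\{\sfV_\tau=0\}$ started from $(0,k)$. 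Second, $\bbP_k[\sfS^\tau=\wh\sfZ]=W(\sfZ)$. Given these, the law of $\sfS^\tau$ restricted to $\{\sfV_\tau=0\}$ coincides with the pushforward of $M_{\cG,k}$. Hence the total mass of $M_{\cG,k}$ equals $\bbP_k[\sfV_\tau=0]\le 1$, so $M_{\cG,k}$ is finite, and conditioning gives the proportionality claim.

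The weight identity is immediate. Each step $(-1,1)$ of $\sfZ$ becomes a step $(1,-1)$ of $\wh\sfZ$, which has probability $b_e$. Each step $(j,-i)$ becomes $(-j,i)$, which has probability $b_{i,j}$. A path of length $n$ therefore has probability $b_e^{n_e}\prod b_{i,j}^{n_{i,j}}=W(\sfZ)$ by~\eqref{eq: lattice path weight}.

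The main step is matching the constraints. Condition~(\ref{cond: Hk lmn cond 1}) matches the step set. The condition $\sfR(j)\ge0$, together with $\sfR(0)=0$ and $\sfR(n)=k$, translates to: $\wh\sfS$ starts at second coordinate $k$, stays $\ge0$, and ends at $0$. For this to describe the stopped walk, we need $\tau=n$ exactly, meaning the second coordinate of $\wh\sfZ$ is strictly positive before the final time.

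Here there is a subtlety. The definition of $\mathcal H_k$ as written only requires $\sfR\ge0$, so I will argue that $\sfR(j)\ge1$ for $0<j\le n$ holds automatically. The first step of $\sfZ$ from $(0,0)$ must have nonnegative $\sfR$-increment. In the map picture, $\wt\sfR$ is the east height, which on $\wt{\mathcal H}_k$ comes from the bipolar height function; that function stays positive after time $0$ because the east tree is rooted at the source, which has a single corner visited at the start. If this positivity is not automatic, I would instead define $\tau$ as the first hitting time of $\{\sfV\le0\}$ and check via Lemma~\ref{lemma: bijection step 2} and the KMSW bijection that intermediate zeros of $\sfR$ cannot occur. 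I expect this bookkeeping to be the main obstacle.

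The remaining conditions on $\sfL$ carry no real constraint. In $\wh\sfZ$, the first coordinate starts at $0$ and ends at $\ell$, where $\ell=-\sfL(n)$. Requiring $\ell\ge1$ and $m=-\inf\sfL-\ell\ge0$ reads as $\sfU_\tau\ge1$, i.e.\ $\sfS_\tau\in\bN\times\{0\}$. The second part is automatic because $\inf\sfL\le\sfL(n)$. Thus $\ell,m,n$ are determined by the path and impose nothing further. Finally, $\tau<\infty$ almost surely by recurrence of the zero-drift one-dimensional projection (Conditions~(\ref{cond: b-A}), (\ref{cond: b-C})), and the event $\{\sfV_\tau=0\}$ has positive probability by exhibiting the explicit path of $k$ consecutive $(1,-1)$ steps.
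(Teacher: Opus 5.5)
Your route is the same as the paper's: the probability that $\sfS^\tau$ follows a given trajectory equals that trajectory's weight, and then one sums over trajectories. The paper's proof only sums over paths that stay strictly above $\bZ\times\{0\}$ until they first hit it, so it silently assumes the identification you try to justify. You are right that this identification is the crux, but your justification is false: $\sfR$ generally does return to $0$ at times $0<j<n$.

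The east height of an edge is the east-tree distance from its starting vertex to the source. So $\sfR$ vanishes exactly at the times the exploration visits an outgoing edge of the source, and the source of $\wt G$ usually has several outgoing edges. Whenever the west edge $e_1$ of the root vertex $v_1$ is an interior edge, the source $v_1^\rmR$ carries two edges:
the copy of $e_1$ on $p_\rmR$, which is visited at time $0$;
the outer-boundary edge leaving $v_1$, which lies on the right boundary below $p_\rmL$.
The second edge is therefore visited at some $0<j<n$, and $\sfR(j)=0$ there; note $\sfR(n)=k\ge1$ and the final $m$ values are $\ge k$.

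Here is a concrete example with $k=2$. Take outer cycle $a_1\to a_2\to a_1$, inner cycle $c_1\to c_2\to c_1$, radial edges $a_1\to c_1$ and $a_2\to c_2$, and root vertex $a_1$. The crossing west path is the single edge $a_1\to c_1$, and the source of $\wt G$ has the two outgoing edges $a_1^\rmR\to c_1^\rmR$ and $a_1^\rmR\to a_2$. The east heights in exploration order are $0,1,0,1,2,1,2$.

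This is exactly why $\wt{\mathcal H}_k$ and $\mathcal H_k$ only impose $\sfR\ge0$: the KMSW quadrant walks may touch the axes. For such maps $\wh\sfZ$ reaches $\{\sfV=0\}$ strictly before its final time, so it is not a possible value of $\sfS^\tau$. Your fallback of ruling out intermediate zeros via the KMSW bijection therefore cannot succeed.

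Consequently the first claim and the total-mass formula are false as stated. Once you drop the positivity claim, your argument does prove the following. The set $\{\wh\sfZ:\sfZ\in\mathcal H_k\}$ consists of all paths from $(0,k)$ with the allowed steps that stay in $\{\sfV\ge0\}$ and end in $\bN\times\{0\}$. Your weight computation then shows that the pushforward of $M_{\cG,k}$ is $\sum_{n\ge1}\bbP_k\big[\sfS|_{[0,n]}\in\cdot\,,\ \sfV_j\ge0\ \text{for all } j\le n,\ \sfS_n\in\bN\times\{0\}\big]$. In words, this is the walk killed upon entering $\{\sfV<0\}$, recorded at each of its visits to $\bN\times\{0\}$.

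Its total mass is at most the expected number of visits to $\{\sfV=0\}$ before killing. That is at most $1/b_e$, since from the axis the next step is the killing step $(1,-1)$ with probability $b_e$. So finiteness survives, but the mass is in general strictly larger than $\bbP_k[\sfS_\tau\in\bN\times\{0\}]$. Likewise, conditioning $\sfS^\tau$ on its first hitting of the axis does not produce the law of $\wh\sfZ$.

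A smaller point: $\{\sfV_\tau=0\}$ has probability one, because $\sfV$ only decreases by unit steps. The relevant event is $\{\sfU_\tau\ge1\}$, which you correctly identify later.
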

\begin{proof}
  It is immediate that the event $A:=\{\sfS_{\tau}\in \bN\times\{0\}\}$ has positive probability.  Let $\mathring\sfZ$ be a lattice path started from $(0,k)$ and staying above  $\bZ\times \{0\}$, whose steps consists of $(1, -1)$ and $(-j, i)$ with $i$, $j$  such that $b_{i, j}\neq 0$, and  stopped upon first hitting $\bZ\times \{0\}$. By construction, we have $\bP(\sfS^\tau=\mathring\sfZ) =b_e^{n_{e}}\prod_{i,j\geq 0}b_{i,j}^{n_{i, j}}$, where $n_e$ is the number of $(1 ,-1)$ steps and $n_{i, j}$ is the number of $(-j, i)$ steps in $\mathring{\sfZ}$. The claim then follows by varying the choices of $\mathring\sfZ$.
\end{proof}

The next proposition gives the scaling limit result in 2.\ of  Theorem \ref{thm: annular orientation scaling limit}.
\begin{proposition}
\label{prop: path scaling limit}
    Let $(\mathsf L^{(N)}_t,\mathsf R^{(N)}_t)_{0\leq t\leq \abs{E(G)}-1}$ be the height functions associated to an  annular map $G$ sampled from {$M_{\mathcal{G},{\lfloor b\sqrt{N}\rfloor }}$}. Then as $N\to \infty$, the law of $\frac{1}{\sqrt{N}}(\mathsf L_{\lfloor Nt\rfloor}^{(N)},\mathsf R_{\lfloor Nt\rfloor}^{(N)})_{0\leq \lfloor Nt\rfloor\leq \abs{E(G)}-1}$ converges weakly to $\int_0^\infty \d a B^{\gamma, \bH}_{0, -a+bi}$ with $\gamma$ given by~\eqref{eq: relation between gamma and weights}.
\end{proposition}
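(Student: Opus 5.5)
Via Lemma~\ref{lemma: random walk representation of Hk}, the law of $\widehat{\sfZ}$ under $M_{\cG,k}$ is exactly the law of the stopped walk $\sfS^\tau$ under $\bbP_k$, restricted to $\{\sfV_\tau=0\}$. Since time reversal and translation are continuous for the metric~\eqref{eq:curve-metric}, and the Brownian path measures are reversible, it suffices to prove the reversed statement. With $k=\lfloor b\sqrt N\rfloor$, we will show that the rescaled walk $N^{-1/2}\sfS^\tau_{\lfloor N\cdot\rfloor}$ under $\bbP_k$ restricted to $\{\sfV_\tau=0\}$ converges to $\int_0^\infty \d a\, B^{\gamma,\bH}_{ib,\,a}$. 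Here the endpoint $\sfS_\tau=(\sfU_\tau,0)$ with $\sfU_\tau=\ell$ reverses to the starting point, and the shift by $(\ell,0)$ turns this into $\int_0^\infty \d a\, B^{\gamma,\bH}_{0,-a+ib}$ after reversal. Two normalizations need care. First, $M_{\cG,k}$ has mass $\bbP_k[\sfV_\tau=0]$, which decays like $N^{-1/2}$ (the walk must land exactly on the line, and the overshoot has bounded support by Condition~(\ref{cond: b-A})). Second, the Brownian measure is the boundary-to-boundary limit~\eqref{eq: BPM interior to boundary defn} with a factor $(2\epsilon)^{-1}$. So the convergence is really $\sqrt N$ times the law versus $B^{\gamma,\bH}$, up to a constant. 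I will take the constant as part of the identification, which is consistent with the remark after Theorem~\ref{thm: annular orientation scaling limit} that partition functions converge.

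The steps, in order, are as follows.

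\textbf{(1) Covariance.} The step distribution has mean zero by Condition~(\ref{cond: b-C}). Its covariance matrix is $\Sigma$, with $\Sigma_{11}=b_e+\sum j^2 b_{i,j}$ and $\Sigma_{22}=b_e+\sum i^2 b_{i,j}$; these are equal by Condition~(\ref{cond: b-B}). The off-diagonal entry is $-b_e-\sum ij\,b_{i,j}$. Hence the correlation is $-\cos\theta$, with $\cos\theta$ given by~\eqref{eq: relation between gamma and weights}. This matches~\eqref{eq: gamma correlated BM} after a deterministic time change (rescaling time by $\Sigma_{11}/\mathbb a^2$), which I will absorb into the convention.

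\textbf{(2) Range of $\gamma$.} The ratio in~\eqref{eq: relation between gamma and weights} is nonnegative. It can be made arbitrarily close to $0$ using $b_{i,0}$-type faces balanced by $b_e$, and close to $1$ using $b_e$ large together with diagonal $b_{i,i}$. This yields $\theta\in(0,\pi/2)$, i.e.\ $\gamma\in(0,\sqrt2)$.

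\textbf{(3) Local limit for the exit measure.} This is the main step. After the linear map $m_\theta^{-1}$, I will use a local limit theorem for random walks killed on exiting a half-plane, with exact landing on the boundary line. The required inputs are: the exit position $\sfU_\tau$ has a local density $\bbP_k[\sfV_\tau=0,\ \sfU_\tau=\lfloor a\sqrt N\rfloor]\sim c\,N^{-1}\,\|B^{\gamma,\bH}_{ib,a}\|$ uniformly on compacts in $a$; and the conditional path law, given the exit point, converges to the normalized bridge measure. Since the steps are bounded and the boundary is a line, I would prove this by comparison. First, decompose via a last-macroscopic-time Markov argument, applying Donsker's invariance principle and the local CLT for the bulk part. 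Then treat the final $o(\sqrt N)$ steps using the fact that the probability of landing exactly on $\{\sfV=0\}$ from height $h\ll\sqrt N$ converges to a constant times the harmonic measure (a renewal/ladder-height argument for the one-dimensional walk $\sfV$, which has bounded steps and includes the $-1$ step so lattice issues are benign). This mirrors the analysis of cones and half-planes in \cite{KMSW19}.

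\textbf{(4) Assembly.} Integrating the density over $a$ gives convergence of total mass. Tightness, and control of contributions from $a\to\infty$ and $a\to0$, follow from Gaussian tail bounds for exit positions together with convergence of total masses. Convergence of the conditioned path laws then gives weak convergence of the measures in the metric~\eqref{eq:curve-metric}.

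I expect step (3) to be the main obstacle, specifically the uniform local estimate near the boundary that produces the boundary normal-derivative factor.
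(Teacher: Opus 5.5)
There is a genuine error, and it comes from overlooking the structure of the step set.

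\textbf{The missing observation.} For the reversed walk, the only step with negative second coordinate is $(1,-1)$; every face step $(-j,i)$ has second coordinate $i\ge 0$. So $\sfV$ is skip-free downward and cannot overshoot. Hence $\sfV_\tau=0$ holds $\bbP_k$-almost surely. This is exactly the observation the paper's proof makes: $\inf\{t:\wh\sfR_t\le 0\}=\inf\{t:\wh\sfR_t=0\}$.

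\textbf{Consequences for your normalizations.} Your bookkeeping is off by a factor $\sqrt N$.
\begin{enumerate}
\item The total mass of $M_{\cG,k}$ does not decay like $N^{-1/2}$. The only genuine restriction is the sign condition $\sfU_\tau\ge 1$ (coming from $\ell\in\bN$), and its probability tends to a positive constant.
\item On the continuum side, $\int_0^\infty \d a\, B^{\gamma,\bH}_{ib,a}$ is interior-to-boundary, not boundary-to-boundary, since $ib\in\bH$. Its total mass is an exit probability (up to the Jacobian of $m_\theta$): the probability that the correlated Brownian motion from $ib$ leaves $\bH$ through $(0,\infty)$.
\item There is therefore no $\sqrt N$ factor. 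Multiplying the law by $\sqrt N$, as you propose, would make the masses diverge and contradicts the statement you are proving.
\item The local estimate in your step (3) is wrong: $\bbP_k[\sfU_\tau=\lfloor a\sqrt N\rfloor]\asymp N^{-1/2}$, not $N^{-1}$.
\item Your ladder-height analysis of ``landing exactly on the line'' addresses an event of probability one.
\end{enumerate}

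\textbf{Step (3) is unnecessary.} The target measure is integrated over the exit point $a$. It is therefore just the law of the (time-rescaled) correlated Brownian motion started at $ib$ and killed on exiting $\bH$, restricted to $\{U_\tau>0\}$. No local limit theorem for the exit density and no convergence of conditioned bridges is needed. The paper concludes directly from Lemma~\ref{lemma: random walk representation of Hk} and Lemma~\ref{lemma: donsker}. Donsker's theorem, together with the a.s.\ continuity of the exit-time functional at Brownian paths (which go strictly below the line immediately after touching it), gives convergence of the killed walk. The restriction $\{\sfU_\tau\ge 1\}$ then passes to the limit because the limiting exit law has no atom at $0$.

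Your covariance computation in step (1) is correct. Your remark that a deterministic time change is needed to match the normalization in~\eqref{eq: gamma correlated BM} is a legitimate point that the paper's proof glosses over.
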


We will use the following lemma, which follows from Donsker's theorem, the strong Markov property of Brownian motion, and the fact that Brownian motion started from 0 takes negative values in any open interval containing zero.
\begin{lemma}\label{lemma: donsker}
    Let $b>0$ and let $\mathsf{S}^{(N)}:= (\sfU^{(N)} \sfV^{(N)})$ be a random walk on $\bbR^2$ such that 
    \begin{enumerate}
        \item $\sfS^{(N)}_0 = (0, \lfloor b\sqrt{N} \rfloor )$;
        \item $\sfS_{n+1}^{(N)}-\sfS_n^{(N)}$ is i.i.d.~with mean $0$, and its norm is bounded by some constant independent of $N$; 
        \item $\mathbb{E}[(\sfU^{(N)}_{n+1}-\sfU^{(N)}_{n})^2] = \mathbb{E}[(\sfV^{(N)}_{n+1}-\sfV^{(N)}_{n})^2] = a_{11} $ and $\mathbb{E}[(\sfU_{n+1}^{(N)} -\sfU_{n}^{(N)})(\sfV_{n+1}^{(N)} - \sfV_{n}^{(N)})] = a_{12}$ for some constants $a_{11}$ and $a_{12}$.
    \end{enumerate}
    Let $\tau_N = \inf \{ t:\sfV_{t}^{(N)} \leq 0\}$ and let $B_t = (U_t, V_t)$ be a two-dimensional Brownian motion started from $(0,b)$ with covariance matrix $\begin{pmatrix}
        a_{11} & a_{12}\\
        a_{12} & a_{11}
        \end{pmatrix}$.
        Then as $N\to \infty$,  $(\frac{\sfS_{\lfloor Nt\rfloor}}{\sqrt{N}})_{0\leq t\leq \tau_N} $ converges in distribution to $ (B_t)_{0\leq t\leq \tau} $ where $\tau: = \inf\{t: \sfV_t  \leq 0 \}$ with respect to the metric~\eqref{eq:curve-metric}.   
\end{lemma}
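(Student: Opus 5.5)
The plan is to split the claim into convergence up to a stopping time and a tightness estimate at the exit time. Since the steps are bounded and the rescaled starting point converges, the rescaled path on a fixed interval $[0,T]$, extended to a continuous process by linear interpolation, converges to $B$ on $[0,T]$ by Donsker's invariance principle for i.i.d.\ mean-zero steps with the given covariance. Skorokhod's representation theorem then lets us assume that $\wt\sfS^{(N)}(t):=\sfS^{(N)}_{\lfloor Nt\rfloor}/\sqrt N$ converges to $B$ almost surely, uniformly on compacts. Because $\tau<\infty$ a.s., it then suffices to show $\tau_N/N\to\tau$ in probability; convergence in the metric~\eqref{eq:curve-metric} follows from uniform convergence together with convergence of the durations, using $\psi(t)=t\,\tau_N/(N\tau)$ and the uniform continuity of $B$ on compact intervals.

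The two halves of $\tau_N/N\to\tau$ go as follows.

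\emph{Lower bound: $\liminf_N \tau_N/N\ge\tau$.} For $\epsilon>0$, the quantity $\min_{t\le \tau-\epsilon}V_t$ is strictly positive. By uniform convergence, $\wt\sfV^{(N)}$ stays positive on $[0,\tau-\epsilon]$ for all large $N$. Hence $\tau_N/N\ge\tau-\epsilon$ eventually.

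\emph{Upper bound: $\limsup_N \tau_N/N\le\tau$.} By the strong Markov property at $\tau$, $(V_{\tau+s}-V_\tau)_{s\ge0}$ is a one-dimensional Brownian motion started at $0$. It therefore a.s.\ takes negative values in every interval $(\tau,\tau+\epsilon)$, so there is some $s\in(\tau,\tau+\epsilon)$ with $V_s<0$. Uniform convergence then forces $\wt\sfV^{(N)}(s)<0$ for large $N$, which gives $\tau_N/N\le s<\tau+\epsilon$.

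Combining the two bounds gives a.s.\ convergence $\tau_N/N\to\tau$ under the coupling, and hence the claimed convergence in distribution.

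The main technical point is the upper bound: we need the limiting path to immediately cross strictly below level $0$ after $\tau$, rather than only touching it. This is exactly what the strong Markov property combined with the oscillation of one-dimensional Brownian motion provides. The remaining issues are minor. The bounded step size makes the overshoot at $\tau_N$ of order $1/\sqrt N$, and the linear-interpolation versus floor-function discretisations differ by $O(1/\sqrt N)$ uniformly. Neither affects the limit.
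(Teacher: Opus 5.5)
Your proof is correct and follows the route the paper indicates: Donsker's theorem, the strong Markov property at $\tau$, and the fact that Brownian motion started at $0$ immediately takes negative values. You simply write out the Skorokhod coupling and the two-sided bound $\tau_N/N\to\tau$ explicitly. One small point: the statement allows the step law to depend on $N$, so you should cite the triangular-array form of the invariance principle. It applies trivially here because the steps are uniformly bounded and the covariance is fixed.
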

    

\begin{proof}[Proof of Proposition~\ref{prop: path scaling limit}]
    Let $\wh\sfZ^{(N)} = (\wh \sfL^{(N)}_t, \wh\sfR^{(N)}_t)$ be the reversal of $\sfZ^{(N)}$, shifted such that it starts from $(0,\lfloor b\sqrt N \rfloor)$. By the definition of the measure $B^{\gamma, \bH}_{0, -a+bi}$, it suffices to show that the law of $N^{-1/2}\wh{\sfZ}^{(N)}$ converges as $N\to\infty$ to $\int_0^\infty \d a B^{\gamma, \bH_{}}_{bi, a}$. Since the steps that $\wh\sfZ^{(N)}$ can take are restricted to $\{(1,-1)\}\cup\{(-i,j):i,j\geq 0\}$, we see that $\inf\{t: \wh\sfR^{(N)}_t\leq 0\} = \inf\{t: \wh\sfR^{(N)}_t= 0\}$. The claim then follows by Lemma~\ref{lemma: random walk representation of Hk} and Lemma~\ref{lemma: donsker}. 
\end{proof}
\begin{proof}[Proof of Theorem \ref{thm: annular orientation scaling limit}]
    Proposition \ref{prop: discrete time bijection} implies that $\mathcal G_k$ and $\mathcal H_k$ are related by a weight-preserving bijection, and Proposition \ref{prop: path scaling limit} gives the scaling limit result described in Theorem \ref{thm: annular orientation scaling limit}. The fact that $\gamma$ can take every value in $[0,\sqrt{2})$ follows from the fact that 
    \[
    \frac{2b_e + 2\sum_{i,j \in \bZ_{\ge 0}} ijb_{i,j}}{2b_e + \sum_{i,j\in \bZ_{\ge 0}}(i^2+j^2) b_{i,j}}
    \]
    may take every value in $[0, 1)$ for $b_e$, $b_{i,j}$ satisfying Conditions~(\ref{cond: b-A})-(\ref{cond: b-D}). 
\end{proof}

\section{Construction of the limiting curve-decorated surface}
\label{sec: resampling property of scaling limit}

The goal of this and the next two sections is to prove Theorem~\ref{thm: quantum annulus mot}. 
We conformally weld a quantum cell and a quantum disk as in Figure~\ref{fig: two ways of welding} to get an LQG surface with two welding interfaces and annular topology, whose law is denoted as $\MA^{1,1\dagger}$, and we will later (in Section~\ref{sec: resampling characterization scaling limit}) prove that this welded surface can be descirbed in terms of  $\LF_\tau$ and mating-of-trees as in Theorem~\ref{thm: quantum annulus mot}.  In Section~\ref{sec: qt mot}, we present the mating-of-trees description of the various LQG surfaces we are considering in terms of  Brownian paths. In Section~\ref{sec: proof sec 4 main thm}, we work with the aforementioned conformal welding of a quantum cell and a quantum disk, and prove a resampling property of the interfaces (Proposition~\ref{prop:curve-resample}). 

\subsection{Mating-of-trees descriptions of LQG surfaces}
\label{sec: qt mot}

In~\cite{MoT}, it is shown that one can encode a whole-plane space-filling $\SLE_{\kappa'}$ decorated LQG surface called \emph{$\gamma$-quantum cone} by a 2D Brownian motion. The precise definition of the $\gamma$-quantum cone, which was introduced in~\cite[Definition 4.10]{MoT}, will not be used in our paper. The mating-of-trees framework was later extended to Brownian path measures and other LQG surfaces~\cite{MoT,MS19Finite,MoTboundary,ConfWeldDisks,ASYZ24}. In this subsection we collect several mating-of-trees results that we will use, starting with  the celebrated mating-of-trees theorem for $\gamma$-quantum cones.

\begin{theorem}[\cite{MoT, GHMS17}]
    \label{thm: quantum cone mot}
    Let $\cC=(\bC, \phi, 0, \infty)$ be a $\gamma$-quantum cone decorated with an independent whole-plane space-filling $\SLE_{\kappa'}$ $\eta'$ from $\infty$ to $\infty$, parametrized according to the quantum area and $\eta'(0)=0$. The boundary length process $(L_t, R_t)_{t\in\bbR}$, which tracks the change of the left and right boundaries of $\eta'((-\infty,t])$ relative to time $0$,  is a bi-infinite  Brownian motion with covariance~\eqref{eq: gamma correlated BM}. Moreover, there is a measurable function $F_\infty$ such that  $(\bC, \phi, \eta',0, \infty){/}{\sim_\gamma} = F_\infty((L,R))$ a.s.
\end{theorem}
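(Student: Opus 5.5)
This is the mating-of-trees theorem of \cite{MoT}, and its measurability statement, as refined in \cite{GHMS17}. I would not reprove it from scratch. Instead I would assemble it from the imaginary-geometry and conformal-welding inputs already available, in three steps.

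\textbf{Step 1: set up the surface and the curve, and establish independent stationary increments.} Let $\phi$ be the $\gamma$-quantum cone and $\eta'$ an independent whole-plane space-filling $\SLE_{\kappa'}$ from $\infty$ to $\infty$. Define $\eta'$ as the Peano curve attached to the tree of angle-$\frac{\pi}{2}$ flow lines of a whole-plane GFF modulo $2\pi\chi\bbZ$, as recalled in Section~\ref{sec: prelim ig}, and parametrize it by $\mu_\phi$-area with $\eta'(0)=0$.

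The key structural input is the conformal welding of \cite{She16a,MoT}. Cutting the quantum cone along the two flow lines from $0$ of angles $\pm\frac{\pi}{2}$ produces two independent weight $2-\frac{\gamma^2}{2}$ quantum wedges. Combining this with the local-set property (Theorem~\ref{thm-ig1} and its interior version in \cite{ImagGeo4}), I would prove two things:

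(a) For each fixed $t$, the decorated surface re-rooted at $\eta'(t)$ has the same law. This uses the invariance of the $\gamma$-quantum cone under re-rooting at a quantum-typical point, together with the stationarity of space-filling SLE in the area parametrization.

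(b) The past $\eta'((-\infty,t])$ and the future $\eta'([t,\infty))$ are independent as quantum surfaces, since they are two quantum wedges of weight $2-\frac{\gamma^2}{2}$ glued together.

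Together, (a) and (b) show that $(L,R)$ has stationary independent increments.

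\textbf{Step 2: identify the increments as Brownian with the right covariance.} Continuity of $(L,R)$ comes from the continuity of the LQG boundary measures along SLE curves. Scale invariance follows because adding a constant $C$ to $\phi$ multiplies areas by $e^{\gamma C}$ and lengths by $e^{\gamma C/2}$, and the quantum cone is invariant in law under this operation. A continuous process with stationary independent increments and Brownian scaling is a 2D Brownian motion. Its covariance $-\cos\theta$ comes from computing the exponent of the cone excursions: the time the process spends in a cone equals the angle $\theta=\frac{\pi\gamma^2}{4}$ read off from the weight of the wedge. The variance normalization $\mathbb{a}^2=\frac{2}{\sin\theta}$ is the constant computed in \cite{ARS23}.

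\textbf{Step 3: measurability.} I would show that the cells $\eta'([s,t])$, together with their boundary lengths, are determined by $(L,R)$ through its cone times and the gluing rules, and that the resulting collection of finitely many welded quantum surfaces determines the whole picture. The last point uses conformal removability of the $\SLE_\kappa$ welding interfaces, which holds for $\kappa<4$. Letting the mesh go to zero gives the function $F_\infty$; this is the argument of \cite{GHMS17}.

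\textbf{Main obstacle.} The hardest part is Step 1(b) together with the covariance identification in Step 2: proving independence of past and future genuinely needs the wedge welding theorem, and pinning down the exact correlation needs an exact computation. In practice I would cite \cite{MoT} for these. The constant is as in \cite{ARS23}.
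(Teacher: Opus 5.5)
The paper gives no proof of its own and only cites. Your outline of the Brownian part matches the structure of \cite{MoT}: cut the cone by the angle $\pm\frac{\pi}{2}$ flow lines into two independent weight $2-\frac{\gamma^2}{2}$ wedges, use stationarity, continuity and scale invariance, and conclude that you have a continuous Lévy process with Brownian scaling. The genuine gap is in identifying the correlation.

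\textbf{The correlation for $\gamma<\sqrt2$.} The cone-excursion argument you allude to is the \cite{MoT} argument, and it only works for $\kappa'\in(4,8)$, i.e.\ $\gamma\in(\sqrt2,2)$. In that range:
\begin{itemize}
\item ordinary $\SLE_{\kappa'}$ swallows bubbles, and these bubbles correspond to $\frac{\pi}{2}$-cone excursions of $(L,R)$;
\item the boundary-length process of ordinary $\SLE_{\kappa'}$ is $\kappa'/4$-stable, with jumps given by the swallowed bubbles;
\item the jumps of $(L,R)$ along cone excursions have index $\pi/\theta$, because after the change of coordinates $M_\theta^{-1}$ the quadrant becomes a cone of opening $\theta$;
\item matching the two indices forces $\theta=4\pi/\kappa'=\frac{\pi\gamma^2}{4}$.
\end{itemize}
So it is this stable index, not the wedge weight, that pins down $\theta$.

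For $\kappa'>8$, i.e.\ $\gamma\in(0,\sqrt2)$, the curve swallows no bubbles. Moreover, $(L,R)$ has no $\frac{\pi}{2}$-cone times at all, since one-sided cone times for a cone of opening $\theta$ exist only when $\theta>\frac{\pi}{2}$. The argument therefore has nothing to work with, and \cite{MoT} only shows that $(L,R)$ is \emph{some} correlated Brownian motion. Your fallback of citing \cite{MoT} for the covariance thus fails exactly for $\gamma\in(0,\sqrt2)$. This is precisely the range produced by the discrete model of Theorem~\ref{thm: annular orientation scaling limit}, so it is the range the paper needs. Supplying the correlation $-\cos\theta$ for $\kappa'>8$ is the role of \cite{GHMS17} in the citation, via a genuinely different argument. \cite{GHMS17} is not a refinement of the measurability statement, as you describe it.

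\textbf{Measurability.} Measurability of the decorated cone in terms of $(L,R)$ is already proved in \cite{MoT}, not in \cite{GHMS17}. Your Step 3 is also circular as written. The process $(L,R)$ directly determines only the boundary lengths of the cells $\eta'([s,t])$ and how they are glued, not the cells as quantum surfaces. That each cell is a measurable function of its own increment (used via \cite{AY23wholeplane} below Definition~\ref{def:Quantum-cell}) is a consequence of the theorem, not an input. The substance of the measurability proof is showing that the randomness left inside the cells vanishes as the mesh goes to zero, and your sketch does not supply this.

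With the attributions corrected, the citation-level proof is complete:
\begin{itemize}
\item \cite{MoT} for the Brownian property, the correlation when $\kappa'\in(4,8]$, and measurability;
\item \cite{GHMS17} for the correlation when $\kappa'>8$;
\item \cite{ARS23} for $\mathbb{a}^2$.
\end{itemize}
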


\begin{definition}\label{def:Quantum-cell}
    Let $\mathcal{C}=(\bC, \phi, \eta')$ be the curve-decorated $\gamma$-quantum cone with boundary length process $(L, R)$ as in Theorem~\ref{thm: quantum cone mot}.
    An area-$a$ quantum cell is the   curve-decorated LQG surface 
    $\cC_a=(\eta'([0, a]), \phi,\eta'|_{[0, a]},\\   \eta'(0), \eta'(a), x_\rmL, x_\rmR)/\mathord\sim_\gamma$, where $x_\rmL=\eta'(t_\rmL)$, $x_\rmR=\eta'(t_\rmR)$, $t_\rmL$ (resp.~$t_\rmR$) is the time when $L_t$ (resp.~$R_t$) attains its infimum for $0\le t\le a$. 
    We write $\MC_a$ for the law of the curve-decorated LQG surface $\cC_a$. 
\end{definition}
We call the four boundary arcs from $\eta'(0)$ to $x_\rmL$,  from $x_\rmL$ to $\eta'(a)$, from $\eta'(a)$ to $x_\rmR$ and from $x_\rmR$ to $\eta'(0)$ the bottom left, top left, top right, bottom right boundary arcs of $\cC_a$, respectively. We write $\MC = \int_0^\infty \MC_a\,\d a$, and let $\MC = \int_{\bbR_+^4} \MC(\ell_1,\ell_2,\ell_3,\ell_4)\;\d\ell_1\,\d\ell_2,\d\ell_3\,\d\ell_4$ be the disintegration such that the bottom left, top left, top right, bottom right boundary arcs have quantum lengths $\ell_1,\ell_2,\ell_3,\ell_4$, respectively, when sampled from $\MC(\ell_1,\ell_2,\ell_3,\ell_4)$. This disintegration can be defined on the level of Brownian paths. 

Note that as explained in~\cite[Section 2.4]{AY23wholeplane},   the quantum cell $\cC_a$ considered above is measurable with respect to $(L,R)|_{[0,a]}$. We write $F$ for the map such that $F((L,R)|_{[0,a]}) = \cC_a$, a.s. 
Furthermore, as explained in~\cite[Section 3.3.1]{ASYZ24}, $F$ satisfies reversibility and  concatenation compatibility, and can therefore be extended to Brownian path trajectories by conformal welding.

\begin{definition}\label{def:MD}
    Let $\cD=(\bH, \phi,  0, \infty)$ be an embedding of a quantum disk from $\mathcal{M}_2^{\mathrm{disk}}(\frac{\gamma^2}{2};a,b)$ as in Definition~\ref{def-quantum-disk} and~\eqref{eq:QD-dis}, and let $\eta'$ be a  space-filling  $\SLE_{\kappa'}(0;\kappa'/2-4)$ from 0 to $\infty$ independent from $\phi$ and  parameterized by LQG area with respect to $\phi$. We write   $\MD(a, b)$ for the law of the LQG surface $(\cD,\eta'){/}{\sim_\gamma}$. Consider the left boundary of $\eta'([0,t])$. We let $L_t$ be the quantum length of the segment inside $\bbH$ plus the quantum length of $(-\infty,0)\backslash \eta'([0,t])$. Let $R_t$ be the quantum length of the right boundary of $\eta'([0,t])$.  
\end{definition}
\begin{theorem} 
    \label{thm: quantum disk mot}
    For any $a, b \in (0, \infty)$, the law of the boundary length process $(L, R)$ of $\MD(a, b)$ as in Definition~\ref{def:MD} is $C_\gamma B_{a, bi}^{\gamma, \bR^2_+}$ for some $C_\gamma >0$ depending only on $\gamma$. Moreover, the curve-decorated LQG surface $(\cD,\eta'){/}{\sim_\gamma}$ can a.s.\  be recovered from $(L,R)$ through the map $F$ in Definition~\ref{def:Quantum-cell}.
\end{theorem}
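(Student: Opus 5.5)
The plan is to deduce Theorem~\ref{thm: quantum disk mot} from the mating-of-trees statement for quantum cells in Definition~\ref{def:Quantum-cell}, combined with the known mating-of-trees theorem for the weight-$2$ (i.e.\ $\mathcal{M}_2^{\mathrm{disk}}(2)$) quantum disk decorated by a space-filling $\SLE_{\kappa'}$ from \cite{MoTboundary,ConfWeldDisks}. In that result, a weight-$2$ quantum disk with boundary lengths $(a,b)$ decorated by an independent space-filling $\SLE_{\kappa'}$ has boundary length process given by a Brownian excursion in the cone $\bR_+^2$ from $(a,b)$ to $0$, namely $C\,B^{\gamma,\bR_+^2}_{(a,b),0}$ up to reversal. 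Moreover, the surface is recovered by a measurable map that agrees with $F$ on subpaths.

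\textbf{Step 1: identify the weight-$\frac{\gamma^2}{2}$ disk inside the weight-$2$ picture.} By the conformal welding results of \cite{ConfWeldDisks} (see in particular their Remark~7.10), cutting a weight-$2$ disk decorated by space-filling $\SLE_{\kappa'}$ along the left boundary of the curve, started from the root and run until it hits the right boundary arc, produces the following two pieces:
\begin{itemize}
\item a weight-$\frac{\gamma^2}{2}$ quantum disk decorated by an independent space-filling $\SLE_{\kappa'}(0;\frac{\kappa'}{2}-4)$, where the force point arises from the $-\lambda'+\pi\chi$ flow-line boundary data;
\item an independent complementary piece.
\end{itemize}
At the level of Brownian paths this corresponds to the Markov decomposition of Lemma~\ref{lemma: first and last passage decomposition}. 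The path of the weight-$2$ disk, started from $(a',b)$, is decomposed at its first hitting time of the line $\{-c+id\}$ (after reparametrization, of the vertical axis). The piece before that hitting time is a path in the cone started from one boundary ray. With the definition of $L_t$ in Definition~\ref{def:MD}, which includes the unexplored part of $(-\infty,0)$, this piece becomes a path from $(a,0)$ on the boundary to $(0,b)$, i.e.\ a measure proportional to $B^{\gamma,\bR_+^2}_{a,bi}$.

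\textbf{Step 2: disintegrate in the boundary lengths.} Disintegrate both sides over the left and right lengths $(a,b)$. Then match the constants: the factor $\mathbb{a}$ from the change of variables in Lemma~\ref{lemma: first and last passage decomposition} and the normalizations in $\mathcal{M}_2^{\mathrm{disk}}$ only contribute a multiplicative constant $C_\gamma$. Continuity in $(a,b)$ of both measures (see \cite[Section~2.6]{ConfWeldDisks}) upgrades the almost-every statement to every $(a,b)$.

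\textbf{Step 3: measurability.} Since the disk piece is a subset of the weight-$2$ surface traced by $\eta'$ during a time interval determined by $(L,R)$, it is determined by the restricted path through $F$. Here we use the concatenation compatibility of $F$ from \cite[Section~3.3.1]{ASYZ24}.

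The main obstacle is Step~1: rigorously identifying the law of the cut-out piece as $\mathcal{M}_2^{\mathrm{disk}}(\frac{\gamma^2}{2})$ with the correct $\SLE_{\kappa'}(0;\frac{\kappa'}{2}-4)$ decoration and independence. I would cite \cite[Theorem~1.4 and Remark~7.10]{ConfWeldDisks} directly. Alternatively, this theorem essentially appears as \cite[Theorem~7.? / Prop.~7.10]{ConfWeldDisks}, so the cleanest route may simply be to quote it.
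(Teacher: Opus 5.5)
The route you end up recommending, quoting \cite[Remark 7.10]{ConfWeldDisks} for the law of $(L,R)$, is exactly what the paper does for the first claim. The derivation you sketch before falling back on it does not work as written, though.

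The boundary length process of the weight-$2$ disk is a cone excursion started at an \emph{interior} point $(a',b')$ of $\bR_+^2$. A first-passage decomposition as in Lemma~\ref{lemma: first and last passage decomposition}, taken at the hitting time of a line, therefore produces after translation a path from an interior point of the quadrant to a boundary ray. That is something like $B^{\gamma,\bR_+^2}_{c+ib',\,id}$, never the boundary-to-boundary measure $B^{\gamma,\bR_+^2}_{a,bi}$. Your text slides from ``started from $(a',b)$'' to ``started from one boundary ray'' without justification.

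Rewriting one coordinate in the convention of Definition~\ref{def:MD} does not repair this. On the flow-line side that convention measures an \emph{explored} boundary length. This differs from the weight-$2$ coordinate by the covered boundary length, which is a running-infimum functional of the path rather than a constant shift. So the change of convention does not carry one Brownian path measure to the other.

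There is also a geometric gap. The region the curve traces before reaching a point $z$ on the right boundary arc carries an extra distinguished boundary point, where its left boundary lands on the original left arc. Nothing in your sketch shows that this region is a two-pointed weight-$\frac{\gamma^2}{2}$ disk with an independent $\SLE_{\kappa'}(0;\frac{\kappa'}{2}-4)$. Finally, the decomposition in the paper that does produce $\MD$ (Proposition~\ref{prop: MC+MD}) starts from the half-plane path of the quantum triangle and uses the present theorem as an input, so it cannot be run backwards here.

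Your Step~3 is phrased through this weight-$2$ embedding, so once Step~1 is replaced by the citation it has nothing to stand on. The paper's argument for the second claim is intrinsic to $\MD$ and needs no ambient surface. Let $T$ be the duration. For each rational $[s,t]\subset(0,T)$, the piece $(\eta'([s,t]),\phi,\eta'|_{[s,t]})$ is a.s.\ equal to $F((L,R)|_{[s,t]})$, by the definition of the boundary length process. Sending $s\downarrow 0$ and $t\uparrow T$, and using the concatenation compatibility of $F$ \cite[Lemma 2.15]{AY23wholeplane}, recovers $(\cD,\eta')/{\sim_\gamma}$. You already invoke concatenation compatibility, so this is the form Step~3 should take.
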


\begin{proof}
    The first claim is from {\cite[Remark 7.10]{ConfWeldDisks}}. For the second claim, let $T$ be the duration of $(L,R)$. By the definition of the above boundary length process, for each rational interval $[a,b]\subseteq (0,T)$, $(\eta'([a,b]),\phi,\eta'|_{[a,b]})$ can a.s.\  be recovered from the map $F$, and the claim further follows by sending $a\downarrow 0$, $b\uparrow T$ and using the concatenation compatibility of $F$~\cite[Lemma 2.15]{AY23wholeplane}.
\end{proof}

Next we consider the boundary length processes associated with quantum triangles. We start with the case where $\gamma\in(\sqrt{2},2)$. Embed a sample from $\QT(2-\frac{\gamma^2}2, \gamma^2, \frac{\gamma^2}2)$ as $(D, \phi, -i, \infty, -1)$ such that the points $-i, \infty, -1$ correspond to the vertices having weights $2-\frac{\gamma^2}2, \gamma^2, \frac{\gamma^2}2$ respectively, the connected component of $D$ having $-1$ on its boundary is $\bbH$, and $\ol{D \backslash \bbH} \cap \bbR = 0$. See Figure \ref{fig: thick and thin QT}.  Independently sample a space-filling $\SLE_{\kappa'}(\frac{\kappa'}2-4;\frac{\kappa'}2-4)$ curve in the connected components of $D \backslash \bbH$ where the force points are located immediately to the left and right of the starting point, and concatenate them then to get a space-filling curve from $0$ to $-i$. Further independently sample a space-filling $\SLE_{\kappa'}(\frac{\kappa'}{2}-4;\frac{\kappa'}{2}-4)$ curve in $\bbH$ from $-1$ to $0$ with force points at $\infty$ and $(-1)^+$. Let $\eta'$ be the concatenation of these curves parametrized by quantum area, such that it is a space-filling curve in $D$ from $-1$ to $-i$.  Let $T$ be the duration of $\eta'$ and let $\tau$ be the first time $\eta'$ hits $\infty$. For $t \leq T$ let $R_t$ be the  quantum length of the right boundary arc of $\eta'([0,t])$. For $t \leq \tau$, consider the left boundary arc of $\eta'([0,t])$; let $L_t^+$ (resp.\ $L_t^-$) be the quantum length of the segment inside $D$ (resp.\ on $\partial D$), and let $L_t = L_t^+ - L_t^-$. For $t \in (\tau, T]$ let $L_t$ be $L_\tau$ plus the quantum length of the left  boundary arc of $\eta'([\tau,t])$. We write $\MT(a,b,c)$ for the law of the curve-decorated LQG surface $(D,\phi,-i,\infty,-1,\eta'){/}{\sim_\gamma}$ considered as above, where we further disintegrate over $\QT(2-\frac{\gamma^2}2, \gamma^2, \frac{\gamma^2}2)$ such that the boundary arcs of $D$ from $-i$ to $\infty$, from $-i$ to $-1$ and from $-1$ to $\infty$ have length $a,b,c$, respectively. For $\gamma\in(0,\sqrt{2})$, we embed a sample from $\QT(2-\frac{\gamma^2}2, \gamma^2, \frac{\gamma^2}2)$ as $(\bbH, \phi, 0, \infty, -1)$ (i.e., there will no longer be beads $D\backslash\bbH$ in the above definition) and define $(L,R)$ and $\MT(a,b,c)$ in the same way.

\begin{figure}
    \centering
    \includegraphics[width=0.8\linewidth]{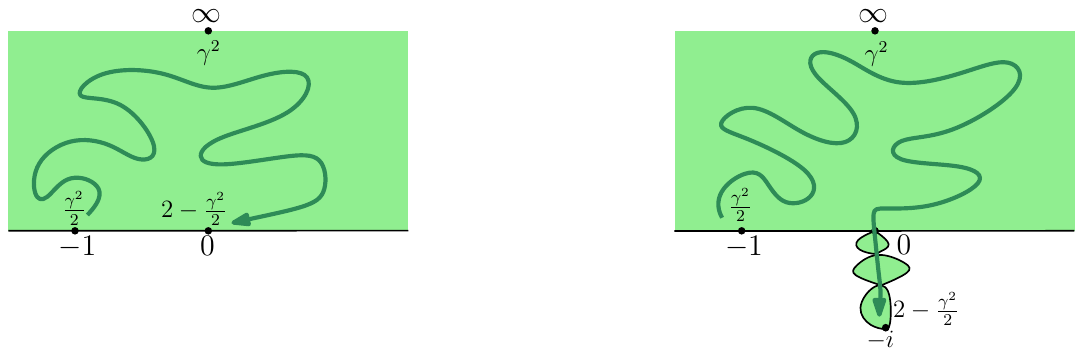}
    \caption{\textbf{Left}: Illustration for $\mathrm{QT}(2-\frac{\gamma^2}{2}, \gamma^2, \frac{\gamma^2}{2})$ along with the space-filling curve $\eta'$, embedded as $(\mathbb H, \phi, 0, \infty, -1)$, for $0<\gamma < \sqrt{2}$; \textbf{Right}: Illustration for $\mathrm{QT}(2-\frac{\gamma^2}{2}, \gamma^2, \frac{\gamma^2}{2})$, embedded as $(D, \phi, -i, \infty, -1)$, for $\gamma > \sqrt{2}$.  }
    \label{fig: thick and thin QT}
\end{figure}

\begin{theorem}\label{thm:ASYZ3.9}
    Let $\gamma\in(0,2)\backslash\{\sqrt2\}$. There is a constant $C_\gamma>0$ such that for every $b>0$, the above boundary length process for 
    \begin{equation}\label{eq:MT-law}
         \int_0^\infty \MT(\ell,b,\ell+a)\,\d\ell
    \end{equation}
   has law $C_\gamma B_{0,-a+bi}^{\gamma;\bbH}$. Furthermore, it is possible to define the map $F$ introduced below Definition~\ref{def:Quantum-cell} such that it a.s.\ recovers the decorated LQG surface from its boundary length process.
\end{theorem}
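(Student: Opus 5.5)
We would not prove this from scratch. We would import it from the mating-of-trees theorem for quantum triangles in \cite{ASYZ24} (their Theorem 3.9, applied to the weights $(2-\frac{\gamma^2}{2},\gamma^2,\frac{\gamma^2}{2})$), and our task is to translate it into the present conventions.

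\textbf{Step 1: identify the curve.} In \cite{ASYZ24}, a quantum triangle $\QT(W_1,W_2,W_3)$ with $W_2=W_1+W_3+2-\gamma^2$ is decorated by an independent space-filling $\SLE_{\kappa'}(\rho_-;\rho_+)$ from the $W_3$ vertex to the $W_1$ vertex. Its force point weights are determined by the $W_i$, and for our weights they are $\frac{\kappa'}{2}-4$ on both sides, since $\frac{\kappa'}{2}-4=\frac{8}{\gamma^2}-4$. The boundary length process is then a Brownian path in a cone or half-plane, as in \cite{ASYZ24}. Here $2-\frac{\gamma^2}{2}+\frac{\gamma^2}{2}+2-\gamma^2=4-\gamma^2$, which does not equal $\gamma^2$ in general. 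So we should check that \cite{ASYZ24} treats precisely this triple: the curve there ends at the vertex of weight $2-\frac{\gamma^2}{2}$, and the $\gamma^2$ vertex is the point $\infty$ that is hit in the middle. We would verify that the curve constructed above, namely the concatenation in $\bbH$ from $-1$ to $0$ and then in the beads, matches their construction. For $\gamma>\sqrt2$ this uses the fact that the thin weight $2-\frac{\gamma^2}{2}<\frac{\gamma^2}{2}$ vertex is a chain of beads, each carrying an independent $\SLE_{\kappa'}(\frac{\kappa'}{2}-4;\frac{\kappa'}{2}-4)$. This identification is a matter of Definition~\ref{def-qt-thin} together with the welding description in \cite{ASYZ24}.

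\textbf{Step 2: match the boundary length process.} Under our definition, $L$ decreases by the boundary length consumed on the arc from $-1$ to $\infty$ up to time $\tau$, and afterwards increases along the arc from $\infty$ to $-i$. The process starts at $0$ and ends at $-a+bi$, where $b$ is the final right boundary length (the arc from $-i$ to $-1$) and $-a$ comes from $\ell-(\ell+a)$. The constraint $R>0$ gives the half-plane $\bbH$, and the left coordinate is unconstrained because $L$ may go negative. Integrating over $\ell$ removes the conditioning on the position of the infimum, producing the measure $\int_0^\infty \MT(\ell,b,\ell+a)\,d\ell$. We would use Lemma~\ref{lemma: first and last passage decomposition} to check consistency: decomposing at the first time $L$ hits $-\ell$ (that is, at $\tau$) splits the path into a cone piece corresponding to a triangle-to-$\infty$ part and a half-plane piece. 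Each piece is described by Theorem~\ref{thm: quantum disk mot} or by its analogue in \cite{ASYZ24}, with the factor $\mathbb a$ absorbed into $C_\gamma$. We would also check that switching to our shear matrix $M_\theta$ changes nothing, as noted in the preliminaries.

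\textbf{Step 3: measurability.} For each rational subinterval $[s,t]$ of the time interval, $(\eta'([s,t]),\phi)$ is a quantum cell recovered by $F$. The whole decorated surface is then recovered by exhausting the time interval and using concatenation compatibility \cite[Lemma 2.15]{AY23wholeplane}, exactly as in the proof of Theorem~\ref{thm: quantum disk mot}.

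\textbf{Main obstacle.} We expect the hardest part to be the bookkeeping in Step 1 for $\gamma\in(\sqrt2,2)$: checking that the beaded concatenation matches the curve law in \cite{ASYZ24} and that the bead boundary lengths enter $L$ with the right sign. The exclusion of $\gamma=\sqrt2$ is inherited from \cite{ASYZ24}.
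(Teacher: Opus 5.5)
Your proposal is correct and follows the paper's route. The paper cites \cite[Proposition 3.9]{ASYZ24}, with $L$ and $R$ swapped, for $\int_0^\infty\int_0^\infty \MT(a',b,c')\,\d a'\,\d c'$, whose boundary length process has law $C_\gamma\int_{-\infty}^\infty B^{\gamma;\bbH}_{0,r+bi}\,\d r$; it observes $r=L_T=a'-c'$, concludes by the change of variables $(\ell,a)=(a',c'-a')$ and disintegration over $a$, and takes the measurability claim directly from that proposition rather than re-deriving it via quantum cells as in your Step 3. One correction to your optional consistency check, which is not needed anyway: $\tau$ is the time at which $L$ attains its global minimum $-(\ell+a)$, not the first hitting time of $-\ell$, and after $\tau$ the path stays in $\{L\ge -(\ell+a)\}$, so the decomposition you describe there is misstated.
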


\begin{proof}
    By~\cite[Proposition 3.9]{ASYZ24} (where we swap the role of $L$ and $R$ there), the boundary length process for $\int_{0}^\infty\int_0^\infty \MT(a',b,c')\,\d a'\,\d c'$ has the law $C_\gamma\int_{-\infty}^\infty B_{0,r+bi}^{\gamma;\bbH}\,\d r$. From the definition of the boundary length process, if $T$ is the duration of $(L,R)$, then in the above two integrals $r=L_T = a'-c'$. The first claim then follows by a change of variables $(\ell,a)=(a',c'-a')$ and disintegration over $a$. The second claim is from the last claim of~\cite[Proposition 3.9]{ASYZ24}.
\end{proof}

\subsection{The limiting surface and an interface resampling property}
\label{sec: proof sec 4 main thm}

We start with the following result. See Figure \ref{fig: cell welding} for an illustration.

\begin{proposition}\label{prop: MC+MD}
   Let $a,b>0$. Let $(D,\phi,-i,\infty,-1,\eta')$ (for $\gamma\in(\sqrt2,2)$) or $(\bbH,\phi,0,\infty,-1,\eta')$ (for $\gamma\in(0,\sqrt2)$) be an embedding of the surface from 
     \begin{equation}\label{eq:MT-law-1}
         \int_0^\infty a\MT(\ell,b,\ell+a)\,\d\ell
    \end{equation}
    as described in the paragraph before Theorem~\ref{thm:ASYZ3.9}. Pick $r$ from the uniform probability measure on $[0,a]$, and mark the point $w$ on $(-\infty,-1)$ such that  $(-\infty,w)$ has quantum length $r$.  Let $T$ be the duration of $\eta'$ and $\tau$ be the time when $\eta'$ hits $w$. Let $\cD_1 = (\eta'([0,\tau]),\phi,\eta|_{[0,\tau]}) {/}{\sim_\gamma}$ and $\cD_2 = (\eta'([\tau,T]),\phi,\eta|_{[\tau,T]}) {/}{\sim_\gamma}$. Then for some constant $C\in(0,\infty)$, the LQG surfaces $(\cD_1,\cD_2)$ has the law
    \begin{equation}\label{eq:MT-law-2}
        C\int_0^a\int_0^\infty\int_0^b \int_0^\infty\MD(a+\ell-r;b+s-t) \times \MC(r,\ell,t,s)\,\d \ell\,\d t\,\d s\,\d r.
    \end{equation}
\end{proposition}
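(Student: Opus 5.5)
Everything will be done at the level of Brownian paths, using Theorem~\ref{thm:ASYZ3.9} and Lemma~\ref{lemma: first and last passage decomposition}. By Theorem~\ref{thm:ASYZ3.9}, the boundary length process $(L,R)$ of a sample from $\int_0^\infty \MT(\ell,b,\ell+a)\,\d\ell$ has law $C_\gamma B^{\gamma,\bbH}_{0,-a+bi}$, and the surface is recovered from $(L,R)$ by $F$. The first task is to read off, in terms of $(L,R)$, the point $w$ and the time $\tau$ at which $\eta'$ hits it. The arc $(-\infty,-1)$ is the boundary arc from $\infty$ to $-1$, of length $\ell+a$. Along the left boundary, $L_t=L_t^+-L_t^-$ measures how much of this arc has been swallowed. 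The path ends at $L_T=-a$, and the left-boundary arc $(-\infty,w)$ of length $r\in[0,a]$ is swallowed exactly when $L$ first reaches level $-r$ (up to orientation conventions, which I would check carefully against Figure~\ref{fig: thick and thin QT}). Hence $\tau=\inf\{t:L_t=-r\}$. The extra weighting by $a$ together with the uniform choice of $r$ on $[0,a]$ yields the measure $\int_0^a \d r\, C_\gamma B^{\gamma,\bbH}_{0,-a+bi}$ with $\tau$ the first passage of $L$ to $-r$.

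Next I would apply Lemma~\ref{lemma: first and last passage decomposition} with $c=r$. Since $L_T=-a\le -r$, the first term $B^{\gamma,\bbR^2_+-r}_{0,-a+ib}$ vanishes for $r<a$, which is a Lebesgue-a.e.\ statement. This gives
\[
B^{\gamma,\bbH}_{0,-a+ib}=\mathbb{a}\int_0^\infty B^{\gamma,\bbR^2_+-r}_{0,-r+id}\oplus B^{\gamma,\bbH}_{-r+id,-a+ib}\,\d d .
\]
The first piece $(L,R)|_{[0,\tau]}$ stays in $(-r,\infty)\times(0,\infty)$. The second piece, started from $(-r,d)$ and restricted to $\bbH$, is the part after $\tau$.

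I would then identify each piece with a surface via the mating-of-trees results. For the first piece, a path in $\bbR^2_+-r$ from $0$ to $-r+id$ should be decomposed further according to the running infima of $L$ and $R$. Disintegrating $B^{\gamma,\bbR^2_+-r}_{0,-r+id}$ over the quantum-cell boundary lengths $(r,\ell,t,s)$ in the sense of Definition~\ref{def:Quantum-cell} shows that it equals, up to a constant, $\int\MC(r,\ell,t,s)$ pushed to paths. The parameters must be chosen consistently: the bottom-left length equals $r$ and the endpoint is $-r+id$ with $d=t-s$ or similar, and I expect the ranges $\ell\in(0,\infty)$, $t\in(0,b)$, $s\in(0,\infty)$ to come out of this bookkeeping. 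For the second piece, after translation $B^{\gamma,\bbH}_{-r+id,-a+ib}$ restricted to the relevant event matches $B^{\gamma,\bbR^2_+}_{a',b'i}$ under the reversal and shift that convert it to the form in Theorem~\ref{thm: quantum disk mot}. This identifies it with $\MD(a+\ell-r;\,b+s-t)$, since the lengths of the remaining boundary arcs are determined by additivity.

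Finally, the measurability statements (the map $F$ and its concatenation compatibility) will show that $\cD_1$ and $\cD_2$ are the surfaces encoded by the two path pieces. The main obstacle is the second piece. Its domain is the half-plane $\bbH$, not a quadrant, so it is not literally the disk path measure. To handle this, I would show that after time $\tau$ the process cannot revisit the swallowed region in a way that violates the quadrant constraint once coordinates are measured relative to the appropriate cell boundary lengths. Equivalently, I would re-express the domain $\bbH$ in the coordinates of the remaining disk, using the fact that the $L$-infimum has already been attained relative to $r$. Carefully matching these coordinates, and hence the arguments $a+\ell-r$ and $b+s-t$, is the delicate step.
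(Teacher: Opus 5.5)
There is a genuine gap: you have the disk and the cell the wrong way round. This even contradicts the order $\MD\times\MC$ for $(\cD_1,\cD_2)$ in the statement, where $\cD_1=\eta'([0,\tau])$.

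\textbf{The disk is the piece before $\tau$.} Shift that piece by its first-passage level $c$. It becomes a path in $\bbR^2_+$ from $c$ to $id$, i.e.\ a sample of $B^{\gamma,\bbR^2_+}_{c,id}$, and Theorem~\ref{thm: quantum disk mot} identifies this with $\MD(c;d)$. So $\cD_1$ is the disk. Read as a cell path, it would have top-left and bottom-right lengths both equal to $0$. That is a Lebesgue-null set in the disintegration of $\MC$, so it cannot equal $\int\MC(r,\ell,t,s)$.

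\textbf{The cell is the piece after $\tau$.} Shift it to start at $0$, and write $(L^2,R^2)$ for its coordinates and $T_2$ for its duration. It is a free path $B^{\gamma,\bC}_{0,(c-a)+(b-d)i}$ restricted to an event. This is exactly the path measure of a quantum cell with $(r,\ell,t,s)=(-\inf L^2,\,L^2_{T_2}-\inf L^2,\,R^2_{T_2}-\inf R^2,\,-\inf R^2)$, and all four lengths are a.s.\ positive.

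The ``main obstacle'' you flag is a symptom of this swap, and it cannot be resolved. A disk path, in either time direction, begins at the running infimum of one coordinate and ends at that of the other. Since $r,s>0$, this piece begins at the infimum of neither coordinate, so no shift or reversal turns it into a disk path.

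\textbf{The hitting time is also wrong.} $\eta'$ swallows $(-\infty,-1)$ starting from $-1$. That arc has length $a+\ell$, and $L$ reaches $L_{\mathrm{inf}}=-(a+\ell)$ when $\eta'$ hits $\infty$. Hence $\eta'$ hits $w$ at the first time $L=-c$, where $c=a+\ell-r$ is the length of $(w,-1)$, not at the first time $L=-r$. Your $\tau$ marks a different point, and it would put the cell's bottom-left length in $(\ell,\ell+a)$ rather than $(0,a)$.

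Because $c$ depends on the whole path through $\ell=-L_{\mathrm{inf}}-a$, you also cannot apply Lemma~\ref{lemma: first and last passage decomposition} at a level fixed in advance. The paper's device is to make $c$ the variable:
\begin{itemize}
\item $c$ is uniform on $(-L_{\mathrm{inf}}-a,-L_{\mathrm{inf}})$, so the weight $a$ cancels the uniform density. The joint law of $(Z,c)$ is then $\mathds{1}_{E_c}B^{\gamma,\bbH}_{0,-a+ib}\,\mathrm{d}c$, with $E_c=\{-L_{\mathrm{inf}}-a<c<-L_{\mathrm{inf}}\}$.
\item For fixed $c$ the lemma now applies. Its first term vanishes on $E_c$, since those paths stay above $L=-c$.
\item $E_c$ together with $R\ge 0$ becomes $\{\ell<c\}\cap\{s\le d\}$ on the two pieces.
\item Matching endpoints gives $\ell-r=c-a$ and $t-s=b-d$, hence $c=a+\ell-r$ and $d=b+s-t$ (not ``$d=t-s$''). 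The event then becomes $\{r<a\}\cap\{t\le b\}$, which is exactly where the integration ranges in \eqref{eq:MT-law-2} come from.
\end{itemize}
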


\begin{figure}[t]
    \centering
    \includegraphics[scale=0.9]{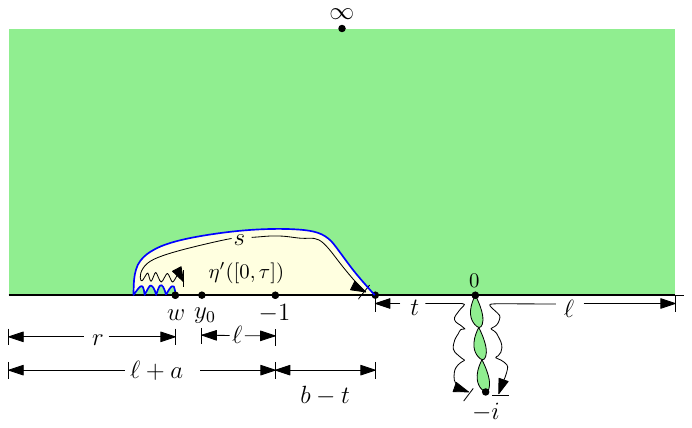}
    \caption{This figure shows the boundary lengths in the conformal welding in Proposition~\ref{prop: MC+MD} in the case where $\gamma > \sqrt{2}$. The light yellow part corresponds to $\mathrm{MD}(a+\ell-r;b+s-t)$ and the light green part corresponds to $\mathrm{MC}(r,\ell,t,s)$. In the $\gamma < \sqrt{2}$ case, there are no  beaded disks and the blue interface is disjoint from $\bbR$ except for its starting and ending points. In the definition of $\MA^{1,1\dagger}$, we shall further glue the arc from $-i$ to $+\infty$ with the arc from $-1$ to $y_0$ to obtain a surface with annular topology as in Figure~\ref{fig: two ways of welding}.} 
    \label{fig: cell welding}
\end{figure}

Proposition~\ref{prop: MC+MD} is a straightforward consequence of Theorem~\ref{thm:ASYZ3.9} and the following Brownian path decomposition. 

\begin{proposition}\label{prop:BM-MC+MD}
    Let $a,b>0$, and $Z=(L,R)$ be a sample from 
     $a B^{\gamma, \bH}_{0, -a+ib}$ of (random) duration $T$ such that $L_T=-a$. Let $L_{\mathrm{inf}}=\inf_{0\leq u\leq T}L_u$, and $c$ be sampled from the uniform probability measure on $(-L_{\mathrm{inf}}-a,-L_{\mathrm{inf}})$. Let $\tau = \inf\{u\in[0,T]:L_u=-c\}$. Let $Z^1 = Z|_{[0,\tau]}+c$ and $Z^2(t) = Z(t+\tau)-Z(\tau)$ for $t\in[0,T-\tau]$. Then for some constant $C_\gamma>0$ depending only on $\gamma$, $(Z^1,Z^2) = ((L^1,R^1),(L^2,R^2))$ has the joint law 
    \begin{equation}\label{eq:prop:BM-MC+MD}
          C\int_0^\infty \int_0^\infty \mathds{1}_E\big(B_{c,id}^{\gamma,\bbR_+^2}\times B^{\gamma,\bC}_{0,(c-a)+(b-d)i}\big)\,\d d\,\d c,
    \end{equation}
    where   $$E=\{L_{T_2}^2- \inf_{t\in [0,T_2]} L_t^2< c \}\cap \{-\inf_{t\in[0,T_2]}R_t^2\leq d \}; \ \ T_2 \text{ is the duration of }Z^2.  $$
\end{proposition}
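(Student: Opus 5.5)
We decompose the path at the first hitting time of a vertical line, using Lemma~\ref{lemma: first and last passage decomposition}, and then take care of the uniform choice of $c$ by a change of variables.

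\textbf{Step 1: fixed $c$.} Fix $c>0$ and let $\tau_c=\inf\{u:L_u=-c\}$. By \eqref{eq: B first passage decomposition}, the restriction of $B^{\gamma,\bH}_{0,-a+ib}$ to $\{\tau_c<T\}$ equals
\[
\mathbb{a}\int_0^\infty B^{\gamma,\bbR_+^2-c}_{0,-c+id}\oplus B^{\gamma,\bH}_{-c+id,-a+ib}\,\d d .
\]
Now translate. Shifting the first piece by $+c$ gives $B^{\gamma,\bbR_+^2}_{c,id}$. Shifting the second piece by $c-id$ gives a path from $0$ to $(c-a)+(b-d)i$ that is constrained to $R\ge -d$. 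Writing the latter as $B^{\gamma,\bC}_{0,(c-a)+(b-d)i}$ restricted to $\{-\inf R^2\le d\}$ produces the second event in $E$.

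\textbf{Step 2: randomizing $c$.} The weight $a$ times the uniform probability on $(-L_{\inf}-a,-L_{\inf})$ is the measure $\mathds 1_{\{-L_{\inf}-a<c<-L_{\inf}\}}\,\d c$. So the joint law of $(Z,c)$ is
\[
\mathds 1_{\{-L_{\inf}-a<c<-L_{\inf}\}}\,B^{\gamma,\bH}_{0,-a+ib}(\d Z)\,\d c .
\]
The condition $c<-L_{\inf}$ is exactly $\{\tau_c<T\}$, so Step 1 applies for each $c$ by Fubini. It remains to rewrite $c>-L_{\inf}-a$ in terms of $(Z^1,Z^2)$. On the first piece, $L^1\ge 0$ means $L\ge -c$ up to $\tau_c$, so $L_{\inf}=-c+\inf L^2$. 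Since $L^2_{T_2}=L_T+c=c-a$, the condition $c>-L_{\inf}-a$ becomes $c>c-\inf L^2-a=L^2_{T_2}-\inf L^2$. This is precisely the first event in $E$.

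\textbf{Step 3: reading off \eqref{eq:prop:BM-MC+MD}.} Combining the two steps with the constant $\mathbb a$ gives \eqref{eq:prop:BM-MC+MD}. Here $c$ ranges over $(0,\infty)$ and $d$ over $(0,\infty)$, with the indicator of $E$ inserted.

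\textbf{Main obstacle.} The delicate point is justifying that Lemma~\ref{lemma: first and last passage decomposition} can be disintegrated jointly in $c$. This is a measurability and Fubini issue: we need the identity for Lebesgue-a.e.\ $c$, and we need the boundary-to-boundary measures to be continuous in $c$. It also requires checking that the endpoint constraint $R\ge0$ in $\bH$ matches correctly after the shift. We handle this by using the continuity of the Brownian path measures in their endpoints, which follows from \cite{LW04}.
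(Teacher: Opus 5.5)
Your proposal is correct and is essentially the paper's proof. The paper likewise writes the joint law of $(Z,c)$ as the indicator of $\{-L_{\mathrm{inf}}-a<c<-L_{\mathrm{inf}}\}$ times $B^{\gamma,\bH}_{0,-a+ib}\,\d c$, and applies Lemma~\ref{lemma: first and last passage decomposition} at the line $L=-c$, where the non-hitting term vanishes on this event. It then replaces $B^{\gamma,\bH}$ by $B^{\gamma,\bC}$ restricted to $\{\inf R\ge 0\}$ for the second piece and identifies the resulting event with $E$, which gives the constant $\mathbb{a}$.

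The joint-in-$c$ issue you flag as the main obstacle is harmless. The decomposition holds for every fixed $c>0$, so Fubini applies directly and no continuity in $c$ is needed.
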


\begin{proof}
   For given $c\in(0,\infty)$, consider the events
    \begin{equation*}
        E_c = \{Z: -L_{\mathrm{inf}}-a < c < -L_{\mathrm{inf}}\}; \ \ \ \  E_\rmR = \{Z: \inf_{0 \le u \le T}R_u \geq 0\}.
    \end{equation*}

    By definition,  the joint  law of $(Z,c)$ is 
    \begin{equation}
     \label{eq: B first passage decomposition-000}
     ( \frac{1}{a} \mathds{1}_{-L_{\mathrm{inf}}-a< c< -L_{\mathrm{inf}}} \d c)\cdot aB^{\gamma, \bH}_{0, -a+ib}  = 
       \mathds{1}_{E_c} B^{\gamma, \bH}_{0, -a+ib} \,\d c .
    \end{equation}
     By Lemma~\ref{lemma: first and last passage decomposition}, where we decompose the path according to the time $\tau$,
    \begin{equation}
        \label{eq: B first passage decomposition-001}
        B^{\gamma, \bH}_{0, -a+ib} = B^{\gamma,  \bbR_+^2-c}_{0, -a+ib} + \mathbb{a} \int_0^\infty \big( B^{\gamma, \bbR_+^2-c}_{0, -c+i d} \oplus B^{\gamma, \bH}_{-c+i d, -a+ib} \big) \d d.
    \end{equation}
Under the event $E_c$, $L_{\mathrm{inf}}<-c$, and by definition  $B^{\gamma,  \bbR_+^2-c}_{0, -a+ib}[E_c]=0$. Furthermore, $B^{\gamma, \bbR_+^2-c}_{0, -c+i d} \oplus B^{\gamma, \bH}_{-c+i d, -a+ib} [\cdot] = B^{\gamma, \bbR_+^2-c}_{0, -c+i d} \oplus B^{\gamma, \bC}_{-c+i d, -a+ib} [\cdot\cap  E_\rmR]$. Combining~\eqref{eq: B first passage decomposition-000} with~\eqref{eq: B first passage decomposition-001}, the joint law of $(Z,c)$ is
\begin{equation}
     \mathds{1}_{E_c} B^{\gamma, \bH}_{0, -a+ib}\,\d c = \mathbb{a}   \int_0^\infty \mathds{1}_{ E_c\cap E_\rmR} \big( B^{\gamma, \bbR_+^2-c}_{0, -c+i d} \oplus B^{\gamma, \bC}_{-c+i d, -a+ib} \big)  \d d\,\d c.
\end{equation}
 On the other hand, in our definition of $Z^1$ and $Z^2$, the event $E_c\cap E_\rmR$ for $(Z,c)$ in the above decomposition is precisely the event $E$ for $(Z^1,Z^2)$. This finishes the proof.
\end{proof}

\begin{proof}[Proof of Proposition~\ref{prop: MC+MD}]
    Let $Z$ be the boundary length process. Following from our definition of the boundary length processes for samples from $\MT$, the quantum length of $(-\infty,-1)$ is $-L_{\mathrm{inf}} = -\inf_{0\leq u\leq T}L_u = a+\ell$. The quantum length $c$ of $(w,-1)$ is uniformly chosen in $(-L_{\mathrm{inf}}-a,-L_{\mathrm{inf}})$, and $\tau=\inf\{u\geq 0:L_u=-c\}$.  Therefore by Theorem~\ref{thm: quantum disk mot}, Proposition~\ref{prop:BM-MC+MD} and the concatenation property of the map $F$, the surfaces $(\cD_1,\cD_2)$ are samples from $\MD$ and $\MC$ with appropriate boundary lengths.  Furthermore, observe that for a cell from $\MC$ generated by a Brownian path $(\wt L_t,\wt R_t)|_{t\in[0,\wt T]}$, its top left and bottom right boundary arcs have lengths $\wt L_{\wt T}-\inf_{0\leq u\leq \wt T}\wt L_u$ and $-\inf _{0\leq u\leq \wt T}\wt R_u$, respectively. Let $r,t,s$ be the bottom left, top right and bottom right arc lengths of $\cD_2$, respectively, and note that the top left boundary arc of $\cD_2$ has quantum length $\ell$. Then the left (resp.\ right) boundary arc of $\cD_1$ has quantum length $c=a+\ell-r$ (resp.\ $d=s+b-t$). By~\eqref{eq:prop:BM-MC+MD}, as we further disintegrate over the bottom right boundary arc length for $\cD_2$, the joint law of $(\cD_1,\cD_2)$ is now a constant times
    \begin{equation}
        \int_0^\infty\int_0^\infty\int_0^\infty \int_0^\infty \mathds{1}_{\ell<c;s\leq d} \MD(c;d)\times \MC(a+\ell-c,\ell,b+s-d,s)\, \d \ell\, \d s\, \d d\, \d c.
    \end{equation}
    The claim then follows from a change of variables $r=a+\ell-c$, $t=b+s-d$.
\end{proof}

Given a pair of certain LQG surfaces, following ~\cite{She16a,MoT}, there exists a way to \emph{conformally weld} them together according to the length measure; see e.g.~\cite[Section 4.1]{AHS24} and~\cite[Section 4.1]{QuanTrig} for more explanation. Proposition~\ref{prop: MC+MD} can be viewed as the conformal welding result of a quantum cell $\cD_2$ from $\MC$ with a quantum disk $\cD_1$ from $\MD$ sampled from~\eqref{eq:MT-law-2}, where we weld the bottom right side of the quantum cell $\cD_2$ to the right side of   the quantum disk $\cD_1$ to get a quantum triangle according to the boundary quantum length measure. By symmetry and the reversibility of the map $F$, one can also define the conformal welding of $\cD_1$ and $\cD_2$ where we weld the  top left boundary arc of the quantum cell $\cD_2$ to the left boundary arc of the quantum disk $\cD_1$. 

Let $a,b>0$. Let $\cT$ be a space-filling $\SLE_{\kappa'}$ decorated quantum triangle sampled from~\eqref{eq:MT-law}, such that the boundary arc joining the weight $\gamma^2$ and weight $\frac{\gamma^2}{2}$ vertices is longer than the boundary arc joining the weight $2-\frac{\gamma^2}{2}$ and weight $\frac{\gamma^2}{2}$ vertices. We conformally weld these two boundary arcs together according to the LQG boundary length as in Figure~\ref{fig:weld-annulus-8}, such that the weight $2-\frac{\gamma^2}{2}$ vertex is identified with the weight $\frac{\gamma^2}{2}$ vertex. The LQG surface we get from this welding has annular topology with boundary lengths $a$ and $b$. Indeed, if $\gamma\in(\sqrt2,2)$, then the beaded part of $\cT$ is glued to the thick quantum triangle part of $\cT$ as in Definition~\ref{def-qt-thin}, so the resulting surface is   connected. Moreover, by Proposition~\ref{prop: MC+MD}, $\cT$ can be viewed as the welding of $\cD_1$ and $\cD_2$ along their left side and bottom left side, and therefore by symmetry and the discussion in the previous paragraph the above conformal welding for $\cT$ is well-defined. We embed the output curve-decorated LQG surface, whose law is denoted by $\MA^{1, 0\dagger}(a,b)$, as $(\cA_\tau,\phi,\eta,-i,\eta') {/}{\sim_\gamma}$ such that the welding interface $\eta$ starts from $-i$, and $\eta'$ is the space-filling loop on $\cA_\tau$ carried from the space-filling curve on $\cT$ as in the definition of $\MT$. For $(\phi,\eta,\eta')$ sampled from $\MA^{1, 0\dagger}(a,b)$ and embedded as above, we weight their joint law by $a$, which is the quantum length of $C_\tau$, and sample another point $y\in C_\tau$ by the probability measure proportional to the LQG length measure with respect to $\phi$. Further let $\wt\eta$ be the part of the boundary of $\eta'$ stopped upon hitting $y$ in the interior of $A_\tau\backslash\eta$. We write $\MA^{1, 1\dagger}(a,b)$ for the law of the curve-decorated LQG surface  $(\cA_\tau,\phi,\eta,-i,\wt\eta,y,\eta') {/}{\sim_\gamma}$.  By Theorem~\ref{thm:ASYZ3.9}, $\|\MA^{1, 1\dagger}(a,b)\| = C_\gamma a\|B_{0,-a+bi}^{\gamma;\bbH}\|<\infty$. Note that by Proposition~\ref{prop: MC+MD}, a surface from $\MA^{1, 1\dagger}(a,b)$ can be viewed as  the conformal welding of $\cD_1$ and $\cD_2$  sampled from~\eqref{eq:MT-law-2}
 where we  weld the bottom right side of  $\cD_2$ to the right side of    $\cD_1$, and also the  top right boundary arc of  $\cD_2$ to the left boundary arc of  $\cD_1$. See Figure \ref{fig: two ways of welding} for an illustration.

\begin{figure}
    \centering
    \begin{tabular}{cc}
         \includegraphics[width=0.5\linewidth]{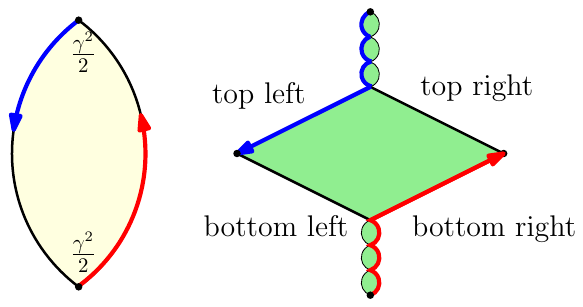}    & \includegraphics[scale=0.66]{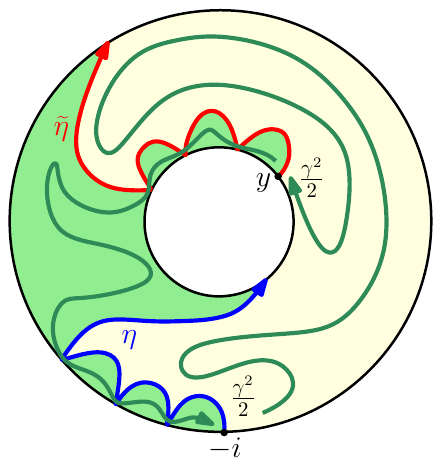}  \\
    \end{tabular}
    \caption{The welding in the definition of $\MA^{1, 1\dagger}$ for $\gamma \in( \sqrt{2},2)$. There are two ways of welding $\mathcal D_1$ (a quantum disk, as left) and $\mathcal D_2$ (a quantum cell, as right). One way is to weld along the blue part, and the other is to weld along the red part. The two welding operations are commutative, and when we weld along one boundary arc, we get a quantum triangle, as in Figure \ref{fig: cell welding}. For $\gamma < \sqrt{2}$, there are no beaded disks, $\cD_2$ is simply connected, and the interfaces $\eta,\wt\eta$ are disjoint from the boundary except for the starting and ending points. }
    \label{fig: two ways of welding}
\end{figure}

\begin{proposition}\label{prop:curve-resample}
Under the finite measure $\MA^{1, 1\dagger}(a,b)$, we have the following description of the conditional law of $\eta$ and $\wt\eta$ given $\phi$.
\begin{enumerate}[(i)]
    \item Given $\phi$ and $\eta$, the conditional law of $(y,\wt\eta)$ is as follows. The distribution of $y\in  C_\tau$ is the probability measure proportional to the LQG length measure with respect to $\phi$; given $y$, $\wt\eta$  has the law as      $\SLE_\kappa(-\frac{\kappa}{2},\kappa-2,-\frac{\kappa}{2};\frac{\kappa}{2}-2)$ in $\cA_\tau\backslash\eta$ from $y$ and targeted at $(-i)^+$, with  the force points   located at $y^-$, the endpoint of $\eta$, the leftmost point of $\eta\cap C_0$ and $y^+$, respectively. 
    \item  Conditioned on $(y,\wt\eta)$ and $\phi$, $\eta$ has the law as   $\SLE_\kappa(-\frac{\kappa}{2},\kappa-2,-\frac{\kappa}{2};\frac{\kappa}{2}-2)$ in $\cA_\tau\backslash\wt\eta$ from $-i$ and targeted at $y^+$, with the force points located at $(-i)^-$, the endpoint of $\wt\eta$, the the leftmost point of $\wt\eta\cap C_\tau$ and $(-i)^+$, respectively.
\end{enumerate}
\end{proposition}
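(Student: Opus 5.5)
We prove both parts by viewing a sample from $\MA^{1,1\dagger}(a,b)$ as the welding of $\cD_1$ (from $\MD$) and $\cD_2$ (from $\MC$) with boundary lengths sampled from~\eqref{eq:MT-law-2}. We then read off the conditional law of one interface from the quantum triangle obtained by welding along only the other interface. The two parts are essentially symmetric, so the main work is in (ii); part (i) follows by the same argument with the roles of the two weldings exchanged.

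\textbf{Part (ii).} Fix $(y,\wt\eta)$ and $\phi$. First weld only the top right arc of $\cD_2$ to the left arc of $\cD_1$; this produces an LQG surface $\cT'$ with disk topology in which $\wt\eta$ is a welding interface. By the symmetry and reversibility of $F$ noted after Proposition~\ref{prop: MC+MD}, $\cT'$ together with its space-filling curve is again a sample from (a disintegration of) $\int a\,\MT(\ell,b,\ell+a)\,\d\ell$. Its marked points are $y$ (playing the role of $-1$), $-i$, and the endpoint of $\wt\eta$. The remaining interface $\eta$ is then the image, inside $\cT'$, of the interface between $\cD_1$ and $\cD_2$, which Proposition~\ref{prop: MC+MD} identifies as the left boundary of the space-filling curve stopped at a boundary point. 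In imaginary geometry terms, this is the flow line of angle $\frac{\pi}{2}$ (the west flow line) of the GFF coupled with the space-filling $\SLE_{\kappa'}(\frac{\kappa'}{2}-4;\frac{\kappa'}{2}-4)$ in the quantum triangle. By Theorem~\ref{thm-ig1} and the flow-line boundary data in Figure~\ref{fig: flow line boundary data}, this flow line is an $\SLE_\kappa(\underline\rho)$. Its weights are obtained by tracking the boundary values at each marked point: the $\pm\lambda'$ jumps at the starting point, the change at the far vertex, and the $\kappa-2$ jump where the curve $\wt\eta$ ends. Matching these jumps gives exactly $(-\frac\kappa2,\kappa-2,-\frac\kappa2;\frac\kappa2-2)$ with the stated force points.

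The quantum surface $\cT'$ is independent of the space-filling curve, which is sampled independently given the field by the construction before Theorem~\ref{thm:ASYZ3.9}. Hence, conditioned on $\phi$ and $\wt\eta$ (equivalently, on the conformal structure of $\cA_\tau\setminus\wt\eta$ with field $\phi$), the law of $\eta$ is this $\SLE_\kappa(\underline\rho)$ mapped to $\cA_\tau\setminus\wt\eta$.

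\textbf{Part (i).} Here we weld along the other arc first, obtaining the quantum triangle $\cT$ directly as in Proposition~\ref{prop: MC+MD}. The interface $\wt\eta$ is the left boundary of $\eta'$ stopped upon hitting $y$, so the same imaginary geometry identification applies with the roles of $-i$ and $y$ swapped. For the law of $y$: since we weighted by $a$ and picked $y$ proportionally to the LQG length on $C_\tau$, the conditional law of $y$ given $(\phi,\eta)$ is the normalized length measure. We must also check that the extra uniform choice of $r$ in Proposition~\ref{prop: MC+MD} corresponds to this, which is just the identification of $r$ with the quantum length of the arc $(-\infty,w)$.

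\textbf{Main obstacle.} The step I expect to be hardest is justifying that the conditional law of $\eta$ given $\phi$ and the \emph{other} interface is obtained by first welding along that other interface. This requires the two weldings to commute and requires conformal removability of the interfaces, so that $\phi$ together with the welded surface determines the embedding. A second delicate point is the careful bookkeeping of the force-point weights, in particular at the point where $\wt\eta$ ends on $C_\tau$ and at the leftmost intersection with the boundary. For $\gamma\in(\sqrt2,2)$ one must additionally handle the beaded components, where the interface touches the boundary. There I would argue bead by bead using the thin quantum triangle structure of Definition~\ref{def-qt-thin}.
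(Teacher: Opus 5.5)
Your proposal follows the paper's own route. Part (i) is read off from the definition of $\MA^{1,1\dagger}(a,b)$ (the triangle $\cA_\tau\setminus\eta$ carries an independent space-filling curve and $y$ is sampled from $\cL_1^\phi$) together with Theorem~\ref{thm-ig1}, and part (ii) reruns that argument after exchanging the two weldings of $\cD_1,\cD_2$ via the symmetry of Proposition~\ref{prop: MC+MD}, which the paper implements with $z\mapsto e^{-2\pi\tau}/z$ and a reflection. The bookkeeping in (ii) needs fixing: the surface to use is $\cA_\tau\setminus\wt\eta$, i.e.\ $\cD_1,\cD_2$ welded only along the $\eta$-arcs, so that $\eta$ (not $\wt\eta$) is the interior interface; this is a sample from \eqref{eq:MT-law-1} with $a$ and $b$ interchanged, decorated by the time-reversal of $\eta'$, whose vertices are the two copies of $y$ and the endpoint of $\wt\eta$, while $-i$ plays the role of the length-sampled point $w$.
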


Note that in the above $\SLE_\kappa(\underline{\rho})$ description of the conditional law of $\wt\eta$ given $\eta$ and $\phi$, $\wt\eta$ hits its continuation threshold upon hitting $C_0$, and the same holds for $\wt\eta$ and $\eta$ swapped.

\begin{proof}
The first claim follows from the definition of $\MA^{1, 1\dagger}(a,b)$ along with the imaginary flow line description in Theorem~\ref{thm-ig1}.   The second claim follows by Proposition~\ref{prop: MC+MD} and applying a map $z\mapsto e^{-2\pi \tau}/z$ and a left-right reflection symmetry to Proposition~\ref{prop: MC+MD}.
\end{proof}

\section{Imaginary geometry in the annulus}\label{sec: IG annulus}


 In this section we will construct the variant of imaginary geometry on the annulus that appears in Theorem \ref{thm: quantum annulus mot}.
We work with an annulus Dirichlet GFF with explicit boundary values. We construct the flow line starting from the outer boundary in Proposition~\ref{prop:def-flow-line-strip}, and in Lemma~\ref{lem:flow-line-continuity} we prove that it is a continuous curve that either crosses the annulus and hits the inner boundary or merges with itself to form a non-contractable loop. Furthermore, in Lemma~\ref{lem:ig-def-SLEloop} and Definition~\ref{def:ig-flow-line-annulus}, under the positive probability event \textbf{$E^\cros$} that there is an  angle $\frac{\pi}{2}$ crossing flow line, we construct the space-filling annulus $\SLE_{\kappa'}$ loop in the annulus, which is a novel object. Finally in Lemma~\ref{lem:ig-curve-resample}, we prove a resampling property of annulus imaginary geometry flow lines similar to Proposition~\ref{prop:curve-resample}, which will allow us to identify the space-filling curve $\eta'$ in $\MA^{1,1,\dagger}$ with the space-filling annulus $\SLE_{\kappa'}$ loop in Definition~\ref{def:ig-flow-line-annulus} in the next section.

Imaginary geometry in non-simply connected domains has already been considered in the papers~\cite{ImagGeo1,ImagGeo4}, but for a different field than what we consider here, and the long-time behavior of flow lines as well as the space-filling counterflowlines were not discussed. In~\cite{ImagGeo4}, the imaginary geometry fields are of the form $h-\alpha\arg(\cdot)-\beta\log|\cdot|$ with $\alpha>-\chi$, and there is a $2\pi(\chi+\alpha)$-branch cut coming from the $\arg(\cdot)$ function. On the other hand, the field we are working on corresponds to the $\alpha=-\chi$ case, where there is no branch cut and results from~\cite{ImagGeo4} are not directly applicable. To circumvent this, we will first work with the universal cover (strip) and later project back to the annulus, and the fields on the annulus are multi-valued functions.

We fix the modulus $\tau\in (0, \infty)$, and recall that $\cS_\tau = \bbR\times (0,\tau)$.
The map $p:z\mapsto e^{i2\pi z}$ provides a holomorphic universal covering map $\cS_\tau\to \cA_\tau$ for any $\tau\in (0, \infty)$.  Throughout this  section we assume $\kappa\in(0,4)$, and $\kappa',\lambda,\chi,\lambda'$ are as in~\eqref{eq:ig-parameters}. Let $h_0$ be a zero boundary GFF in $\cA_\tau$, and let $h^\rmS  = h_0\circ p+\lambda'-\chi\pi$. We will work with the periodic field $h^\rmS$ in the universal cover $\cS_\tau$, and later project back to $\cA_\tau$. Note that by the rotation invariance in law of the GFF on $\cA_\tau$, $h^\rmS$ is translation invariant in law, in the sense that $h^\rmS(\cdot+x)\overset{d}{=}h^\rmS(\cdot)$ for any deterministic $x$.

\begin{proposition}\label{prop:def-flow-line-strip}
    Let $h^\rmS$ be the field as above and $\theta\in \bbR$ such that $\lambda'-\chi\pi+\theta\chi\in(-\lambda,\lambda)$. Let $x\in\bbR$. There exists a unique coupling between $(h^\rmS,\eta)$ where $\eta$ is a curve starting from $x$,  and stopped until either hitting $\bbR+i\tau$ or one of its translates $\eta+k$ where $k\in\bbZ$, such that the following holds:
    \begin{enumerate}[(i)]
        \item For each stopping time $\sigma$ of $\eta$ before its duration, $\eta([0,\sigma])$ is a local set of $h^\rmS$, in the sense that the conditional law of $h^\rmS$ given $\eta|_{[0,\sigma]}$ has the following description. Let $\cS_{\eta,\sigma} = \cS_\tau\backslash \cup_{k\in\bbZ}(k+\eta([0,\sigma]))$, and $\cA_{\eta,\sigma}$ be the annulus such that there is a conformal map $\varphi_{\eta,\sigma}$ with $\cA_{\eta,\sigma} = p\circ\varphi_{\eta,\sigma}\circ \cS_{\eta,\sigma}$. Let $\mathfrak{h}_{\eta,\sigma}$ be the harmonic function on $\cS_{\eta,\sigma}$ whose boundary values agrees with $h^\rmS$ on $\partial\cS_\tau$ and is given by the flow line boundary conditions on $ \cup_{k\in\bbZ}(k+\eta([0,\sigma]))$ as in Figure~\ref{fig: flow line boundary data} and~\cite[Figure 1.10]{ImagGeo1}. Then $h^\rmS|_{\cS_{\eta,\sigma}} =  {h}_{\eta,\sigma}\circ p\circ \varphi_{\eta,\sigma} + \mathfrak{h}_{\eta,\sigma}$, where ${h}_{\eta,\sigma}$ is a zero boundary GFF in $\cA_{\eta,\sigma}$ independent from $\mathfrak{h}_{\eta,\sigma}$.
        \item The curve $\eta$ is measurable with respect to $h^\rmS$.
    \end{enumerate}
\end{proposition}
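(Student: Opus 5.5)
Our plan is to build $\eta$ from flow lines of ordinary (simply connected) GFFs through local absolute continuity, the approach indicated in the introduction: locality of flow lines as in \cite[Theorem 1.2]{ImagGeo1}, together with the absolute continuity of \cite[Proposition 2.16]{ImagGeo4}. Fix a small box $U\subset\cS_\tau$ touching $\bbR$ at $x$ and of width less than $1$, so that $U$ is disjoint from all its translates $U+k$, $k\neq0$. Let $D\supset U$ be a simply connected domain in $\cS_\tau$ (say a rectangle of width $<1$ with base on $\bbR$). The boundary value of $h^\rmS$ on $\bbR$ is $\lambda'-\chi\pi$, so $h^\rmS+\theta\chi$ has boundary value in $(-\lambda,\lambda)$. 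On $D$, the field $h^\rmS|_D$ is absolutely continuous with respect to a Dirichlet GFF $h^D$ on $D$ whose boundary data on $\partial D\cap\bbR$ is the same constant, when both are restricted to $U$. By \cite[Theorem 1.1]{ImagGeo1}, $h^D+\theta\chi$ has a flow line from $x$, which is an $\SLE_\kappa(\underline\rho)$ that does not immediately hit $\bbR$ because the boundary value lies in $(-\lambda,\lambda)$. Since $\eta|_{[0,\sigma_U]}$ is determined by $h^D|_U$, where $\sigma_U$ is the exit time of $U$, we can transfer it to $h^\rmS$. This defines $\eta$ up to $\sigma_U$, it is $h^\rmS$-measurable, and $\eta([0,\sigma_U])$ is local.

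Next we extend the curve and translate. By periodicity, $\eta+k$ is the flow line of $h^\rmS$ from $x+k$, since $h^\rmS(\cdot+k)=h^\rmS$ exactly (not just in law). The hull $\bigcup_k(k+\eta([0,\sigma]))$ is a union of local sets. When the pieces are pairwise disjoint (before any merging), the conditional law given them is obtained from the union property of local sets \cite{SS13}: we get a zero-boundary GFF on the complement plus a harmonic function. Project to the annulus through $p$. There the image is a single local set of $h_0$ on $\cA_\tau$, and the complement is conformally an annulus $\cA_{\eta,\sigma}$. The conditional field is a zero-boundary GFF on it plus the harmonic extension $\mathfrak h_{\eta,\sigma}$ with flow-line boundary data. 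The winding term $-\chi\arg$ is single-valued on the strip, which is why we lift. We then continue iteratively: at a stopping time $\sigma$, take a new small simply connected neighbourhood of the tip $\eta(\sigma)$ in $\cS_{\eta,\sigma}$, disjoint from its translates. Map it by $\varphi_{\eta,\sigma}$ and apply the same absolute-continuity argument to the conditional field, as in the proof of \cite[Theorem 1.1]{ImagGeo4} for interior flow lines. Concatenating gives $\eta$ up to the first time it hits $\bbR+i\tau$ or some $\eta+k$. The curve is continuous up to that time by the local $\SLE_\kappa(\underline\rho)$ description, and measurability follows at each step.

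Uniqueness comes from the same local argument. Any coupling satisfying (i) has, up to each $\sigma_U$-type exit time, the conditional law dictated by the Loewner description in Theorem~\ref{thm-ig1}, pulled back through the absolute continuity. Hence its marginal law agrees with ours. Together with measurability (ii), the standard argument of \cite[Theorem 1.2]{ImagGeo1} then identifies the curve as a.s.\ equal to the constructed one.

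The main obstacle is the iteration step. We must show that the localization can be repeated until the curve actually reaches the stopping criterion, rather than accumulating at some earlier time. This requires controlling the $\SLE_\kappa(\underline\rho)$ force-point weights coming from the flow-line boundary data on the other translates: the curve may hit $\eta+k$ only by merging, and must not reach a continuation threshold prematurely. We would handle this by checking the boundary values against the angle-gap criteria of \cite[Theorem 1.5]{ImagGeo1}.
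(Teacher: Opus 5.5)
Your existence argument is the paper's: compare $h^\rmS$ locally with a Dirichlet GFF via \cite[Proposition 3.4]{ImagGeo1}, transfer the flow line with \cite[Proposition 2.16]{ImagGeo4}, and iterate with the already drawn translates removed. The paper makes two concrete choices that avoid your shrinking tip neighbourhoods. First, the comparison field is a single Dirichlet GFF $\wt h^\rmS$ on the whole strip $\cS_\tau$, which is simply connected. Second, the localizing regions are the full-height strips $[y-\frac25,y+\frac25]\times[0,\tau]$, which are automatically disjoint from their translates. So each step either ends the curve or moves it horizontally by $\frac13$.

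The obstacle you flag is not settled inside this proof. The proposition only runs the curve until it hits $\bbR+i\tau$ or a translate, if it ever does. That it does so, as a compact simple curve, is Lemma~\ref{lem:flow-line-continuity}, which uses an idea absent from your plan. If the curve never terminated, infinitely many pieces of $\bigcup_k(\eta+k)$ would cross a fixed annulus $B(z_0,2\delta_0)\setminus B(z_0,\delta_0)$. Since $\pm\frac\pi2$ flow lines from rational points cannot cross the $\eta+k$, this yields rational $z_n$ whose flow lines are pairwise disjoint near $z_0$. That forces the space-filling counterflow line of $\wt h^\rmS$ to alternate between the $z_n$ and points outside $B(z_0,2\delta_0)$, contradicting its continuity \cite[Theorem 4.12]{ImagGeo4}. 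Checking angle gaps against \cite[Theorem 1.5]{ImagGeo1} only tells you what happens when the curve meets a translate; it cannot exclude accumulation.

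A smaller point: a boundary value in $(-\lambda,\lambda)$ only excludes the continuation threshold at the start. For $\theta=\frac\pi2$ and $\kappa\in(2,4)$ the curve does hit $(-\infty,x)$ at times arbitrarily close to $0$; compare Lemma~\ref{lem:ig-flow-line-cross}(i).

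For uniqueness you take a genuinely different route. The paper never identifies the law of a competing curve $\wh\eta$. Instead:
\begin{itemize}
\item It surrounds $\theta$ by a fan of angles $\theta_1<\dots<\theta_n$ whose flow lines, together with $(x,\infty)$ and $(-\infty,x)$, consecutively intersect infinitely often near $x$ \cite[Lemma 7.34]{ImagGeo1}.
\item It uses (i), (ii) and \cite[Lemma 3.6(ii)]{ImagGeo1} to see that $\wh\eta$, up to leaving the strip, is determined by $h^\rmS$ restricted there. Hence the interaction rules \cite[Proposition 6.1, Lemma 6.2]{ImagGeo1} apply to $(\eta_1,\dots,\eta_n,\wh\eta)$.
\item It concludes that $\wh\eta$ is confined to finitely many pockets and merges into $\eta_{j_0}$.
\end{itemize}

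Your route instead transfers the competing coupling to the simply connected comparison field. It then invokes the converse of Theorem~\ref{thm-ig1}: a local-set coupling with flow-line boundary data forces the $\SLE_\kappa(\underline\rho)$ law. That works, and it closes more directly than you indicate. Because (i) prescribes the conditional law of $h^\rmS$ given the curve, equal laws of the curves up to each localization time give equal joint laws of field and curve. Since $\eta=F(h^\rmS)$ almost surely under that joint law, also $\wh\eta=F(h^\rmS)$ almost surely. No coincidence argument in the style of \cite[Theorem 1.2]{ImagGeo1} is needed.

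The price of your route is justifying that the transferred curve is again a local set with flow-line data for the comparison field; this needs the same locality and measurability input the paper uses. The paper's squeeze needs only the interaction rules, and it mirrors the uniqueness proof for interior flow lines in \cite[Theorem 1.1]{ImagGeo4}.
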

We call the curve $\eta$ in the proposition above \emph{the $\theta$-angle flow line} of $h^\rmS$ started from $x$. Rotating by $\pi$ and applying~\eqref{eq:ig-change-coord}, we can also define $\theta$-angle flow lines of $h^\rmS$ started from $x+i\tau$.

\begin{proof}
    Let $\wt h^\rmS_1:=\wt h^\rmS$ be a Dirichlet GFF on $\cS_\tau$ with the same boundary value as $h^\rmS$. By~\cite[Proposition 3.4]{ImagGeo1} with the field $h_0$ on $\cA_\tau$ and the conformal invariance of GFF, for each rectangle $\cR_y:=[y-\frac{2}{5},y+\frac{2}{5}]\times [0,\tau]$, the law of the fields $h^\rmS|_{\cR_y}$ and $\wt h^\rmS|_{\cR_y}$ are mutually absolutely continuous. Now consider the $\theta$-angle  flow line $\wt\eta$ of $\wt h^\rmS$ as discussed in Section~\ref{sec: prelim ig} started from $x$, targeted at $+\infty$, and stopped until exiting $\wt \cR_x:=[x-\frac{1}{3},x+\frac{1}{3}]\times [0,\tau]$. Following the characterization of the local sets in~\cite[Lemma 3.6(ii)]{ImagGeo1},  $\wt\eta$ is measurable with respect to $\wt h^\rmS|_{\cR_x}.$ Then following~\cite[Proposition 2.16]{ImagGeo4}\footnote{This proposition is stated for the whole plane GFF modulo global additive constants, and we can first project to the annulus and apply the analogous statement for Dirichlet GFF on domains and then lift to the universal cover.}, using the aforementioned local absolute continuity between $h^\rmS$ and $\wt h^\rmS$, we can generate the flow line $\eta$ of $h^\rmS$ with properties (i) and (ii) up until time $\sigma_1$ when $\eta$ exits $\wt \cR_x$.  Assume that we have constructed $\eta$ up until time $\sigma_j$ before its duration with $\mathrm{Re} \,\eta(\sigma_j) = y_j\in \{x-j/3,x-(j-1)/3,...,x,...,x+j/3\}$. Consider the Dirichlet GFF $\wt h^\rmS_{j+1}$ on $\cS_\tau\backslash (\cup_{k=-3j}^{3j}(\eta([0,\sigma_j])+k))$ with flow line boundary conditions on $\cup_{k=-3j}^{3j}(\eta([0,\sigma_j])+k)$ and the same boundary value as $h^\rmS$ on $\partial \cS_\tau$. Then we can use the same local absolute continuity argument to compare the conditional law of $h^\rmS|_{\cR_{y_j}}$ given $\eta|_{[0,\sigma_j]}$ with the law of $\wt h^\rmS_j|_{\cR_{y_j}}$ and further extend $\eta$ to time $\sigma_{j+1}$ when $\eta$ either hits one of its translates or exits $\cR_{y_j}$. By an iteration of this, we have shown the existence of the flow lines until its duration as desired.  

    To prove the uniqueness of the above coupling, we adapt the arguments in~\cite[proof of Proposition 7.33]{ImagGeo1} and~\cite[proof of Theorem 1.1]{ImagGeo4}. Suppose $\wh\eta$ is another random curve satisfying the above two properties. We fix angles $\theta_1<\theta_2<...<\theta_n$ such that for the flow lines $\wt\eta_j$ of $\wt h^\rmS$ started from $x$ with angle $\theta_j$, $\wt \eta_j$ a.s.\ intersects $\wt \eta_{j+1}$ infinitely many times for each $j=0,...,n$ where $\eta_0=\wt\eta_0=(x,\infty)$ and $\eta_{n+1}=\wt\eta_{n+1}=(-\infty,x)$~\cite[Theorem 1.5 and Lemma 7.34]{ImagGeo1}. Without loss of generality assume $\theta=\theta_{j_0}$. Let $\eta_j$ be the flow lines of $h^\rmS$ with angle $\theta_j$ constructed as in the preceding paragraph. Then using absolute continuity between the laws of $h^\rmS|_{\cR_x}$ and $\wt h^\rmS|_{\cR_x}$ and the flow line interacting rules in~\cite[Theorem 1.5]{ImagGeo1}, almost surely, for each $j=0,...,n$, before exiting $\cR_x$, $\eta_j$ stays to the right of $\eta_{j+1}$, and for every $r>0$, $\eta_j$ hits and bounces off $\eta_{j+1}$ before exiting $B(x;r)$.    From the locality property (i), by~\cite[Lemma 3.6(ii)]{ImagGeo1} and property (ii), if we let $\wh\sigma_x$ be the time when $\wh \eta$ exists $\wt \cR_x$, then $\wh\eta|_{[0,\wt\sigma_x]}$ is measurable with respect  to $h^\rmS|_{ \cR_x}$. Therefore by~\cite[Proposition 6.1 and Lemma 6.2]{ImagGeo1}, the same set of flow line intersecting rules as described in~\cite[Theorem 1.5]{ImagGeo1} applies for $(\eta_1,...,\eta_n,\wh\eta)$ before any of those curves exits $\cR_x$. In particular, for $j>j_0$ (resp.\ $j<j_0$), if $\eta_j$ hits $\wh \eta$ on the right (resp.\ left) side of $\wh\eta$, then $\eta_j$ crosses $\wh\eta$ and never crosses back, while for $j=j_0$, $\eta_j$ merges with $\wh\eta$ upon intersecting. In particular, for each $j=0,...,n$, $\wh\eta$ can only intersect the interior of one pocket formed by $\eta_j$ and $\eta_{j+1}$, and $\wh\eta$ can only intersect a deterministic finite number of connected components of $\cR_x\backslash (\cup_{j=1}^n\eta_j)$ and further merge into $\eta_{j_0}$ before exiting $\cR_x$. This implies that $\wh\eta$ agrees with $\eta_{j_0}$ before exiting $\cR_x$, and the same arguments can be extended to all times.
\end{proof}
Note that from the measurability property (ii) in Proposition~\ref{prop:def-flow-line-strip} and our construction of $h^\rmS$, for $k\in\bbZ$, if the angle $\theta$ flow line of $h^\rmS$ started from $x\in\bbR$ is $\eta$, then the angle $\theta$ flow line of $h^\rmS$ started from $x+k$ is $\eta+k$.

 Next we prove the following end point continuity of flow lines. For simplicity we assume $\theta=\pm\frac{\pi}{2}$. See Figure~\ref{fig:flowline-cts} for an illustration.
\begin{lemma}\label{lem:flow-line-continuity}
    Let $\eta$ be the angle $\theta=\pm\frac{\pi}{2}$ flow line started from $x\in\bbR$ as in Proposition~\ref{prop:def-flow-line-strip}. Then almost surely, $\eta$ either hits $\bbR+i\tau$ or one of its translates $\eta+k$ for $k\in\bbZ$, and is a compact simple curve.
\end{lemma}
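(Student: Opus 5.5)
We have to show three things about $\eta$: it is simple, it terminates in finite time (does not wander off to $\pm\infty$ in the strip), and its endpoint is well defined, so $\eta$ is a compact curve. All of them should follow from the simply connected imaginary geometry theory, applied locally through the construction in Proposition~\ref{prop:def-flow-line-strip}.

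\textbf{Step 1: local regularity.} Up to each stopping time $\sigma_j$ of that construction, $\eta$ is absolutely continuous with respect to a flow line of a Dirichlet GFF in a simply connected domain. That domain is $\cS_\tau$ with the finitely many translates $\eta([0,\sigma_{j-1}])+k$ removed. For such flow lines, \cite[Theorem 1.1]{ImagGeo1} gives continuity and simplicity. The hypothesis $\lambda'-\chi\pi+\theta\chi\in(-\lambda,\lambda)$ with $\theta=\pm\frac\pi2$ matters here: it guarantees that the boundary data on $\bbR$ and $\bbR+i\tau$ are such that $\eta$ does not hit $\bbR$ again. By the flow line interaction rules of \cite[Theorem 1.5]{ImagGeo1}, which apply thanks to the locality in property (i), $\eta$ cannot cross its own translates $\eta+k$. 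When it hits one, it merges, which is exactly the stopping rule. Hence $\eta$ is simple up to its terminal time, and on each compact time interval before termination it is a continuous curve in $\cS_\tau$.

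\textbf{Step 2: finite horizontal travel.} This is where periodicity enters. Consider the event that $\eta$ moves horizontally by at least $1$ without terminating. Since $\eta+1$ is also a flow line and the two cannot cross, once $\eta$ has real part exceeding that of $\eta(0)+1$, it must pass above or below the initial segment of $\eta+1$. One would like to say that a non-crossing, non-merging flow line which travels distance one in the strip must cross to $\bbR+i\tau$. That is too strong as stated, so I would argue quantitatively instead.

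For each new unit of horizontal travel, conditionally on the past there is probability at least $p>0$, uniform in the past configuration, that $\eta$ either hits $\bbR+i\tau$ or hits a translate within the next segment. This uniform bound comes from two inputs. First, the Radon--Nikodym derivatives of the local absolute continuity in \cite[Proposition 2.16]{ImagGeo4} are bounded on good events of uniformly positive probability. Second, a simply connected flow line with angle $\pm\frac\pi2$ hits the opposite side of a rectangle with positive probability, which follows from the explicit $\SLE_\kappa(\underline\rho)$ description.

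By the Markov property (i) and a Borel--Cantelli/geometric argument, $\eta$ therefore a.s.\ terminates after finitely many units of travel. Combined with the bounded height $\tau$, this shows $\eta$ stays in a compact set and its duration (in capacity time) is finite.

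\textbf{Step 3: endpoint continuity.} At the terminal time, $\eta$ is near either $\bbR+i\tau$ or a translate $\eta+k$. Near $\bbR+i\tau$, transverse boundary hitting is governed by the $\SLE_\kappa(\underline\rho)$ description in a simply connected local picture. There, continuity up to and including the continuation threshold is known from \cite{ImagGeo1}, so $\eta$ has a limit point.

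Near a translate, the merging point is the situation of two flow lines of equal angle in a simply connected piece after cutting along $\eta+k$, and \cite[Theorem 1.5]{ImagGeo1} gives that merging happens at a single point with continuous approach. Either way the curve extends continuously to its terminal time, and so is compact.

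The main obstacle is Step 2: making the uniform-in-past lower bound rigorous, since the conditional domain $\cS_{\eta,\sigma}$ can be very pinched. I would first try to restrict to good events where $\eta$ has reached a point at distance at least $\tau/10$ from the previous translates, and show via the same geometric-trial argument that such events recur.
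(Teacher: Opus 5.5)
There is a genuine gap in your Step 2, and it is the heart of the lemma rather than a technicality.

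\textbf{The uniform bound is not available.} The local absolute continuity you invoke (\cite[Proposition 2.16]{ImagGeo4}, \cite[Proposition 3.4]{ImagGeo1}) gives mutual absolute continuity of restrictions to regions at positive distance from where the two fields differ. It gives no control of the Radon--Nikodym derivative that is uniform over the conditioning configuration. After conditioning on $\eta([0,\sigma])$ and all its translates, the comparison degenerates exactly in the pinched configurations you mention. Your suggested repair (restrict to good configurations and show they recur by the same trial argument) is circular, because proving recurrence needs the same uniform $p$. Positive probability of crossing a rectangle for one fixed configuration, as in \cite[Lemma 3.9]{ImagGeo4}, does not give a $p$ independent of the past.

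\textbf{Step 2 only rules out horizontal escape.} Even granting such a $p$, your argument excludes only escape to horizontal infinity. Confinement to a compact set and finite capacity duration do not imply termination. The curve $\eta$ could go through infinitely many stages of the construction in Proposition~\ref{prop:def-flow-line-strip} inside a bounded region, accumulating without ever hitting $\bbR+i\tau$ or a translate. There is no global $\SLE_\kappa(\underline\rho)$ description of $\eta$ that would force a continuation threshold at finite capacity. Your Step 3 simply assumes this case away.

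\textbf{A minor error.} The hypothesis $\lambda'-\chi\pi+\theta\chi\in(-\lambda,\lambda)$ does not prevent $\eta$ from hitting $\bbR$. For $\theta=\frac\pi2$ the curve is locally an $\SLE_\kappa(-\frac\kappa2;\frac\kappa2-2)$, so it hits $(-\infty,x)$ when $\kappa\in(2,4)$; cf.\ Lemma~\ref{lem:ig-flow-line-cross}(i). This is harmless for the statement.

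\textbf{The missing idea.} The paper needs no quantitative estimate; it uses a soft contradiction argument.
\begin{enumerate}
\item Suppose $\eta$ fails to terminate with positive probability. By the stage-wise construction and periodicity, $\bigcup_k(\eta+k)$ then crosses some fixed annulus $B(z_0;2\delta_0)\setminus B(z_0;\delta_0)$ infinitely often. This covers both escape and bounded oscillation.
\item Flow lines of angle $\pm\frac\pi2$ started from rational points cannot cross any $\eta+k$, argued as in \cite[Proposition 3.5]{ImagGeo4}. So flow lines started in different components of $B(z_0;2\delta_0)\setminus\bigcup_k(\eta+k)$ stay disjoint inside $B(z_0;2\delta_0)$.
\item This produces rational points $z_n\to z_0'$ whose flow lines are pairwise disjoint in the ball. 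By local absolute continuity, the same happens with positive probability for the Dirichlet comparison field $\wt h^\rmS$.
\item The space-filling counterflow line of $\wt h^\rmS$ would then have to alternate infinitely often between the $z_n$ and points outside $B(z_0;2\delta_0)$. This contradicts its continuity \cite[Theorem 4.12]{ImagGeo4}.
\end{enumerate}
Once termination is established this way, your Steps 1 and 3 supply the compact simple curve: simplicity from the local comparison, and continuity at the terminal point from continuity up to the continuation threshold.
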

\begin{proof}
    Let $h^\rmS$, $\wt h^\rmS$ and $\cR_x$ be as in the proof of Proposition~\ref{prop:def-flow-line-strip}. 
    We assume on the contrary and assume that with positive probability, $\eta$ never hits  any of its translates $\{\eta+k:k\in\bbZ\}$ or $\bbR+i\tau$. By the periodicity of $h^\rmS$, under this event, the same holds for all $\eta+k$ where $k\in\bbZ$.   Then from our construction of $\eta$, 
    under this event,  there exists some $z_0\in \bbR^2$, such that for some $\delta_0\in(0,0.1)$, there are infinitely many segments of $\cup_{k\in\bbZ}(\eta+k)$ starting at some point on $\partial B(z_0;2\delta_0)$, crossing the annulus $B(z_0;2\delta_0)\backslash B(z_0;\delta_0)$ and further leaving $B(z_0;2\delta_0)$ at some point on $\partial B(z_0;2\delta_0)$. Without loss of generality assume that $\mathrm{Re}\, z_0\in (-0.1,0.1)$. 
    Consider the set $\Lambda_{\wt h^\rmS}$ of the flow lines of $\wt h^\rmS|_{\cR_0}$ started from rational points in $\cR_0$ of angle $\pm\frac{\pi}{2}$ stopped upon leaving $\cR_0$, and define $\Lambda_{h^\rmS}$ analogously via the local absolute continuity between the laws of $\wt h^\rmS$ and $h^\rmS$. Then following the same proof as in~\cite[Proposition 3.5]{ImagGeo4}, as we condition on $\eta$ as well as its translates and read off from the flow line boundary conditions of $\eta+k$, flow lines of angles $\pm\frac{\pi}{2}$ do not cross each other. Thus almost surely, all the flow lines of $h^\rmS$ of angle $\pm\frac{\pi}{2}$ started from rational points do not cross any of $\eta+k$.  Consider any two disjoint connected components $D$ and $D'$ of $B(z_0;2\delta_0)\backslash \cup_{k\in\bbZ}(\eta+k)$. Then if $\eta_1,\eta_2\in\Lambda_{h^\rmS}$ such that the starting point of $\eta_1$ (resp.\ $\eta_2$) is in $D$ (resp.\ $D'$), then $\eta_1$ and $\eta_2$ are disjoint from each other before leaving $B(z_0;2\delta_0)$. This further implies with positive probability, that there exists a sequence $\{z_n\}$ with rational coordinates converging to some $z_0'$ such that flow lines of $h^\rmS|_{\cR_0}$ of angle $\pm \frac{\pi}{2}$ started from different $z_j$ are disjoint from each other before exiting $B(z_0;2\delta_0)$, and the same holds with positive probability for $\wt h^\rmS|_{\cR_0}$ by local absolute continuity. Consider the space-filling counterflow line $\wt\eta'$ of $\wt h^\rmS$ defined in terms of angle $\pm\frac{\pi}{2}$ flow lines of $\wt h^\rmS$ as in Section~\ref{sec: prelim ig} from $-\infty$ to $+\infty$, which is a space-filling $\SLE_{\kappa'}(\frac{\kappa'}{2}-4;0)$ curve. Since almost surely, $\wt\eta'$ visits each rational point only once,  without loss of generality assume that $\wt\eta'$ visits $z_1,z_2,z_3,...$ in order; otherwise we look into the time reversal of $\wt\eta'$. Then from the construction of $\wt\eta'$ and our assumption on $\{z_n\}$, there exists rational points $\{w_n\}\subseteq \cR_0\backslash B(z_0;2\delta_0)$ such that $\wt\eta'$ visits the points $z_1,w_1,z_2,w_2,z_3,w_3,....$ in order. This contradicts with the continuity of $\wt \eta'$ as in~\cite[Theorem 4.12]{ImagGeo4}.
 \end{proof}

\begin{figure}
    \centering
    \includegraphics[scale=0.8]{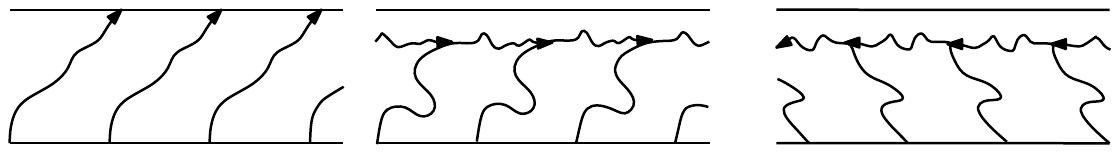}
    \caption{Flow line $\eta$ either hits   $\mathbb R + i\tau$, or merges into one of its translations $\eta + k$. }
    \label{fig:flowline-cts}
\end{figure}

We call flow lines of $h^\rmS$ of angle $\frac{\pi}{2}$ started from $x\in\bbR$  the \emph{west-going flow line} of $h^\rmS$ from $x$, and we denote it by $\eta_x^\rmW$. Likewise, we call flow lines of $h^\rmS$ of angle $-\frac{\pi}2$ started from $y+i\tau$   where $y\in\bbR$ the \emph{east-going flow line} of $h^\rmS$ from $y +i\tau$, and denote it by $\eta_{y+i\tau}^\rmE$. Using the change of coordinates formula~\eqref{eq:ig-change-coord}, east-going flow lines have the same law as the west-going flow lines up to a rotation by 180 degrees. Following the interaction rule of the flow lines in~\cite[Theorem 1.5]{ImagGeo1}, by our choice of boundary data, $\bbR+i\tau$ (resp.\ $\bbR$) can be viewed as a west-going flow line targeted at $+\infty$ (resp.\ east-going flow line targeted at $-\infty$).  In particular, if we view $\eta_x^\rmW$ as a flow line targeted at $+\infty$, then  $\eta_x^\rmW$ merges into $\bbR+i\tau$ upon intersecting, while if we view $\eta_{x+i\tau}^\rmE$ as a flow line targeted at $-\infty$, then $\eta_{x+i\tau}^\rmE$ merges into $\bbR$ upon intersecting.

\begin{lemma}\label{lem:ig-flow-line-cross}
  Let $x,y\in\bbR$ be deterministic.  With positive probability, $\eta_x^\rmW$ hits  $\bbR+i\tau$. Under this event, almost surely, the following holds:
  \begin{enumerate}[(i)]
      \item  $\eta_x^\rmW$ is disjoint from $\bbR+i\tau$ except for its ending point. Furthermore, $\eta_x^\rmW$ is disjoint from $(x,\infty)$, and  if $\kappa\in(0,2]$, then $\eta_x^\rmW$ is also disjoint from $(-\infty,x)$;
      \item $\eta_x^\rmW$ is disjoint from all of its translates $\{\eta_x^{\rmW}+k:k\in\bbZ\}$;
      \item  For  each connected component $D$ of $\cS_\tau\backslash (\cup_{k\in\bbZ}(\eta_x^\rmW+k) )$, $h^\rmS|_D$ has the conditional law as a zero boundary GFF on $D$ plus a harmonic function which is equal to $h^\rmS$ on $\partial D\cap\partial \cS_\tau$ and has flow line boundary conditions on $\partial D\cap (\cup_{k\in\bbZ}(\eta_x^\rmW+k))$. 
      \item We have  $\eta_y^\rmW$ hits $\bbR+i\tau$, and $\eta_{y+i\tau}^\rmE$ hits $\bbR$.
      \item The same conclusion holds for the conditional law of $h^\rmS$ given $\eta_{y+i\tau}^\rmE$ in connected components of $\cS_\tau\backslash (\cup_{k\in\bbZ}(\eta_{y+i\tau}^\rmE+k) )$ as in part (iii).
  \end{enumerate}
\end{lemma}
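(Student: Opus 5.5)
Our plan is to reduce everything to the simply connected imaginary geometry of~\cite{ImagGeo1} via the local absolute continuity already used in the proof of Proposition~\ref{prop:def-flow-line-strip}, together with the periodicity of $h^\rmS$.

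\emph{Positive probability.} Fix a thin vertical rectangle $\cR_x$ and compare $h^\rmS|_{\cR_x}$ with a Dirichlet GFF $\wt h$ on $\cS_\tau$ (or on the rectangle) whose boundary values are $\lambda'-\chi\pi$ on the relevant parts. For $\wt h$, the angle-$\frac{\pi}{2}$ flow line from $x$ is an $\SLE_\kappa(\underline\rho)$ process. Its boundary data are $\lambda'$-type on $\bbR$ to the right and $-\lambda'$-type to the left, shifted by $\chi\pi/2$. With positive probability it stays within a small tube around the segment $[x,x+i\tau]$ and reaches $\bbR+i\tau$ before exiting $\cR_x$; this is the standard support statement for $\SLE_\kappa(\underline\rho)$, e.g.\ via \cite[Lemma 2.3]{ImagGeo2}-type arguments. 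Mutual absolute continuity of the restricted fields, together with the locality in Proposition~\ref{prop:def-flow-line-strip}(i), transfers this to $\eta^\rmW_x$. On this tube event $\eta^\rmW_x$ does not reach any translate $\eta^\rmW_x+k$, since those lie in the translated tubes, which are disjoint.

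\emph{Items (i)--(iii).} For (i), we read off the boundary data. By the remark after Lemma~\ref{lem:flow-line-continuity}, $\bbR+i\tau$ acts as a west-going flow line targeted at $+\infty$, so $\eta^\rmW_x$ merges into it upon hitting, and stopping at the first hit gives disjointness except at the endpoint. For the bottom boundary, the $\rho$-values at $x^\pm$ are computed from the boundary values of $h^\rmS+\frac{\pi}{2}\chi$ relative to $\pm\lambda$. To the right we get $\rho\le\frac\kappa2-2$, so the curve does not hit $(x,\infty)$. To the left we get $\rho=\kappa-2\ge\frac\kappa2-2$ exactly when $\kappa\le 2$. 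These are local statements, so they transfer by absolute continuity.

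For (ii), Lemma~\ref{lem:flow-line-continuity} says the curve ends upon hitting either $\bbR+i\tau$ or a translate, and we are on the event that it hits $\bbR+i\tau$. If it touched $\eta^\rmW_x+k$ earlier, the translate flow lines (which are flow lines of the same field and same angle by periodicity) would merge upon contact, by the argument in the proof of Lemma~\ref{lem:flow-line-continuity}/\cite[Theorem 1.5]{ImagGeo1}. That would stop $\eta$, a contradiction. A bounce without merging is excluded because both curves have the same angle.

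For (iii), we apply Proposition~\ref{prop:def-flow-line-strip}(i) at the terminal time. To justify this we approximate by stopping times $\sigma_n\uparrow$ duration and use the continuity from Lemma~\ref{lem:flow-line-continuity} to pass to the limit of local sets, as in~\cite[Lemma 3.6]{ImagGeo1}/\cite{SS13}. On $E^\cros$ the components $D$ are simply connected, so the conformal image is a zero boundary GFF on $D$ plus the harmonic extension with flow line boundary conditions.

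\emph{Items (iv)--(v); the main obstacle.} We expect (iv) to be the hard step: it is a.s.\ on the event, not merely positive probability. We would argue topologically. Given (iii), $\cS_\tau$ is cut into strips $D_k$ bounded by $\eta^\rmW_x+k$ and $\eta^\rmW_x+k+1$. For $y\in(x+k,x+k+1)$, the flow line $\eta^\rmW_y$ cannot cross the curves $\eta^\rmW_x+j$, by the non-crossing/merging rules as in the proof of Lemma~\ref{lem:flow-line-continuity}. So it stays in $\overline{D_k}$, and it cannot merge with its own translates, which lie in other strips. By Lemma~\ref{lem:flow-line-continuity} it therefore hits $\bbR+i\tau$, possibly after merging with $\eta^\rmW_x+k$ or $\eta^\rmW_x+k+1$, which themselves reach $\bbR+i\tau$. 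The care needed here is identifying $\eta^\rmW_y$, restricted to $D_k$, with the flow line of the conditional field from (iii). We would use the uniqueness part of Proposition~\ref{prop:def-flow-line-strip} together with \cite[Theorem 1.5]{ImagGeo1}.

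For the east-going line, the $180^\circ$ rotation symmetry and~\eqref{eq:ig-change-coord} give the same dichotomy. An east-going line from $y+i\tau$ cannot cross the west lines $\eta^\rmW_x+k$; it bounces off them by Proposition~\ref{prop: no loop in good annular map}'s continuum analogue, \cite[Theorem 1.5]{ImagGeo1}. It is thus confined to a strip, cannot merge with its own translates, and must hit $\bbR$. Then (v) follows exactly as in (iii), applied to the east line once we know it crosses.
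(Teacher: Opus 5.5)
Your plan follows the paper's proof essentially step by step. Positive probability comes from a tube event for the Dirichlet comparison field, transferred by local absolute continuity (the paper cites \cite[Lemma 3.9]{ImagGeo4} for the tube event). Item (ii) follows from the stopping rule in Proposition~\ref{prop:def-flow-line-strip}, and (iii) from locality plus continuity of the curve. Item (iv) comes from conditioning on the region between $\eta^\rmW_{x+k}$ and $\eta^\rmW_{x+k+1}$ and using non-crossing/merging, and (v) follows from (iv).

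The one step that is wrong as written is the $\rho$-bookkeeping in (i).

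The angle-$\frac{\pi}{2}$ line sees the constant value $\lambda'-\chi\pi+\frac{\pi}{2}\chi=\lambda'-\frac{\pi}{2}\chi=\lambda(\frac{\kappa}{2}-1)$ on $\bbR$. Matching this with $-\lambda(1+\rho^{\rmL})$ and $\lambda(1+\rho^{\rmR})$ gives $\rho^{\rmL}=-\frac{\kappa}{2}$ at $x^-$ and $\rho^{\rmR}=\frac{\kappa}{2}-2$ at $x^+$. These are exactly the weights at $(-i)^-$ and $(-i)^+$ in Proposition~\ref{prop:curve-resample}(ii). The weight $\kappa-2$ there sits at the endpoint of the other interface, not next to the starting point.

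An $\SLE_\kappa(\rho)$ touches the boundary on the force point's side iff $\rho<\frac{\kappa}{2}-2$. So the right side is the borderline non-hitting case. Your ``$\rho\le\frac{\kappa}{2}-2$, so it does not hit'' has the inequality pointing the wrong way; a strict inequality would force hitting. The left side avoids $(-\infty,x)$ iff $-\frac{\kappa}{2}\ge\frac{\kappa}{2}-2$, i.e.\ iff $\kappa\le 2$.

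With your value $\rho=\kappa-2$, the inequality $\kappa-2\ge\frac{\kappa}{2}-2$ holds for every $\kappa>0$. Your argument would then show that $\eta^\rmW_x$ never touches $(-\infty,x)$ for any $\kappa\in(0,4)$, which contradicts the beaded regime $\kappa\in(2,4)$.

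With the corrected weights, or via the interaction rules of \cite[Theorem 1.5]{ImagGeo1} as the paper does, part (i) goes through. The rest of your proposal stands as is.
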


\begin{proof}
    Let $\wt h^\rmS$ be the same as in the proof of Proposition~\ref{prop:def-flow-line-strip}. Then following identical arguments as in~\cite[Lemma 3.9]{ImagGeo4}, with positive probability,  the angle $\frac{\pi}{2}$ flow line of $\wt h^\rmS$ started from $x$ stays within the 0.01-neighborhood of $\{x+yi:y\in[0,\tau]\}$ and merges with $\bbR+i\tau$. The  claim that $\eta_x^\rmW$ hits  $\bbR+i\tau$ with positive probability further follows from the local absolute continuity between the laws of $h^\rmS$ and $\wt h^\rmS$. 
    
    Now we prove the claims (i)-(v). The first half of  claim (i) is immediate from our construction of $\eta_x^\rmW$ in Proposition~\ref{prop:def-flow-line-strip} since $\eta_x^\rmW$ is terminated upon hitting $\bbR+i\tau$. The second half follows
    from the flow line interaction rules in~\cite[Theorem 1.5]{ImagGeo1} and~\cite[Theorem 1.9]{ImagGeo4} along with local absolute continuity. Claim (ii) is also a straightforward consequence of our construction that flow lines are terminated upon hitting $\bbR+i\tau$ or its translates. 
    Claim (iii) is then a consequence of part (i) of Proposition~\ref{prop:def-flow-line-strip} along with the continuity of $\eta$ from Lemma~\ref{lem:flow-line-continuity} and~\cite[Proposition 3.8]{ImagGeo1}. 

    To prove claim (iv), if  $y-x\in\bbZ$, then $\eta_x^\rmW$ is a translate of  $\eta_0^\rmW$ and the conclusion holds.  Now assume $y-x\notin \bbZ$ and we pick $k\in\bbZ$ such that $x+k<y<x+k+1$.  Following the locality of the flow lines as in Proposition~\ref{prop:def-flow-line-strip}, conditioned on $\eta_{x+k}^\rmW$ and $\eta_{ x+k+1}^\rmW$,  $\eta_y^\rmW$ has the law of the angle $\frac{\pi}{2}$ flow line of the field $h^\rmS$  in the domain bounded by $\eta_{x+k}^\rmW$, $\eta_{ x+k+1}^\rmW$ and $\partial S_\tau$. On the other hand, from the flow line interaction rules in~\cite[Theorem 1.5]{ImagGeo1} for the field as described in claim (iii), $\eta_y^\rmW$ merges into $\eta_{x+k}^\rmW$, $\eta_{x+k+1}^\rmW$ or $\bbR+i\tau$ upon hitting and does not cross  $\eta_{x+k}^\rmW$ and $\eta_{x+k+1}^\rmW$. This gives the first half of the claim. The second half  follows similarly by working with the events where $y+i\tau$ is between  $\eta_{x'}^\rmW$ and $\eta_{x'+1}^\rmW$ where $x'\in\bbQ$. Claim (v) then follows from part (i) of Proposition~\ref{prop:def-flow-line-strip} and claim (iv).
\end{proof}



\begin{figure}
    \centering
    \includegraphics[scale=0.66]{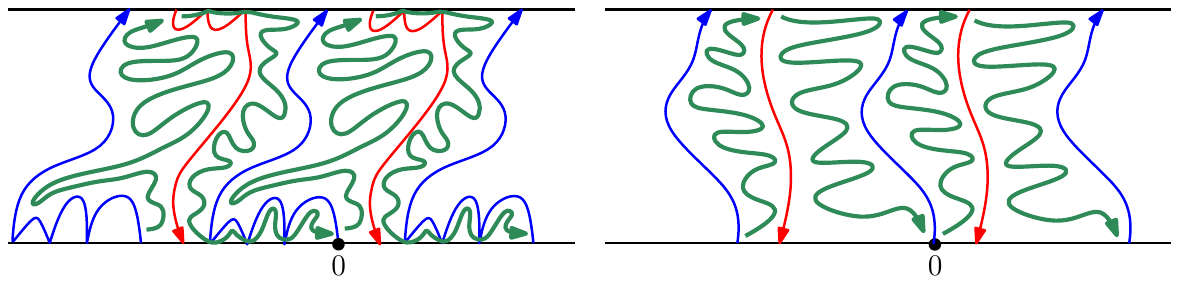}
    \caption{An illustration of the definition of the space-filling curve $\eta'$, where the blue curves are  $\eta_0^\rmW$ and its translates, the red curves are $\eta_{y+i\tau}^\rmE$ and its translates, and $\eta'$ is the concatenation of the green curves. In the left panel $\kappa\in (2,4)$ and in the right panel $\kappa\in (0,2]$. The space-filling $\SLE_\kappa$ loop in the annulus is defined to be the image of the green curve under the map $p:z\mapsto e^{2\pi iz}$.}
    \label{fig:space-sf-def}
\end{figure}

Now we work on the event $E^\cros$ where $\eta_0^\rmW$ hits $\bbR+i\tau$.  We define the \emph{counterclockwise space-filling counterflow line of $h^\rmS$} as follows. Let $x\in\bbR$. Let $\cS_{\eta_x^\rmW}$ be the (beaded if $\kappa\in (2,4)$) domain  which is formed by the right side of $\eta_x^\rmW$, the left side of $\eta_x^\rmW+1$ and $\partial\cS_\tau$. Consider the space-filling counterflow line of $h^\rmS$ in $\cS_{\eta_x^\rmW}$ from $x$ to $x+1$ defined in the usual sense as in Section~\ref{sec: prelim ig} and~\cite[Section 1.2.3]{ImagGeo4}, where if $\kappa\in(2,4)$ $\eta'$ is a concatenation of a number of space-filling curves. Note that by the flow line boundary conditions, the law of   $\eta'$ inside $\cS_{\eta_x^\rmW}$ is precisely the $\SLE_{\kappa'}(\frac{\kappa'}{2}-4;\frac{\kappa'}{2}-4)$ as described in the paragraph before Theorem~\ref{thm:ASYZ3.9}.
Let $\eta'$ be the concatenation of $\eta'$ and its  translates $\{\eta'+k:k\in\bbZ\}$. See Figure~\ref{fig:space-sf-def} for an illustration.

\begin{lemma}\label{lem:ig-def-SLEloop}
    The counterclockwise space-filling counterflow line $\eta'$ above is invariant of the choice of $x$. Moreover, for any deterministic $y\in\bbR$, the boundary of $\eta'$ stopped upon hitting $y$ is $\eta_y^\rmW$, while the boundary of $\eta'$ stopped upon hitting $y+i\tau$ is $\eta_{y+i\tau}^\rmE$.
\end{lemma}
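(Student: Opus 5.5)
The plan is to work on the event $E^\cros$ throughout and reduce every claim to the simply connected theory of~\cite{ImagGeo4}, applied inside the fundamental domains $\cS_{\eta_x^\rmW}$. The link between the two is the locality in Proposition~\ref{prop:def-flow-line-strip}(i) and Lemma~\ref{lem:ig-flow-line-cross}(iii).

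\textbf{Step 1: ordering of rational points.} By Lemma~\ref{lem:ig-flow-line-cross}(iv), on $E^\cros$ every west-going flow line $\eta_y^\rmW$ with $y\in\bbR$ crosses to $\bbR+i\tau$, and so does every east-going flow line $\eta^\rmE_{y+i\tau}$ down to $\bbR$. Fix $x$. Conditionally on $\{\eta_x^\rmW+k\}_{k\in\bbZ}$, the field in $\cS_{\eta_x^\rmW}$ is a Dirichlet GFF with flow line boundary conditions, by Lemma~\ref{lem:ig-flow-line-cross}(iii). Its angle $\pm\frac{\pi}{2}$ flow lines started at rational interior points therefore coincide with those of $h^\rmS$. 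I would justify this by the uniqueness part of Proposition~\ref{prop:def-flow-line-strip} and the locality of flow lines~\cite[Theorem 1.2]{ImagGeo1}. By~\cite[Theorem 1.9]{ImagGeo4}, these flow lines form a tree and merge upon intersecting. Applying the interaction rules~\cite[Theorem 1.5]{ImagGeo1} to the boundary data, they never cross the translates $\eta_x^\rmW+k$ or $\partial\cS_\tau$.

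Consequently the ordering $\preceq$ of $\bbQ^2\cap\cS_{\eta_x^\rmW}$ that defines the counterflow line from $x$ to $x+1$ can be read off from flow lines of $h^\rmS$ alone. Concatenating over translates, the ordering of all of $\bbQ^2\cap\cS_\tau$ induced by $\eta'$ is given by an intrinsic rule with no reference to $x$. Namely, $z\preceq w$ if the flow line from $z$ merges into the one from $w$ on the left. If the two flow lines first reach $\bbR+i\tau$, or merge into different translates of a west-going line, we order them according to that landing.

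\textbf{Step 2: independence of $x$.} Take $x<x'<x+1$, where $x'$ may be replaced by any deterministic point. Then $\eta_{x'}^\rmW$ lies in $\overline{\cS_{\eta_x^\rmW}}$ and splits it into two pieces. By the standard theory~\cite[Section 1.2.3]{ImagGeo4}, the counterflow line in $\cS_{\eta_x^\rmW}$ from $x$ to $x+1$ visits $x'$ at a single time. It first fills the region left of $\eta_{x'}^\rmW$, then the region right of it. Each portion is the counterflow line of the restricted field in its subdomain, since it is determined by the same ordering.

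Gluing the second portion with the first portion translated by $+1$ gives exactly the counterflow line in $\cS_{\eta_{x'}^\rmW}$ from $x'$ to $x'+1$. Hence the bi-infinite concatenations agree for $x$ and $x'$. Because both curves are determined by the same ordering on a dense set, and both are continuous by~\cite[Theorem 4.12]{ImagGeo4} in each domain, they coincide.

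\textbf{Step 3: boundaries.} Fix $x<y<x+1$. By~\cite[Theorem 1.16]{ImagGeo4}, in the simply connected domain $\cS_{\eta_x^\rmW}$ the left and right boundaries of the counterflow line stopped at an interior or boundary point $z$ are the flow lines of angle $\frac{\pi}{2}$ and $-\frac{\pi}{2}$ from $z$. These flow lines are taken in the conditional field. For $z=y$ the relevant boundary is the angle $\frac{\pi}{2}$ flow line from $y$ in $\cS_{\eta_x^\rmW}$, and by the locality of Step 1 this is $\eta_y^\rmW$. The translates $\eta_x^\rmW+k$ contribute nothing extra, since $\eta_y^\rmW$ merges with them only if it hits them.

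For $z=y+i\tau$ the same argument with angle $-\frac{\pi}{2}$ gives $\eta^\rmE_{y+i\tau}$. Here one uses that $\bbR$ acts as an east-going flow line, as noted before Lemma~\ref{lem:ig-flow-line-cross}. The boundary statement is invariant under changing $x$ by Step 2, so $y\in x+\bbZ$ causes no problem.

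\textbf{Main obstacle.} I expect the hardest step to be the beaded case $\kappa\in(2,4)$. There $\eta_x^\rmW$ can touch $\bbR$, which is allowed by Lemma~\ref{lem:ig-flow-line-cross}(i). Then $\cS_{\eta_x^\rmW}$ is a chain of components and $\eta'$ is a concatenation of curves. The issue is to check that $\eta_y^\rmW$ enters the component containing $y$ and, after merging, follows the boundary segments correctly. I would first handle this component by component, using the conditional independence in Lemma~\ref{lem:ig-flow-line-cross}(iii). The boundary claim would then follow from~\cite[Theorem 1.16]{ImagGeo4} in each bead, plus the observation that $\eta_y^\rmW$ traces the cut points of the beads where it merges into $\eta_x^\rmW+k$.
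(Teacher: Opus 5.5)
Your proposal is correct and follows essentially the same route as the paper. Both arguments use three steps: locality identifies $\eta_y^\rmW$ (resp.\ $\eta^\rmE_{y+i\tau}$) with the $\pm\frac{\pi}{2}$ flow line of the conditional field in $\cS_{\eta_x^\rmW}$ (or the translate containing the point), which is the boundary of the counterflow line stopped there; splitting the fundamental domain along $\eta_y^\rmW$ and regluing the translated pieces then gives invariance in $x$. The differences are that the paper identifies the boundary first and then reglues, whereas you order these the other way round, your global ``intrinsic ordering'' in Step~1 is not needed once Step~2 is in place, and your bead-by-bead treatment of $\kappa\in(2,4)$ makes explicit a point the paper leaves implicit.
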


\begin{proof}
    We start with the first half of the second claim. If $y\in x+\bbZ$, then the claim is clear from our construction. Without loss of generality assume $y\in(x,x+1)$. From our construction of the space-filling counterflow lines, conditioned on $\eta_x^\rmW$ and  $\eta_{x+1}^\rmW$, the boundary of $\eta'$ stopped upon hitting $y$ is the angle $\frac{\pi}{2}$ flow line of $h^\rmS|_{\cS_{\eta_x^\rmW}}$ started from $y$, which coincides with the description of the conditional law of $\eta_y^\rmW$ given $\eta_x^\rmW$ and $\eta_{x+1}^\rmW$. This proves the first half of the second claim. Now the restriction of $\eta'$ to the times when $\eta'$ is between $\eta_x^\rmW$ (resp.\ $\eta_{x+1}^\rmW$) and $\eta_y^\rmW$ is the space-filling counterflow line of $h^\rmS$ in the region bounded by $\eta_x^\rmW$ (resp.\ $\eta_{x+1}^\rmW$), $\eta_y^\rmW$, $\bbR$ and $\bbR+i\tau$ from $x$ to $y$ (resp.\ $y$ to $x+1$), and from this description we conclude that the space-filling curve $\eta'$ agrees with the version defined using $y$ in place of $x$, finishing the proof for the first claim. The other half of the second claim follows similarly by working  with the events where $y+i\tau$ is between the $\eta_{x}^\rmW$ and $\eta_{x+1}^\rmW$ where $x\in\bbQ$. 
\end{proof}

Now we move back to the setting of the annulus. Recall that $h^\rmS=h_0\circ p+\lambda'-\chi\pi$ where $h_0$ is a zero boundary GFF in $\cA_\tau$. We write $h^{\mathrm{A}} = h_0+\chi\arg(\cdot)+\lambda'-\chi\pi$, where we view the $\arg(\cdot)$ as a multi-valued function on the annulus. 
\begin{definition}\label{def:ig-flow-line-annulus}
    For $x\in\partial \cA_\tau$, let $x'\in\cS_\tau$ such that $p(x')=x$. With slight abuse of notation, we define the west-going flow line $\eta_x^\rmW$ of $h^{\mathrm{A}}$ started from $x$ to be equal to $p(\eta_{x'}^\rmW)$ where  $\eta_{x'}^\rmW$ is the west-going flow line of $h^\rmS$ started from $x'$, and the east-going flow line  $\eta_x^\rmE$ of $h^{\mathrm{A}}$ started from $x$ to be equal to $p(\eta_{x'}^\rmE)$ where  $\eta_{x'}^\rmE$ is the east-going flow line of $h^\rmS$ started from $x'$. Let $E^\cros$ be the event where the west-going flow line of $h^{\mathrm{A}}$ started from $1$ hits $C_\tau$. Conditioned on  the event $E^\cros$, we define the counterclockwise space-filling $\SLE_{\kappa'}$ loop associated with $h^{\mathrm{A}}$ by taking the projection under $p$ of the space-filling curve $\eta'$ for $h^\rmS$ on $\cS_\tau$ in Lemma~\ref{lem:ig-def-SLEloop}, and write $\SLE_{\kappa',\tau}^{\mathrm{loop}}$ for its law. We write $\IG^*_\tau$ for the law of $h^{\mathrm{A}}$ conditioned on the positive probability event $E^\cros$.
\end{definition}

\begin{lemma}\label{lem:ig-curve-resample}
  Let $x\in C_0$ and $y\in C_\tau$.  Under the probability measure $\IG_\tau^*$, the conditional law of $\eta_y^\rmE$ given $\eta_x^\rmW$ and the conditional law of $\eta_x^\rmW$ given $\eta_y^\rmE$, agree with the description of the conditional laws of $\eta$ given $\wt\eta$ and $\wt\eta$ given $\eta$ as in Proposition~\ref{prop:curve-resample}. Furthermore, let $\eta'$ be the counterclockwise space-filling $\SLE_{\kappa'}$ loop associated with $h^{\mathrm{A}}$ parameterized by Lebesgue area, and let $\tau_x$ be the time when $\eta'$ hits $x$. Then $((-i)x^{-1}\eta_x^\rmW,(-i)x^{-1}\eta'(\cdot+\tau_x))\overset{d}{=} (\eta_{-i}^\rmW,\eta')$ where we set $\eta'(t+\mathrm{area}(\cA_\tau)) = \eta'(t)$ for $t\in\bbR$.
\end{lemma}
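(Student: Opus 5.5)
Both claims will be proved on the universal cover $\cS_\tau$ and projected back by $p$. Throughout, $\IG^*_\tau$ is the law of $h^{\mathrm A}$ restricted to $E^\cros$, and by Lemma~\ref{lem:ig-flow-line-cross}(iv) this event is the same whichever boundary starting point we use.

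\textbf{Step 1: law of $\eta_y^\rmE$ given $\eta_x^\rmW$.} Lift $x$ to $x'\in\bbR$ and $y$ to $y'+i\tau$. By Lemma~\ref{lem:ig-flow-line-cross}(iii), given $\eta_{x'}^\rmW$, in each component of $\cS_\tau\setminus\cup_k(\eta_{x'}^\rmW+k)$ the field is a Dirichlet GFF with flow line boundary data on the translates and the original data $\lambda'-\chi\pi$ on the real and top lines. Projecting by $p$, this is a simply connected domain $\cA_\tau\setminus\eta_x^\rmW$ with explicit piecewise constant boundary data modulo winding. By the locality in Proposition~\ref{prop:def-flow-line-strip}(i), $\eta_{y'+i\tau}^\rmE$ is the angle $-\frac{\pi}{2}$ flow line of this conditional field. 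Then Theorem~\ref{thm-ig1}, transferred by a conformal map to $\bbH$ together with \eqref{eq:ig-change-coord}, identifies it as an $\SLE_\kappa(\underline\rho)$. The weights are read off from the jumps of the boundary data at four points:
\begin{itemize}
\item the jump across $y$ on $C_\tau$ (the two sides give $y^\pm$);
\item the jump at the tip of $\eta_x^\rmW$ where it meets $C_\tau$;
\item the jump at the leftmost point of $\eta_x^\rmW\cap C_0$;
\item the jump between the two sides of $\eta_x^\rmW$ at $x$.
\end{itemize}
The target of the flow line is $x^+$ on the right side of $\eta_x^\rmW$. I will check that these jumps give exactly $-\frac\kappa2,\kappa-2,-\frac\kappa2;\frac\kappa2-2$. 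For this I use the flow line boundary conditions of Figure~\ref{fig: flow line boundary data}, with heights $\pm\lambda'$ plus $\chi\cdot$winding, and the relation $\lambda'=\lambda-\frac\pi2\chi$. The claim that $\eta^\rmE$ merges into the lifted $\bbR$, so that it is stopped at the continuation threshold on $C_0$, comes from the interaction rule quoted before Lemma~\ref{lem:ig-flow-line-cross}.

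\textbf{Step 2: the reverse conditional law.} The roles of $\eta^\rmW$ and $\eta^\rmE$ are exchanged by the $180^\circ$ rotation, that is, the map $z\mapsto e^{-2\pi\tau}/z$ on $\cA_\tau$, combined with \eqref{eq:ig-change-coord}. So the conditional law of $\eta_x^\rmW$ given $\eta_y^\rmE$ follows from Step 1 by symmetry, using Lemma~\ref{lem:ig-flow-line-cross}(v). This matches Proposition~\ref{prop:curve-resample}(ii).

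\textbf{Step 3: the rotation statement.} Write $x=e^{2\pi i s}$. The field $h^\rmS$ is translation invariant in law, since $h_0$ is rotation invariant and the additive term $\lambda'-\chi\pi$ is constant on the cover, and $E^\cros$ is translation invariant. By the measurability in Proposition~\ref{prop:def-flow-line-strip}(ii), flow lines and the counterflow line of the translated field are the translates of those of $h^\rmS$. By Lemma~\ref{lem:ig-def-SLEloop}, $\eta'$ does not depend on the base point used to construct it, and the boundary of $\eta'$ stopped at $x'$ is $\eta_{x'}^\rmW$. Translating by the real shift $s'$ that sends $x'$ to the lift of $-i$ and projecting by $p$, which realizes the rotation by $(-i)x^{-1}$, gives the joint equality in law. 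Rotation preserves Lebesgue area, and $\eta'$ restarted at its hitting time $\tau_x$ is again the space-filling loop rooted at the new point. We take the loop to be periodic in time with period equal to the total area.

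\textbf{Expected obstacle.} The main difficulty is the careful bookkeeping of boundary data in Step 1: the winding contributions along $\eta_x^\rmW$ near its two endpoints, and the fact that the field on the annulus is multivalued. Together these must produce exactly the weights $-\frac\kappa2$ and $\kappa-2$ at the tip. I would do this computation on the cover, where everything is single valued, and use the merging rule of flow lines to justify the $\kappa-2$ weight.
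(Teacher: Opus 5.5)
Your proposal is correct and follows essentially the same route as the paper. The paper's proof simply invokes Lemma~\ref{lem:ig-flow-line-cross}(iii) and (v) on the strip, together with Definition~\ref{def:ig-flow-line-annulus}, for the two conditional laws, and the translation invariance in law of $h^\rmS$ for the rotation statement; your force-point bookkeeping in Step~1 is extra detail the paper leaves implicit, but when you carry it out, the jump at $x$ is not a fifth force point (one prime end of $x$ is the target, and for $\kappa\le 2$ the other is exactly the leftmost point of $\eta_x^\rmW\cap C_0=\{x\}$), so only the four weights $-\frac{\kappa}{2},\kappa-2,-\frac{\kappa}{2};\frac{\kappa}{2}-2$ arise.
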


\begin{proof}
    The first claim follows from the claim (iii) and claim (v)  in Lemma~\ref{lem:ig-flow-line-cross} in the strip setting along with Definition~\ref{def:ig-flow-line-annulus}. The second claim follows from the translation invariance in law of the field $h^\rmS$.
\end{proof}

\section{Characterization of the limiting surface via resampling}
\label{sec: resampling characterization scaling limit}
In this section, we prove Theorem~\ref{thm: quantum annulus mot}. Recall that the measure $\MA^{1,0,\dagger}$ defined in Section~\ref{sec: proof sec 4 main thm} describes the conformal welding of a weight $(2-\frac{\gamma^2}{2},\gamma^2,\frac{\gamma^2}{2})$ quantum triangle to itself. Then in Section~\ref{subsec:RS-curve} we prove that the interface in $\MA^{1,0,\dagger}$ can be described in terms of  imaginary geometry flow lines as in Definition~\ref{def:ig-flow-line-annulus}, while in Section~\ref{subsec:RS-field}, we prove that the field in $\MA^{1,0,\dagger}$ is a Liouville field on the annulus. 
Our proof is based on a Markov chain resampling argument, where we build an irreducible Markov kernel such that the law of the interface and field from the conformal welding in $\MA^{1,0,\dagger}$ and the law of the imaginary geometry flow line in Definition~\ref{def:ig-flow-line-annulus} and the Liouville field on the annulus are both invariant measures, which allows us to deduce that the two laws are up to a constant thanks to the uniqueness of invariant measures for irreducible Markov chains~\cite{MT12}. 

Resampling uniqueness properties of SLE curves have been considered in~\cite{ImagGeo2,Yu22,Zhan23,ang2024radial}. One key difference in our setting is that we will with the annulus and we also resample the starting point of one interface according to the LQG length measure to transition between curves of different homotopy classes. The characterization of the field has been considered in~\cite{ModAnn,QuanTrig,ang2024radial}, which considered the resampling of the field in different deterministic domains. Our resampling argument in Section~\ref{subsec:RS-field} will involve resampling of the field in random domains, where we will additionally induce a rotation and use a rotation invariance of the field in $\MA^{1,0,\dagger}$.

\subsection{Characterization of the interface law}\label{subsec:RS-curve}
Recall the curve-decorated surface $\MA^{1,1,\dagger}$ and its embedding $(\cA_\tau,\phi,\eta,-i,\wt\eta,y,\eta') {/}{\sim_\gamma}$ as described in Section~\ref{sec: proof sec 4 main thm} where $|y|=e^{-2\pi\tau}$. We write $\cL_0^\phi$ (resp.\ $\cL_1^\phi$) for the LQG boundary length measure on $C_0=\partial\bbD$ (resp.\ $C_\tau=\{|z|=e^{-2\pi\tau}\}$) with respect to $\phi$. Also recall the probability measure $\IG_\tau^*$ from Definition~\ref{def:ig-flow-line-annulus}. We write $\mathsf m_y$ for the joint law of the flow lines $\eta_{-i}^\rmW$  and $\eta_y^\rmE$ under  $\IG_\tau^*$. The goal of this subsection is to prove the following.

\begin{proposition}\label{prop:pf-curve-resample}
    Let $(\cA_\tau,\phi,\eta,-i,\wt\eta,y,\eta') {/}{\sim_\gamma}$ be the embedding of  $\MA^{1,1,\dagger}$ as above. The conditional law of $(\eta,\wt\eta,y)$ given $\phi$ is given by the probability measure proportional to $\cL_1^\phi(\d y)\times \mathsf m_y$.
\end{proposition}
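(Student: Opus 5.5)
The plan is a Markov chain resampling argument for the pair of interfaces, with $\phi$ (hence $\tau$ and the boundary length measures) held fixed. Fix a field $\phi$ on $\cA_\tau$. Let $\Omega$ be the space of triples $(\eta,y,\wt\eta)$, where $\eta$ is a crossing curve from $-i$ to $C_\tau$, $y\in C_\tau$, and $\wt\eta$ is a crossing curve from $y$ to $C_0$. Consider the Markov kernel $\Lambda^\curve$ on $\Omega$ obtained by performing two moves in succession.

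The first move resamples $(y,\wt\eta)$ given $\eta$. We draw $y$ from $\cL_1^\phi/\|\cL_1^\phi\|$, and then, given $y$, we draw $\wt\eta$ from the $\SLE_\kappa(-\frac{\kappa}{2},\kappa-2,-\frac{\kappa}{2};\frac{\kappa}{2}-2)$ law in $\cA_\tau\backslash\eta$ described in Proposition~\ref{prop:curve-resample}(i).

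The second move resamples $\eta$ given $(y,\wt\eta)$, using the law in Proposition~\ref{prop:curve-resample}(ii).

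By Proposition~\ref{prop:curve-resample}, the conditional law $\mu^\dagger_\phi$ of $(\eta,y,\wt\eta)$ given $\phi$ under $\MA^{1,1,\dagger}$ is invariant for each move, and hence for $\Lambda^\curve$. Next consider $\mu^{\IG}_\phi:=\cL_1^\phi(\d y)\,\mathsf m_y/\|\cL_1^\phi\|$. Under it, $y$ is independent of the flow lines and has marginal $\cL_1^\phi/\|\cL_1^\phi\|$. By Lemma~\ref{lem:ig-curve-resample}, under $\mathsf m_y$ the conditional law of $\eta^\rmE_y$ given $\eta^\rmW_{-i}$ is the SLE$_\kappa(\underline\rho)$ law of move one. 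The conditional law of $\eta^\rmW_{-i}$ given $\eta^\rmE_y$ is that of move two, and it does not depend on anything else. Move one uses the $y$-marginal times the conditional law given $(\eta,y)$, so it preserves $\mu^{\IG}_\phi$. Move two likewise preserves $\mu^{\IG}_\phi$.

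So both $\mu^\dagger_\phi$ and $\mu^{\IG}_\phi$ are invariant probability measures for $\Lambda^\curve$. We then conclude by uniqueness of invariant measures for irreducible chains~\cite{MT12}, which gives $\mu^\dagger_\phi=\mu^{\IG}_\phi$ for a.e.\ $\phi$. This is exactly the claim. A minor measurability point is that $\mu^\dagger_\phi$ is defined only for a.e.\ $\phi$; I would handle this by disintegrating $\MA^{1,1,\dagger}$ over $\phi$ (finite total mass by Theorem~\ref{thm:ASYZ3.9}) and applying the argument fiberwise.

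The main obstacle is proving irreducibility, or at least the form of it needed to get uniqueness: from any starting state, the two-step kernel must reach every positive-measure set of $\mu^{\IG}_\phi$. The difficulty is that curves in the annulus carry a winding/homotopy class relative to each other. Fixing $\eta$ and resampling only $\wt\eta$ from a fixed $y$ cannot change how $\wt\eta$ winds relative to $\eta$, since $\wt\eta$ lives in the simply connected domain $\cA_\tau\backslash\eta$. This is why $y$ is resampled: moving $y$ along $C_\tau$ and then alternating lets the relative winding change.

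For the irreducibility step I would first show that, for any starting $\eta$, after one full step the new $\eta$ has a law that is mutually absolutely continuous with respect to the $\eta$-marginal of $\mu^{\IG}_\phi$ on a suitable event. Concretely, with positive probability $y$ lands in a small arc and $\wt\eta$ stays in a thin tube around a chosen radial segment; this uses the standard fact that SLE$_\kappa(\underline\rho)$ stays near any given path with positive probability. Conditionally on that, the new $\eta$ is an SLE$_\kappa(\underline\rho)$ in the complement of a near-radial slit, whose law has positive density relative to the target. Iterating twice then gives a kernel that dominates a fixed positive multiple of a reference measure, a Doeblin-type condition. This yields uniqueness of the invariant probability measure, and hence the proposition.
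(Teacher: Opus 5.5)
\textbf{The gap is in the irreducibility step.} Your kernel and the two invariance checks match the paper's (Lemma~\ref{lem:curve-resample-0}), and concluding via uniqueness of invariant measures from~\cite{MT12} is also the paper's route. You correctly note that $\wt\eta$ cannot change its winding relative to $\eta$. But your concrete argument then claims that from an \emph{arbitrary} state, after one step, $\wt\eta_1$ stays near a radial segment with positive probability. This is false whenever the current $\eta$ winds around the annulus. If $\eta$ makes a full turn, every radial segment meets $\eta$, so no such path lies in $\cA_\tau\backslash\eta$, and Lemma~\ref{lem:SLE-positive-prob} cannot be applied to it.

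Quantitatively, lift to $\cS_\tau$ via $p$, with $\eta$ going from $3/4$ to $3/4+\ell+i\tau$. Then $\wt\eta_1$ lies between $\eta$ and $\eta+1$, so its horizontal displacement is within $1$ of $-\ell$. Likewise the displacement of $\eta_1$ is within $1$ of that of $\wt\eta_1$ reversed. Hence each step of $\Lambda^\curve$ changes the winding of $\eta$ by at most about $2$, wherever $y_1$ lands. So no fixed number of steps can bring all initial states into a common near-radial set, and the uniform Doeblin minorization you propose fails.

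\textbf{What is needed instead.} Uniform minorization is not required. For \cite[Proposition 10.1.1, Theorem 10.0.1, Proposition 4.2.1]{MT12} it suffices to have $\psi$-irreducibility with a number of steps $N$ that depends on the initial state. The paper gets this from a gradual-unwinding connectivity argument:
\begin{enumerate}
\item Put a graph on $S^\curve$ in which two configurations are adjacent if the new $\eta$ is disjoint from the old one, so that a single step of $\Lambda^\curve$ can move between them.
\item Show that configurations of equal winding are connected, using \cite[Lemma 5.2]{ImagGeo4}.
\item Exhibit explicit configurations, projected from straight segments in $\cS_\tau$, that link winding $\ell$ to winding $\ell-\delta$ in one step.
\end{enumerate}
Combining this connectivity with Lemma~\ref{lem:SLE-positive-prob} and the fact that $\cL_1^\phi$ charges every arc, one gets the following. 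From any initial configuration there is a deterministic $N$ with $\bbP[\eta_N\in\Omega_1,\wt\eta_N\in\Omega_2]>0$, where $\Omega_1,\Omega_2$ are fixed disjoint thin neighborhoods of crossing paths. One further step from that event then reaches every set of positive target measure inside it. You need to replace your Doeblin step with such a state-dependent unwinding argument.
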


In Proposition~\ref{prop:curve-resample}, we derived a resampling property for the conditional law of $(\eta,\wt\eta,y)$ given $\phi$. Based on this, we define a Markov kernel $\Lambda^\curve$ as follows. Fix the modulus $\tau$. Consider the space $S^\curve$ of tuples $(\eta,\wt\eta,y)$, where $y\in C_\tau$, $\eta$ is a simple curve in $\ol{\cA_\tau}$ started from $(-i)$
and ending at a point in $C_\tau$, while $\wt\eta$ is a simple curve in $\ol{\cA_\tau}$ started from $y$ and ending at a point in $C_0$, such that $\eta,\wt\eta$ are disjoint from each other, and $\eta$ (resp.\ $\wt\eta$) is disjoint from $C_\tau$ (resp.\ $C_0$) except at its ending point. For each tuple $\omega=(\eta,\wt\eta,y)$, let $\Lambda^\curve(\omega,\cdot)$ be the law of $\omega_1=(\eta_1,\wt\eta_1,y_1)$, where we first sample $y_1$ from the probability measure proportional to $\cL_1^\phi$, and then sample $\wt\eta_1$ in $\cA_\tau\backslash \eta$ as an $\SLE_\kappa(-\frac{\kappa}{2},\kappa-2,-\frac{\kappa}{2};\frac{\kappa}{2}-2)$  curve from $y_1$ and targeted at  $(-i)^+$, and further sample $\eta_1$ in $\cA_\tau\backslash \wt\eta_1$ as an $\SLE_\kappa(-\frac{\kappa}{2},\kappa-2,-\frac{\kappa}{2};\frac{\kappa}{2}-2)$ curve from $-i$ and targeted at $y_1^+$. See Figure~\ref{fig:curve-resample} for an illustration.

\begin{figure}
    \centering
    \includegraphics[scale=0.65]{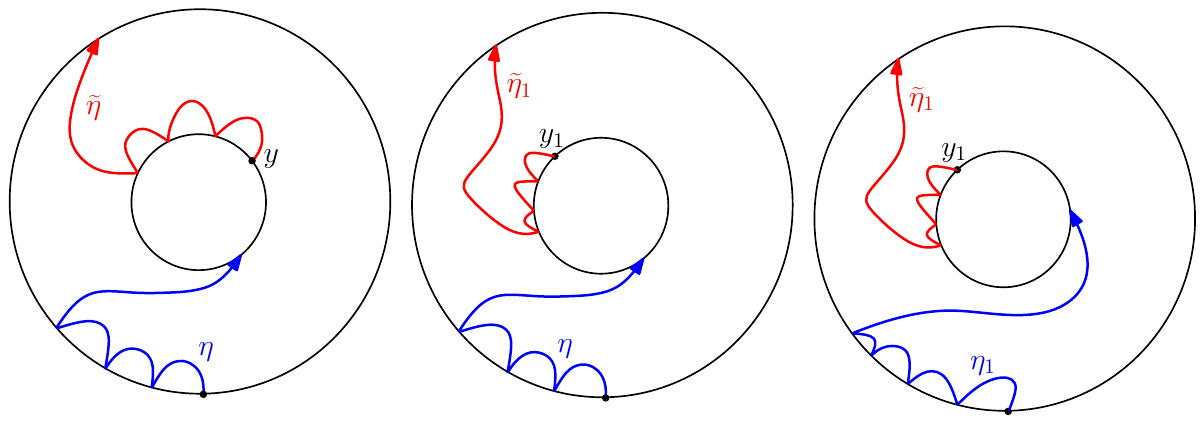}
    \caption{A single step in the resampling of the interfaces in the Markov kernel $\Lambda^\curve$.}
    \label{fig:curve-resample}
\end{figure}

\begin{lemma}\label{lem:curve-resample-0}
    The conditional law of $(\eta,\wt\eta,y)$ given $\phi$ under the measure $\MA^{1,1,\dagger}(a,b)$ and the measure $\cL_1^\phi(\d y)\times \mathsf m_y$ are both invariant measures of the Markov kernel $\Lambda^\curve$.
\end{lemma}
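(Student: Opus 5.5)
The plan is to check invariance for each measure by splitting one step of $\Lambda^\curve$ into its three stages and showing that each stage preserves the measure. Since $\Lambda^\curve(\omega,\cdot)$ does not depend on the old $y$ or the old $\wt\eta$, a single step is the composition of three partial resamplings. First, resample $y$ from $\cL_1^\phi$ normalized, forgetting the old $y$ and $\wt\eta$. Second, resample $\wt\eta$ given $(\eta,y)$ from the stated $\SLE_\kappa(\underline\rho)$ law in $\cA_\tau\backslash\eta$. Third, resample $\eta$ given $(\wt\eta,y)$ from the analogous law in $\cA_\tau\backslash\wt\eta$. A measure $\mu$ on $S^\curve$ is invariant for $\Lambda^\curve$ if it is preserved by each stage. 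For that, it suffices that under $\mu$ the following two conditional laws hold: the conditional law of $(y,\wt\eta)$ given $\eta$ is ``$y\sim\cL_1^\phi/\|\cL_1^\phi\|$, then $\wt\eta$ is the $\SLE_\kappa(\underline\rho)$ in $\cA_\tau\backslash\eta$''; and the conditional law of $\eta$ given $(y,\wt\eta)$ is the stated $\SLE_\kappa(\underline\rho)$ in $\cA_\tau\backslash\wt\eta$. Indeed, stages one and two together resample $(y,\wt\eta)$ from its conditional law given $\eta$. Stage three resamples $\eta$ from its conditional law given $(y,\wt\eta)$. Each of these Gibbs-type moves preserves $\mu$.

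For the conditional law of $(\eta,\wt\eta,y)$ given $\phi$ under $\MA^{1,1,\dagger}(a,b)$, both properties are exactly Proposition~\ref{prop:curve-resample}: part (i) gives the first and part (ii) gives the second. Since the measure is finite, we can condition on $\phi$ and disintegrate, so invariance holds for a.e.\ $\phi$. The kernel $\Lambda^\curve$ is defined with the same $\phi$, so this is the relevant statement.

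For $\cL_1^\phi(\d y)\times\mathsf m_y$, normalized, I would argue as follows. Under $\mathsf m_y$ we have $\eta=\eta^\rmW_{-i}$ and $\wt\eta=\eta^\rmE_y$. Lemma~\ref{lem:ig-curve-resample} says that under $\IG_\tau^*$ the conditional law of $\eta^\rmE_y$ given $\eta^\rmW_{-i}$ is the $\SLE_\kappa(-\frac\kappa2,\kappa-2,-\frac\kappa2;\frac\kappa2-2)$ of Proposition~\ref{prop:curve-resample}(i), and symmetrically for $\eta^\rmW_{-i}$ given $\eta^\rmE_y$, matching (ii). This gives the second property immediately, since $y$ is fixed given $\wt\eta$. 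For the first, I need that the marginal law of $\eta=\eta^\rmW_{-i}$ under $\mathsf m_y$ does not depend on $y$. That holds because $\eta^\rmW_{-i}$ is a functional of $h^{\mathrm A}$ alone, whose law $\IG_\tau^*$ is independent of $y$. Hence under $\cL_1^\phi(\d y)\times\mathsf m_y$ the pair $(y,\eta)$ is a product: $y$ is independent of $\eta$ with law $\cL_1^\phi$ normalized. Given $(\eta,y)$, the curve $\wt\eta$ then has the $\SLE_\kappa(\underline\rho)$ law, which is the first property.

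The main obstacle I expect is measure-theoretic bookkeeping rather than substance. One point is to check that both measures are supported on $S^\curve$: the disjointness and simplicity properties come from Lemma~\ref{lem:ig-flow-line-cross}(i),(ii) on $E^\cros$ and from the welding construction. The other point is to check that the $\SLE_\kappa(\underline\rho)$ descriptions in Lemma~\ref{lem:ig-curve-resample} and Proposition~\ref{prop:curve-resample} use identical force point conventions, in particular the leftmost intersection point with $C_0$ or $C_\tau$ and the continuation threshold. I would verify these directly from the flow line boundary data in Lemma~\ref{lem:ig-flow-line-cross}(iii),(v).
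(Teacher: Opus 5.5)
Your proposal is correct and follows the same route as the paper. The paper's proof derives invariance of the first measure directly from Proposition~\ref{prop:curve-resample}(i),(ii), and of the second from Lemma~\ref{lem:ig-curve-resample}. You additionally spell out the Gibbs-type decomposition of $\Lambda^\curve$ into its three stages, and you supply the step the paper leaves implicit: under $\cL_1^\phi(\d y)\times\mathsf m_y$ the point $y$ is independent of $\eta=\eta^\rmW_{-i}$, because the marginal law of $\eta^\rmW_{-i}$ under $\mathsf m_y$ does not depend on $y$.
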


\begin{proof}
The first claim is a direct consequence of Proposition~\ref{prop:curve-resample}, and the second claim follows from Lemma~\ref{lem:ig-curve-resample}.
\end{proof}

The kernel $\Lambda^\curve$ defines a Markov chain $\{(\eta_n,\wt\eta_n,y_n):n\in\bN\}$ on $S^\curve$, which can be viewed as a metric space via Hausdorff distance between curves.   To prove Proposition~\ref{prop:pf-curve-resample}, we will prove that the Markov chain induced by $\Lambda^\curve$ is {irreducible}, in the sense that for any $(\eta_0,\wt\eta_0,y_0)$ and any measurable set $A\subseteq S^\curve$ such that $(\cL_1^\phi(\d y)\times \mathsf m_y)[A]>0$, there is some deterministic $N>0$ (possibly depending on $(\eta_0,\wt\eta_0,y_0)$) such that $\bbP[(\eta_N,\wt\eta_N,y_N)\in A]>0$. The key of our proof is the following result on $\SLE_\kappa(\underline{\rho})$, which is a consequence of ~\cite[Lemma 3.9]{ImagGeo4} and~\cite[Lemma 3.1]{Yu22}.

\begin{lemma}\label{lem:SLE-positive-prob}
  Let $a_2<a_1<0$.  Let $\eta$ be an $\SLE_\kappa(-\frac{\kappa}{2},\kappa-2,-\frac{\kappa}{2};\frac{\kappa}{2}-2)$  curve on $\bbH$ from 0 and targeted at $\infty$ with force points $0^-,a_1,a_2;0^+$, respectively. Let $\vartheta$ be any deterministic simple curve in $\ol \bbH$ from 0 to some point $a\in (-\infty,a_2)$. Then for any $\e>0$, with positive probability, $\eta$ stays within the $\e$-neighborhood of $\vartheta$. 
\end{lemma}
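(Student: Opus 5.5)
We would prove Lemma~\ref{lem:SLE-positive-prob} by reducing to the two cited results: \cite[Lemma 3.9]{ImagGeo4}, which gives positive probability for an $\SLE_\kappa(\underline\rho)$ flow line to follow a deterministic path away from its force points, and \cite[Lemma 3.1]{Yu22}, which handles the behavior near boundary force points (in particular hitting a prescribed boundary arc). The plan is to split $\vartheta$ into an interior portion and a terminal portion, and then glue the two estimates together with the domain Markov property of $\SLE_\kappa(\underline\rho)$.

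First step (interior portion). We may assume, after shrinking $\e$ and slightly perturbing $\vartheta$ inside its $\e/2$-neighborhood, that $\vartheta$ lies in $\bbH$ except for its two endpoints, stays at positive distance from $a_1,a_2$, and approaches $a$ transversally. Fix a small $\delta>0$ and let $\vartheta_\delta$ be $\vartheta$ stopped upon first entering $B(a;\delta)$. Until $\eta$ reaches $\partial B(a;\delta)$, the force points $a_1,a_2$ are not swallowed. Moreover the accumulated weights $-\frac{\kappa}{2}$ and $\frac{\kappa}{2}-2$ at $0^\mp$ are both $>-2$, so no continuation threshold is hit. Therefore, viewing $\eta$ as a flow line of a GFF with the corresponding piecewise constant boundary data via Theorem~\ref{thm-ig1}, \cite[Lemma 3.9]{ImagGeo4} yields that with positive probability $\eta$ stays in the $\delta$-neighborhood of $\vartheta_\delta$ until hitting $\partial B(\vartheta_\delta(\text{end});\delta)$. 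Near the start, the weights $-\kappa/2$ and $\kappa/2-2$ are exactly those for which the process is absolutely continuous with respect to an ordinary flow line away from $0$, so that lemma applies.

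Second step (terminal portion). On the event of the first step, condition on $\eta$ up to the stopping time $\sigma$ and map out by $g_\sigma$. The remaining curve is an $\SLE_\kappa(\underline\rho)$ in $\bbH$ with force points at the images of $0^\mp$, $a_1$, $a_2$ and $0^+$. Its position relative to the image of the small boundary interval around $a$ is controlled, since by the first step the images of $a_1$ and $a_2$ are at macroscopic distance from the driving point in a uniform way. The cumulative weight past $a_1$ is $-\frac{\kappa}{2}+\kappa-2=\frac{\kappa}{2}-2>-2$, and past $a_2$ it is $-2$, which is the continuation threshold. By \cite[Lemma 3.1]{Yu22}, with positive probability the curve goes from its current tip straight to the boundary segment $(a-\delta,a+\delta)\subseteq(-\infty,a_2)$, staying within $\delta$ of a straight segment, and terminates there without wandering. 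Because these probability lower bounds are continuous in the force-point configuration over a compact set, we obtain a uniform positive lower bound, and multiplying the two probabilities gives the claim for $\delta<\e/2$.

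The main obstacle is the terminal step: making precise that the curve actually hits $(-\infty,a_2)$ near $a$ and stops there, rather than hitting $(a_2,a_1)$ or oscillating. Here we would read off the boundary data from Theorem~\ref{thm-ig1} to see that $(-\infty,a_2)$ is the hittable arc where the threshold triggers, and then invoke \cite[Lemma 3.1]{Yu22} in exactly that configuration.
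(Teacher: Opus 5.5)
This is correct and is essentially the paper's route: the paper gives no argument beyond saying the lemma follows from \cite[Lemma 3.9]{ImagGeo4} and \cite[Lemma 3.1]{Yu22}, and your interior/terminal split glued by the domain Markov property is a sensible way to make that explicit. The one step to fix is the last one: the uniform lower bound from ``continuity over a compact set of configurations'' is not justified, and it is also not needed, because it suffices that the conditional probability of the terminal event given $\eta|_{[0,\sigma]}$ is a.s.\ positive on the first-step event (the chord from $\eta(\sigma)$ to $a$ lies in the ball $B(\vartheta_\delta(\mathrm{end});\delta)$, which $\eta[0,\sigma)$ avoids), after which you integrate rather than multiply; also take $\delta<\e/3$ rather than $\e/2$, so that the terminal piece stays in the $\e$-neighborhood.
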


\begin{proof}[Proof of Proposition~\ref{prop:pf-curve-resample}]
    We first prove the irreducibility of $\Lambda^\curve$. Let $\Omega_1,\Omega_2\subsetneq \cA_\tau$   be some fixed pair of disjoint subdomains containing a pair of paths $(\vartheta,\wt\vartheta)$ in $\cA_\tau$ from $C_0$ to $C_\tau$ where $\vartheta$ starts from $-i$.  Let $G$ be a graph whose vertex set  is equal to $S^\curve$, and two configurations $(\eta_0,\wt\eta_0,y_0)$ and $(\eta_1,\wt\eta_1,y_1)$ is connected by an edge if and only if the curve $\eta_1$ is disjoint from $\eta_0$. In other words, we are able to move from  $(\eta_0,\wt\eta_0,y_0)$ to $(\eta_1,\wt\eta_1,y_1)$ by a single step described by the Markov kernel $\Lambda^\curve$. For $\ell\in\bR$, let $G^\ell$ be the subgraph of $G$ such that the winding number of $\eta_0$ around 0 (with fraction also counted) is equal to $\ell$. Then one can deduce from~\cite[Lemma 5.2]{ImagGeo4} that  $G^\ell$ is connected. On the other hand, for $\delta\in(0,1/4]$, it is straightforward to check that if we set $\vartheta$ (resp.\ $\vartheta_1$) be the projection of the line segment connecting $3/4$ and $3/4+\ell+i\tau$ (resp.\ $3/4-\delta$ and  $3/4-\delta+\ell+i\tau$)  under the map $p:z\mapsto e^{2\pi i \tau z}$, $\wt\vartheta$ be the projection of the line segment  connecting $1/4+\ell+i\tau$ and $1/4$ (resp.\ $-1/8+\ell+i\tau$ and $-1/8$) under $p$, then $(p(\ell+1/4),\vartheta_0,\wt\vartheta_0)\in G^\ell$ is connected with $(p(\ell-1/8),\vartheta_1,\wt\vartheta_1)\in G^{\ell-\delta}$ by an edge in $G$. This implies that the graph $G$ is connected. Further by Lemma~\ref{lem:SLE-positive-prob} and the fact that almost surely, every segment of $C_\tau$ has positive measure under $\cL^\phi_1$, one can prove that for each initial configuration $(\eta_0,\wt\eta_0,y_0)$, there is a deterministic $N>0$ such that $\bbP[\eta_N\in\Omega_1,\wt\eta_N\in\Omega_2]>0$. Further using the fact that GFF determines flow lines~\cite[Theorem 1.2]{ImagGeo1}, for any  $A\subseteq S^\curve$ such that $(\cL_1^\phi(\d y)\times \mathsf m_y )[A\cap \{\eta\in \Omega_1, \ \wt\eta\in\Omega_2\}]>0$, we have $\bbP\big[ \{(\eta_{N+1},\wt\eta_{N+1},y_{N+1})\in A \} \cap\{\eta_{N+1}\in\Omega_1,\wt\eta_{N+1}\in\Omega_2\}\big]>0$. This verifies the irreducibility of $\Lambda^\curve$. Now by~\cite[Proposition 10.1.1, Theorem 10.0.1]{MT12}, the Markov chain $\{( \eta_n,\wt\eta_n,y_n):n\in\bN\}$ is recurrent, and further by~\cite[Proposition 4.2.1]{MT12}, the invariant measure of this Markov chain is unique up to a multiplicative constant. This along with Lemma~\ref{lem:curve-resample-0} finish the proof.
\end{proof}

The following is an immediate consequence of Proposition~\ref{prop:pf-curve-resample}.

\begin{proposition}\label{prop:pf-curve-resample-1}
    Let  $(\cA_\tau,\phi,\eta,-i,\eta') {/}{\sim_\gamma}$ be the embedding of an LQG surface from $\MA^{1,0,\dagger}(a,b)$. Then conditioned on the modulus $\tau$, $(\eta,\eta')$ is independent from $\phi$. Moreover, $(\eta,\eta')$ has the law as that of the west-going flow line $\eta_{-i}^\rmW$ and the counterclockwise space-filling  loop $\SLE_{\kappa',\tau}^{\mathrm{loop}}$ under the measure $\IG_\tau^*$. Finally, for any $\e>0$, with positive probability (depending on $\e$), $\eta$ is contained within the $\e$-neighborhood of $\{-ir:e^{-2\pi \tau}<r<1\}$.
\end{proposition}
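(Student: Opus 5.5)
The plan is to obtain all three assertions from Proposition~\ref{prop:pf-curve-resample} by forgetting the auxiliary data $(y,\wt\eta)$. Note that $\MA^{1,1,\dagger}(a,b)$ is just $\MA^{1,0,\dagger}(a,b)$ weighted by $\|\cL_1^\phi\|=a$, with an extra point $y$ sampled from $\cL_1^\phi/\|\cL_1^\phi\|$ and $\wt\eta$ determined by $(\eta',y)$. Marginalizing $(y,\wt\eta)$ in Proposition~\ref{prop:pf-curve-resample} then shows that the conditional law of $\eta$ given $\phi$ under $\MA^{1,0,\dagger}(a,b)$ is the $\IG_\tau^*$-law of $\eta^\rmW_{-i}$. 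The $y$-marginal of $\cL_1^\phi(\d y)\times\mathsf m_y$, normalized, is the $\IG_\tau^*$-law of $\eta_{-i}^\rmW$, independent of $y$, because the first marginal of $\mathsf m_y$ does not depend on $y$. This law depends only on $\tau$, which gives the independence of $\eta$ and $\phi$ given $\tau$.

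Next I would upgrade this statement from $\eta$ to $\eta'$. By construction $\eta'$ is determined by the field on the strip, through the flow lines $\eta^\rmW_x$ for $x$ in a countable dense set of $C_0$ together with the counterflow lines in the complementary regions. On the welding side, I would use Proposition~\ref{prop:pf-curve-resample} with $y$ ranging over the LQG-typical points of $C_\tau$. The conditional law of $(\eta,\wt\eta_y)$ given $\phi$ matches that of $(\eta_{-i}^\rmW,\eta_y^\rmE)$ under $\IG^*_\tau$. By Lemma~\ref{lem:ig-def-SLEloop}, in the IG picture $\eta_y^\rmE$ is exactly the boundary of $\eta'$ stopped at $y$; in $\MA^{1,1,\dagger}$ the same identity holds by the definition of $\wt\eta$. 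Taking $y_1,\dots,y_n$ i.i.d.\ from $\cL_1^\phi$ (reweighting by $a^n$), the same resampling argument shows that the joint conditional law of $(\eta,\wt\eta_{y_1},\dots,\wt\eta_{y_n})$ given $\phi$ agrees with that of $(\eta^\rmW_{-i},\eta^\rmE_{y_1},\dots,\eta^\rmE_{y_n})$. The kernel would resample one $\wt\eta_{y_j}$ at a time, with all the curves as boundary conditions, and irreducibility would be proved as in Proposition~\ref{prop:pf-curve-resample}. Since $\{y_j\}$ becomes dense as $n\to\infty$, and these east-going boundaries together with $\eta$ determine the ordering of points and hence $\eta'$ (via the counterflow line in each region, Section~\ref{sec: prelim ig}), the law of $(\eta,\eta')$ given $\phi$ is $\IG^*_\tau$-distributed $(\eta_{-i}^\rmW,\SLE^{\mathrm{loop}}_{\kappa',\tau})$. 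A cleaner alternative, which I would try first, is to argue directly. Given $\phi$ and $\eta$, the curve $\eta'$ in $\MA^{1,0,\dagger}$ is the space-filling $\SLE_{\kappa'}(\frac{\kappa'}2-4;\frac{\kappa'}2-4)$ in $\cA_\tau\setminus\eta$ carried over from $\MT$; this holds because in $\MT$ the curve is independent of the field given the triangle. By the construction before Lemma~\ref{lem:ig-def-SLEloop}, the same is the conditional law of $\eta'$ given $\eta^\rmW_{-i}$ under $\IG^*_\tau$. Combining this with the first paragraph finishes the identification.

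For the last claim, it suffices to show that under $\IG^*_\tau$ the flow line $\eta^\rmW_{-i}$ stays within the $\e$-neighborhood of the segment with positive probability. I would lift to the strip and use the argument of Lemma~\ref{lem:ig-flow-line-cross}: via~\cite[Lemma 3.9]{ImagGeo4}, the comparison field $\wt h^\rmS$ has its flow line stay within an $\e$-tube around the vertical segment and merge into $\bbR+i\tau$ with positive probability. Local absolute continuity on $\cR_x$ transfers this to $h^\rmS$, and on this event $E^\cros$ holds, so the conditioning costs only a positive constant. Projecting by $p$ maps the vertical segment to the radial segment through $-i$. The main obstacle is the second paragraph, namely justifying that the conditional law of $\eta'$ given $(\phi,\eta)$ in the welded surface is the same counterflow-line law, particularly at the beaded part when $\gamma>\sqrt2$. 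I expect the direct argument to suffice there, since the welding does not alter the curve inside $\cT$.
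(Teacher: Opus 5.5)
Your proposal is correct, and for the first two claims it follows the paper's argument. The paper also gets the law of $\eta$ by forgetting $(y,\wt\eta)$ in Proposition~\ref{prop:pf-curve-resample}. Your ``cleaner alternative'' for $\eta'$ is exactly what the paper means by ``follows from the construction of the space-filling curve in $\cS_\tau$ and $\cA_\tau$''. Given $(\phi,\eta)$, the loop is the conformal image of the independent $\SLE_{\kappa'}(\frac{\kappa'}{2}-4;\frac{\kappa'}{2}-4)$ from $\MT$. The paper notes just before Lemma~\ref{lem:ig-def-SLEloop} that, by the flow-line boundary conditions of Lemma~\ref{lem:ig-flow-line-cross}(iii), the counterflow line in $\cS_{\eta_x^\rmW}$ has precisely this law, componentwise in the beaded case. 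So $\gamma>\sqrt2$ is no extra obstacle. You can drop the multi-point resampling scheme. It would need its own irreducibility proof. Moreover, east-going boundaries from points of $C_\tau$ do not by themselves determine the ordering of interior points, so it would fall back on the direct argument anyway.

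The genuine difference is in the last claim. The paper proves it on the welding side by rerunning the irreducibility argument from the proof of Proposition~\ref{prop:pf-curve-resample}, with $\Omega_1$ equal to the $\e$-tube. From any initial configuration the chain $\Lambda^\curve$ enters $\{\eta\in\Omega_1\}$ with positive probability, and stationarity of the welding law then forces that law to charge the tube. You instead pass through the law identification and prove positivity directly under $\IG^*_\tau$. You do this via~\cite[Lemma 3.9]{ImagGeo4} for $\wt h^\rmS$ and local absolute continuity on $\cR_x$, as in the proof of Lemma~\ref{lem:ig-flow-line-cross}. This is more self-contained and does not rely on the sketched connectivity argument for the graph $G$. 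The one detail to add is that $E^\cros$ is defined via the flow line from $1$ rather than from $-i$. This is harmless, because by Lemma~\ref{lem:ig-flow-line-cross}(iv) the west-going flow line from $-i$ crosses if and only if the one from $1$ does, almost surely.
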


\begin{proof}
     Using the construction of $\MA^{1,1,\dagger}$ from $\MA^{1,0,\dagger}$, it is immediate from Proposition~\ref{prop:pf-curve-resample} that conditioned on $\tau$, $\eta$ is independent from $\phi$ and has the same law as $\eta_{-i}^\rmW$ under $\IG_\tau^*$. The claim on the space-filling curve $\eta'$ follows from the construction of the space-filling curve in $\cS_\tau$ and $\cA_\tau$. The last claim follows from the proof of Proposition~\ref{prop:pf-curve-resample}, where we take $\Omega_1$ to be  the $\e$-neighborhood of $\{-ir:e^{-2\pi \tau}<r<1\}$ there.
\end{proof}

\subsection{Characterization of the field law}\label{subsec:RS-field}

In this subsection, we further prove that the field from the conformal welding as in the definition of $\MA^\dagger$ can be described in terms of $\LF_\tau$, and hence complete the proof of Theorem~\ref{thm: quantum annulus mot}. Following the strategy of~\cite{ModAnn,QuanTrig}, we will construct a Markov kernel on the space of distributions and prove that both of the laws of the fields from $\MA^\dagger$ and the Liouville field on the annulus are stationary under this Markov kernel, and conclude the proof via resampling uniqueness.

We start with the following resampling property for the marked points on $\MA^{1,0,\dagger}$.

\begin{lemma}\label{lem:M-resample}
    Let $(\cA_\tau,\phi,\eta,-i,\eta') {/}{\sim_\gamma}$ be the embedding of an LQG surface from $\MA^{1,0,\dagger}(a,b)$. Let $x$ be a point on $C_0 = \partial\bbD$ sampled according to the probability measure proportional to $\cL^\phi_0$. Let $\wh\phi(\cdot) = \phi(x\cdot)$, and $\wh \eta$ be $(-i)x^{-1}$ times the west-going flow line started from $x$. Let $(-i)x^{-1}\eta'(\cdot+\tau_x)$ be defined via $\eta'$ and $x$ as in Lemma~\ref{lem:ig-curve-resample}. Then the law of $(\cA_\tau,\wh\phi,\wh\eta,-i,(-i)x^{-1}\eta'(\cdot+\tau_x)) {/}{\sim_\gamma}$ is still $\MA^{1,0,\dagger}(a,b)$.
\end{lemma}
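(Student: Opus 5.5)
The plan is to lift the statement to the level of Brownian paths. By Theorem~\ref{thm:ASYZ3.9} and the construction of $\MA^{1,0,\dagger}(a,b)$, the curve-decorated annulus is a.s.\ the image under the measurable map $F$, followed by the self-welding, of a boundary length process $Z=(L,R)$ with law $C_\gamma B^{\gamma,\bH}_{0,-a+ib}$. Marking $x\in C_0$ by the normalized length measure $\cL_0^\phi$ amounts to picking a uniform point on the outer boundary. That boundary has length $b$, and in the path picture it is the part of the right boundary of the triangle traced by $\eta'$. So $x$ corresponds to a uniform time-level: a uniform $u\in(0,b)$ together with $\sigma_u$, the time at which the outer-boundary length consumed by $\eta'$ equals $u$. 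This is a last-passage time of $R$ at level $b-u$, because $R$ only measures the outer boundary plus the cut. Re-rooting at $x$ means cyclically shifting $\eta'$ by $\tau_x$. I therefore need the corresponding cyclic-shift transformation on paths to preserve the measure $B^{\gamma,\bH}_{0,-a+ib}\times \frac{1}{b}\mathds{1}_{(0,b)}(u)\,\d u$.

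The first step is to describe the shifted path explicitly. The new west-going interface $\wh\eta$ from $x$ is the left boundary of $\eta'$ at time $\tau_x$, by Lemma~\ref{lem:ig-def-SLEloop} transported through Proposition~\ref{prop:pf-curve-resample-1}. Hence the boundary length process of the re-rooted surface is obtained from $Z$ by splitting it at time $\sigma_u$ into $Z|_{[0,\sigma_u]}$ and $Z|_{[\sigma_u,T]}$, swapping the two pieces, and translating so that the new path starts at $0$. The lengths of the cut $\wh\eta$ are read off as the appropriate running infima, which is the same bookkeeping as in Proposition~\ref{prop:BM-MC+MD}. The second step is to apply the last-passage decomposition, the analogue of Lemma~\ref{lemma: first and last passage decomposition} with the roles of $L$ and $R$ interchanged, via~\cite{MinBMCutPt}. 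This writes $B^{\gamma,\bH}_{0,-a+ib}$ restricted to the event defining $\sigma_u$ as a concatenation $B_1\oplus B_2$ of two half-plane or quadrant Brownian path measures glued at a point on a horizontal line, integrated over that point. I would then check that after swapping and translating, the product measure $B_2\oplus B_1$, integrated against $\d u$, is again of the same form. The Lebesgue integrals over the glueing coordinate and over $u$ exchange roles under the swap, and the Jacobian is $1$ because both are lengths. The third step uses that $F$ satisfies concatenation compatibility and reversibility~\cite[Section 3.3.1]{ASYZ24}. This shows that the welded surface built from the shifted path equals $(\cA_\tau,\wh\phi,\wh\eta,-i,(-i)x^{-1}\eta'(\cdot+\tau_x))$. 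The rotation $z\mapsto x^{-1}(-i)z$ is a conformal automorphism of $\cA_\tau$ with $|\psi'|=1$, so $\wh\phi$ is the correct LQG pullback.

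An alternative route avoids most of the path algebra. Proposition~\ref{prop:pf-curve-resample-1} says that, given $\tau$, $(\eta,\eta')$ is independent of $\phi$ with law $\IG_\tau^*$. Lemma~\ref{lem:ig-curve-resample} says that $\IG_\tau^*$ is invariant under re-rooting at a deterministic $x$. It would then suffice to show that the law of $(\phi,x)$, with $x$ sampled from $\cL^\phi_0$, is rotation invariant jointly with the independence. That is circular, however, since the rotation invariance of the field marginal is exactly what is unknown. So I would keep the path argument for the field, and use this alternative only for the curve part.

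The main obstacle is the second step. One has to identify precisely which hitting time of $Z$ corresponds to $\eta'$ reaching $x$, and how the lengths of the new cut are encoded after swapping. One also has to verify that the swapped concatenation lands back in $B^{\gamma,\bH}_{0,-a+ib}$ with the same constant. The piece after $\sigma_u$ must stay in the region where $R$ exceeds its terminal level, while the piece before $\sigma_u$ must be unconstrained relative to it, and these constraints have to match up after translation. I would first attempt this with the triangle picture of Figure~\ref{fig:weld-annulus-8} drawn explicitly, checking the constraints for $\gamma<\sqrt2$ and then transferring to the beaded case.
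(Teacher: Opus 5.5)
Your route is viable, but it differs from the paper's.

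\textbf{The paper's route.} For the curves the paper does exactly your ``alternative'': Proposition~\ref{prop:pf-curve-resample-1} plus the translation invariance in Lemma~\ref{lem:ig-curve-resample}. Its field argument is not circular. By Proposition~\ref{prop: MC+MD}, $\MA^{1,1,\dagger}$ is the double welding of $\MD\times\MC$ from~\eqref{eq:MT-law-2}. This welding is symmetric under $z\mapsto e^{-2\pi\tau}/z$ composed with a reflection, which exchanges the two circles, the two interfaces and the two marked points. Since $y$ is $\cL_1^\phi$-distributed given $(\phi,-i)$, the symmetry shows that $-i$ is $\cL_0^\phi$-distributed given $(\phi,y)$. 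So replacing $-i$ by a fresh $x$ does not change the law, and one then forgets $y$.

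\textbf{Your route.} You derive re-rooting invariance of the whole decorated surface from a cyclic exchangeability of the path measure. The path step is easy rather than the main obstacle. Split at the last passage of $R$ at level $u$ (note $R_{\sigma_u}=u$, not $b-u$, and $R$ does not see the cut):
\begin{equation*}
B^{\gamma,\bH}_{0,-a+ib}=\mathbb{a}\int_{\bR}B^{\gamma,\bH}_{0,c+iu}\oplus B^{\gamma,\bH+iu}_{c+iu,-a+ib}\,\d c .
\end{equation*}
Swapping and translating the two pieces gives the same decomposition at level $b-u$ with $c\mapsto -a-c$.

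\textbf{Where the work actually lies.} The real content is your third step, and concatenation compatibility of $F$ does not give it by itself, because the self-welding along $\eta$ is not a concatenation. You need to observe that this welding glues the first passage of $L$ at level $-s$, $s\in[0,\ell]$, to the last passage after the weight-$\gamma^2$ vertex at level $-a-s$. That is, it is the usual mating-of-trees gluing for the periodic extension $\bar Z(t+T)=\bar Z(t)+(-a,b)$. Then:
\begin{itemize}
\item the annulus is the quotient of the mating of trees of $\bar Z$;
\item re-rooting at $x$ is the time shift of $\bar Z$ by $\sigma_u$;
\item uniqueness of welding, together with Lemma~\ref{lem:ig-def-SLEloop}, identifies the new cut with the flow line from $x$. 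The lemma applies because $x$ is independent of $(\eta,\eta')$ given $\tau$.
\end{itemize}
You currently only assert this periodic-gluing statement.

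\textbf{Comparison.} Given Propositions~\ref{prop: MC+MD} and~\ref{prop:pf-curve-resample-1}, the paper's proof takes a few lines. Yours does not need the field--curve decoupling for the field part and explains the invariance at the level of paths, but it rests on the periodic-gluing statement above.
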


\begin{proof}
    Following Proposition~\ref{prop:pf-curve-resample-1}, $(\wh\eta,\eta')$ has the desired law independent from $\wh\phi$ thanks to the second claim of Lemma~\ref{lem:ig-curve-resample}. To see the claim on  the field law, we first weight the law of $(\cA_\tau,\phi,\eta,-i,\eta') {/}{\sim_\gamma}$ by $b$ and sample $y\in C_\tau$ from the probability measure proportional to $\cL_1^\phi$. Then $(\cA_\tau,\phi,-i,y) {/}{\sim_\gamma}$ has the same law as the field and marked points in $\MA^{1,1,\dagger}(a,b)$, which can be understood as the conformal welding of a pair of surfaces from~\eqref{eq:MT-law-2} by Proposition~\ref{prop: MC+MD}. Following the symmetry in the above conformal welding, if we forget about the marked point $-i$, then the surfaces $(\cA_\tau,\phi,-i){/}{\sim_\gamma}$ has the same law as $(\cA_\tau,\phi,y){/}{\sim_\gamma}$. This implies that if we further sample $x$ from the probability measure proportional to $\cL_0^\phi$, then the law of $(\cA_\tau,\phi,y,-i) {/}{\sim_\gamma}$ agrees with the law of $(\cA_\tau,\phi,y,x) {/}{\sim_\gamma}$ when viewed as marked LQG surfaces. This finishes the proof.
\end{proof}

Now we consider the following Markov kernel on the space of generalized functions on $\cA_\tau$ such that $C_0$ has finite LQG length with respect to $\phi$. Let $\phi_0$ be a field on $\cA_\tau$. We first sample a point $x$ according to the probability measure proportional to $\cL_0^{\phi_0}$, and set $\wh\phi_1 = \phi_0((-i)x^{-1}\cdot)$. Let $D=\cA_\tau$, $A = D\backslash\{re^{i\theta}:r\in[e^{-2\pi \tau},1],\theta\in [5/4\pi,7/4\pi]\}$ be as in Lemma~\ref{lem:GFF-Markov-0}. Let $\phi_1|_{D\backslash A} = \wh\phi_1|_{D\backslash A}$ and $\phi_1|_{A } = h+\mathfrak{h}$ where $h$ is a GFF on $A$ with zero boundary condition on $\partial A\backslash\partial D$ and free boundary condition on $\partial A\cap \partial D$, and $\mathfrak{h}$ is the harmonic extension of   $ \wh\phi_1|_{D\backslash A}$ onto $A$ with zero normal derivative on $\partial A\cap \partial D$. We write $\Lambda^{\field}(\phi_0,\cdot)$ for the law of $\phi_1$.  For an open interval $I$ with compact closure in $(0,\infty)$, we write $\Lambda^{\field}_I(\phi_0,\cdot)$ for the measure $\Lambda^{\field}(\phi_0,\cdot)$ conditioned on $\{\| \cL_0^\phi \|,\|\cL_1^\phi\|\in I\}$.

\begin{figure}
    \centering
    \includegraphics[scale=0.6]{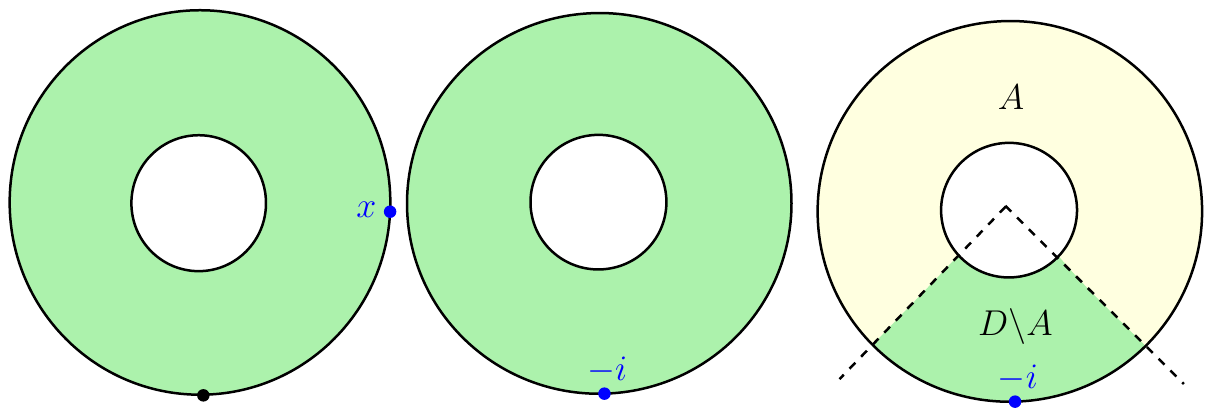}
    \caption{An illustration of the a single step in the kernel $\Lambda^\field$, where we first sample the blue point according to the probability measure proportional to the LQG length measure, rotate and forget about the black marked point, and further resample the field inside $A$ given its value on $D\backslash A$. In Lemma~\ref{lem:M-Markov}, we use the independence between the field and the interface in Proposition~\ref{prop:pf-curve-resample-1} to prove that the measure $M_\tau$, which describes the law of the field under the conformal welding when conditioned on the modulus to be $\tau$, is invariant under $\Lambda^\field$.}
    \label{fig:field-resample}
\end{figure}

Let $\MA^{1,0\dagger} = \int_0^\infty\int_0^\infty \MA^{1,0,\dagger}(a,b)\,\d a \d b$ be the $\sigma$-finite measure, and $(\cA_\tau,\phi,\eta,-i,\eta') {/}{\sim_\gamma}$ be an embedding of a surface from $\MA^{1,0\dagger}$. By Proposition~\ref{prop:pf-curve-resample-1}, the law of $(\phi,\eta,\eta')$ can be expressed as 
\begin{equation}\label{eq:M-decomposition}
    \int_0^\infty M_\tau\times \mathsf m_\tau\, \wh m(\d\tau),
\end{equation} 
where $\wh m(\d \tau)$ is a $\sigma$-finite measure on $(0,\infty)$ describing the law of the modulus $\tau$, $M_\tau$ is a $\sigma$-finite measure describing the law of the field $\phi$, and $\mathsf m_\tau$ is the probability measure describing the law of $(\eta,\eta')$ as in Proposition~\ref{prop:pf-curve-resample-1}. The following is the analog of Lemma~\ref{lem:GFF-Markov} for the measure $M_\tau$.

\begin{lemma}\label{lem:M-Markov}
    For $\wh m$-a.e.\ $\tau$, we have the following domain Markov property for $M_\tau$. Let $D=\cA_\tau$ and $A$ be as in Lemma~\ref{lem:GFF-Markov-0}. Let $(\cA_\tau,\phi_0,-i)$ be sampled according to $M_\tau$. Then conditioned on $\phi_0|_{D\backslash A}$, $\phi_0|_A\overset{d}{=} h+\mathfrak{h}$, where $h$ is a GFF on $A$ with zero boundary condition on $\partial A\backslash\partial D$ and free boundary condition on $\partial A\cap \partial D$, and $\mathfrak{h}$ is the harmonic extension of   $ \phi_0|_{D\backslash A}$ onto $A$ with zero normal derivative on $\partial A\cap \partial D$.   
\end{lemma}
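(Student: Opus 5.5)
We want the domain Markov property of $M_\tau$ on $A$, which is the annulus minus the sector around $-i$. The plan is to get it locally from the welding structure: away from the interface the field is locally a Liouville field, so it has the Markov property on regions avoiding $\eta$. Then we move the interface away from a given region using the resampling and positivity results already established.

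\textbf{Step 1 (Markov property off the interface).} Work under $\MA^{1,0,\dagger}$ and condition on $(\tau,\eta,\eta')$. By Proposition~\ref{prop:pf-curve-resample-1}, $\phi$ is independent of $(\eta,\eta')$ given $\tau$. Take the quantum triangle from Theorem~\ref{thm:ASYZ3.9} that is welded to itself. Its field is a Liouville field $\LF_\bbH^{(\beta_i,s_i)}$, possibly with beads, and the interior of the triangle maps conformally onto $\cA_\tau\setminus\eta$. Liouville fields satisfy the domain Markov property on any domain bounded away from the insertions, with the usual free/zero boundary mixture; this uses the conformal covariance $\psi_\gamma^*$ and the Markov property of the free boundary GFF. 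We deduce the following. For any deterministic closed region $K\subset \ol{\cA_\tau}$ with $\eta\cap K=\emptyset$, conditioned on $\phi|_{D\setminus K}$ and on the event $\{\eta\cap K=\emptyset\}$, the field $\phi|_K$ equals a mixed-boundary GFF plus the harmonic extension with Neumann condition on $\partial K\cap\partial D$. Here $\{\eta\cap K=\emptyset\}$ is independent of $\phi$.

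\textbf{Step 2 (transfer to the given $A$).} The interface $\eta$ starts at $-i$ and, with positive probability, stays in a small neighbourhood of the segment $\{-ir\}$ (last claim of Proposition~\ref{prop:pf-curve-resample-1}). So on the event $F_\e:=\{\eta\subset B_\e\}$, where $B_\e$ is the $\e$-neighbourhood of that segment, we have $\eta\cap A=\emptyset$ for $\e$ small, since $A$ excludes the sector $[5\pi/4,7\pi/4]$. The event $F_\e$ depends only on $\eta$, has positive $\mathsf m_\tau$-probability, and is independent of $\phi$. Therefore the conditional law of $\phi$ given $F_\e$ is still $M_\tau$. Combining this with Step 1 applied with $K=\ol A$, the Markov property of $M_\tau$ on $A$ follows for $\wh m$-a.e.\ $\tau$, after disintegrating the decomposition~\eqref{eq:M-decomposition} in $\tau$.

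\textbf{Main obstacle.} The delicate step is Step 1. The welding is of a triangle to itself, so we must check two things. First, the region $A$, pulled back to the triangle, avoids all insertion points. Second, we must handle the beaded components for $\gamma>\sqrt2$; there the welded pieces are glued along boundary arcs, and we need that a region avoiding $\eta$ sees only the interior of a single thick component. The Markov property must hold jointly with conditioning on the outside field, which includes the arcs glued along $\eta$. I would first try to avoid this by conditioning on $\eta$ together with a small neighbourhood of it. The field in the complement of $\eta$ is then the pushforward of the triangle field under the uniformizing map, which is a Liouville field on a simply connected domain. A standard local Markov property on subdomains at positive distance from its boundary insertions applies, and Neumann conditions on $\partial D\cap\partial A$ come from free boundary on the triangle's boundary arcs. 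Measure-theoretic issues with infinite measures are handled by restricting to $\{\|\cL_0^\phi\|,\|\cL_1^\phi\|\in I\}$ for compact $I$, which is measurable with respect to the field and has finite mass.
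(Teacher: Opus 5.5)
\textbf{Gap in Step~1: the Markov property is needed on a field-dependent domain.}

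Your overall plan is the paper's: view the field off $\eta$ as the welded quantum-triangle field, which is a Liouville field; use a Markov property there away from the insertions; then remove the restriction to $\{\eta$ stays near $\{-ir\}\}$ using Proposition~\ref{prop:pf-curve-resample-1}. Your Step~2 is essentially the paper's last step. The gap is in Step~1.

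Pulled back to the triangle embedding $(\bbH,\psi,0,-1,\infty)$, the region where you need the Markov property is $S=f_\eta(A)$. Here $f_\eta$ uniformizes the component of $\cA_\tau\setminus\eta$ containing $C_\tau$. This set is random and is a function of $\psi$ itself, because $\tau$, $\eta$ and $f_\eta$ are all outputs of the welding. A Markov property of $\LF_\bbH^{(\beta_1,0),(\beta_2,\infty),(\beta_3,-1)}$ on deterministic domains away from the insertions says nothing about an arbitrary field-measurable domain; for general such domains it is false.

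Conditioning first on $(\tau,\eta)$ does not get around this. The triangle field has its explicit Liouville description only under $\MA^{1,0\dagger}$ itself, before conditioning on $\tau$. After conditioning on $\tau$, the law of $\phi$ is $M_\tau$, which is exactly the unknown, so that route is circular.

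\textbf{What is missing.}

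First, you need that the welding output $(\tau,\eta,f_\eta)$ is measurable with respect to the quantum length measures on the two glued boundary arcs, together with the beaded part when $\gamma>\sqrt2$, which is independent of the thick part. The reason is that the pieces have fixed conformal type, so the welded surface is determined by the welding homeomorphism. Consequently, set $S=f_\eta(A)$ on the event that the welding is defined and $\eta$ lies in a thin neighbourhood of $\{-ir\}$, and $S=\emptyset$ otherwise. Then $\bbH\setminus S$ contains a neighbourhood of the glued arcs. Moreover, $S$ satisfies the local-set condition that $\{\bbH\setminus S\subseteq U\}$ is measurable with respect to $\psi|_U$ for every open $U$.

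Second, you need a Markov property of the Liouville field for such random sets. This is Lemma~\ref{lem:M-Markov-1}. It is proved by writing the Liouville field with its insertions as a free boundary GFF normalized by a measure $\rho$ supported where $S$ never reaches, so that the additive constant is determined by the field outside $S$. One adds the explicit logarithmic terms, invokes the Schramm--Sheffield characterization of local sets, and takes the $\beta_3\uparrow Q$ limit for the $Q^-$ insertion. Only after this do you disintegrate in $\tau$ and remove the event using independence.

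\textbf{A further problem.} Your suggestion to handle infinite-measure issues by restricting to $\{\|\cL_0^\phi\|,\|\cL_1^\phi\|\in I\}$ would break the statement. The set $A$ meets both $C_0$ and $C_\tau$, so this event depends on $\phi|_A$ and changes its conditional law given $\phi|_{D\setminus A}$. Conditioning should instead be understood via Markov kernels, as in Definition~\ref{defn: conditioning Markov kernel definition}.
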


We will use the following lemma, which is a generalization of~\cite[Lemma 5.5]{QuanTrig} to the setting where one of the insertion is possibly $Q^-$.
 
\begin{lemma}\label{lem:M-Markov-1}
    Let $\beta_1,\beta_2<Q$ and $\beta_3\in(-\infty,Q)\cup\{Q^-\}$. Let $D_0$ be a deterministic neighborhood of $[-1,0]$. Let $\psi\sim\LF_\bbH^{(\beta_1,0),(\beta_2,\infty),(\beta_3,-1)}$, and let $S=S(\psi)\subseteq \bbH$ be a bounded random set measurable with respect to $\psi$ such that $\ol S \cap D_0=\emptyset$. Suppose for any open set $U\subseteq\bbH$, the event $\{(\bbH\backslash S)\subseteq U\}$ is measurable with respect to $\psi|_U$. Then conditioned on $(S,\psi|_S)$ and $\{S\neq\emptyset\}$ in the sense of Definition~\ref{defn: conditioning Markov kernel definition}, we have $\psi|_S \overset{d}{=} h+\mathfrak{h}$  where $h$ is a GFF on $S$ with zero (resp.\ free) boundary conditions on $\partial S \cap \bbH$ (resp.\ $\partial S \cap \bbR$), $\mathfrak h$ is the harmonic extension of $\psi|_{\bbH \backslash S}$ to $S$ with normal derivative zero on $\partial S \cap \bbR$.
\end{lemma}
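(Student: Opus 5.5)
We plan to reduce Lemma~\ref{lem:M-Markov-1} to the known case~\cite[Lemma 5.5]{QuanTrig}, in which all insertions are strictly below $Q$, by realizing the $Q^-$ insertion as a limit. If $\beta_3<Q$ there is nothing new to prove, so assume $\beta_3=Q^-$. By definition,
\[
\LF_\bbH^{(\beta_1,0),(\beta_2,\infty),(Q^-,-1)}=\lim_{\beta\uparrow Q}\frac{1}{Q-\beta}\LF_\bbH^{(\beta_1,0),(\beta_2,\infty),(\beta,-1)}
\]
as measures on distributions with the weak-$*$ topology. The statement is a domain Markov property for the field away from $D_0$, and it is encoded by the identity
\[
\int F(S,\psi|_{\bbH\setminus S})\,G(\psi|_S)\,\d\mu(\psi)=\int F(S,\psi|_{\bbH\setminus S})\,\bbE\big[G(h+\mathfrak h)\big]\,\d\mu(\psi)
\]
for bounded test functionals $F,G$. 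Here the event $\{S\neq\emptyset\}$ is built into $F$, and the expectation is over the GFF $h$ on $S$ with the mixed boundary data.

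The first step is a local description of the $Q^-$ field. The singularity $\frac{\beta}{2}G_\bbH(-1,\cdot)$ and the Radon--Nikodym weight relating the different insertions depend on $\psi$ only near $-1$, and $\ol S\cap D_0=\emptyset$ keeps this region away from $S$. The paper's reference~\cite[Definition 2.28, Proposition 2.32]{QuanTrig} gives an explicit construction of the $Q^-$ field: the field on $\bbH$ is obtained by adding a Bessel-type (or conditioned Brownian) radial part near $-1$ to a field that coincides away from $D_0$ with a Liouville field with finitely many insertions below $Q$. We would use that construction directly. The construction changes only the law of the field's behaviour inside $D_0$, which is a deterministic set disjoint from $\ol S$.

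Given this, we would run the argument of~\cite[Lemma 5.5]{QuanTrig}. That proof is the standard local-set argument (as in~\cite{SS13}): approximate $S$ by unions of dyadic squares, use the measurability hypothesis on $\{(\bbH\setminus S)\subseteq U\}$, and apply the ordinary domain Markov property of the free-boundary GFF plus an independent constant on deterministic sets avoiding $D_0$. That last property holds for the approximating fields, because the insertions and the reweighting near $-1$ are measurable with respect to the field restricted to a neighbourhood of $D_0$, and the Markov decomposition outside $D_0$ is unaffected. Restricting to $\{S\neq\emptyset\}$ keeps the conditioning meaningful in the sense of Definition~\ref{defn: conditioning Markov kernel definition}.

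The second step is to pass to the limit $\beta\uparrow Q$. For each fixed $\beta<Q$, the identity above holds for $\frac{1}{Q-\beta}\LF_\bbH^{(\beta_1,0),(\beta_2,\infty),(\beta,-1)}$ by~\cite[Lemma 5.5]{QuanTrig}. We want to let $\beta\uparrow Q$ on both sides using vague convergence. The main obstacle is this passage to the limit: $S$ is only a measurable, not continuous, functional of $\psi$, and the measures are infinite. To handle it, we would restrict $F$ to depend on the field only through $\psi|_{U}$ for an open set $U$ with $\ol U\cap D_0=\emptyset$. Near $-1$ one can couple the $\beta$-fields and the $Q^-$ field so that their restrictions to $U$ agree, because the Radon--Nikodym derivative of the field restricted to $U$ converges in total variation on sets where the additive constant is bounded. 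This gives convergence of both sides for such local test functions, and a monotone-class argument over $U$ then extends the identity to general $F$.
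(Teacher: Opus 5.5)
Your skeleton matches the paper's. For $\beta_3<Q$ the paper reproves \cite[Lemma 5.5]{QuanTrig}: it normalizes the GFF with a measure $\rho$ supported in $D_0$, so that $\psi=h+\mathbf c+g_{\beta_3}$ with $g_{\beta_3}$ deterministic and harmonic off $D_0$ and $\mathbf c$ a function of $\psi|_{D_0}$, and then quotes \cite[Lemma 3.9]{SS13}. The $Q^-$ case is reached by $\beta_3\uparrow Q$ via \cite[Proposition 2.32]{QuanTrig}. You correctly single out this limit as the delicate point, but the mechanism you propose for it does not work, for two reasons.

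First, your test functionals cannot reach the conditioning data. Since $D_0\cap\bbH\subseteq\bbH\setminus S$, the data $(S,\psi|_{\bbH\setminus S})$ always contains $\psi|_{D_0}$, and the hypothesis on $\{(\bbH\setminus S)\subseteq U\}$ is vacuous unless $U\supseteq D_0\cap\bbH$. So $S$ may depend on the field at $-1$; this happens in Lemma~\ref{lem:M-Markov}, where $S=f_\eta(A)$ comes from a welding governed by boundary lengths measured from $-1$. Functionals of $\psi|_U$ with $\ol U\cap D_0=\emptyset$ generate only $\sigma(\psi|_{\bbH\setminus\ol{D_0}})$, which contains neither $S$ nor $\psi|_{D_0}$. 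Hence your monotone-class step cannot reach the identity. Moreover, even for such $F$, both sides still involve $S$ through $\psi|_S$, $h$ and $\mathfrak h$, so making the fields agree on $U$ does not make the two sides agree.

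Second, the total-variation claim is false. Under $\frac{1}{Q-\beta}\LF_\bbH^{(\beta_1,0),(\beta_2,\infty),(\beta,-1)}$ the event $\{\mathbf c\in[0,1]\}$ has mass of order $(Q-\beta)^{-1}\to\infty$, because $C_\bbH^{(\beta_i,s_i)}$ and the density of $\mathbf c$ stay bounded away from $0$ at $\beta=Q$. The divergent mass lives on configurations with ever larger values of the field at $-1$. That is exactly what $\psi|_U$ does not record, so there is nothing to couple.

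Your first step does not rescue this. The $Q^-$ field is singular with respect to the sub-$Q$ fields near $-1$, so they are not related by any density, let alone one measurable with respect to $\psi|_{D_0}$. The assertion that the construction ``changes only the law of the field inside $D_0$'' is itself a domain Markov property across $\partial D_0$, i.e.\ the lemma for a deterministic set.

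What works is to pass to the limit only in a decomposition whose conditional kernel visibly does not depend on $\beta_3$, and to run the random-set argument afterwards. Take a smaller neighborhood $D_0'$ of $[-1,0]$ with $\ol{D_0'}\subseteq D_0$ and $\supp\rho\subseteq D_0'$. The representation above shows that for every $\beta_3<Q$, given the field on $D_0'$, the field on $\bbH\setminus\ol{D_0'}$ is a GFF with zero boundary values on $\partial D_0'\cap\bbH$ and free boundary values on $\bbR\setminus D_0'$, plus a harmonic function determined by that data (and $\beta_2$). Only the law of the field on $D_0'$ depends on $\beta_3$. Transfer this deterministic Markov decomposition to $\LF_\bbH^{(\beta_1,0),(\beta_2,\infty),(Q^-,-1)}$ using \cite[Proposition 2.32]{QuanTrig}. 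Then apply \cite[Lemma 3.9]{SS13} to the local set $\bbH\setminus S$ under the limiting measure.
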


\begin{proof}
    We first consider the case where $\beta_3<Q$. Let $\rho$ be a deterministic measure on $D_0$ satisfying~\eqref{eq: log integrability}. Recall that in our definition of the Liouville field $\LF_\bbH$, the GFF $h$ is normalized to take zero average on the unit semicircle $C_0\cap\bbH$. By~\cite[Proposition 2.8]{ModAnn}, there exists some constant $Z_\rho$ such that if we write $\bbP_{\bbH,\rho}$ for the free boundary GFF on $\bbH$ normalized such that $\int_\bbH h\d \rho = 0$, then for $(h,\rho)\sim \bbP_{\bbH,\rho}\times [Z_\rho e^{-Qc}\d c]$, then the law of $\phi=h+\mathbf{c}-Q G_\rho(\cdot,\infty)$ is equal to $\LF_\bbH$, where $G_\rho$ is the covariance kernel for $\rho$. By further weighting the law of $\phi$ appropriately and adapting the argument in~\cite[Lemma 3.6]{AHS24}, it follows that for some constant $Z_\rho>0$, we can express a sample $\psi$ from $\LF_\bbH^{(\beta_1,0),(\beta_2,\infty),(\beta_3,-1)}$ as   
    \begin{equation}\label{eq:local-set-LF-1}
        \psi= h+\mathbf{c}+ \frac{\beta_1}{2}G_\rho(\cdot,0)+(\frac{\beta_2}{2}-Q)G_\rho(\cdot,\infty)+\frac{\beta_3}{2}G_\rho(\cdot,-1)
    \end{equation}
     with $(h,\mathbf{c})\sim \bbP_{\bbH,\rho}\times [Z_\rho e^{(\beta_1+\beta_2+\beta_3-2Q)c/2}\d c]$. Since we have assumed that $\ol S\cap D_0=\emptyset$, $\bbH\backslash S$ is now a local set for $h$, and the claim for $\beta_3<Q$ now follows from the decomposition~\eqref{eq:local-set-LF-1} together with the characterization of local sets in~\cite[Lemma 3.9]{SS13}. The $\beta_3=Q^-$ case follows by taking the limit $\lim_{\beta_3\uparrow Q^-}\frac{1}{Q-\beta_3}\LF_\bbH^{(\beta_1,0),(\beta_2,\infty),(\beta_3,-1)}$ following~\cite[Proposition 2.32]{QuanTrig}. 
\end{proof}

\begin{proof}[Proof of Lemma~\ref{lem:M-Markov}]
    We only prove the case where $\gamma\in(\sqrt2,2)$; the proof for $\gamma\in(0,\sqrt2)$ is the same and actually simpler since the quantum triangle in~\eqref{eq:MT-law} is simply connected. We follow the same methods as the proof of~\cite[Lemma 5.6]{QuanTrig}, which proved the same result for the setting where the quantum triangle is simply connected.. Let  $(\cA_\tau,\phi,\eta,-i,\eta') {/}{\sim_\gamma}$ be an embedding of a surface from $\MA^{1,0\dagger}$, $D_\eta$ be the connected component of $\cA_\tau\backslash\eta$ whose boundary contains $C_\tau$, and $ f_\eta: D_\eta\to\bbH$ be the conformal map sending $-i$ to $-1$ and the left side of $\eta$ to $[0,\infty)$. Let $\psi = \phi\circ f_\eta^{-1}+Q\log|(f_\eta^{-1})'|$, $\cD_1 = (\bbH,\psi,0,-1,\infty){/}{\sim_\gamma}$ and $\cD_2 = (\cA_\tau\backslash D_\eta,\phi){/}{\sim_\gamma}$. Then following the definition of $\MA^{1,0\dagger}$, $(\cD_1,\cD_2)$ form a quantum triangle from $\QT(2-\frac{\gamma^2}{2},\frac{\gamma^2}{2},\gamma^2)$ restricted to the event where the boundary arc connecting the weight $2-\frac{\gamma^2}{2}$ and ${\gamma^2}$ vertices has shorter length than the boundary arc connecting the weight $\frac{\gamma^2}{2}$ and weight $\gamma^2$ vertices, and $\cD_1$ is conditionally independent from $\cD_2$ on this event. Moreover, since the welding of $(\cD_1,\cD_2)$ is according to the LQG boundary length, if we mark the point  $x_0$ on the boundary arc $(-\infty,-1)$ of $\cD_1$ such that it is identified with the weight $\gamma^2$ vertex (which is embedded at $\infty$), the modulus $\tau$ along with the interface $\eta$ are measurable with respect to $(\cL^\psi|_{(-\infty,x_0]},\cD_2)$, where $\cL^\psi|_{(-\infty,x_0]}$ is the LQG length measure on $(-\infty,x_0]$ with respect to $\phi$. Now we remove this conditioning on $(\cD_1,\cD_2)$, and define  $S = f_\eta(A)$ if the welding is well-defined and $\eta$ is within the $0.01 e^{-\tau-\tau^{-1}}$-neighborhood of $\{-ir:e^{-2\pi \tau}<r<1\}$, and set $S=\emptyset$ otherwise. By the above measurability of the modulus and the interface,  we see that Lemma~\ref{lem:M-Markov-1} is applicable to the set $S$. Furthermore, by   Proposition~\ref{prop:pf-curve-resample-1}, the interface $\eta$ is independent from $\phi$ and has positive probability to stay within $A$. Lemma~\ref{lem:M-Markov} then follows from Lemma~\ref{lem:M-Markov-1} together with  conformal invariance of GFF.
\end{proof}

\begin{lemma}\label{lem:M-Markov-2}
    Let $\ol\Lambda^\field_I(\phi_0,\d \phi_2) = \iint \Lambda^\field_I(\phi_0,\d \phi_1)\,\Lambda^\field_I(\phi_1,\d \phi_2).$ Let $\LF_{\tau,I}^{(\gamma,-i)}$ be the finite measure which is given by the restriction of $\LF_{\tau}^{(\gamma,-i)}$ on the event $\{\|\cL_0^\phi\|,\|\cL^\phi_1\|\in I\}$. Then for any initial configuration $\phi_0$ with $\{\|\cL^{\phi_0}_0\|,\|\cL^{\phi_0}_1\|\in I\}$, the measure $\LF_{\tau,I}^{(\gamma,-i)}$ is absolutely continuous with respect to $\ol\Lambda^\field_I(\phi_0,\d \phi_2)$.
\end{lemma}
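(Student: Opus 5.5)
The plan is to track explicitly what two steps of $\Lambda^\field_I$ do to the field. One step samples $x_1\in C_0$ from the normalized measure $\cL_0^{\phi_0}$ and rotates so that $x_1$ lands at $-i$. It then keeps the field on the sector $D\backslash A$ around $-i$ and resamples it on $A$ as a mixed-boundary GFF plus the harmonic extension. After one step, $\phi_1$ agrees with a rotated copy of $\phi_0$ on $D\backslash A$ and is fresh on $A$. The second step samples $x_2$ from the normalized $\cL_0^{\phi_1}$ and rotates again.

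With positive probability, $x_2$ falls into a fixed arc of $C_0\cap \ol A$ lying at angular distance bounded away from the sector. This holds because $\cL_0^{\phi_1}$ charges every arc a.s. After this rotation, the new non-resampled sector $D\backslash A$ lies entirely inside the region where $\phi_1$ was freshly sampled, so $\phi_2|_{D\backslash A}$ is a rotated piece of the fresh field $h+\mathfrak h$ from step one. Finally, $\phi_2|_A$ is again freshly resampled given $\phi_2|_{D\backslash A}$.

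The key steps, in order, are as follows.

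First, I would compare laws on a subsector. Take a closed sector $K\subset A$, also contained in the image of $D\backslash A$ under rotations by angles in a small range $J$. On $K$, the field $h+\mathfrak h$ has a law mutually absolutely continuous with that of $\phi|_K$ for $\phi\sim\LF_\tau^{(\gamma,-i)}$ restricted to a bounded range of the additive constant. This follows because $\mathfrak h$ is harmonic and smooth on $K$, since $K$ is away from $\partial A\backslash\partial D$. It also uses the standard fact that a GFF plus a smooth function is absolutely continuous with respect to the GFF on a compactly separated region. The free boundary pieces along $C_0$ and $C_\tau$ are handled by the mixed-boundary version of this statement, via Girsanov and the Markov property of Lemma~\ref{lem:GFF-Markov-0}. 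The $\gamma$-insertion at $-i$ for the Liouville field lies in $D\backslash A$ for the target, so after rotation it is away from $K$ and contributes only a smooth shift.

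Second, I would integrate over the random rotation angle $x_2$ in $J$. For each fixed admissible angle, the law of $\phi_2$ is (fresh field on $D\backslash A$) composed with the resampling kernel on $A$. By Lemma~\ref{lem:GFF-Markov}, $\LF_\tau^{(\gamma,-i)}$ has exactly the same form: its law on $D\backslash A$, followed by the resampling on $A$. It therefore suffices to show that the law of $\phi_2|_{D\backslash A}$ dominates the law of $\LF^{(\gamma,-i)}_\tau$ restricted to $D\backslash A$.

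Here the $\gamma$-insertion at $-i$ must come from somewhere. It arises from the size-biasing: $x_2$ is sampled proportionally to $\cL_0^{\phi_1}$. By the Girsanov rooted-measure identity (Proposition~\ref{prop: adding a gamma singularity to QA}, localized), sampling a point from the LQG length measure of a GFF-type field on an arc produces, relative to the unweighted field with Lebesgue-chosen point, a field with a $\gamma$-log singularity at that point. The relevant density is $\|\cL_0^{\phi_1}\|^{-1}$, which is positive and finite on $I$. Combined with the first step, this gives absolute continuity of $\LF_\tau^{(\gamma,-i)}|_{D\backslash A}$ with respect to the law of $\phi_2|_{D\backslash A}$, averaged over $x_2\in J$.

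Third, I would restrict to the event $\{\|\cL_0\|,\|\cL_1\|\in I\}$. Both measures charge it, and both conditionings are by positive-probability events (for the Markov chain) or finite restrictions (for $\LF$), so absolute continuity survives restriction. A set null for $\ol\Lambda^\field_I(\phi_0,\cdot)$ is null for the unrestricted two-step law intersected with the event, hence null for $\LF_{\tau,I}^{(\gamma,-i)}$.

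The main obstacle is the second step: making rigorous that the length-biased choice of $x_2$ produces exactly the $\gamma$-insertion, while $\phi_1$ is only partially fresh. The length measure near $x_2$ depends on $\phi_1$ close to $C_0$, including the boundary behaviour of $\mathfrak h$. I would first restrict $x_2$ to the interior arc of $C_0\cap A$ at positive distance from $\partial A\backslash\partial D$, where $\mathfrak h$ is smooth up to $C_0$. There I would apply the Girsanov rooting lemma to the free-boundary GFF $h$ on $A$. Any bounded smooth shift only changes densities by factors bounded above and below.
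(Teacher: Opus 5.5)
Your outline is the paper's route. After one step the field on $A$ (in particular on $\bbH\cap\cA_\tau$) is fresh. With positive probability the second root lands in that fresh region, Proposition~\ref{prop: adding a gamma singularity to QA} supplies the $\gamma$-insertion, and Lemma~\ref{lem:GFF-Markov} reduces the comparison to the marginal on $D\backslash A$. Your explicit version of that last reduction is cleaner than the paper's. A small slip: $K$ should \emph{contain} the rotated copies of $D\backslash A$, not be contained in them.

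The genuine gap is your third step. The kernel $\ol\Lambda^{\field}_I$ conditions the \emph{intermediate} field $\phi_1$ on $E:=\{\|\cL_0^\phi\|,\|\cL_1^\phi\|\in I\}$, not just $\phi_2$. So $\ol\Lambda^{\field}_I(\phi_0,B)=0$ only gives $\Lambda^{\field}(\phi_1,B\cap E)=0$ for $\Lambda^{\field}(\phi_0,\cdot)$-a.e.\ $\phi_1\in E$. It says nothing about $\phi_1\notin E$, so it does not make $B$ null for the two-step law restricted only at the end, which is all your first two steps control.

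This is not merely formal: the two-step claim is false. Write $I=(\alpha,\beta)$, let $a_x\subset C_0$ be the arc of angular width $\pi/2$ centred at $x$ (so $a_{-i}=C_0\cap\overline{D\backslash A}$), and set $m_0:=\min_x\cL_0^{\phi_0}(a_x)$. This is positive as soon as $\cL_0^{\phi_0}$ charges every arc.
\begin{enumerate}
\item After step one, $\cL_0^{\phi_1}(a_{-i})=\cL_0^{\phi_0}(a_{x_1})\ge m_0$. So on $E$ the fresh arc $C_0\backslash a_{-i}$ carries $\cL_0^{\phi_1}$-mass less than $\beta-m_0$.
\item In step two, suppose $a_{x_2}$ avoids the interior of $a_{-i}$. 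Then $\cL_0^{\phi_2}(a_{-i})=\cL_0^{\phi_1}(a_{x_2})<\beta-m_0$.
\item Otherwise $a_{x_2}$ overlaps $a_{-i}$. Then $\phi_2$ coincides on an open subset of $D\backslash A$ with a rotated copy of the fixed field $\phi_0$, an event of zero $\LF_\tau^{(\gamma,-i)}$-measure.
\end{enumerate}
Since $\LF_{\tau,I}^{(\gamma,-i)}$ charges $\{\cL_0^\phi(a_{-i})\in(\beta-m_0,\beta)\}$, absolute continuity fails.

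The paper's proof has the same hole. It asserts that $\phi_1|_{\bbH\cap\cA_\tau}$ under $\Lambda^{\field}_I(\phi_0,\cdot)$ and $\wh\phi_1|_{\bbH\cap\cA_\tau}$ under $\LF_{\tau,I}^{(\gamma,-i)}$ are mutually absolutely continuous. This is false, because $\cL_0^{\phi_1}(C_0\cap\bbH)<\beta-m_0$ a.s.\ under the former.

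The repair is one more step. With positive probability the second step leaves a non-resampled sector whose boundary lengths on both circles are arbitrarily small. Your argument applied to the third step then gives $\LF_{\tau,I}^{(\gamma,-i)}\ll(\Lambda^{\field}_I)^3(\phi_0,\cdot)$. Both measures are invariant under $\Lambda^{\field}_I$, so this three-step irreducibility is all the Meyn--Tweedie uniqueness argument in Lemma~\ref{lem:M-law} needs.
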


\begin{proof}
  By classical results on GFF, e.g.,~\cite[Proposition 3.4]{ImagGeo1}\footnote{This proposition is stated for Dirichlet GFF, and the same applies for the mixed or free boundary GFF as well.}, for $\phi_1$ from $\Lambda^\field_I(\phi_0,\d \phi_1)$ and $\wh\phi_1$ from $\LF_{\tau,I}^{(\gamma,-i)}$, the law of $\phi_1|_{\bbH\cap \cA_\tau}$ and the law of $\wh\phi_1|_{\bbH\cap \cA_\tau}$ are mutually absolutely continuous. Now as we sample $\phi_2$ from $\Lambda^\field_I(\phi_1,\d \phi_2)$ as in the definition of $\Lambda^\field_I$, we work with the positive probability event that the point $x$ satisfies $\arg x\in [0.49\pi,0.51\pi]$. Then following Proposition~\ref{prop: adding a gamma singularity to QA}, the law of $\phi_2|_{\cA_\tau\backslash A}$ and the law of   $\wh\phi_1|_{\cA_\tau\backslash A}$ are mutually absolutely continuous, and the claim follows by another application of ~\cite[Proposition 3.4]{ImagGeo1}.
\end{proof}

\begin{lemma}\label{lem:M-law}
    For $\wh m$-a.e.\ $\tau$, there is a  constant $c(\tau)\in(0,\infty)$ such that the measure $M_\tau$ is equal to $c(\tau)\LF_\tau^{(\gamma,-i)}$.
\end{lemma}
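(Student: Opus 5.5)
The plan is a resampling-uniqueness argument with the kernel $\Lambda^\field_I$, in the spirit of \cite{ModAnn,QuanTrig}. Fix $\tau$ outside the $\wh m$-null set where Lemma~\ref{lem:M-Markov} fails, and fix an open interval $I$ with compact closure in $(0,\infty)$. Let $M_{\tau,I}$ and $\LF^{(\gamma,-i)}_{\tau,I}$ be the restrictions of $M_\tau$ and $\LF_\tau^{(\gamma,-i)}$ to $\{\|\cL_0^\phi\|,\|\cL_1^\phi\|\in I\}$. Both are finite: for $\LF$ this follows from the discussion after the definition of $\QA_\tau$, and for $M_\tau$ it follows from $\|\MA^{1,0,\dagger}(a,b)\|<\infty$ after integrating over $a,b\in I$ and disintegrating over $\tau$ (possibly after discarding a further null set of $\tau$). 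The goal is to show that both are invariant measures for $\Lambda^\field_I$. The chain is irreducible by Lemma~\ref{lem:M-Markov-2}, so \cite[Proposition 4.2.1]{MT12} will give $M_{\tau,I}=c_I(\tau)\LF^{(\gamma,-i)}_{\tau,I}$. Letting $I$ increase to $(0,\infty)$ along a nested sequence, consistency of the constants will give a single $c(\tau)$.

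\textbf{Invariance for $M_\tau$.} One step of $\Lambda^\field$ has two parts: sample $x\propto\cL_0^\phi$ and rotate by $(-i)x^{-1}$, then resample $\phi|_A$ given $\phi|_{D\backslash A}$.
\begin{itemize}
\item The first part preserves $\MA^{1,0,\dagger}(a,b)$ by Lemma~\ref{lem:M-resample}. Rotation does not change $\tau$, and by Proposition~\ref{prop:pf-curve-resample-1} the field is independent of $(\eta,\eta')$ given $\tau$. Using the decomposition \eqref{eq:M-decomposition}, the field marginal $M_\tau$ is therefore preserved for $\wh m$-a.e.\ $\tau$.
\item The second part preserves $M_\tau$ by the domain Markov property in Lemma~\ref{lem:M-Markov}.
\end{itemize}
Since neither part changes the boundary lengths $\|\cL_0^\phi\|,\|\cL_1^\phi\|$, the restriction to $I$ commutes with the kernel, and $M_{\tau,I}$ is invariant for $\Lambda^\field_I$.

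\textbf{Invariance for $\LF^{(\gamma,-i)}_\tau$.} The rotation step preserves the measure by Proposition~\ref{prop: adding a gamma singularity to QA}. There $\LF_\tau^{(\gamma,-i)}$ is a constant multiple of $\QA^1_\tau$, so resampling the marked point proportionally to $\cL_0^\phi$ and rotating it to $-i$ leaves the law invariant, using rotation invariance of $\LF_\tau$. The resampling step preserves the measure by Lemma~\ref{lem:GFF-Markov}.

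\textbf{Uniqueness.} Lemma~\ref{lem:M-Markov-2} shows that the two-step kernel $\ol\Lambda^\field_I(\phi_0,\cdot)$ dominates $\LF^{(\gamma,-i)}_{\tau,I}$ from every admissible starting point. This gives $\psi$-irreducibility with $\psi=\LF^{(\gamma,-i)}_{\tau,I}$. Because an invariant probability measure exists, the chain is positive recurrent by \cite[Proposition 10.1.1, Theorem 10.0.1]{MT12}, and its invariant measure is unique up to scaling. One point needs care: $M_{\tau,I}$ must be a nonzero $\sigma$-finite invariant measure for which \cite{MT12} applies. This holds because $M_{\tau,I}$ is finite and the chain is $\psi$-irreducible.

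The main obstacle I expect is in the first invariance step. Lemma~\ref{lem:M-resample} is stated for the full curve-decorated law at fixed $(a,b)$, and turning it into invariance of $M_\tau$ for a.e.\ fixed $\tau$ requires the independence from Proposition~\ref{prop:pf-curve-resample-1} together with measurability of $\tau$ as a function of $\phi$, or at least of the surface. A second point to check is that $M_\tau$ is genuinely supported on fields with finite, positive boundary lengths, so that the kernel is well defined $M_\tau$-a.e.
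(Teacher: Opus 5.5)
Your strategy matches the paper's: invariance of $M_{\tau,I}$ and $\LF^{(\gamma,-i)}_{\tau,I}$ via Lemma~\ref{lem:M-resample}, Lemma~\ref{lem:M-Markov}, Proposition~\ref{prop: adding a gamma singularity to QA} and Lemma~\ref{lem:GFF-Markov}; irreducibility via Lemma~\ref{lem:M-Markov-2}; uniqueness from \cite{MT12}; and consistency of the constants over nested $I$. However, one step rests on a false claim. You pass from invariance under $\Lambda^{\field}$ to invariance of the restricted measures under $\Lambda^{\field}_I$ by asserting that neither part of the kernel changes $\|\cL_0^\phi\|,\|\cL_1^\phi\|$. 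The rotation does preserve them, but the resampling step does not. The set $A$ contains the arcs of $C_0$ and $C_\tau$ outside the bottom sector, and there the field is resampled with free boundary conditions, so both boundary lengths change. This is why $\Lambda^{\field}_I$ needs a conditioning at all. Your claim also contradicts your own irreducibility step: if the lengths were preserved, $\ol\Lambda^{\field}_I(\phi_0,\cdot)$ would be carried by a single level set of $(\|\cL_0^\phi\|,\|\cL_1^\phi\|)$. That level set is $\LF^{(\gamma,-i)}_{\tau,I}$-null, which contradicts Lemma~\ref{lem:M-Markov-2}.

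What actually makes the restriction work is the Gibbs structure of the resampling step. Let $E_I=\{\|\cL_0^\phi\|,\|\cL_1^\phi\|\in I\}$ and $\mu\in\{M_\tau,\LF^{(\gamma,-i)}_\tau\}$. The rotation maps $E_I$ to itself, so it preserves $\mu|_{E_I}$. For the resampling step, write $g=\phi|_{D\backslash A}$, let $\mu_{\mathrm{out}}$ be the law of $g$ under $\mu$, and let $K(g,\cdot)$ be the conditional law of $\phi$ given $g$. By Lemmas~\ref{lem:M-Markov} and~\ref{lem:GFF-Markov}, this is the same explicit kernel for both measures, and it is exactly what the step samples from. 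Then $\mu|_{E_I}(\d\phi)=\mu_{\mathrm{out}}(\d g)\,K(g,E_I)\,K(g,\d\phi\mid E_I)$, so resampling from $K(g,\cdot\mid E_I)$ leaves $\mu|_{E_I}$ invariant. For this you should read $\Lambda^{\field}_I$ as ``rotate, then resample $\phi|_A$ conditioned on the result lying in $E_I$''. Conditioning the composed step as a whole would also reweight the rotation by $K(\wh g,E_I)$, where $\wh g$ is the rotated outer field and so depends on the sampled point $x$; invariance would then no longer be automatic. With this replacement, the rest of your argument goes through as in the paper.
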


\begin{proof}
  By Proposition~\ref{prop: adding a gamma singularity to QA} and Lemma~\ref{lem:GFF-Markov}, the  finite measure $\LF_{\tau,I}^{(\gamma,-i)}$ an invariance measure with respect to $\ol\Lambda_I^\field$. We write $M_{\tau,I}$ for the restriction of the measure $M_\tau$ onto the event $\{\|\cL_0^\phi\|,\|\cL_1^\phi\|\in I \}$.  Following Lemma~\ref{lem:M-resample} and Lemma~\ref{lem:M-Markov}, the  finite measure $M_{\tau,I}$ is also invariant under $\ol\Lambda^\field$. By Lemma~\ref{lem:M-Markov-2}, the  kernel $\ol\Lambda_I^\field$ is irreducible. Therefore  by~\cite[Proposition 10.1.1, Theorem 10.0.1, Proposition 4.2.1]{MT12},   the invariant measure of this Markov chain is unique up to a multiplicative constant, and for some $c(\tau,I)\in(0,\infty)$,  $M_{\tau,I} = c(\tau,I)\LF^{(\gamma,-i)}_{\tau,I}$. For $I'\subsetneq I$, by further restricting to the event $\{\|\cL_0^\phi\|,\|\cL_1^\phi\|\in I' \}$, we have $c(\tau,I)=c(\tau,I')$. This completes the proof.
\end{proof}

\begin{proof}[Proof of Theorem~\ref{thm: quantum annulus mot}]
    Recall the decomposition in~\eqref{eq:M-decomposition}. With slight abuse of notation, we continue to write $\IG_\tau^*$ for the joint law of the west-going flow line $\eta_{-i}^\rmW$ and the space-filling loop $\eta'$ under the probability measure $\IG^*_\tau$ as in Definition~\ref{def:ig-flow-line-annulus}. Then following Proposition~\ref{prop:pf-curve-resample-1} along with Lemma~\ref{lem:M-law}, we have 
    \begin{equation}\label{lem:pf-thm-1.6}
        \MA^{1,0,\dagger} = \int_0^\infty c(\tau)\LF_\tau^{(\gamma,-i)}\times \IG^*_\tau \, \wh m(\d \tau):=  \int_0^\infty  \LF_\tau^{(\gamma,-i)}\times \IG^*_\tau \,   m(\d \tau),
    \end{equation}
    where we view the right hand side as a measure on curve-decorated LQG surfaces. In particular, if we set the measure $m$ in~\eqref{eq: MA 01 defn intro}, then   $ \MA^{1,0,\dagger} =  \MA^{1,0}$ if we forget about the interface $\eta$ in $ \MA^{1,0,\dagger}$. Furthermore, following our definition of the boundary length process $(L,R)$ in the annulus, if we view $\eta'$ as a loop rooted at $-i$, then $(L,R)$ agrees with the one in the definition of $\MT$ in the paragraph before Theorem~\ref{thm:ASYZ3.9}. Moreover, since by  Theorem~\ref{thm:ASYZ3.9}, the process $(L,R)$ a.s.\ determines the quantum triangle $\cT$ from~\eqref{eq:MT-law}  whereas $\MA^{1,0,\dagger}(a,b)$ describes the law of the conformal welding of $\cT$ to itself, we conclude that the LQG surface from $ \MA^{1,0,\dagger}$ is a.s.\ determined by the the boundary length process $(L,R)$ in the annulus. This completes the proof.
\end{proof}

\section{Law of the modulus}
\label{sec: law of the random modulus}
In this section, we prove Theorem~\ref{thm: law of the random modulus} following a similar approach to \cite{ModAnn}. On one hand, the law of the boundary lengths for $\MA^{ 1,0}=\MA^{ 1,0,\dagger}$ can be computed via Brownian paths thanks to Theorem~\ref{thm:ASYZ3.9}. On the other hand, following the $\LF_\tau$ description of $\MA^{ 1,0}$, one can also compute  the law of the boundary lengths for $\MA^{ 1,0}$ using integrability of Liouville conformal field theory on the annulus~\cite{LCFTAnnulus,ModAnn}. By comparing the above two ways of computation, we obtain,  following the approach in \cite{ModAnn}, the expression of $m(\d\tau)$.
This is done in Section~\ref{sec: law of the random modulus calculation}. In Section~\ref{app: gamma = 0 limit}, we study the law $m(\d\tau)$ in the semiclassical limit.

\subsection{Law of the random modulus}
\label{sec: law of the random modulus calculation}

We start with the following computation involving Brownian path measures.

\begin{proposition}\label{prop:BM-path-MA}
    There exists a constant $C_\gamma>0$, such that 
    \begin{equation}\label{eq:MA-10-length}
        \|\MA^{1, 0 }(a, b)\| = C_\gamma\frac{b}{a^2+b^2-2ab\cos\theta}.
    \end{equation}
\end{proposition}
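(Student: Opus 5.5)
Theorem~\ref{thm:ASYZ3.9} and the definition of $\MA^{1,0,\dagger}(a,b)$ as the self-welding of a triangle from~\eqref{eq:MT-law} give
\[
\|\MA^{1,0}(a,b)\| = C_\gamma \|B^{\gamma,\bH}_{0,-a+ib}\|.
\]
Here we use $\MA^{1,0}=\MA^{1,0,\dagger}$ from the proof of Theorem~\ref{thm: quantum annulus mot}, disintegrated over the outer length $b$ and inner length $a$. The self-welding is a deterministic operation on a.e.\ configuration, so it preserves total mass. The $\ell$-integral in~\eqref{eq:MT-law} is already absorbed into the Brownian path measure.

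So the task reduces to computing the boundary Poisson kernel mass of the correlated Brownian path measure from $0$ to the boundary point $-a+ib$ of $\bH$. I would pass to standard Brownian motion using the map $m_\theta$ from~\eqref{eq: m theta definition}. By the definition of $B^{\gamma,D}_{x,y}$ as a pushforward,
\[
\|B^{\gamma,\bH}_{0,-a+ib}\|=\|B^{D}_{0,y}\|,\qquad D=m_\theta^{-1}(\bH),\quad y=m_\theta^{-1}(-a+ib).
\]
The set $D$ is a half-plane bounded by the line $m_\theta^{-1}(\bR)$, and the start $0$ is a boundary point of it. We therefore need the boundary-to-boundary measure. By~\eqref{eq: BPM interior to boundary defn} and~\cite{LW04}, its mass in a half-plane is the boundary Poisson kernel, $\|B^{D}_{x,y}\| = \frac{1}{\pi |x-y|^2}$ up to a universal constant. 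This holds with the normalization via normal derivatives, and the derivatives should be computed relative to the inward normals of $D$. The normal vectors pick up a Jacobian factor because $m_\theta$ is not conformal, but since both points lie on the same line this factor is a constant depending only on $\theta$.

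It then remains to compute $|m_\theta^{-1}(-a+ib)|^2$. Since $M_\theta$ is symmetric with $M_\theta^2=\Sigma$, we have $|M_\theta^{-1}v|^2 = v^T\Sigma^{-1}v$, where $\Sigma = \mathbb a^2\begin{pmatrix}1&-\cos\theta\\-\cos\theta&1\end{pmatrix}$. Taking $v=(-a,b)$, with $L$ the first and $R$ the second coordinate,
\[
v^T\Sigma^{-1}v=\frac{a^2+b^2-2ab\cos\theta}{\mathbb a^2\sin^2\theta}.
\]
The interior term $+2ab\cos\theta$ in $\Sigma^{-1}$ becomes $-2ab\cos\theta$ after the sign from $-a$. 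This yields $\|B^{\gamma,\bH}_{0,-a+ib}\| = C/(a^2+b^2-2ab\cos\theta)$.

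This leaves a discrepancy with the factor $b$ in~\eqref{eq:MA-10-length}. It has to come from a weighting. The problem is to determine which length is the outer one and where the marking enters. In $\MA^{1,0}$ there is a root marked by the $\gamma$-insertion, that is, weighting by the length of $C_0$. In the Brownian description of Theorem~\ref{thm: quantum annulus mot}, the measure $\int\!\!\int B^{\gamma,\bH}_{0,-a+ib}$ corresponds to the triangle measure without the uniform marking. Comparing with~\eqref{eq:MT-law-1} and the definition of $\MA^{1,1\dagger}$ (weighting by $a$), I expect the correct identification is that the self-welded surface with the root from $-i$ yields $\|\MA^{1,0}(a,b)\| = C_\gamma\, b\,\|B^{\gamma,\bH}_{0,-a+ib}\|$. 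The extra $b$ reflects a free choice of welding offset along the outer circle: the identification of the rooted annulus with the triangle carries a Lebesgue factor in the location of the root relative to the welding point, over a range of length $b$.

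I expect this bookkeeping to be the main obstacle: checking precisely which boundary-length weighting relates $\MA^{1,0}(a,b)$ to~\eqref{eq:MT-law}, including the role of the $\ell$-integration and the root sampled proportional to $\cL_0^\phi$. The Poisson-kernel computation itself is routine.
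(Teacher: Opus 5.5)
Your reduction $\|\MA^{1,0}(a,b)\|=C_\gamma\|B^{\gamma,\bH}_{0,-a+ib}\|$ is correct and is also the paper's starting point. Your computation $|m_\theta^{-1}(-a+ib)|^2=\frac{a^2+b^2-2ab\cos\theta}{\mathbb{a}^2\sin^2\theta}$ is also right. The error is the claim that $-a+ib$ is a boundary point of $\bH$. Its imaginary part is $R_T=b>0$, so it is an \emph{interior} point, and only the starting point $0$ lies on $\partial\bH$. Hence $B^{\gamma,\bH}_{0,-a+ib}$ is a boundary-to-interior measure. By reversibility, its mass is the exit density at $0$ of the correlated Brownian motion started at $-a+ib$. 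In the half-plane $D=m_\theta^{-1}(\bH)$ this is the interior-to-boundary Poisson kernel $\frac{\operatorname{dist}(y,\partial D)}{\pi|y|^2}$ with $y=m_\theta^{-1}(-a+ib)$, not the boundary-to-boundary kernel $\frac{1}{\pi|y|^2}$. Now $\partial D$ is the zero set of $z\mapsto (M_\theta z)_2$, and the second row of $M_\theta$ has norm $\mathbb{a}$, so $\operatorname{dist}(y,\partial D)=b/\mathbb{a}$. Together with your value of $|y|^2$ this gives $\|B^{\gamma,\bH}_{0,-a+ib}\|=C_\gamma\,b/(a^2+b^2-2ab\cos\theta)$, which is the proposition. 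The paper does the same computation: it translates the path to start at $bi$ and differentiates in $a$ the exit probability $\frac1\pi\arctan\frac{b\sin\theta}{a-b\cos\theta}$ through $[a,\infty)$.

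The final part of your proposal must therefore be discarded: there is no additional weighting by $b$. The identity $\|\MA^{1,0}(a,b)\|=C_\gamma\,b\,\|B^{\gamma,\bH}_{0,-a+ib}\|$ contradicts your own first display and Theorem~\ref{thm: quantum annulus mot}. That theorem already identifies the law of $(L,R)$ under the \emph{rooted} measure $\MA^{1,0}$ with $C_\gamma\int\!\!\int B^{\gamma,\bH}_{0,-a+bi}\,\mathrm{d}a\,\mathrm{d}b$. Nor is there a free welding offset: the root is the image of the identified weight $2-\frac{\gamma^2}{2}$ and $\frac{\gamma^2}{2}$ vertices, and the self-welding is determined by the boundary lengths. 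With the correct Poisson kernel, your extra factor would produce $b^2$ in the numerator. Your intuition linking $b$ to the root holds only in the sense that, after forgetting the root, $\MA^{1,0}(a,b)$ is a constant multiple of $b\,\MA(a,b)$. The paper uses this afterwards to get $\|\MA(a,b)\|\propto(a^2+b^2-2ab\cos\theta)^{-1}$. In the Brownian computation itself, the factor $b$ is just the distance-to-boundary term of the Poisson kernel.
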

\begin{proof}
    By Theorem~\ref{thm: quantum annulus mot} and the reversibility of the Brownian path measure, it suffices to show 
    \begin{equation*}
        \|B^{\gamma, \bH}_{0, -a+bi} \| = \|B^{\gamma, \bH}_{ -a+bi,0} \| = C_\gamma \frac{b}{a^2+b^2-2ab\cos\theta}.
    \end{equation*}

   By translation invariance we have $ \|B^{\gamma, \bH}_{ -a+bi,0} \| = \|B^{\gamma, \bH}_{ bi,a} \| $. Consider a Brownian motion $(L,R)$ with covariance~\eqref{eq: gamma correlated BM} started at $bi$, and write $\bbP_{bi}$ for its law. Recall that $\|B^{\gamma, \bH}_{ bi,a} \|$ is equal to  the exit density of $a\in\partial\bbH$ of the Brownian motion $(L,R)$ started at $bi$. Therefore
    \begin{equation}\label{eq:BM-ptn-fn}
       \|B^{\gamma, \bH}_{ -a+bi,0} \| = \|B^{\gamma, \bH}_{ bi,a} \|  = \frac{\partial }{\partial a}\mathbb{P}_{bi}\ab[(L,R) \text{ exits $\mathbb{H}$ from $[a,\infty)$}],
    \end{equation}
    Under $\bbP_{bi}$, we can write $(L_t,R_t) = (\mathbb{a}(X_t\sin\theta- Y_t\cos\theta), \mathbb{a}Y_t)$, where $(X,Y)$ is standard 2D Brownian motion started at $\mathbb{a}^{-1}b\cot\theta + \mathbb{a}^{-1}bi$. The event that $(L,R)$ exits $\mathbb{H}$ from $[a,\infty)$ is equivalent to the event that $(X,Y)$ exits $\bbH$ from $[\mathbb{a}^{-1}a/\sin\theta,\infty)$.  By standard results on exit probability of 2D Brownian motion, this event has probability $\frac{1}{\pi} \arctan\frac{b\sin\theta}{a-b\cos\theta}.$ Therefore we conclude the proof by~\eqref{eq:BM-ptn-fn}.
\end{proof}

Now we set $$\MA =\int_0^\infty \LF_\tau \IG^*_\tau\, m(\d\tau)$$ and view it as a measure on curve-decorated LQG surfaces. Following Proposition~\ref{prop: adding a gamma singularity to QA}, if we weight the law of a surface  from $\MA$ embedded  as $(\cA_\tau,\phi)$ by the $\|\cL_0^\phi\|$, and sample a point on $C_0$ according to the measure $\cL_0^\phi$, then the marked surface has law $\MA^{1,0}$.  Therefore by~\eqref{eq:MA-10-length}, for some $C_\gamma\in(0,\infty)$, we have 
\begin{equation}
    \label{eq: MA boundary law}
    \ab\|\MA(a, b)\| = \frac{C_\gamma}{a^2+b^2-2ab\cos\theta}.
\end{equation}


The following theorem was obtained with input from Liouville CFT on the annulus \cite{LCFTAnnulus}. We restate it in our context.
\begin{theorem}[{\cite[Theorem 1.7]{ModAnn}}]
    \label{thm: modulus LCFT}
    Let $L_0$, $L_1$ be the quantum length of the outer and inner boundary, respectively, and $\langle \cdot \rangle^\gamma_{\MA}$ be the integration with respect to $\MA$ with certain $\gamma\in(0, 2)$. We have
    \begin{equation*}
        \int_0^\infty \exp\ab(-\theta x^2 \tau)\,m(\d\tau) =
        \frac{2\sinh \ab(\theta x)}{\pi\gamma x \Gamma(1+ix)} 
        \langle L_1 e^{-L_1} L_0^{ix} \rangle^\gamma_{\MA},
    \end{equation*}
    for any $x\in\bR$.
\end{theorem}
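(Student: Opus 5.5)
The plan is to reduce the identity to a fixed-modulus computation for the Liouville field on $\cA_\tau$, and then integrate against $m(\d\tau)$. Since $\MA=\int_0^\infty \LF_\tau\,\IG^*_\tau\,m(\d\tau)$ with $\IG^*_\tau$ a probability measure, and since $L_0=\|\cL_0^\phi\|$ and $L_1=\|\cL_1^\phi\|$ are functionals of the field alone, Fubini gives
\[
\langle L_1e^{-L_1}L_0^{ix}\rangle^\gamma_{\MA}=\int_0^\infty \LF_\tau\big[\|\cL_1^\phi\|e^{-\|\cL_1^\phi\|}\|\cL_0^\phi\|^{ix}\big]\,m(\d\tau).
\]
Absolute integrability of the integrand is part of what must be justified; see the last paragraph. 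It therefore suffices to prove the fixed-$\tau$ identity
\begin{equation*}
\LF_\tau\big[\|\cL_1^\phi\|e^{-\|\cL_1^\phi\|}\|\cL_0^\phi\|^{ix}\big]=\frac{\pi\gamma x\,\Gamma(1+ix)}{2\sinh(\theta x)}\,e^{-\theta x^2\tau},
\end{equation*}
where $\theta=\pi\gamma^2/4$.

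To prove this, I would first integrate out the zero mode $\mathbf c$. Write $\phi=h+\mathbf c$, and set $A_0=\|\cL_0^h\|$ and $A_1=\|\cL_1^h\|$. Then $\|\cL_j^\phi\|=e^{\gamma\mathbf c/2}A_j$. Substituting $u=e^{\gamma \mathbf c/2}A_1$, we have $\d\mathbf c=\frac{2}{\gamma}\frac{\d u}{u}$, and the integral over $u$ is $\int_0^\infty e^{-u}u^{ix}\,\d u=\Gamma(1+ix)$. This turns the left-hand side into
\[
\frac{2}{\gamma}\,\Gamma(1+ix)\,\bbE_\tau\big[(A_0/A_1)^{ix}\big].
\]
This is where the factor $\gamma\,\Gamma(1+ix)$ comes from. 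The remaining quantity is the characteristic function of $\log(A_0/A_1)$ under the free boundary GFF on $\cA_\tau$, which is a one-point-free bulk, two-boundary-cosmological-constant annulus correlation of boundary LCFT.

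Next I would import the exact formula from \cite{LCFTAnnulus}, in the form used in \cite{ModAnn}. That formula expresses the annulus partition function with imaginary boundary cosmological constant ratio, equivalently this characteristic function, through the modular bootstrap. Its output is a Gaussian $e^{-\theta x^2\tau}$, i.e.\ the character of the degenerate-free spectrum, times an explicit factor $\frac{x}{\sinh(\theta x)}$ coming from the boundary two-point structure constant at $\beta=Q+ix\gamma/2$-type momenta. Combining this with the previous display gives the fixed-$\tau$ identity, and integrating against $m(\d\tau)$ gives the theorem. In practice this step is exactly \cite[Theorem 1.7]{ModAnn}, so I would cite it, after checking that our $\LF_\tau$ and $\MA$ match their conventions.

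The main obstacle is normalization bookkeeping. I would check three things in particular. First, that the GFF normalization used here, namely zero average on a radial segment, gives the same $\LF_\tau$ as in \cite{ModAnn}; this follows from \cite[Lemma 2.3]{ModAnn}. Second, whether \cite{ModAnn} includes a factor such as $\eta(2i\tau)$ in the measure on $\tau$, which would be absorbed into $m$ and not alter the stated form. Third, that the exchange of the $\tau$-integral with the expectation is justified. For the third point I would use absolute integrability: $|L_0^{ix}|=1$, and $\int L_1e^{-L_1}\,\d\MA<\infty$ by the explicit law $\|\MA(a,b)\|=C_\gamma/(a^2+b^2-2ab\cos\theta)$ in \eqref{eq: MA boundary law}, integrated against $be^{-b}$.
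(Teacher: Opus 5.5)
This is essentially the paper's route: the paper gives no argument of its own and simply imports \cite[Theorem 1.7]{ModAnn}. Your reduction (Fubini, using that $\IG^*_\tau$ is a probability measure and that $L_0,L_1$ depend only on the field; the zero-mode integral giving $\frac{2}{\gamma}\Gamma(1+ix)\,\mathbb{E}[(A_0/A_1)^{ix}]$; and the integrability check) is correct and ends in the same citation.

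Two points in your convention check need fixing. First, a $\tau$-dependent factor such as $\eta(2i\tau)$ in the fixed-$\tau$ formula could \emph{not} be absorbed into $m$, because the same $m$ also defines $\MA$ on the right-hand side; such a factor would change the identity, so the check has to come out with no such factor. Second, the right-hand side of your fixed-$\tau$ identity is real and even in $x$, which forces $\log(A_0/A_1)$ to be symmetric in law. That holds in the flat-cylinder normalization. It fails for $h+\mathbf{c}$ on $\cA_\tau$ with Euclidean boundary lengths, where the term $Q\log|p'|$ multiplies $A_1$ by $e^{-\pi\gamma Q\tau}$ and produces an extra phase $e^{i\pi\gamma Q x\tau}$. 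So you also need to check which of these two normalizations of $\LF_\tau$ is meant, not only the choice of $\rho$.
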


\begin{proof}[Proof of Theorem~\ref{thm: law of the random modulus}]
Throughout the proof, $C_\gamma\in(0,\infty)$ is some constant depending only on $\gamma$ that may vary line by line.    By \eqref{eq: MA boundary law},
\begin{equation}
    \label{eq: expectation intermediate}
    \langle L_1 e^{-L_1} L_0^{ix} \rangle^\gamma_{\MA} = 
    C_\gamma\int_0^\infty \d a \int_0^\infty \d b \frac{ a e^{-a} b^{ix}}{a^2+b^2-2ab\cos\theta}.
\end{equation}
Setting $b=at$ and noting that for $\theta\in (0,\pi)$
\begin{equation*}
    \int_0^\infty a^{ix} e^{-a} \d a = \Gamma(1+ix), \quad
    \int_0^\infty \frac{t^{ix}}{1+2t\cos\theta+t^2}\d t = \frac{\pi\sinh(\theta x)}{\sin\theta\sinh(\pi x)},
\end{equation*}
we have
\begin{equation*}
    \eqref{eq: expectation intermediate} = \frac{\Gamma(1+ix)\sin\theta}{\theta} \int_0^\infty \frac{t^{ix}}{1+2t\cos(\pi-\theta)+t^2}\d t = C_\gamma \frac{\Gamma(1+ix)\pi\sinh((\pi-\theta) x)}{\theta\sinh(\pi x)}.
\end{equation*}

Combining Theorem~\ref{thm: modulus LCFT} and the last equation, we conclude that,
\begin{equation}
    \label{eq: modulus first expression}
    \int_0^\infty \exp\ab(-\theta x^2\tau)\,m(\d\tau)
    =  C_\gamma \frac{\sinh(\theta x)\sinh((\pi-\theta)x)}{x \sinh(\pi x)},
\end{equation}

For $\ell\in (0, \infty)$, $s$, $\beta\in\bC$ such that $\re s>0$ and $|\re \beta| + |\im \beta|\le \ell$, by e.g.~\cite[Chapter 20]{NIST:DLMF}, we have the following identity for the Jacobi theta function with imaginary arguments
\begin{equation}\label{eq:JAcobi-1}
    \int_0^\infty e^{-st}\vartheta_4\ab(\frac{\beta\pi}{2\ell} \bigg\vert \frac{i\pi t}{\ell^2}) \d t = \frac{\ell\cosh(\beta\sqrt{s})}{\sqrt{s}\sinh(\ell\sqrt{s})},
\end{equation}
where we use the convention $q = e^{i\pi t}$,
\begin{equation*}
    \vartheta_4(z | t) = \vartheta_4(z, q) = 1+2\sum_{n=1}^\infty (-1)^n q^{n^2} \cos(2 n z).
\end{equation*}

Note that $\sinh (\theta x) \sinh((\pi-\theta) x) = \frac{1}{2}[\cosh(\pi x)-\cosh((\pi-2\theta)x)]$.
Let $s = \theta x^2$. Then  
\begin{equation}\label{eq:JAcobi-2}
  \int_0^\infty e^{-st}\,m(\d t)
    = C_\gamma \frac{
        \cosh(\pi \sqrt{\frac{s}{\theta}}) - \cosh((\pi-2\theta)\cdot\sqrt{\frac{s}{\theta}})
    }{
        \sqrt{s} \sinh\ab(\frac{\pi}{\sqrt{\theta}}\sqrt{s})
    }.
\end{equation}
Picking $\beta=\frac{\pi}{\sqrt{\theta}}$ or $\frac{-2\theta + \pi}{\sqrt{\theta}}$ and $\ell=\frac{\pi}{\sqrt{\theta}}$, comparing~\eqref{eq:JAcobi-1}  and~\eqref{eq:JAcobi-2}, we get
\begin{equation}\label{eq:mdtau}
\begin{split}
    m(\d \tau) &= 
    C_\gamma\ab[
        \vartheta_4\ab(
            \frac{\pi}{2} \bigg\vert 
            \frac{i\theta\tau}{\pi}
        ) - \vartheta_4\ab(
            \frac{\pi - 2\theta}{2} \bigg\vert \frac{i\theta\tau}{\pi}
        )
    ]\d\tau\\
    &= C_\gamma \ab[\vartheta_4\ab(\frac{\pi}{2}, e^{-\theta\tau})-\vartheta_4\ab(\frac{\pi}{2}-\theta, e^{-\theta\tau})]\d\tau \\ 
    &= C_\gamma \ab[ \vartheta_3(0,e^{-\theta\tau}) - \vartheta_3(\theta, e^{-\theta\tau}) ]\\
    &= C_\gamma \sum_{n=1}^\infty  e^{-n^2\theta\tau} \sin^2(n\theta) \d\tau
    \end{split}
\end{equation} 
where we have used
\begin{equation*}
    \vartheta_3(z | t) = \vartheta_3(z, q) = 1+2\sum_{n=1}^\infty q^{n^2}\cos(2 n z) = \vartheta_4\ab(z +\frac{1}{2}\pi \big| t).
\end{equation*}
It is clear from~\eqref{eq:mdtau} that the measure $m$ is finite. This finishes the proof.
\end{proof}

\subsection{Semiclassical limit of \texorpdfstring{$m(\d\tau)$}{m(d tau)}}
\label{app: gamma = 0 limit}

Recall that $g(\theta,\tau) = \vartheta_3(0,e^{-\theta\tau}) - \vartheta_3(\theta, e^{-\theta \tau})$. We prove the following proposition, which roughly says that the modulus $\tau$ converges to 0 in probability as $\gamma\downarrow 0$.
\begin{proposition}
    For every $\delta>0$,
    \begin{equation*}
        \lim_{\theta\downarrow 0} \frac{\int_{\delta}^\infty g(\theta, \tau)\d\tau}{\int_{0}^\infty g(\theta, \tau)\d\tau} = 0.
    \end{equation*}
\end{proposition}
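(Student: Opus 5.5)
The plan is to use the series representation of $g$ obtained at the end of the proof of Theorem~\ref{thm: law of the random modulus} and compute both integrals termwise. Since $1-\cos(2n\theta) = 2\sin^2(n\theta)$, the definition~\eqref{eq: theta 3 definition} gives
\begin{equation*}
    g(\theta,\tau) = 2\sum_{n=1}^\infty e^{-n^2\theta\tau}\bigl(1-\cos(2n\theta)\bigr) = 4\sum_{n=1}^\infty e^{-n^2\theta\tau}\sin^2(n\theta).
\end{equation*}
All terms are nonnegative, so Tonelli's theorem lets us integrate in $\tau$ term by term. This yields
\begin{equation*}
    \int_0^\infty g(\theta,\tau)\,\d\tau = \frac{4}{\theta}\sum_{n=1}^\infty \frac{\sin^2(n\theta)}{n^2}, \qquad \int_\delta^\infty g(\theta,\tau)\,\d\tau = \frac{4}{\theta}\sum_{n=1}^\infty \frac{\sin^2(n\theta)}{n^2}e^{-n^2\theta\delta}.
\end{equation*}

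For the denominator we use the classical Fourier identity $\sum_{n\ge1} \frac{\sin^2(n\theta)}{n^2} = \frac{\theta(\pi-\theta)}{2}$ for $\theta\in[0,\pi]$. It follows from $\sum_{n\ge 1}\frac{\cos(nx)}{n^2} = \frac{\pi^2}{6}-\frac{\pi x}{2}+\frac{x^2}{4}$ on $[0,2\pi]$, evaluated at $x=2\theta$. Hence the denominator equals $2(\pi-\theta)$, which tends to $2\pi$ as $\theta\downarrow 0$; in particular it stays bounded away from zero. If one prefers to avoid the exact identity, a lower bound suffices: restricting to the terms with $n\le 1/\theta$, where $\sin(n\theta)\ge \frac{2}{\pi}n\theta$, gives a lower bound of order $1$.

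For the numerator we use $\sin^2(n\theta)\le n^2\theta^2$, which gives
\begin{equation*}
    \int_\delta^\infty g(\theta,\tau)\,\d\tau \le 4\theta\sum_{n=1}^\infty e^{-n^2\theta\delta} \le 4\theta\int_0^\infty e^{-x^2\theta\delta}\,\d x = 2\sqrt{\pi\theta/\delta}.
\end{equation*}
The second inequality compares the sum with the integral, using that $x\mapsto e^{-x^2\theta\delta}$ is decreasing. This bound tends to $0$ as $\theta\downarrow 0$, and dividing it by the denominator bound finishes the proof.

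There is no real obstacle in this argument. The only points needing care are the justification of termwise integration, which holds because the terms are nonnegative, and citing or verifying the Fourier identity; the lower bound described above is a fallback for the latter.
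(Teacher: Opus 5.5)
Your proof is correct and has the same skeleton as the paper's. Both expand $g=2\sum_{n\ge 1}e^{-n^2\theta\tau}(1-\cos(2n\theta))$, integrate termwise by Tonelli, bound the tail integral above and bound the total mass below. The two estimates, however, are done differently.

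\textbf{Numerator.} The paper splits the sum at $M_\theta=\lfloor\theta^{-2/3}\rfloor$. Below the cutoff it uses $1-\cos(2n\theta)\le 2n^2\theta^2$, and above it uses the crude bound $1-\cos(2n\theta)\le 2$ together with the Gaussian factor, which gives a bound of order $\theta^{1/3}$. You instead use the quadratic bound $\sin^2(n\theta)\le n^2\theta^2$ for every $n$. This works because $\sum_{n\ge1}e^{-n^2\theta\delta}$ is only of order $(\theta\delta)^{-1/2}$, so no splitting is needed, and you get the sharper bound $2\sqrt{\pi\theta/\delta}$.

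\textbf{Denominator.} The paper lower-bounds $\int_0^\delta g$ by keeping only the terms with $\frac{\pi}{6\theta}\le n\le\frac{\pi}{3\theta}$, where $1-\cos(2n\theta)\ge\frac12$, and obtains a constant larger than $\frac{1}{2\pi}$. You compute $\int_0^\infty g(\theta,\tau)\,d\tau=2(\pi-\theta)$ exactly from $\sum_{n\ge1}\cos(nx)/n^2=\frac{\pi^2}{6}-\frac{\pi x}{2}+\frac{x^2}{4}$ on $[0,2\pi]$. Your Jordan-inequality fallback is also valid.

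\textbf{Comparison.} Your route is shorter and gives the explicit rate $\frac{\sqrt{\pi\theta/\delta}}{\pi-\theta}$ for the ratio. It also identifies the total mass of $g(\theta,\cdot)$, which tends to $2\pi$. The paper's route relies only on elementary inequalities and needs no closed-form series identity.
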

\begin{proof}
    By \cite[Equation 20.2.3]{NIST:DLMF},
    \begin{equation*}
        g(\theta,\tau) = 2\sum_{n=1}^\infty e^{-n^2\theta\tau}(1-\cos(2n\theta)). 
    \end{equation*}
    Note that each term in the summation is positive and decays exponentially with respect to $\tau$ for $\theta>0$. By Fubini--Tonelli,
    \begin{equation*}
        \int_{\delta}^\infty g(\theta,\tau)\d\tau 
        = \sum_{n=1}^\infty 2(1-\cos(2n\theta))\int_{\delta}^\infty e^{-n^2\theta\tau}\d\tau 
        = \sum_{n=1}^\infty \frac{2e^{-n^2\theta\delta}(1-\cos(2n\theta))}{n^2\theta}.
    \end{equation*}
    Let $M_\theta = \lfloor \theta^{-2/3} \rfloor$. Then
    \begin{equation*}
    \sum_{n=1}^\infty \frac{2e^{-n^2\theta\delta}(1-\cos(2n\theta))}{n^2\theta} = 
    \underbrace{\sum_{n=1}^{M_\theta} \frac{2e^{-n^2\theta\delta}(1-\cos(2n\theta))}{n^2\theta}}_{S_1} + 
    \underbrace{\sum_{n=M_\theta+1}^\infty \frac{2e^{-n^2\theta\delta}(1-\cos(2n\theta))}{n^2\theta}}_{S_2}.
    \end{equation*}
    Since $1-\cos x\leq \frac{x^2}{2}$, we have $1-\cos(2n\theta)\leq 2n^2\theta^2$, and thus
    \begin{equation}
        \label{eq: S1 estimate}
        S_1 \le \sum_{n=1}^{M_\theta} \frac{4n^2\theta^2 e^{-n^2\theta\delta}}{n^2\theta}
        \le M_\theta \cdot 4\theta \le 4\theta^{1/3}.
    \end{equation}

    On the other hand, using the crude estimate $|1-\cos(2n\theta)|<2$, we set $x=\theta^{1/2} n$ and obtain
    \begin{equation}
        \label{eq: S2 estimate}
        S_2 \le \sum_{n=M_\theta+1}^\infty \frac{4 e^{-\delta x^2}}{x^2} \le
        4 \theta^{-1/2} \int_{\theta^{-1/6}}^\infty x^{-2} e^{-\delta x^2}\d x \le 4\theta^{-1/2} e^{-\delta\theta^{-1/3}} \int_{\theta^{-1/6}}^\infty x^{-2}\d x =4\theta^{-1/3} e^{-\delta\theta^{-1/3}}.
    \end{equation}

    Combining \eqref{eq: S1 estimate} and \eqref{eq: S2 estimate}, we conclude that for any $\delta>0$,
    \begin{equation*}
        \lim_{\theta\downarrow 0} \int_{\delta}^\infty g(\theta, \tau)\d\tau = 0.
    \end{equation*}

    Now we give a lower bound for $\int_0^\delta g(\theta,\tau)\d\tau$. 
    Using Fubini--Tonelli again,
    \begin{equation}
        \label{eq: g 0 delta integral estimate}
        \int_0^\delta g(\theta,\tau)\d\tau = 2\sum_{n=1}^\infty \frac{1-e^{-n^2\theta\delta}}{n^2\theta}(1-\cos(2n\theta)).
    \end{equation}
    
    We keep only the terms where $\frac{\pi}{6\theta} \le n \le \frac{\pi}{3\theta}$. In this range, $1-\cos(2 n\theta) \ge \frac{1}{2}$ and $1-e^{-n^2\theta\delta} \ge 1- e^{-\pi^2\delta/(9\theta)} \ge \frac{1}{2}$ for $\theta$ sufficiently small. As a result,
    \begin{equation*}
        \eqref{eq: g 0 delta integral estimate}
        \ge 2\sum_{n=\lceil\frac{\pi}{6\theta}\rceil}^{\lfloor \pi/3\theta \rfloor} \frac{(1-e^{-n^2\theta\delta}) (1-\cos(2n\theta))}{n^2\theta} \ge 2 \ab(\frac{\pi}{6\theta}-2)\frac{1}{4}\frac{9\theta}{\pi^2}=\frac{3}{4\pi}-\frac{9\theta}{\pi^2} > \frac{1}{2\pi},
    \end{equation*}
    where in the last inequality we also assumed that $\theta$ is sufficiently small.

    We thus conclude that for each fixed $\delta > 0$, 
    \begin{equation*}
        \lim_{\theta\downarrow 0} \frac{\int_\delta^\infty g(\theta,\tau)d\tau}{\int_0^\infty g(\theta, \tau) d\tau} = 0.
    \end{equation*}
\end{proof}

\begin{remark}\label{remark: gammato0}
 If one condition on the boundary length $b$ in the quantum annulus from $\MA$ to be equal to 1, then following~\eqref{eq:MA-10-length}, as $\gamma\downarrow 0$, the other side length $a$ tends to 1 in probability as well, which is in accordance with the fact that $\tau\downarrow0$ in probability as  $\gamma\downarrow 0$.   It is natural to ask if there is any nontrivial limits as we send $\gamma \downarrow 0$ and scale differently. For instance, one can require that the LQG area of the surface is equal to 1. Since the LQG area of the quantum annulus from $\MA$ is equal to the duration of the Brownian path, as in the proof of Proposition~\ref{prop:BM-path-MA}, the joint law of the boundary lengths $(a,b)$ and the area $A$ for a sample from $\MA$ is proportional to the Brownian path measure of the standard 2D Brownian motion in $\bbH$ from $\frac{b}{\sqrt 2}(\cos\theta (\sin\theta)^{-1/2}+(\sin\theta)^{1/2}i)$ to $\frac{a}{\sqrt{2 \sin\theta}}$ of duration $A$. It is then possible to prove that as $\theta\downarrow 0$, we have the following limits:
 \begin{itemize}
     \item conditioned on $A=1$,  $a,b\to\infty$ and $b/a\to1$;
     \item conditioned on $A=1,b=1$, then $a\to 1$;
     \item conditioned on $A> (\sin\theta)^{-1/2}$ and $b=1$, then $a$ converges in law to a random distribution.
 \end{itemize}
 We further comment that it is possible to work with the joint law of $(a,b,A,\tau)$ under the measure $\MA$ using the integrability of $\LF_\tau$ from~\cite{LCFTAnnulus}; see e.g.~\cite[Equation (3.2)]{ModAnn}.
\end{remark}








\printbibliography

\end{document}